\newtheorem{defn0}{Definition}[section]
\newtheorem{prop0}[defn0]{Proposition}
\newtheorem{thm0}[defn0]{Theorem}
\newtheorem{lemma0}[defn0]{Lemma}
\newtheorem{claim0}[defn0]{Claim}
\newtheorem{corollary0}[defn0]{Corollary}
\newtheorem{example0}[defn0]{Example}
\newtheorem{remark0}[defn0]{Remark}
\newtheorem{assumption0}[defn0]{Assumption}
\newtheorem{conjecture0}[defn0]{Conjecture}
\newtheorem{notation0}[defn0]{Notation}
\newtheorem{question0}[defn0]{Question}
\newenvironment{definition}{\begin{defn0}\rm}{\end{defn0}}
\newenvironment{proposition}{\begin{prop0}}{\end{prop0}}
\newenvironment{theorem}{\begin{thm0}}{\end{thm0}}
\newenvironment{lemma}{\begin{lemma0}}{\end{lemma0}}
\newenvironment{corollary}{\begin{corollary0}}{\end{corollary0}}
\newenvironment{remark}{\begin{remark0}\rm}{\end{remark0}}
\newenvironment{assumption}{\begin{assumption0}\rm}{\end{assumption0}}
\newenvironment{conjecture}{\begin{conjecture0}}{\end{conjecture0}}
\def \mint {\times \hskip -1.1em \int}
\newcommand{\Gal}{{\mathrm {Gal}}}
\newcommand{\disc}{{\mathrm {disc }}}
\newcommand{\ord}{\mathrm{ord}}
\newcommand{\Norm}{\mathrm{N}}
\newcommand{\M}{\mathrm{M}}
\newcommand{\Jac}{\mathrm{Jac}}
\newcommand{\Ind}{{\mathrm{Ind}}}
\newcommand{\PGL}{{\mathrm{PGL}}}
\newcommand{\GL}{{\mathrm{GL}}}
\newcommand{\End}{{\mathrm{End}}}
\newcommand{\SO}{{\mathrm {SO}}}
\newcommand{\rO}{{\mathrm {O}}}
\newcommand{\ind}{{\mathrm {ind}}}
\newcommand{\df}{{\mathrm {def}}}
\newcommand{\Z}{{\mathbb Z}}
\newcommand{\A}{{\mathbb A}}
\newcommand{\Q}{{\mathbb Q}}
\newcommand{\C}{{\mathbb C}}
\newcommand{\R}{{\mathbb R}}
\newcommand{\N}{{\mathbb N}}
\newcommand{\G}{{\mathbb G}}
\newcommand{\PP}{{\mathbb P}}
\newcommand{\cA}{{\mathcal A}}
\newcommand{\cQ}{{\mathcal Q}}
\newcommand{\cN}{{\mathcal N}}
\newcommand{\cD}{{\mathcal D}}
\newcommand{\cF}{{\mathcal F}}
\newcommand{\cX}{{\mathcal X}}
\newcommand{\cY}{{\mathcal Y}}
\newcommand{\cH}{{\mathcal H}}
\newcommand{\cG}{{\mathcal G}}
\newcommand{\dG}{{\mathfrak g}}
\newcommand{\cV}{{\mathcal V}}
\newcommand{\cT}{{\mathcal T}}
\newcommand{\cI}{{\mathcal I}}
\newcommand{\cL}{{\mathcal L}}
\newcommand{\dL}{{\mathfrak L}}
\newcommand{\cP}{{\mathcal P}}
\newcommand{\cO}{{\mathcal O}}
\newcommand{\cW}{{\mathcal W}}
\newcommand{\dH}{{\mathfrak H}}
\newcommand{\Div}{{\mathrm {Div}}}
\newcommand{\Coind}{{\mathrm {Coind}}}
\newcommand{\rank}{{\mathrm rank}}
\newcommand{\Hom}{{\mathrm {Hom}}}
\newcommand{\Dist}{{\mathrm {Dist}}}
\newcommand{\Meas}{{\mathrm {Meas}}}
\title{Anticyclotomic $p$-adic $L$-functions and \\ the Exceptional Zero Phenomenon}
\author{Santiago Molina
}
\begin{document}

\address{Departament de Matem\`atica Aplicada\\
Universitat Polit\`ecnica de Catalunya}
\email{santiago.molina@upc.edu}

\maketitle

\begin{abstract} 
Let $A$ be a modular elliptic curve over a totally real field $F$, and let $K/F$ be a totally imaginary quadratic extension.
In the event of exceptional zero phenomenon, we prove a formula for the derivative of the multivariable anticyclotomic $p$-adic $L$-function attached to $(A,K)$, in terms of the Hasse-Weil $L$-function and certain $p$-adic periods attached to the respective automorphic forms. 
Our methods are based on a new construction of the anticyclotomic $p$-adic $L$-function by means of the corresponding automorphic representation.
\end{abstract}

\tableofcontents

\section{Introduction}

Let $F$ be a totally real field of degree $d$ and let $A$ be a modular elliptic curve defined over $F$ (although our results apply for general modular abelian varieties). One of the central research topics in Modern Number Theory is the relation between the arithmetic of $A$ and the analysis of the (Hasse-Weil) $L$-function $L(A,s)$ attached to $A$.  The $L$-function $L(A,s)$ and all its twists $L(A,\psi,s)$, where $\psi$ is a finite order character of the Galois group $\Gal(F^{ab}/F)$ of the maximal abelian extension $F^{ab}$ of $F$, are $\C$-valued functions that satisfy a certain symmetric functional equations relating their values at $s$ and $2-s$. The well-known \emph{Birch and Swinnerton-Dyer conjecture} predicts that the rank of the Mordel-Weil group $A(F)$ coincides with the order of vanishing of $L(A,1)$. Later generalizations by Mazur and Tate predict that the rank of the $\psi$-isotypical component $A(F)[\psi]=A(F^{ab})\otimes_{\Z[\Gal(F^{ab}/F)]}\C(\psi)$ agrees with the order of vanishing of $L(A,\psi,1)$.

If $A$ has either ordinary good or bad multiplicative reduction at all places above $p$, we obtain a better understanding of the arithmetic of $A/F$ if we replace the complex analysis of $L(A,s)$ by the a $p$-adic analysis of its $p$-adic avatar $L_p(A,s)$, the (cyclotomic) $p$-adic $L$-function of $A$. This is a $\C_p$-valued function that \emph{interpolates} the critical values $L(A,\psi,1)$, for any finite order character $\psi$ of the Galois group $\cG_p\simeq\Z_p$ of the cyclotomic $\Z_p$-extension $F_p^{cyc}$ of $F$ unramified outside $p$ and $\infty$. The function $L_p(A,s)$ is defined as
\[
L_p(A,s)=\int_{\cG_p}\exp_p(s\ell_{cyc}(\gamma))d\mu_p(\gamma),\qquad s\in\C_p,
\]
where $\ell_{cyc}:\cG_p\rightarrow\Z_p$ is a the $p$-adic logarithm of the cyclotomic character and $\mu_p$ is a certain (cyclotomic) $p$-adic measure attached to $A$. By $L_p(A,s)$ \emph{interpolates} $L(A,\psi,1)$, we mean that the measure $\mu_p$ satisfies
\[
\int_{\cG_p}\psi(\gamma)d\mu_p(\gamma)=e_p(A,\psi)L(A,\psi,1),
\]
for all finite order characters $\psi:\cG_p\rightarrow\C^\times$, where
$e_p(A,\psi)$ is the Euler factor at $p$ which is non-zero for almost all $\psi$. Observe that the $p$-adic $L$-function is univocally characterized by the $\C_p$-valued measure $\mu_p$.
A $p$-adic analog of the Birch and Swinnerton-Dyer conjecture was stated in \cite{MTT} and \cite{Dis}.

Let $K/F$ be a totally imaginary quadratic  extension.  Some remarkable achievements towards the Birch and Swinnerton-Dyer conjecture have been obtained by means of the rich theory of \emph{Heegner points} associated with $K/F$. This encourages us to consider $A$ as an elliptic curve defined over $K$.
Note that in this setting we can consider \emph{anticyclotomic} abelian extensions of $K$ which are linearly disjoint from $F_p^{cyc}K$. Indeed,
for any prime ideal $\cP$ of $F$ dividing $p$, let
$K_\cP^{a}$ be the maximal abelian extension of $K$ which is unramified outside $\cP$ and $\infty$ and so that the complex conjugation $\tau\in\Gal(K/F)$ acts on $\cG_{K,\cP}:=\Gal(K_\cP^{a}/E)$ by $-1$.
Up to torsion, $\cG_{K,\cP}$ is isomorphic to $\Z_p^r$, where $r=[F_\cP:\Q_p]$.
Motivated by the cyclotomic theory, one may ask if there is an analogous construction of $p$-adic $L$-functions attached to such anticyclotomic $\Z_p^r$-extensions.

The behavior of the local functional equation outside $\cP$ provides a dichotomy in our scenario: on the one hand the \emph{definite 
case}, where the number of finite places $v$ outside $\cP$ with the sign of the local functional equation (i.e. the local root number) $\omega_v(A/K)=-1$ is even; on the other hand the \emph{indefinite case}, where that number of finite places is odd.

Assume that $A$ has either ordinary good or multiplicative reduction modulo $\cP$.
Our starting point is the construction of $\C_p$-valued measures of $\cG_{K,\cP}$ attached to $A$, with good interpolation properties.
In the literature, we can find constructions of such measures in some particular cases: for $F=\Q$ and $K/\Q$ not ramified at $p$, we have the work of Bertolini-Darmon (see \cite{B-D1} and \cite{B-D4}); for arbitrary $F$, we have the work of Van Order also under certain restrictions for the ramification of $K/F$ (see \cite{V-Ord1} and \cite{V-Ord2}). 
In this paper, we provide alternative constructions valid for arbitrary totally imaginary quadratic extensions $K/F$. 
We denote by $\mu_{K,\cP}^{\df}$ and $\mu_{K,\cP}^{\ind}$ the corresponding measures in the definite and indefinite situations, respectively. 
In analogy with the cyclotomic setting, our definite $p$-adic measure $\mu_{K,\cP}^{\df}$ interpolates the critical values $L(A/K,\chi,1)$ for any finite order character $\chi$ of $\cG_{K,\cP}$ (Theorem \ref{intprop1}).
In the indefinite case, the corresponding $p$-adic measure $\mu_{K,\cP}^{\ind}$ interpolates $p$-adic logarithms of certain Heegner points whose height is given by the derivative $L'(A/K,\chi,1)$ (Theorem \ref{intprop2}). In both scenarios (and also including the cyclotomic setting), the interpolation property involves an Euler factor $e_\cP(A/K,\chi)$ that is not identically zero. We remark that our construction of $\mu_{K,\cP}^{\df}$ admits generalizations to $p$-adic measures (seen as Stickelberger elements) attached to arbitrary quadratic extensions $K/F$ (see Bergunde and Gehrmann \cite{B-G} and \cite{lennart-felix}), indeed, we explain in \S \ref{globdistmeas} that the degenerate case of $K=F\times F$ corresponds to Spiess construction of the cyclotomic $p$-adic L-function \cite{Spiess}.

The set of $\C_p$-valued measures $\Meas(\cG_{K,\cP},\C_p)$ of $\cG_{K,\cP}$ has a natural structure of $\C_p$-algebra called the \emph{Iwasawa algebra}. The morphism $\deg:\Meas(\cG_{K,\cP},\C_p)\rightarrow\C_p$, that maps any measure to the image of the constant map 1, is indeed a $\C_p$-algebra morphism. The ideal $\cI=\ker(\deg)$ is called \emph{augmentation ideal} and the maximum power $\cI^r$ in which a measure $\mu$ lies is called \emph{order of vanishing} of $\mu$. Observe that, in the cyclotomic setting, $L_p(A,0)=\deg(\mu_p)$ and the order of vanishing of $L_p(A,s)$ at $s=0$ coincides with the order of vanishing of $\mu_p$.

In this paper, we mean by anticyclotomic $p$-adic $L$-functions the elements $L^{\df}_\cP(A/K)$ and $L^{\ind}_\cP(A/K)$ of the Iwasawa algebra of $\cG_{K,\cP}$ defined by $\mu_{K,\cP}^{\df}$ and $\mu_{K,\cP}^{\ind}$, respectively. In the cyclotomic and definite anticyclotomic setting, the interpolation property relates the image $\deg( L^{\df}_\cP(A/K))$
 with the critical value of the classical $L$-functions at $s=1$ (analogously, in the indefinite anticyclotomic case the classical $L$-function is replaced by its derivative). This suggests that the order of vanishing of such $p$-adic $L$-functions coincides with the order of vanishing of the classical $L$-functions at $s=1$ (resp. their derivatives in the indefinite anticyclotomic case).
Nevertheless, a surprising phenomenon appears if $A$ has split multiplicative reduction: the Euler factor $e_\cP(A/K,1)$ vanishes and we observe a zero of the $p$-adic $L$-function, even when the classical $L$-function (or its derivative) is not zero. 
These extra zeros coming from the vanishing of the Euler factors are called \emph{exceptional zeros}.

A first approach to understand this exceptional zero phenomenon is to compute the derivatives of the corresponding $p$-adic $L$-functions or, analogously in terms of Iwasawa algebras, their classes in the respective $\C_p$-vector spaces $\cI^r/\cI^{r+1}$. Many authors have contributed to this research line: 
\begin{itemize}
\item In the cyclotomic setting, the order of vanishing of $L_p(A,s)$ at $s=0$ is at least $m$, the number of places above $p$ where $A$ has split multiplicative reduction \cite{Spiess}. Moreover, a formula that expresses $\frac{d^m}{ds^m}L_p(A,s)\mid_{s=0}$ as a product of $L(A,1)$ with what are known as (geometric) $\cL$-invariants $\cL_\cP(A)$ ($\cP\mid p$ is a prime of split multiplicative reduction) was conjectured by Hida. This formula was established by Greenberg and Stevens in \cite{G-S} for $F=\Q$, by Mok in \cite{Mok} for arbitrary totally real fields and $m=1$, and finally by Spie{\ss} in \cite{Spiess} for arbitrary $m$ and $F$ under some mild assumptions. In fact, Spie{\ss} proves an analogous formula with the $\cL$-invariants $\cL_\cP(A)$ replaced by certain automorphic $\cL$-invariants $\cL_\cP(\pi)$, where $\pi$ is the automorphic cuspform attached to $A$.

\item In the definite anticyclotomic setting, there have only been results for $F=\Q$, prior to the writing of this paper. Assuming that the quadratic extension $K/\Q$ splits at $p$, Bertolini and Darmon proved in \cite{B-D1} a similar formula for the image of $L^{\df}_p(A/K)$ in $\cI/\cI^2\simeq\C_p$ in terms of $L(A/K,1)$ and the same $\cL$-invariant $\cL_p(A)$ that appeared in the cyclotomic setting. If $p$ is inert in $K$, Bertolini and Darmon proved in \cite{B-D3} an analogous formula involving the $p$-adic logarithm of a Heegner point (see also \cite{B-D4}). The case $K/\Q$ ramified at $p$ is still an open problem even for $F=\Q$. During the development of this paper, L. Gehrmann and F. Bergunde in \cite{B-G} and \cite{lennart-felix} have obtained similar results in the definite setting for general $F$ and arbitrary quadratic extension $K/F$ using Stickelberger elements. See also \cite{Hung1} for a similar work in this setting.

\item In the indefinite anticyclotomic setting, there are only results for the case $F=\Q$. Bertolini and Darmon proved in \cite{B-D2} that the image of $L^{\ind}_p(A/K)$ in $\cI/\cI^2\simeq\C_p$ can be expressed in terms of the critical value $L(A/K,1)$ (see also \cite{B-D4}). If $K/\Q$ splits, Castell\`a recently proved in \cite{Cas} a formula for the derivative of $L^{\ind}_p(A/K)$ in terms of the logarithm of a Heegner point and the $\cL$-invariant $\cL_p(A)$.
The case $K/\Q$ ramified at $p$ has never been considered. 
\end{itemize} 

In order to explain our main result, let us introduce $\pi$, the automorphic representation of $\GL_2(\A_F)$ associated with $A$. The (twisted) Hasse-Weil $L$-function $L(A/K,\chi,s)$ coincides with the Rankin-Selberg $L$-function $L(s-1/2,\pi_K,\chi)$, where $\pi_K$ is the extension of the representation $\pi$ to $\GL_2(\A_K)$. Actually, we will define our anticyclotomic measures by means of a certain Jacquet-Langlands lift $\pi^{JL}$ of $\pi$. For this reason we denote them by $L_\cP^{\df}(\pi_K)$ and $L_\cP^{\ind}(\pi_K)$ when considered as elements of the Iwasawa algebra.
The main result of this paper establishes a formula for the classes of the $p$-adic $L$-functions in $\cI/\cI^2$, when $\cP$ splits in $K$:
\begin{theorem}
Suppose that $\cP$ is a Steinberg prime of $\pi$, then both $L_\cP^{\df}(\pi_K),L_\cP^{\ind}(\pi_K)\in \cI$.
Let us denote by $\nabla L_\cP^{\df}(\pi_K)$ and $\nabla L_\cP^{\ind}(\pi_K)$ their classes in $\cI/\cI^2$. If we also assume that $\cP$ splits in $K$, then we have that
\begin{eqnarray*}
\nabla L_\cP^{\df}(\pi_K)&=&\underline{\cL_\cP^{\df}}(\pi)\left( C_K C(\pi_\cP)\frac{L(1/2,\pi_K,1)}{L(1,\pi,ad)}\right)^{1/2},\\
\nabla L_\cP^{\ind}(\pi_K)&=&\underline{\cL_\cP^\ind}(\pi)\log_p(P_T),
\end{eqnarray*}
where $C_K$ is a non-zero constant, $C(\pi_\cP)$ is a non-zero Euler factor, $P_T$ is a Heegner point with explicit height depending on $L'(1/2,\pi_K,1)$, and $\underline{\cL_\cP^\df}(\pi),\underline{\cL_\cP^{\ind}}(\pi)\in\cI/\cI^2$ are automorphic $\cL$-invariants defined in terms of the cohomology of ($S_p$-)arithmetic groups associated with Jacquet-Langlands lifts of $\pi$.
\end{theorem}
In Theorem \ref{main-Res} one can found a more precise statement of this result where we compute the Euler factor $C(\pi_\cP)$ and the height of $P_T$. The definition of $\underline{\cL_\cP^\df}(\pi)$ and $\underline{\cL_\cP^{\ind}}(\pi)\in\cI/\cI^2$ can be found in \S\ref{Ap2} and \S \ref{CaseSplit}. Notice that the condition of  $\cP$ being a Steinberg prime of $\pi$ is equivalent to $A$ having split multiplicative reduction at $\cP$. 
 
Hence, our formulas generalize the results for $F=\Q$ by Bertolini and Darmon \cite{B-D1} and by Castell\`a \cite{Cas} in the definite and indefinite settings, respectively. We remark that our proof in the definite case is an adaption of Spie{\ss}' proof of the exceptional zero conjecture for Hilbert modular forms \cite{Spiess}. Nevertheless, the results in the indefinite setting are much more exciting. The indefinite formula was conjectured by Bertolini and Darmon in \cite[Conjecture 4.6]{BDMT} for $F = \Q$ and $A(K)$ of rank 1, with the spirit of the $p$-adic BSD conjecture introduced in \cite{MTT}, but replacing the automorphic $\cL$-invariant vector $\underline{\cL_\cP^{\ind}}(\pi)$ by the corresponding geometric $\cL$-invariant. 
Moreover, the techniques used to prove the indefinite formula differ completely from Castell\`a's work, who exploited the theory of big Heegner points and an adaptation of Greenberg-Stevens proof of the cyclotomic exceptional zero conjecture for $F=\Q$ (see \cite{G-S}). We have decided to expose both the definite and indefinite settings together in this paper in order to remark the analogies between both constructions of the $p$-adic measures and both proofs of the exceptional zero conjecture. 

Bearing in mind the case $F=\Q$, we expect the $\cL$-invariant vectors $\underline{\cL_\cP^{\df}}(\pi)$ and $\underline{\cL_\cP^{\ind}}(\pi)$ to be equal and related to the geometry of $A/F_\cP$. 
 In \S \ref{GeoLinv}, we show that $\underline{\cL_\cP^{\df}}(\pi)$ is given by the geometric $\cL$-invariants. 
 It is a work in progress to prove that $\underline{\cL_\cP^{\ind}}(\pi)=\underline{\cL_\cP^{\df}}(\pi)$. Such equality, predicted by Conjecture \ref{conjLinv}, holds for $F=\Q$ thanks to the work by Greenberg-Stevens \cite{G-S} and Longo-Rotger-Vigni \cite{L-R-V}, and holds for general $F$, under certain technical conditions, thanks to the work of L. Gehrmann \cite{lennart-L-invariants}.

Finally we would like to remark that, similarly as in the definite setting one can generalize the costruction of the $p$-adic measure and prove a similar exceptional zero result when the quadratic extension $K/F$ is arbitrary (see \cite{B-G} and \cite{lennart-felix}), we strongly believe that one can also define an analogous $p$-adic distribution in the indefinite setting with values in Stark-Heegner points. Moreover, the techniques developed in \cite{GMM} would provide a analogous exceptional zero result.

 \subsection*{Outline of the proof} The fact that $A$ has good ordinary or multiplicative reduction modulo $\cP$ implies that $\pi_\cP$ is the irreducible quotient of the representation induced by the character of the Borel subgroup given by some $\alpha\in \bar\Q\cap\cO_{\C_p}^\times$. In \S \ref{locthe} we define a $\bar\Q$-representation, with representation space $V_\alpha^{\bar\Q}$, whose scalar extension by $\C$ is isomorphic to $\pi_\cP$, and a $K_\cP^\times$-equivariant morphism $\delta_T:C_c(K_\cP^\times/F_\cP^\times,\bar\Q)\rightarrow V_{\alpha}^{\bar\Q}$. Such a morphism $\delta_T$ realizes $\Hom(V_\alpha^{\bar\Q},\bar\Q)$ as a space of distributions of $K_\cP^\times/F_\cP^\times$. 
 
 In the definite setting, we consider $G_D$ the multiplicative group modulo the center of the totally definite quaternion algebra $D$ that splits at $\cP$ and such that $\Hom_{K_v^\times}(\pi_{v}^{D},\C)\neq 0$, for any place $v\neq\cP$, where $\pi^{D}$ is the Jacquet-Langlands lift of $\pi$ to $G_D$. It is easy to prove that a well chosen generator of $\pi^{D}$ provides an element 
 \[
 \phi\in H^0(G_D(F),\cA_D^\cP(V_\alpha^{\bar\Q},\bar\Q)^{U^\cP}),\qquad \cA_D^\cP(V_\alpha^{\bar\Q},\bar\Q)=C(G_D(\hat F^\cP),\Hom(V_\alpha^{\bar\Q},\bar\Q))
 \]
 where $U^\cP\subseteq G_D(\hat F^\cP)$ is certain open compact subgroup. By Class Field Theory, any locally constant function in $g\in C(\cG_{K,\cP},\bar\Q)$ can be seen as an element $\partial g\in H_0(K^\times,C_c(\hat K^\times/\hat F^\times,\bar\Q))$.
 In \S \ref{globdistmeas}, we define the distribution $\mu_{K,\cP}^{\df}$ by
 \[
 \int_{\cG_{K,\cP}}g(\gamma)d\mu_{K,\cP}^{\df}(\gamma):={\rm res}(\phi)\cap\partial g,
 \]
 where the cap-product corresponds to the natural pairing between $C_c(\hat K^\times/\hat F^\times,\bar\Q)$ and $\cA_D^\cP(V_\alpha^{\bar\Q},\bar\Q)^{U^\cP}$, once considered $\Hom(V_\alpha^{\bar\Q},\bar\Q)$ as a space of distributions.
 
 In the indefinite setting, we choose our quaternion algebra $B$ in such a way that $B$ splits at a single archimedean place and $\Hom_{K_v^\times}(\pi_v^{B},\C)\neq 0$, for any place $v\neq\cP\infty$. In \S \ref{ShimCM}, we explain how to obtain an element $\Phi_T\in H^0(K^\times,\cA_B^\cP(V_\alpha^{\bar\Q},A(K^{ab})\otimes\bar\Q)^{U^\cP})$ from the image through the Abel-Jacobi map of divisors supported on Heegner points of the Shimura curve attached to $G_B$ and some $U\subset G_B(\hat F)$. Applying the same procedure as before, we can define a distribution with values in $A(K^{ab})\otimes\bar\Q$. Since we want a distribution with values in $\C_p$, we compose $\Phi_T$ with the formal group logarithm of $A$ obtaining $\log\phi\in H^0(K^\times,\cA_B^\cP(V_\alpha^{\bar\Q},\C_p)^{U^\cP})$. Similarly as before,
 \[
 \int_{\cG_{K,\cP}}g(\gamma)d\mu_{K,\cP}^{\ind}(\gamma):=\log\phi\cap\partial g.
 \]
 
 The interpolation properties of $\mu_{K,\cP}^{\df}$ and $\mu_{K,\cP}^{\ind}$ are provided by Waldspurger and Gross-Zagier-Zhang formulas, respectively. The explicit computations of the Euler factors are given in \S \ref{locthe}. Analyzing the behavior of such Euler factors when $A$ has split multiplicative reduction, we observe the exceptional zero phenomenon in both definite and indefinite situations. 
 
 In \S \ref{padicmeas}, we prove that $\mu_{K,\cP}^{\df}$ and $\mu_{K,\cP}^{\ind}$ are in fact $p$-adic measures. Thus, by Corollary \ref{charI/I2}, their images $\nabla L_\cP^\df(\pi_K)$ and $\nabla L_\cP^{\ind}(\pi_K)$ in $\cI/\cI^2$ are characterized by the integrals $\int_{\cG_{K,\cP}}\ell(\gamma)d\mu_{K,\cP}^{\bullet}(\gamma)$ ($\bullet=\df,\ind$), for any $\ell\in \Hom_{\Z_p}(\cG_{K,\cP},\Z_p)$. Note that if $F=\Q$ the $\Z_p$-rank of $\cG_{K,\cP}$ is 1, and it is enough to consider $\ell=\log_p$ given by Iwasawa $p$-adic logarithm. In general such $\Z_p$-rank is $[F_\cP:\Q_p]$, hence there may be different $\ell$ to consider. 
 
 In \S \ref{CaseSplit}, we show that $\partial(\ell)=\vartheta\cap z_{\ell_\cP}$, for some fixed $\vartheta\in H_1(K^\times,C_c((\hat K^\cP)^\times/ (\hat F^\cP)^\times,\Z))$ and certain $ z_{\ell_\cP}\in H^1(K^\times,C_c(K_\cP^\times/F_\cP^\times,\Z_p))$ that only depends on $\ell_\cP$, the restriction of $\ell$ on $K_\cP^\times/F_\cP^\times$ via the Artin map.
 By the cohomological interpretation of the measures $\mu_{K,\cP}^{\bullet}$,
 \[
 \int_{\cG_{K,\cP}}\ell(\gamma)d\mu_{K,\cP}^{\bullet}(\gamma)=\kappa_\bullet\cap\partial(\ell)=(\kappa_\bullet\cap z_{\ell_\cP})\cap\vartheta,\qquad \bullet=\df,\ind,
 \]
 where $\kappa_{\df}={\rm res}(\phi)$ and $\kappa_{\ind}=\log\phi$. We prove in \S \ref{Ap2} that there exists $\cL(\pi^{JL},\ell_\cP)\in \C_p$ such that $\kappa_\bullet\cap z_{\ell_\cP}=\cL(\pi^{JL},\ell_\cP)(\kappa_\bullet\cap z_{\ord_\cP})$, for some fixed $ z_{\ord_\cP}\in H^1(K^\times,C_c(K_\cP^\times/F_\cP^\times,\Z))$. Hence, if we define the automorphic $\cL$-invariant $\underline{\cL_\cP^{\bullet}}(\pi)$ to be the element in $\cI/\cI^2$ that maps any $\ell$ to $\cL(\pi^{JL},\ell_\cP)$, then it is clear that 
 \[
 \nabla L_\cP^\bullet(\pi_K)=\underline{\cL_\cP^\bullet}(\pi)(\kappa_\bullet\cap z_{\ord_\cP})\cap\vartheta.
 \]
 It is a tedious but straightforward computation to show that
 \[
 (\kappa_\bullet\cap z_{\ord_\cP})\cap\vartheta=\left\{\begin{array}{lc}\left( C_K C(\pi_\cP)\frac{L(1/2,\pi_K,1)}{L(1,\pi,ad)}\right)^{1/2},&\bullet=\df,\\
 \log_p(P_T),&\bullet=\ind,\end{array}\right.
 \]
 for some Heegner point $P_T$ with Neron-Tate height $|P_T|^2=C_K C(\pi_\cP)\frac{L'(1/2,\pi_K,1)}{L(1,\pi,ad)}$.



\subsection{Notation}

For any field $L$, write $\cO_L$ for its integer ring.

We fix embeddings $\bar\Q\hookrightarrow\C$ and $\bar\Q\hookrightarrow\C_p$, and write $\bar\cO=\cO_{\C_p}\cap\bar\Q$.

Throughout this paper $F$ will be a totally real number field.
Let $\A_F$ and $\hat F$ be its rings of adeles and finite adeles, respectively. For any set of non-archimedean primes $S$, write $\hat F^S=\hat F\cap \prod_{v\not\in S}F_v$, where $F_v$ is the localization of $F$ at a finite place $v$. We write $\hat F^\cP$ instead of $\hat F^{\{\cP\}}$. We denote by $d^\times x_v$ the Haar measure normalized as in \cite{Zhang}. The product of $d^\times x_v$ defines a Haar measure on $\A_F^\times/F^\times$. 

Throughout this paper $K/F$ will be an imaginary quadratic extension. We will denote by $T$ the algebraic group given by the quotient of $K^\times$ and $F^\times$, namely, $T(R)=(K\otimes_F R)^\times/R^\times$.
If $v$ is a finite place of $F$, we denote by $d^\times t_v$ the Haar measure on $T(F_v)$ given by the quotient measure.
The product of $d^\times t_v$ defines a Haar measure $d^\times t$ on $T(\hat F)$.

For any quaternion algebra $B/F$, write $G_B$ for the multiplicative group modulo its center, namely, $G_B(R)=(B\otimes_F R)^\times/R^\times$,  for any $F$-algebra $R$.


Given two topological spaces $\cX$ and $\cY$, we denote by $C_c(\cX,\cY)$ the space of continuous and compactly supported functions between $\cX$ and $\cY$.
We denote it by $C_c(\cX,\cY)_0$ when $\cY$ is considered with the discrete topology. If $\cY$ is a normed space write $C_\diamond(\cX,\cY)$ for the completion of $C_c(\cX,\cY)$ with the supremum norm $\|\cdot\|_\infty$.

\subsubsection*{Acknowledgements.}
The author would like to thank Lennart Gehrmann and Michael Spie{\ss} for their comments and discussions
 throughout the development of this paper. The author also thanks the referee for carefully reading the paper and providing many helpful
comments.

The author is supported in part by DGICYT Grant MTM2015-63829-P.
This project has received funding from the European Research Council
(ERC) under the European Union's Horizon 2020 research and innovation
programme (grant agreement No. 682152).

\section{Measures and Iwasawa algebras}

\subsection{Distributions and measures}

Let $\cG$ be a totally disconnected locally compact topological group, and let $R$ be a topological Hausdorff ring.
 For any $R$-module $M$,  an $M$-valued distribution on $\cG$ is a homomorphism $\mu:C_c(\cG,\Z)_0\rightarrow M$. Hence, it extends to a $R$-linear map 
\[
C_c(\cG,R)_0\longrightarrow M, \quad f\longmapsto\int_\cG f(\gamma)d\mu(\gamma).
\]
We shall denote the $R$-module of $M$-valued distributions by $\Dist(\cG,M)$.

Let $(V,\|\;\|)$ be a Banach space over a $p$-adic field $R=L$. We say that $\mu\in\Dist(\cG,V)$ is a measure if it is continuous with respect to the supremum norm, hence it extends to a continuous functional
\[
C_\diamond(\cG,L)\longrightarrow V;\quad f\longmapsto\int_\cG fd\mu.
\]
We will denote by $\Meas(\cG,V)$ the space of $V$-valued measures on $\cG$.

An $\cO_L$-submodule $M$ of a $L$-vector space $V$ is a \emph{lattice} if $\cup_{a\in L^\times}aM=V$ and $\cap_{a\in L^\times}aM=\{0\}$. For a given lattice $M\subseteq V$ the function $p_M(v):=\inf_{v\in aM}|a|$ is a norm on $V$. Any other norm $\|\;\|$ on $V$ is equivalent to $p_M$ if and only if $M$ is open and bounded in $(V,\|\;\|)$. 
For any $M$ open and bounded lattice on a Banach space $(V,\|\;\|)$, the space $\Meas(\cG,V)$ is the image of the canonical inclusion 
\begin{equation}\label{defMeas}
\Dist(\cG,M)\otimes_{\cO_L}L\longrightarrow\Dist(\cG,V).
\end{equation}

\subsection{Iwasawa algebras}
 
Let $\cG$ now be a commutative pro-$p$ group. Since $\cO_L$ is a lattice in the Banach space $(L,|\;|)$, the $L$-vector space $\Meas(\cG,L)$ coincides with the tensor product $\Dist(\cG,\cO_L)\otimes_{\cO_L}L$.

The usual convolution product 
 endows $\Dist(\cG,\cO_L)$ with structure of $\cO_L$-algebra. 
It is isomorphic to $\cO_L[[\cG]]=\varprojlim_\cH \cO_L[\cG/\cH]$,
where $\cH$ runs over open compact subgroups of $\cG$. Hence, we deduce that $\Meas(\cG,L)$ is equipped with the $L$-algebra structure $\Lambda_L:=\cO_L[[\cG]]\otimes_{\cO_L}L$, called the \emph{Iwasawa algebra} of $\cG$ with coefficients in $L$. 

Observe that the natural map 
\[
d:\cG\longrightarrow\Lambda_L^\ast;\qquad \int_\cG fdg:=f(g),
\]
is a group homomorphism and the $L$-module homomorphism 
\[
\deg:\Lambda_L\longrightarrow L;\qquad\mu\longmapsto\int_\cG d\mu,
\]
is indeed an $L$-algebra homomorphism.


Let $\cI:=\ker(\deg)$ be the augmentation ideal. Since $\deg$ is surjective, $\cI/\cI^2$ has a natural structure of $\Lambda_L/\cI\simeq L$-vector space. Moreover, since $\cG$ is a pro-$p$ group, it is a $\Z_p$-module and the map
\[
\varphi:\cG\longrightarrow\cI/\cI^2;\qquad g\longmapsto dg-d1,
\]
 is a $\Z_p$-module homomorphism. 
The following result describes $\cI/\cI^2$ as the tensor product
$\cG\otimes_{\Z_p}L$.
\begin{proposition}\label{HurThm}
Assume that $\cG$ is a $\Z_p$-module of finite rank,
then the map $\varphi$ defines an isomorphism of $L$-modules
\[
\varphi:\cG\otimes_{\Z_p}L\longrightarrow\cI/\cI^2.
\]
\end{proposition}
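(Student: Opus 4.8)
The statement to prove is the ``Hurewicz Theorem'' (Proposition \ref{HurThm}): that $\varphi\colon\cG\otimes_{\Z_p}\cO_K\to\cI/\cI^2$ is an isomorphism of $\cO_K$-modules when $\cG$ is a $\Z_p$-module of finite rank. The strategy is to reduce to the case of finite quotients $\cG/\cH$, use the explicit identification $\cO_K[[\cG]]=\varprojlim_\cH\cO_K[\cG/\cH]$ established above, and then invoke the classical fact that for a finite (abelian) group $G$ the augmentation ideal $I_G$ of the group ring satisfies $I_G/I_G^2\cong G^{\mathrm{ab}}$ — here $G=\cG/\cH$ is already abelian — extended from $\Z$ to $\cO_K$ coefficients. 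The one subtlety is passing the limit correctly through the quotient $\cI/\cI^2$, which is why the finite-rank hypothesis on $\cG$ is needed.

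First I would fix notation: write $\cG$ as an extension $0\to T\to\cG\to\Z_p^r\to 0$ with $T$ finite, so that $\cG\otimes_{\Z_p}\cO_K\cong\cO_K^r\oplus(T\otimes_{\Z_p}\cO_K)$, a finitely generated $\cO_K$-module. Next I would construct an explicit inverse to $\varphi$ as follows. For a fixed open compact $\cH\le\cG$, let $\pi_\cH\colon\cO_K[[\cG]]\to\cO_K[\cG/\cH]$ be the natural projection; it carries $\cI$ onto the augmentation ideal $I_\cH$ of $\cO_K[\cG/\cH]$ and hence induces $\cI/\cI^2\to I_\cH/I_\cH^2$. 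On the other side, the classical computation gives an isomorphism $I_\cH/I_\cH^2\cong (\cG/\cH)\otimes_{\Z}\cO_K$ sending $(g\cH-1)\mapsto \overline{g}\otimes 1$; composing with the surjection $\cG\otimes_{\Z_p}\cO_K\twoheadrightarrow(\cG/\cH)\otimes_{\Z}\cO_K$ one sees $\varphi$ is compatible with these maps in the evident sense. Taking the inverse limit over $\cH$ (the groups $I_\cH/I_\cH^2$ form an inverse system), I would show $\varprojlim_\cH\bigl(I_\cH/I_\cH^2\bigr)\cong\varprojlim_\cH(\cG/\cH)\otimes_\Z\cO_K\cong\cG\otimes_{\Z_p}\cO_K$, the last step using that $\cG$ is a finite-rank $\Z_p$-module (so the transition maps stabilize and $\otimes_{\Z_p}\cO_K$ commutes with the limit). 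Finally I must compare $\cI/\cI^2$ with $\varprojlim_\cH(I_\cH/I_\cH^2)$.

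The main obstacle is exactly this last comparison: $\cI/\cI^2$ need not be literally the inverse limit of the $I_\cH/I_\cH^2$ a priori, because inverse limits are not exact and squaring does not obviously commute with the limit. I would handle it by choosing a cofinal tower $\cH_0\supseteq\cH_1\supseteq\cdots$ with $\bigcap\cH_n=\{1\}$ (possible since $\cG$ has finite rank, hence is topologically finitely generated and metrizable), and arguing that a measure $\mu\in\cI$ lies in $\cI^2$ if and only if each $\pi_{\cH_n}(\mu)$ lies in $I_{\cH_n}^2$: the nontrivial direction uses that $\cI^2$ is closed in $\cO_K[[\cG]]$ (it is an open, hence closed, $\cO_K$-submodule once one checks $\cI/\cI^2$ is finitely generated, which follows from the surjectivity of $\varphi$ already visible from $dg-d1$ generating $\cI$ topologically) together with the fact that $\cI^2=\varprojlim_n\pi_{\cH_n}^{-1}(I_{\cH_n}^2)\cap\cI^2$. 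Concretely: surjectivity of $\varphi$ is the easy part — $\cI$ is topologically generated by $\{dg-d1\}$ and the displayed identity $\varphi(g_1+g_2)=\varphi(g_1)+\varphi(g_2)+(dg_1-d1)\ast(dg_2-d1)$ shows mod $\cI^2$ these behave additively, so any $\mu\in\cI$ is, modulo $\cI^2$, a finite $\cO_K$-combination of $\varphi(g_i)$'s, giving a well-defined surjection $\cG\to\cI/\cI^2$ which factors through $\cG\otimes_{\Z_p}\cO_K$ by $\cO_K$-linearity. Injectivity is where the inverse-limit argument above does the work: if $\varphi(x)=0$ then $x$ maps to $0$ in each $(\cG/\cH_n)\otimes_\Z\cO_K$ via the classical isomorphism, and since $\cG\otimes_{\Z_p}\cO_K\cong\varprojlim_n(\cG/\cH_n)\otimes_\Z\cO_K$ we get $x=0$. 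I would present the classical lemma $I_G/I_G^2\cong G\otimes_\Z\cO_K$ for finite abelian $G$ either by citing it or by the one-line proof $g\mapsto g-1$, noting $(g-1)(h-1)\in I_G^2$ forces the assignment to be a homomorphism with inverse induced by $\sum a_g g\mapsto\prod g^{a_g}\otimes 1$ on the augmentation-zero part.
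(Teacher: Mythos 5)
The injectivity half of your argument is correct and is a genuinely different route from the paper's. The paper never passes to finite quotients for this: it pairs measures against homomorphisms $\ell\in\Hom_{\Z_p}(\cG,\cO_K)\subset C(\cG,\cO_K)$, checks that the resulting map $\psi(\mu)=(\ell\mapsto\int_\cG\ell\,d\mu)$ kills $\cI^2$ and satisfies $\psi\circ\varphi=\iota$, and uses that the double-dual map $\iota$ is an isomorphism for $\cG$ of finite rank; injectivity of $\varphi$ falls out formally. Your alternative — compatibility of $\varphi$ with the projections $\cI/\cI^2\to I_{\cH}/I_{\cH}^2$, the classical finite-level isomorphism $I_{\cH}/I_{\cH}^2\cong(\cG/\cH)\otimes\cO_K$, and $\bigcap_n p^n(\cG\otimes_{\Z_p}\cO_K)=0$ — is valid, and it has the small advantage of not requiring $\cG$ to be torsion-free, which the paper's duality step implicitly does. (Minor slip: with $\cO_K$-coefficients the inverse of the finite-level isomorphism is $\sum_g a_g[g]\mapsto\sum_g \bar g\otimes a_g$ on the augmentation-zero part; the formula $\prod g^{a_g}\otimes 1$ only makes sense for integer coefficients.)

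The genuine gap is in your surjectivity argument. From density of the $\cO_K$-span of $\{dg-d1\}$ in $\cI$ you cannot conclude that every $\mu\in\cI$ is a \emph{finite} combination of the $\varphi(g_i)$ modulo $\cI^2$: that inference requires $\cI^2$ (or the span plus $\cI^2$) to be closed, and your justification of this is both circular — you invoke finite generation of $\cI/\cI^2$, which you say ``follows from the surjectivity of $\varphi$'', i.e.\ from the very statement being proved — and partly false: $\cI^2$ is not open (already for $\cG=\Z_p$ one has $\cI/\cI^2\cong\cO_K$ with its non-discrete topology, so ``open, hence closed'' fails). The displayed identity $\cI^2=\varprojlim_n\pi_{\cH_n}^{-1}(I_{\cH_n}^2)\cap\cI^2$ is vacuous as stated; what you actually need is $\bigcap_n\pi_{\cH_n}^{-1}(I_{\cH_n}^2)\cap\cI\subseteq\cI^2$, which, given the finite-level isomorphisms, is precisely the injectivity of $\cI/\cI^2\to\varprojlim_n I_{\cH_n}/I_{\cH_n}^2$ — essentially the hard content of the proposition, left unproved. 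The paper sidesteps this for surjectivity by noting that at finite level $I_{\cH}$ is \emph{honestly} (not just topologically) spanned by the finitely many elements $[g\cH]-[\cH]$, so each $\varphi_{\cH}$ is surjective on the nose, injectivity having already been secured by the duality argument. If you want to stay within your framework, one way to close the gap is to prove that $\cI$ is generated \emph{as an ideal} by $d\gamma_1-d1,\dots,d\gamma_r-d1$ for topological generators $\gamma_i$ of $\cG$ (the power-series description of $\cO_K[[\cG]]$ when $\cO_K$ is a complete discrete valuation ring); then any $\mu=\sum_i f_i\ast(d\gamma_i-d1)$ satisfies $\mu\equiv\sum_i\deg(f_i)\,\varphi(\gamma_i)\bmod\cI^2$ and surjectivity is immediate — but this is an additional input that must be supplied, not a consequence of density.
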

\begin{proof}
Let us consider the dual group $(\cG\otimes_{\Z_p}L)^\vee:=\Hom_{\Z_p}(\cG,L)$ and the canonical $L$-module morphism $\iota:\cG\otimes_{\Z_p}L\rightarrow ((\cG\otimes_{\Z_p}L)^\vee)^\vee$. We define
\[
\psi:\cI\longrightarrow ((\cG\otimes_{\Z_p}L)^\vee)^\vee;\qquad \mu\longmapsto \left(\ell\mapsto\int_\cG\ell d\mu\right),
\]
where $\ell\in\Hom_{\Z_p}(\cG,L)$ is seen as a continuous function in $C(\cG,L)$. We check that $\psi(\cI^2)=0$; Indeed, if $\mu_1,\mu_2\in\cI$,
\[
\int_{\cG}\ell d(\mu_1\ast\mu_2)=\int_\cG\int_\cG \ell(\alpha+\beta)d\mu_1(\alpha)d\mu_2(\beta)=\deg(\mu_2)\int_\cG\ell d\mu_1+\deg(\mu_1)\int_\cG\ell d\mu_2=0.
\]
Thus we have a $L$-module morphism $\psi:\cI/\cI^2\rightarrow ((\cG\otimes_{\Z_p}L)^\vee)^\vee$ satisfying $\psi\circ\varphi=\iota$.

Since $\cG$ is a $\Z_p$-module of finite rank, $\iota$ is an isomorphism. Thus, in order to prove our result, it is enough to show that $\varphi$ is surjective.

Given the description of $\cO_L[[\cG]]$ as an inverse limit of group algebras, we observe that at each finite level
\[
\cI_\cH:=\ker\left(\deg:\cO_L[\cG/\cH]\rightarrow\cO_L\right)=\left\{\sum_{g\cH\in\cG/\cH}\alpha_{g\cH}(g\cH-\cH),\;\;\alpha_{g\cH}\in\cO_L\right\}.
\]
This implies that the corresponding morphism $\varphi_\cH:\cG/\cH\otimes_{\Z_p}\cO_L\rightarrow\cI_\cH/\cI_\cH^2$ is surjective. We conclude that $\varphi$ is surjective an the result follows.
\end{proof}

\begin{corollary}\label{charI/I2}
Assume that $\cG$ is a (free) $\Z_p$-module of finite rank and let $\cG^\vee:=\Hom_{\Z_p}(\cG,\Z_p)$. Then the map
\[
\psi:\cI/\cI^2\longrightarrow \Hom_{\Z_p}(\cG^\vee,L),\qquad\mu\longmapsto \left(\ell\mapsto\int_\cG\ell d\mu\right),
\]
is a $L$-module isomorphism.
\end{corollary}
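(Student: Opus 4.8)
The plan is to deduce the Corollary formally from Proposition \ref{HurThm} (the Hurewicz Theorem) together with biduality for finitely generated free $\Z_p$-modules; no substantially new argument should be required. First I would check that $\psi$ is well defined, i.e. that it annihilates $\cI^2$: for $\mu_1,\mu_2\in\cI$ and $\ell\in\cG^\vee$, viewing $\ell$ as a continuous $\cO_K$-valued (indeed $\Z_p$-valued) function on $\cG$, the same computation as in the proof of Proposition \ref{HurThm} gives
\[
\int_\cG\ell\,d(\mu_1\ast\mu_2)=\deg(\mu_2)\int_\cG\ell\,d\mu_1+\deg(\mu_1)\int_\cG\ell\,d\mu_2=0,
\]
using that $\ell$ is additive and that $\deg(\mu_i)=0$. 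The integral $\int_\cG\ell\,d\mu$ lies in $\cO_K$ for $\mu\in\cO_K[[\cG]]$ because $\ell$ takes values in $\cO_K$, so $\psi$ does land in $\Hom_{\Z_p}(\cG^\vee,\cO_K)$, and the $\cO_K$-linearity of $\psi$ is immediate.

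Next I would compose $\psi$ with the Hurewicz isomorphism $\varphi:\cG\otimes_{\Z_p}\cO_K\xrightarrow{\sim}\cI/\cI^2$ of Proposition \ref{HurThm} and evaluate it on a basis. Fix a $\Z_p$-basis $\gamma_1,\dots,\gamma_r$ of $\cG$ and let $\ell_1,\dots,\ell_r\in\cG^\vee$ be the dual basis. Since $d\gamma_i$ and $d1$ are the Dirac measures at $\gamma_i$ and at the identity of $\cG$ and each $\ell_j$ is a homomorphism,
\[
\psi(\varphi(\gamma_i))(\ell_j)=\int_\cG\ell_j\,d(d\gamma_i-d1)=\ell_j(\gamma_i)-\ell_j(1)=\delta_{ij}.
\]
By Proposition \ref{HurThm} the elements $\varphi(\gamma_1),\dots,\varphi(\gamma_r)$ form an $\cO_K$-basis of $\cI/\cI^2$, while, since $\ell_1,\dots,\ell_r$ is a $\Z_p$-basis of $\cG^\vee$, the functionals $\ell_j\mapsto\delta_{ij}$ form an $\cO_K$-basis of $\Hom_{\Z_p}(\cG^\vee,\cO_K)$. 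Hence $\psi$ carries a basis to a basis and is therefore an isomorphism. Equivalently, in basis-free terms, the computation above shows that $\psi\circ\varphi$ is the base change along $\Z_p\to\cO_K$ of the biduality map $\cG\to(\cG^\vee)^\vee$, followed by the canonical isomorphism $(\cG^\vee)^\vee\otimes_{\Z_p}\cO_K\cong\Hom_{\Z_p}(\cG^\vee,\cO_K)$ (valid because $\cG^\vee$ is finitely generated and free, hence finitely presented); since $\cG$ is free of finite rank the biduality map is an isomorphism, so $\psi\circ\varphi$ and hence $\psi$ are isomorphisms.

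I do not anticipate a genuine obstacle: the only point where the freeness hypothesis is used — rather than merely ``finite rank'' — is the existence of a dual basis of $\cG^\vee$, equivalently the biduality isomorphism $\cG\cong(\cG^\vee)^\vee$, and this is precisely the reason the Corollary strengthens the hypothesis of Proposition \ref{HurThm}. All remaining verifications (well-definedness on $\cI^2$, the identification of $\psi$ with the restriction of the map $\psi$ built in the proof of Proposition \ref{HurThm}, and the bookkeeping of the canonical isomorphisms) are routine.
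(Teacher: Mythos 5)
Your proposal is correct and follows essentially the same route as the paper: the paper's Corollary is stated to follow directly from the proof of Proposition \ref{HurThm}, where the map $\psi$ is already defined, shown to kill $\cI^2$, and shown to satisfy $\psi\circ\varphi=\iota$ with $\iota$ the (bidual) canonical map, which is exactly what your dual-basis computation re-establishes. Your write-up simply makes explicit the details the paper leaves implicit (well-definedness on $\cI^2$, the identification of $\psi\circ\varphi$ with biduality base-changed to $\cO_K$, and the role of freeness), so there is nothing substantively different to compare.
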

\begin{proof}
Follows directly from the proof of Proposition \ref{HurThm}.
\end{proof}

\section{Local theory}\label{locthe}

\subsection{Universal unramified principal series}\label{Loc-Re}

Let $\cP\subset \cO_F$ be a prime ideal with uniformizer $\varpi$, residual characteristic $q$, and valuation $\nu$ ($\nu(\varpi)=1$). 


Let $R$ be a topological Hausdorff ring and assume $\alpha\in R^\times$. Let us consider the unramified character $\mu_\alpha:F_\cP^\times\rightarrow R^\times$, 
$\mu_\alpha(x)=\alpha^{\nu(x)}$. This provides the induced $R$-representation
\begin{equation}\label{indalpha}
\bar V_\alpha^R=\left\{\phi:\GL_2(F_\cP)\rightarrow R\mbox{ continuous }:\;\phi\left(\left(\begin{array}{cc}
                                                               t_1&x\\&t_2
                                                              \end{array}
\right)g\right)=\mu_\alpha(t_2/t_1)\phi(g)\right\}.
\end{equation}
If $\alpha=\pm1$, the representation $\bar V_\alpha^R$ is reducible, since there is an invariant subspace of rank 1 over $R$ generated by $\phi_0(g)=\alpha^{\nu(\det g)}$. 
We denote by $(\pi_\alpha^R,V_\alpha^R)$ the quotient representation of $\bar V_\alpha^R$ by this rank 1 subrepresentation.
In the case $\alpha\neq\pm1$, we write $(\pi_\alpha^R,V_\alpha^R)$ for the induced representation $V_\alpha^R=\bar V_\alpha^R$. 
We denote by $\bar V_{\alpha, 0}^R$, $V_{\alpha, 0}^R$ and $\pi_{\alpha, 0}^R$ the corresponding representations when $R$ considered with the discrete topology. We call $V_{1,0}^\C$ \emph{the Steinberg representation} (the subindex $0$ will be dropped soon because $\C$ will be always considered with the trivial topology).

For any ideal $\mathfrak{c}\subset \cO_{F_\cP}$ write $K_0(\mathfrak{c})\subset \GL_2(\cO_{F_\cP})$ for the subgroup of triangular matrices modulo $\mathfrak{c}$, and let $Z$ be the centre of $\GL_2(F_\cP)$.
By Iwasawa decomposition, the constant map $\phi_0(k)=1$, for all $k\in K_0(1)$, defines an element of $\bar V_{\alpha,0}^R$ fixed by $K_0(1)$ and $Z$. Moreover, if $\alpha\neq\pm1$, the module $(V_\alpha^R)^{K_0(1)}$ is the free $R$-module $R\phi_0$. Similarly, if $\alpha=\pm 1$ the class of the function  
$\phi_1(k)=1_{K_0(\cP)}(k)$, for all $k\in K_0(1)$, generates $(V_\alpha^R)^{K_0(\cP)}$ freely. In both cases, $\rank_R(V_\alpha^R)^{U}=1$, where $U=K_0(1)$ or $K_0(\cP)$.

Let us consider 
\[
\Ind_{UZ}^{\GL_2}(1_R)=\{\phi\in C(\GL_2(F_\cP),R):\;\phi(UZg)=\phi(g), \mbox{\small compactly supported mod }UZ\},
\] 
and the Hecke algebra $\cH_U^R:=\Ind_{UZ}^{\GL_2}(1_R)^U$. Note that, by Frobenius reciprocity,
\[
\cH_U^R=\Hom_{UZ}(1_R,\Ind_{UZ}^{\GL_2}(1_R))=\End_{\GL_2}(\Ind_{UZ}^{\GL_2}(1_R)).
\]

Let $g_{\varpi}=\left(\begin{array}{cc}\varpi&\\&1\end{array}\right)$. We consider the element of the Hecke algebra 
$\cT_\cP\in\cH_K^R=(\Ind_{UZ}^{{\GL_2}}1_R)^{U}$ attached to 
$1_{UZg_{\omega}U}$, for $U=K_0(1)$ or $K_0(\varpi)$.

Again by Frobenius reciprocity, elements $\phi_i\in (V_\alpha^R)^U$ provide ${\GL_2(F_\cP)}$-module homomorphisms
\[
\varphi_{i}:\Ind^{\GL_2}_{ZU}1_R/(\cT_\cP-a_\cP)\Ind^{\GL_2}_{ZU}1_R\longrightarrow V_\alpha^R,
\]
where $a_\cP=\alpha+q\alpha^{-1}$ if $i=0$, and $a_\cP=\alpha=\pm 1$ if $i=1$. 

\begin{lemma}{\cite[Theorem 20]{B-L}}\label{lemlattices}
Assume $R$ is a ring endowed with the discrete topology. 
Then $\varphi_0$ is injective and $\varphi_1$ is an isomorphism. Moreover, if $R$ is a field then $\varphi_0$ is also bijective.
\end{lemma}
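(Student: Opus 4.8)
The plan is to follow the approach of Barthel--Livné \cite{B-L} and work concretely with the universal unramified principal series, exploiting the fact that over a discrete ring $R$ induction from $ZK$ is governed by the combinatorics of the Bruhat--Tits tree and the Hecke operator $\cT$ acts as a nearest-neighbour averaging operator. First I would fix an explicit $R$-basis of $\Ind_{ZK}^G 1_R$ indexed by the vertices of the tree (equivalently, by the cosets $G/ZK$), and compute the matrix of $\cT$ on this basis: $\cT$ sends the vertex $v$ to the sum of its $q+1$ neighbours. Then $\Ind_{ZK}^G 1_R/(\cT-a)$ is the universal module on which $\cT$ acts by the scalar $a$, and by Frobenius reciprocity $\varphi_i$ is the unique $G$-map sending the canonical generator (the function supported on $ZK$, i.e. the base vertex $v_0$) to $\phi_i\in (V_\alpha^R)^U$; one checks immediately that the eigenvalue condition $\cT\phi_i = a\phi_i$ holds with the stated value of $a$ (for $i=0$, $\cT\phi_0 = (\alpha + q\alpha^{-1})\phi_0$ by a direct Iwasawa-decomposition computation; for $i=1$, $\cT\phi_1 = \alpha\phi_1$ since $\phi_1$ is the $K_0(\varpi)$-new vector when $\alpha = \pm 1$), so the maps $\varphi_i$ are well-defined.

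Next I would prove surjectivity. Since $\phi_0$ (resp. $\phi_1$) generates $V_\alpha^R$ as a $G$-module — this is the standard irreducibility/generation statement for principal series, and in the reducible case $\alpha = \pm 1$ the quotient $V_\alpha^R$ is exactly generated by the image of the $K_0(\varpi)$-fixed vector — the image of $\varphi_i$ is all of $V_\alpha^R$. So the content is injectivity. For $\varphi_1$ I would argue that both sides are free $R$-modules with bases naturally indexed by the edges of the tree (for the source: cosets $G/ZK_0(\varpi)$; for the target: the standard basis of $\Ind_B^G(\mu_\alpha^{-1}\otimes\mu_\alpha)$ restricted via Iwasawa decomposition to $B\backslash G / K_0(\varpi)$, which after quotienting by the line $R\phi_0$ still matches up edge-by-edge), and that $\varphi_1$ identifies these bases up to an invertible change of coordinates — this is where the quotient by the rank-one subrepresentation is exactly compensated by the quotient by $(\cT-a)$, giving a bijection. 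For $\varphi_0$, injectivity is more delicate: I would show that the source $\Ind_{ZK}^G 1_R/(\cT-a)$ embeds by writing any element as an $R$-combination of translates of $v_0$, pushing forward, and using that a relation in the image forces — via the tree structure and the explicit recursion coming from $\cT - a$ — a relation already present in the source; the key input is that $a^2 \ne 4q$ type non-degeneracy is not needed over a general ring because the quotient by $(\cT-a)$ has already been taken, so the remaining module is "as small as possible'' and maps isomorphically onto the $K$-spherical submodule generated by $\phi_0$, but need not surject onto all of $V_\alpha^R$ unless $R$ is a field.

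Finally, for the last sentence — if $R$ is a field then $\varphi_0$ is bijective — I would invoke that over a field the principal series $V_\alpha^R$ is generated by its spherical vector $\phi_0$ as a $G$-module (again standard: the $K$-isotypic generation plus irreducibility, or in the reducible borderline case the explicit description of the quotient), so $\varphi_0$ is surjective, and combined with the injectivity just established it is an isomorphism. The main obstacle I anticipate is the injectivity of $\varphi_0$ over an arbitrary discrete ring: one must rule out $R$-torsion phenomena in the cokernel of $\cT - a$ acting on the tree module, and the cleanest route is a direct combinatorial argument on the tree — exhibiting an explicit $R$-basis of $\Ind_{ZK}^G 1_R/(\cT-a)$ (for instance, the images of $v_0$ together with the vertices at odd distance from $v_0$, using the recursion to eliminate the rest) and checking that $\varphi_0$ maps it to an $R$-linearly independent set in $V_\alpha^R$. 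Since this lemma is quoted verbatim as \cite[Theorem 20]{B-L}, in the paper itself one simply cites Barthel--Livné rather than reproducing this analysis, but the sketch above is how the argument goes.
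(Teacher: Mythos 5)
The paper offers no argument here: Lemma \ref{lemlattices} is quoted verbatim from \cite[Theorem 20]{B-L}, so the only question is whether your sketch would stand on its own, and as written it has two concrete problems. First, your surjectivity step asserts that $\phi_0$ generates $V_\alpha^R$ as a $G$-module over an arbitrary discrete ring $R$, ``so the content is injectivity''. That is false in general, and it is precisely what the Remark immediately following the lemma warns about: the image of $\varphi_0$, namely $\Lambda'=\Ind^{G}_{ZK}1_R/(\cT-a)\Ind^{G}_{ZK}1_R$, is in general a \emph{proper} $G$-stable $R$-submodule of $\Lambda=V_\alpha^R$; generation by the spherical vector is a field-level fact (and even over a field it concerns $\alpha\neq\pm1$ only, since for $\alpha=\pm1$ the image of $\phi_0$ in the quotient $V_\alpha^R$ is zero, so your aside about the ``reducible borderline case'' does not apply to $\varphi_0$). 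You do retract this later (``need not surject onto all of $V_\alpha^R$ unless $R$ is a field''), but the two statements contradict each other, and the first, if used, would prove more than the lemma claims.

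Second, the injectivity of $\varphi_0$, which you correctly identify as the crux, is settled in your sketch by an incorrect explicit basis. The images of $v_0$ together with all vertices at odd distance from $v_0$ are not $R$-linearly independent in $\Ind^{G}_{ZK}1_R/(\cT-a)$: the defining relation at $v_0$, namely $\sum_{w\sim v_0}[w]-a[v_0]=0$, is a relation among exactly these elements with unit coefficients, so their independence would force $1=0$ in $R$; and conversely an even vertex $v\neq v_0$ only satisfies $a[v]=\sum_{w\sim v}[w]$, which does not place $[v]$ in their span unless $a\in R^\times$, so generation is not clear either. The structural input actually used by Barthel--Livn\'e is that $\Ind^{G}_{ZK}1_R$ is free as a module over the Hecke algebra $R[\cT]$ (whence the quotient by $\cT-a$ is $R$-free, with a basis obtained by eliminating one child per vertex, not by a parity condition), combined with an explicit computation of the image of the generator inside the principal series. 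Your parallel claim for $\varphi_1$, that both sides have bases ``indexed by the edges'' which $\varphi_1$ matches up, is likewise not substantiated: $B\backslash G/K_0(\varpi)$ has only two elements, so it cannot index a basis of $\Ind_B^G(\mu_\alpha^{-1}\otimes\mu_\alpha)$, and the identification $V_\alpha^R\simeq\Ind_{ZK_0(\varpi)}^G1_R/(\cT-\alpha)$ requires an argument (compare the intertwining-operator computation the paper carries out over a field in the proof of Proposition \ref{propord}). Citing \cite{B-L}, as the paper does, is the right move; as a proof, the sketch does not go through.
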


\begin{remark}
We emphasize that $\varphi_0$ is not an isomorphism in general. If $R$ is a domain and $L$ its fraction field, we notice that 
\[
V_\alpha^R\otimes_R L=V_\alpha^L\simeq \Ind^{\GL_2}_{ZU}1_L/(\cT_\cP-a_\cP)\Ind^{\GL_2}_{ZU}1_L=\Ind^{\GL_2}_{ZU}1_R/(\cT_\cP-a_\cP)\Ind^{\GL_2}_{ZU}1_R\otimes_RL,
\]
where $U=K_0(1)$.
Hence, we have two distinguished but generally distinct ${\GL_2(F_\cP)}$-stable $R$-modules in $V_\alpha^L$, namely $\Lambda=V_\alpha^R$ and $\Lambda'=\Ind^{\GL_2}_{ZU}1_R/(\cT_\cP-a_\cP)\Ind^{\GL_2}_{ZU}1_R$, satisfying $\Lambda'\subseteq \Lambda$.
\end{remark}

\subsection{Local distributions attached to a torus in $\GL_2$}\label{LocDisTor}

Fix $K/F$ an imaginary quadratic extension, and assume that we have a fixed embedding $K_\cP\hookrightarrow\GL_2(F_\cP)$ such that its image does not lie in the subgroup $P_\cP\subset\GL_2(F_\cP)$ of upper triangular matrices. 


Let $R$ be a topological Hausdorff ring, and let us consider the action of $T(F_\cP)$ on $C(T(F_\cP),R)$, given by $t\ast f(x):=f(t^{-1}x)$.
Since $P_\cP\cap K_\cP^\times=Z$, the natural map $T(F_\cP)\rightarrow \GL_2(F_\cP)/Z\rightarrow P_\cP\backslash \GL_2(F_\cP)\simeq\PP^1(F_\cP)$ is injective. 
By abuse of notation, we denote by $\mu_\alpha:P_\cP\rightarrow R^\times$ the continuous central character associated with $\mu_\alpha$ as in \eqref{indalpha}. We can construct the following well defined $T(F_\cP)$-equivariant morphism
\[
\bar\delta_T^\alpha: C_\diamond(T(F_\cP),R)\longrightarrow\bar V_\alpha^R;\quad \bar\delta_T^\alpha(f)(g)=\left\{
\begin{array}{ll}
\mu_\alpha(b)f(t^{-1}),&g=bt\in P_\cP K_\cP^\times\\
0,&g\not\in P_\cP K_\cP^\times.
\end{array}\right.
\]
The composition of $\bar\delta_T^\alpha$ with the natural projection gives rise to a morphism
\begin{equation}\label{deltaT}
\delta_T: C_\diamond(T(F_\cP),R)\longrightarrow(\pi_\alpha^R,V_\alpha^R).
\end{equation}

\begin{remark}
Note that if $K_\cP^\times$ is non-split 
$C_\diamond(T(F_\cP),R)=C(T(F_\cP),R)$, and $\bar\delta_T$ is bijective.
\end{remark}

\begin{lemma}\label{bounddelta}
Assume that $R$ is a domain endowed with the discrete topology and $\alpha\neq\pm 1$. Then there exists $\lambda\in R$ such that
\[
\lambda\left({\rm Im}(\delta_T)\right)\subseteq \Ind^{\GL_2}_{ZU}1_R/(\cT_\cP-a_\cP)\Ind^{G}_{ZU}1_R,
\]
where $U=K_0(1)=\GL_2(\cO_{F_\cP})$.
\end{lemma}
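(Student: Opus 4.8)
The strategy is to compare the two $G$-stable $R$-lattices $\Lambda = V_\alpha^R$ and $\Lambda' = \Ind^{G}_{ZK}1_R/(\cT-a)\Ind^{G}_{ZK}1_R$ inside $V_\alpha^L$ (with $L=\mathrm{Frac}(R)$) that were singled out in the Remark following Lemma \ref{lemlattices}. We know $\Lambda'\subseteq\Lambda$, and by Lemma \ref{lemlattices} the map $\varphi_0$ realizing this inclusion becomes an isomorphism after tensoring with $L$. So the plan is: first, show that $\delta_T$ already lands in $\Lambda = V_\alpha^R$; second, show that $\Lambda/\Lambda'$ is, in a suitable sense, bounded — i.e. there is a single nonzero $\lambda\in R$ with $\lambda\Lambda\subseteq\Lambda'$ — so that $\lambda\,\mathrm{Im}(\delta_T)\subseteq\Lambda'$ follows at once. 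The second point is where the Borel–Livn\'e description in Lemma \ref{lemlattices} does the real work: the cokernel of $\varphi_0$ is a torsion $R$-module (killed by some ideal since it vanishes after inverting enough of $R$), and one needs its annihilator to contain a nonzero element, which holds because $R$ is a domain and the cokernel is finitely generated over the (Noetherian, or at least coherent) relevant Hecke-module quotient; concretely one can read off an explicit $\lambda$ — essentially $\alpha^2-1$ up to units and powers of $q$ — from the structure of $\Ind_B^G(\mu_\alpha^{-1}\otimes\mu_\alpha)$ versus its compact-induction model.

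In more detail, I would first check that $\bar\delta^\mu_T(f)$ lies in the $R$-integral induced representation: for $f\in C_\diamond(T,R)$ the function $\bar\delta^\mu_T(f)$ is supported on $BT^2$, is continuous (using that $B\cap T^2 = Z$ makes the formula well-defined and that $f$ vanishes at infinity so the support is controlled near the boundary of $BT^2$ in $\PP^1(F)$), and takes values in $R$ since $\mu = \mu_\alpha^{-1}\otimes\mu_\alpha$ does and $\alpha\in R^\times$. Composing with the projection to $V_\alpha^R$ keeps us in the $R$-lattice $\Lambda$. Then, to produce $\lambda$, I would invoke Lemma \ref{lemlattices}: $\varphi_0\colon \Lambda'\to\Lambda$ is injective with $L$-linear isomorphism $\varphi_0\otimes L$, so $\mathrm{coker}(\varphi_0)$ is a torsion $R$-module; since $V_\alpha^R$ is a quotient of $\Ind_B^G(\mu_\alpha^{-1}\otimes\mu_\alpha)_0$, which as a $K$-module (via Iwasawa decomposition $G=BK$) is induced from $K_0(\varpi)\backslash K$, both $\Lambda$ and $\Lambda'$ are generated over the Hecke action by finitely many $K$-types, so the cokernel is a finitely generated $R$-module and hence annihilated by some nonzero $\lambda\in R$. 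That $\lambda$ then satisfies $\lambda\Lambda\subseteq\Lambda'$, and in particular $\lambda\,\mathrm{Im}(\delta_T)\subseteq\Lambda'$.

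The main obstacle is the finiteness/boundedness step: a priori the two lattices in $V_\alpha^L$ could differ "unboundedly," and one must genuinely use the explicit content of \cite[Theorem 20]{B-L} to see that $\Lambda'$ has finite colength in $\Lambda$ — equivalently that $\varphi_0$ is "generically" but also "uniformly" close to an isomorphism. Concretely the obstruction is pinned down by how the Steinberg-type vector and the spherical vector $\phi_0$ relate under the $\cT$-action near $a = \alpha + q\alpha^{-1}$, and the relevant denominator is governed by $(1-\alpha^2)$ (this is exactly the factor that makes $\Ind_B^G(\mu_\alpha^{-1}\otimes\mu_\alpha)$ irreducible precisely when $\alpha\neq\pm 1$, which is the standing hypothesis). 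I would therefore either cite the explicit lattice index from \cite{B-L} directly, or reprove the needed bound by writing $V_\alpha^R$ in its $C(K_0(\varpi)\backslash K, R)$-model and tracking the image of $\cT - a$ explicitly; the rest of the argument is formal.
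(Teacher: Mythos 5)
Your plan stands or falls on the assertion that there is a single nonzero $\lambda\in R$ with $\lambda V_\alpha^R\subseteq \Ind^{G}_{ZK}1_R/(\cT-a)\Ind^{G}_{ZK}1_R$, and the justification you give for it does not hold. The cokernel of $\varphi_0$ is not a finitely generated $R$-module: $V_\alpha^R$ has infinite $R$-rank, and "generated over the Hecke action by finitely many $K$-types" is neither a finiteness statement over $R$ nor does it imply one (finite generation over $R[G]$ or over a Hecke algebra says nothing about finite generation over $R$). A torsion $R$-module that is not finitely generated need not have a nonzero annihilator, and $R$ is not assumed Noetherian, so the inference "torsion $+$ finitely generated $\Rightarrow$ killed by one $\lambda$" is applied to hypotheses you have not established. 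Likewise, \cite[Theorem 20]{B-L} (Lemma \ref{lemlattices}) only gives injectivity of $\varphi_0$ over a discrete ring and bijectivity over a field, i.e.\ exactly that the cokernel is torsion element by element; it does not give a bounded index of the two lattices, and the guess that the denominator is "essentially $\alpha^2-1$" is not substantiated. So the step you yourself flag as the main obstacle is precisely the one left unproved, and the argument offered for it is incorrect.

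What actually makes this kind of statement work — both in the paper and in any repair of your route — is $R[G]$-finite generation combined with the $G$-stability of $\Lambda'=\Ind^{G}_{ZK}1_R/(\cT-a)\Ind^{G}_{ZK}1_R$: since $R$ is central in $R[G]$, if a torsion quotient is generated over $R[G]$ by finitely many elements, then one nonzero $\lambda$ annihilating those generators annihilates everything. The paper applies this directly to ${\rm Im}(\delta_T)$, not to all of $V_\alpha^R$: an explicit computation with a shrinking neighborhood basis $H_n$ of $1\in T$ and the element $g_\varpi$ shows $\delta_T(1_{H_{n+1}})=\mu(g_\varpi)\pi_\alpha(g_\varpi^{-1})\delta_T(1_{H_n})$, hence ${\rm Im}(\delta_T)=R[G]\,\delta_T(1_{H_{n_0}})$ is cyclic, and scaling that single generator into $\Lambda'$ finishes the proof. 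Your stronger claim $\lambda V_\alpha^R\subseteq\Lambda'$ is probably also true and provable by the same device — for instance by showing $V_\alpha^R$ is generated over $R[G]$ by $\phi_0$ together with the characteristic function of a single ball in $\PP^1(F)$, using that diagonal and unipotent translates move balls to balls with unit cocycle values — but some argument of that nature, rather than a finiteness claim for the cokernel over $R$, is what your proposal is missing.
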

\begin{proof}\footnote{Due to L. Gehrmann}
If we prove that there exists $H\subseteq T(F_\cP)$ an small enough open compact neighborhood of 1 such that ${\rm Im}(\delta_T)=R[\GL_2(F_\cP)]\delta_T(1_H)$, where $1_H$ is the characteristic function of $H$, then the result will automatically follow. Indeed, we can choose $\lambda\in R$ to be such that $\lambda\delta_T(1_H)\in \Ind^{\G_2}_{ZU}1_R/(\cT_\cP-a_\cP)\Ind^{\GL_2}_{ZU}1_R$.

Notice that the continuous equivariant map 
\[
\varphi:\GL_2(F_\cP)\longrightarrow  \PP^1(F_\cP), \qquad\left(\begin{array}{cc}a&b\\c&d\end{array}\right)\longmapsto -\frac{d}{c}.
\]
provides an injection of $T(F_\cP)$ in $\PP^1(F_\cP)$ that sends $1$ to $\infty$. The open subsets $U_n=\{x\in\PP^1(F_\cP),\;\ord(x)<-n\}$ ($n\geq 0$) form a neighborhood basis for $\infty$ and their preimages give rise to neighborhood basis $\{H_n\}_n$ of $1\in T(F_\cP)$. We check that $g_\varpi=\left(\begin{array}{cc}\varpi&\\&1\end{array}\right)$
satisfies that $g_\varpi^{-1} U_n=U_{n+1}$. Thus,
\[
\varphi(H_{n+1})=g_\varpi^{-1} U_n=\varphi(H_n g_\varpi),\qquad \mbox{hence}\quad H_{n+1}\subseteq P_\cP H_n g_\varpi.
\] 
Let $n_0\in\N$ be big enough that, for any $n\geq n_0$, $H_{n}\subset K_0(1)\cap g_\varpi^{-1} K_0(1)g_\varpi$. This implies that, 
\[
H_{n+1}\subseteq P_\cP H_n g_\varpi\cap g_\varpi^{-1} K_0(1)g_\varpi=g_\varpi^{-1}(P_\cP\cap K_0(1))H_n g_\varpi.
\]
It is clear that $\delta_T(1_{H_{n+1}})(g)=\pi_\alpha(g_\varpi^{-1})\delta_T(1_{H_{n}})(g)=0$ if $\varphi(g)\not \in U_{n+1}$. Assume that $\varphi(g) \in U_{n+1}$, then $g=bh_{n+1}$ for some $b\in P_\cP$ and $h_{n+1}\in H_{n+1}$. The above claim implies that $h_{n+1}=g_\varpi^{-1}b'h_n g_\varpi$, where $b'\in P_\cP\cap K_0(1)$ and $h_n\in H_n$. We compute
\begin{eqnarray*}
\pi_\alpha(g_\varpi^{-1})\delta_T(1_{H_{n+1}})(g)&=&\delta_T(1_{H_{n}})(bh_{n+1}g_\varpi^{-1})=\delta_T(1_{H_{n}})(bg_\varpi^{-1}b'h_n)=\mu_\alpha(g_\varpi)^{-1}\mu_\alpha(b)\\
\delta_T(1_{H_{n+1}})(g)&=&\delta_T(1_{H_{n+1}})(bh_{n+1})=\mu_\alpha(b)
\end{eqnarray*}
This implies that $\delta_T(1_{H_{n+1}})=\mu_\alpha(g_\varpi)\pi_\alpha(g_\varpi^{-1})\delta_T(1_{H_n})\in R[\GL_2(F_\cP)]\delta_T(1_{H_n})$. By induction, we deduce that $\delta_T(1_{H_{n}})\in R[\GL_2(F_\cP)]\delta_T(1_{H_{n_0}})$, for any $n>0$. Since $1_{H_n}$ generate the $T(F_\cP)$-module $C_c(T(F_\cP),R)$ and $\delta_T$ is $T(F_\cP)$-equivariant, the results follows. 
\end{proof}

\subsection{Torus and inner products}\label{TorInn}

For any (finite or indefinite) place $v$ of $F$,
let $D_v$ be the quaternion division algebra over $F_v$. 
We fix an embedding $K_v^\times\hookrightarrow D_v^\times$ whenever it exists ($K_v$ is non-split). Let $\chi_v:T(F_v)\rightarrow\C^\times$ be any continuous character.
\begin{proposition}[Saito-Tunnel \cite{Sa}, \cite{Tu}]\label{Saito-Tunnel}
Let $\pi_v$ be a representation of $\GL_2(F_v)$ with central character.
 \begin{itemize}
  \item If either $\pi_v$ is principal or $K_v^\times$ is split, then ${\rm dim}(\Hom_{T(F_v)}(\pi_v\otimes\chi_v,\C))=1$.
  
  \item If $\pi_v$ is discrete and $K_v^\times$ is non-split, then 
    \[
      {\rm dim}(\Hom_{T(F_v)}(\pi_v\otimes\chi_v,\C))+{\rm dim}(\Hom_{T(F_v)}(\pi_v^{JL}\otimes\chi_v,\C))=1.
    \]
  where $\pi_v^{JL}$ is the Jacquet-Langlands correspondence of $\pi_v$ on $D_v^\times$.
 \end{itemize}
\end{proposition}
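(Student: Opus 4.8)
The statement to prove is the Saito--Tunnell theorem (Proposition~\ref{Saito-Tunnell}), which classifies the existence of torus-invariant linear functionals on a representation $\pi$ of $\GL_2(F)$ twisted by a character $\chi$ of $T = T^2/Z$. The plan is to treat the cases according to the nature of $\pi$ and $T^2$, reducing each to a concrete harmonic-analysis or epsilon-factor computation.

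\textbf{Case 1: $T^2$ split, or $\pi$ principal.} If $T^2$ is split, then after conjugating we may assume $T^2$ is the diagonal torus, so $T = F^\times$ embedded as $\mathrm{diag}(t,1)$ modulo center. By the theory of the Kirillov model (or Mackey theory applied to the mirabolic subgroup), the restriction of any infinite-dimensional irreducible $\pi$ to this torus is, up to a twist, the regular representation of $F^\times$; hence $\Hom_T(\pi\otimes\chi,\C)$ is one-dimensional, spanned (in the nonarchimedean case) by the integral $\varphi \mapsto \int_{F^\times} W_\varphi(\mathrm{diag}(t,1))\chi(t)^{-1}\,d^\times t$ suitably regularized, which is the local zeta integral of Jacquet--Langlands. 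If instead $T^2$ is nonsplit but $\pi$ is a principal series $\Ind_B^G(\mu_1\otimes\mu_2)$, one computes $\Hom_T(\pi\otimes\chi,\C)$ by Frobenius reciprocity combined with the Bruhat decomposition $G = B \sqcup B w B$ and the orbit structure of $T$ acting on $B\backslash G = \PP^1(F)$: the open orbit contributes the one-dimensional space, and one checks that the closed orbit(s) contribute nothing that survives (a ``no extra invariant distribution supported on the boundary'' argument, using the character $\mu$ being generic enough, or a meromorphic continuation/residue argument à la Waldspurger). The output is again dimension exactly $1$.

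\textbf{Case 2: $\pi$ discrete series, $T^2$ nonsplit.} Here the embedding $T^2 \hookrightarrow D^\times$ exists, and one must prove the dichotomy
\[
\dim\Hom_T(\pi\otimes\chi,\C) + \dim\Hom_T(\pi^{JL}\otimes\chi,\C) = 1,
\]
i.e. exactly one of the two Hom-spaces is one-dimensional and the other vanishes. The standard approach (Waldspurger's method, following Saito and Tunnell) is: first, each Hom-space has dimension $\le 1$ (by a uniqueness/multiplicity-one statement, provable via the same $\PP^1$-orbit analysis on the $\GL_2$ side and an even easier orbit analysis on the compact $D^\times$ side since $T$ is then relatively large in $D^\times/F^\times$). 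Second, one establishes the exact criterion: $\Hom_T(\pi\otimes\chi,\C)\neq 0$ iff $\epsilon(1/2,\pi_E\otimes\chi^{-1})\cdot \chi(-1) \cdot \omega_\pi(-1) = +1$ (with the convention matching the paper's normalization), where $\pi_E$ is the base change to the quadratic étale algebra $E$ cut out by $T^2$. The cleanest proof of the existence direction is via the theta correspondence / Weil representation for the dual pairs $(\rO(E), \widetilde{\SL_2})$ or $(\GL_2, \GO)$, translating the nonvanishing of the period into nonvanishing of a theta lift, whose obstruction is precisely the local root number. The vanishing direction on the ``wrong'' algebra ($\GL_2$ vs.\ $D^\times$) follows because the two root numbers for $\pi$ and $\pi^{JL}$ differ exactly by the sign $-1$ (the local Jacquet--Langlands sign), so exactly one of the two parity conditions holds.

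\textbf{Main obstacle.} The genuinely hard part is the exact epsilon-factor criterion in Case 2 --- both directions of ``$\Hom_T \neq 0 \iff$ root number condition.'' The multiplicity-$\le 1$ statements are routine orbit bookkeeping, and the split/principal cases are classical Kirillov-model or Bruhat-cell computations. But pinning down the precise sign requires either Waldspurger's global-to-local seesaw argument (embedding the local situation into a global toric period, invoking the Waldspurger formula relating $|$period$|^2$ to $L(1/2,\pi_E\otimes\chi)$, and isolating the local term) or Tunnell's explicit computation with the Weil representation and local theta dichotomy. Since the paper cites \cite{Sa} and \cite{Tu} for this proposition and does not reprove it, I would present the statement as a black box; but if a proof sketch is wanted, I would route through the local theta dichotomy for $(\widetilde{\SL_2},\rO_3^\pm)$, where the split vs.\ nonsplit orthogonal group corresponds exactly to $\GL_2$ vs.\ $D^\times$, and the Rallis-type nonvanishing criterion delivers the root number.
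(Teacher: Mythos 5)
The paper does not prove this proposition at all: it is imported verbatim from Saito and Tunnell (\cite{Sa}, \cite{Tu}) as a black box, exactly as you ultimately propose to treat it, so your stance matches the paper's. Your sketch (Kirillov/Bruhat-orbit analysis for the split and principal cases, multiplicity $\le 1$ plus the local root-number dichotomy via theta correspondence for the discrete nonsplit case) is a fair outline of the standard proofs, with the caveat — which you acknowledge — that the $\epsilon$-factor criterion in Case 2 is the real content and is only gestured at, not proved.
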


Assume that $\pi_v$ (and thus $\pi_v^{JL}$) is unitarizable, namely, there is an invariant hermitian inner product $\langle\;,\;\rangle$ on $\pi_v$ (and $\pi_v^{JL}$). If $\dim(\Hom_{T(F_v)}(\Pi_v\otimes\chi_v,\C))\neq 0$, where $\Pi_v=\pi_v$ or $\pi_v^{JL}$, we can consider the following distinguished element of $\Hom_{T(F_v)}(\Pi_v\otimes\chi_v,\C)\otimes \Hom_{T(F_v)}(\Pi_v\otimes\chi_v^{-1},\C)$:
\[
\beta_{\Pi_v,\chi_v}(u, w):=\int_{T(F_v)}\langle\Pi_v(t)u,w\rangle\chi_v(t_v)d^\times t_v,\qquad u, w\in\Pi_v.
\] 
\begin{proposition}[Waldspurger \cite{Walds}]\label{propWaldloc}
Given $\beta_{\pi_v,\chi_v}$ or $\beta_{\pi_v^{JL},\chi_v}$, we have
\begin{itemize}
\item [(i)] If $\dim(\Hom_{T(F_v)}(\pi_v\otimes\chi_v,\C))\neq 0$, then $\beta_{\pi_v,\chi_v}\neq 0$;
\item [(ii)] If $\dim(\Hom_{T(F_v)}(\pi_v\otimes\chi_v,\C))= 0$, then $\beta_{\pi_v^{JL},\chi_v}\neq 0$;
\item [(iii)] If $\pi_v$ is spherical, $\chi$ unramified, and $w\in \pi_v^{K_0(1)}$, $\langle w,w\rangle=1$
, then
\[
\beta_{\pi_v,\chi_v}(w, w)=\frac{\xi_v(2)L(1/2,\pi_v,\chi_v)}{L(1,\eta_v)L(1,\pi_v,ad)},
\]
where $\eta_v$ is the quadratic character attached to $T(F_v)$.
\end{itemize}
\end{proposition}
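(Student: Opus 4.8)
The statement in question is Proposition \ref{propWaldloc}, the Waldspurger local result — let me sketch how I would prove it.

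=== MY PROOF PLAN ===

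The plan is to treat parts (i)–(ii) together via the non-vanishing of local integrals of matrix coefficients, and to handle (iii) by an explicit unramified computation.

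For (i), the strategy is to show that the map $\beta_{\pi,\chi}$, which a priori lands in $\Hom_T(\pi\otimes\chi,\C)$ (this follows from invariance of $\langle\;,\;\rangle$ and a change of variables $t \mapsto s^{-1}t$ in the integral, using unimodularity of $T$), is not the zero map whenever $\Hom_T(\pi\otimes\chi,\C)\neq 0$. Since by Saito–Tunnell (Proposition \ref{Saito-Tunnell}) this Hom-space is one-dimensional in that case, it suffices to produce a single pair $(u,w)$ with $\beta_{\pi,\chi}(u,w)\neq 0$. The key input is that, for $\pi$ unitary (tempered or, more generally, with the matrix coefficients lying in the appropriate $L^{1+\epsilon}$-class after twisting — in the non-tempered unitary case one invokes that $\pi$ appears in $L^2$ of the relevant symmetric space and uses the convergence results of Waldspurger), the integral $\int_T \langle\pi(t)u,w\rangle\chi(t)\,dt$ computes (up to nonzero constants and local $L$-factors) a value of the local Rankin–Selberg / base-change $L$-function, which is finite and can be made nonzero by choosing $u,w$ to be suitable new- or test-vectors; this is precisely the content of Waldspurger's local computation. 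Convergence of the integral is the first technical point: when $T$ is non-split the torus $T/Z$ is compact and there is nothing to check, while when $T$ is split one must verify the matrix coefficient decays fast enough against $\chi$, which holds because $\pi$ is unitary and one can reduce to the tempered case by a standard argument with the Langlands quotient.

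For (ii), the point is the Saito–Tunnell dichotomy: if $\Hom_T(\pi\otimes\chi,\C)=0$ then $\pi$ is discrete series and $T^2$ is non-split, so $\Hom_T(\pi^{JL}\otimes\chi,\C)$ is one-dimensional; since $T/Z$ is then compact, $\beta_{\pi^{JL},\chi}$ converges trivially, lands in this one-dimensional space, and one runs the same non-vanishing argument — here it is cleanest because compactness of $T/Z$ lets one apply the orthogonality/positivity of matrix coefficients directly, or alternatively invoke that $\beta_{\pi^{JL},\chi}(u,u) = \int_T |\!|\ldots|\!|$-type positivity fails to be available but the Plancherel-theoretic argument of Waldspurger still produces a nonzero value. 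So both (i) and (ii) reduce to: a nonzero $T$-invariant functional built from matrix coefficients is genuinely nonzero, which is Waldspurger's theorem.

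For (iii), the plan is a direct computation in the spherical setting. With $\pi$ unramified, $\chi$ unramified, $v$ the normalized spherical vector, and $dt$ normalized so that the maximal compact of $T/Z$ has volume $1$, one writes $T/Z$ as a disjoint union over the cocharacter lattice (using the structure of the split or unramified torus modulo its maximal compact) and evaluates $\langle\pi(t)v,v\rangle$ via the Macdonald formula for the spherical function of $\pi$. Summing the resulting geometric series in the Satake parameters against $\chi$ produces the ratio $\xi(2)L(1/2,\pi,\chi)/(L(1,\eta)L(1,\pi,\mathrm{ad}))$ after the standard manipulation of Euler factors; the factor $L(1,\eta)$ enters from the volume of $T/Z$ relative to $F^\times$, and $L(1,\pi,\mathrm{ad})$ from the denominator in Macdonald's formula. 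This is bookkeeping rather than conceptual difficulty.

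The main obstacle, and the one genuinely requiring the cited work of Waldspurger, is the non-vanishing in (i) and (ii): establishing that the explicit matrix-coefficient integral is not identically zero requires either the local Plancherel/relative-trace-formula analysis or the explicit $L$-function computation of \cite{Walds}, together with the convergence of the integral in the split non-tempered case. I would therefore structure the write-up to reduce everything to these two inputs — the Saito–Tunnell dichotomy for dimensions and Waldspurger's non-vanishing for the explicit functional — and carry out only part (iii) by hand.
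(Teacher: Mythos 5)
This proposition is not proved in the paper at all: it is imported verbatim from Waldspurger \cite{Walds} (with the dimension statements coming from Saito--Tunnell, Proposition \ref{Saito-Tunnel}), so there is no internal argument to compare yours against. Your plan is consistent with how the result is established in the cited literature: parts (i)--(ii) reduce, via the Saito--Tunnell dichotomy, to Waldspurger's non-vanishing of the matrix-coefficient integral on the one-dimensional space $\Hom_T(\Pi\otimes\chi,\C)$, and part (iii) is the standard spherical computation via the Macdonald formula, with $L(1,\eta)$ and $L(1,\pi,ad)$ entering exactly as you say. Two small remarks. First, your treatment of (i) is essentially circular (``the non-vanishing \dots is precisely the content of Waldspurger's local computation''); that is acceptable here only because the proposition is itself a citation, but it means your write-up would not constitute an independent proof of the hard part. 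Second, your hedging in (ii) about positivity is unnecessary and slightly confused: when $T^2$ is non-split, $T=T^2/Z$ is compact, $\pi^{JL}$ is finite-dimensional, and $\beta_{\pi^{JL},\chi}(u,w)=\langle e_\chi u,w\rangle$ where $e_\chi=\int_T\chi(t)\Pi(t)\,dt$ is (up to volume) the projector onto the $\chi^{-1}$-isotypic subspace; since $\Hom_T(\pi^{JL}\otimes\chi,\C)\neq 0$ forces that subspace to be nonzero, taking $u=w$ there gives $\beta_{\pi^{JL},\chi}(u,u)=\mathrm{vol}(T)\,\langle u,u\rangle\neq 0$ directly, with no appeal to Plancherel-type arguments. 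With that cleaned up, your reduction is sound and matches the paper's (implicit) reliance on \cite{Walds}.
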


Due to this proposition, we can normalize the above pairing as follows:
\[
\alpha_{\pi_v,\chi_v}:=\frac{L(1,\eta_v)L(1,\pi_v,ad)}{\xi_v(2)L(1/2,\pi_v,\chi_v)}\beta_{\pi_v,\chi_v}.
\]

\subsection{Steinberg representations and intertwining operators}\label{Steinberg}

In this section, we fix a prime $\cP$, and we consider the field $\C$ endowed with discrete topology. Assume that $\alpha=\pm 1$, and write $\alpha_s:=q^s\alpha$. 
We observe that the representation $V_{\alpha_1}^\C=V_{\alpha_1,0}^{\C}$ admits an infinite dimensional subrepresentation $(\hat V_{\alpha_1}^\C,\hat \pi_{\alpha_1}^\C)$
\[
\hat V_{\alpha_1}^\C=\left\{\phi\in V_{\alpha_1}^\C:\;\int_{K_0(1)}\phi(g)dg=0\right\}\subset V_{\alpha_1}^\C,
\]
by Corollary \ref{coras} with $M=K_0(1)$, $g_0=1$ and $h(g)=\alpha^{\nu(\det(g))}\phi(g)$ (in this case $P_\cP M=P_\cP K_0(1)=\GL_2(F_\cP)$ and $dg$ is any Haar measure of $\GL_2(F_\cP)$). 

As above, we fix an embedding $K_\cP^\times\hookrightarrow\GL_2(F_\cP)$ whose image does not lie in $P_\cP$, hence $K_\cP^\times\cap P_\cP=Z$. By Corollary \ref{coras2} (comparison) we have that
\[
\hat V_{\alpha_1}^\C=\left\{\phi\in V_{\alpha_1}^\C:\;\int_{T(F_\cP)}\alpha^{\nu(\det(t))}\phi(t)d^\times t=0\right\}\subset V_{\alpha_1}^\C.
\]
Here the expression $\alpha^{\nu(\det(t))}$ is well-defined for $t\in T(F_\cP)$, since $\nu(\det(z))\in 2\Z$, for $z\in Z$.

Let $\phi=\bar\delta_T^\alpha(f)\in \bar V_{\alpha}^\C$,  for some $f\in C_\diamond(T(F_\cP),\C)$. Then, we can consider its \emph{flat section} (\cite[\S 4.5]{Bump}) $\phi_s:=\bar\delta_T^{\alpha_s}(f)\in \bar V_{\alpha_s}^\C$. The integral 
\begin{eqnarray*}
I(\phi,s,g):=\int_{N_\cP} \phi_s(n\omega g)dn,&
\mbox{where}&
N_\cP:=\left\{\left(\begin{array}{cc}
					 1  & \\
					x  & 1
					\end{array}\right):\; x\in F_\cP\right\}\simeq F_\cP,\\
					&&\omega:=\left(\begin{array}{cc}
					   & -1\\
					 1 & 
					\end{array}\right).
\end{eqnarray*}
converges absolutely for ${\rm Re }(s)>1/2$ and admits analytic continuation to all $s\in \C$ \cite[Proposition 4.5.6, Proposition 4.5.7]{Bump}. By abuse of notation, we also denote its analytic continuation by $I(\phi,s,g)$. 

Recall that the map 
\begin{equation}\label{defvarphi}
\varphi:\GL_2(F_\cP)\longrightarrow P_\cP\backslash \GL_2(F_\cP)\longrightarrow \PP^1(F_\cP), \qquad\left(\begin{array}{cc}a&b\\c&d\end{array}\right)\longmapsto -\frac{d}{c}.
\end{equation}
provides an injection of $T(F_\cP)$ in $\PP^1(F_\cP)$.
We have that $\varphi(N_\cP\omega)=\PP^1(F_\cP)\setminus\{\infty\}=F_\cP$. Hence, for any $t\in T(F_\cP)$ but (possibly) one, there is a unique $n\in N_\cP$, such that $n\omega\in P_\cP\bar t^{-1}$, for any preimage $\bar t\in K_\cP^\times$ of $t$. We denote this fact by $n\omega\in P_\cP t^{-1}$.
\begin{definition}
We consider the function $\theta_T(s)(t)$, defined for almost all $t\in T(F_\cP)$ by $\theta_T(s)(t):=\mu_{\alpha_{s-1}}(n\omega t)$, where $n\omega\in P_\cP t^{-1}$ and $n\in N_\cP$. 
\end{definition}
\begin{remark}
The expression $\mu_{\alpha_{s-1}}(n\omega t)$ is well-defined, since $n\omega \bar t\in P_\cP$, for any preimage $\bar t\in K_\cP^\times$ of $t$, and $\mu_{\alpha_{s-1}}(n\omega \bar t)$ does not depend on the given preimage.
\end{remark}
If $t\in T(F_\cP)$ and $n\in N_\cP$, let $y\in T(F_\cP)$ such that $n\omega\in P_\cP y^{-1}$ (this can be done for all $n$ but maybe finitely many). Thus,
\begin{eqnarray*}
\phi_s(n \omega t)&=&\phi_s(n\omega yy^{-1}t)=\kappa(n\omega y)\mu_{\alpha_{s-1}}(n\omega y)f(t^{-1}y)\\
&=&\kappa(n\omega y)\theta_T(s)(y)(t\ast f)(y),
\end{eqnarray*}
where $\kappa:P_\cP\rightarrow\R$ is the modular quasi character defined in Appendix \S\ref{locint}. Hence, if we define $h_t\in C(\GL_2(F_\cP),\C)$ by $h_t(by):=\kappa(b)\theta_T(s)(y^{-1})(t\ast f)(y^{-1})$, for $b\in P_\cP$ and $y\in K_\cP^\times$, we have that $\phi_s(n \omega t)=h_t(n\omega)$. We compute,
\begin{eqnarray}
I(\phi,s,t)&=&\int_{N_\cP} \phi_s(n \omega t)dn=\int_{N_\cP} h_t(n \omega)dn=C_T\int_{T(F_\cP)} h_t(y)d^\times y\\
&=&
C_T\int_{T(F_\cP)}\theta_T(s)(y)f(t^{-1}y)d^\times y,\label{eqntT}
\end{eqnarray}
by Comparison (Corollary \ref{coras2}).

We define $\Lambda(\phi)(g):=I(\phi,0,g)$. The following result proves that the expression $\Lambda(\phi)$ provides a well-defined intertwining operator.
\begin{proposition}\label{propintert}
We have that:
\begin{enumerate}
\item [(i)] $I(\phi,s,bg)=\mu_{\alpha_{1-s}}(b)I(\phi,s,g)$, thus, $\Lambda(\phi)\in V_{\alpha_1}^\C$.

\item[(ii)] The image of the morphism 
\[
\Lambda:\bar V_\alpha^\C\longrightarrow V_{\alpha_1}^\C,\quad \phi\longmapsto\Lambda(\phi)
\]
lies in fact in $\hat V_{\alpha_1}^\C$.

\item[(iii)] Let $\phi_0\in \bar V_{\alpha}^\C$, defined by $\phi_0(g)=\alpha^{\nu(\det g)}$, then $\Lambda(\phi_0)=0$.

\item [(iv)] The intertwining operator $\Lambda$ induces an isomorphism 
\[
\Lambda:V_\alpha^\C\stackrel{\simeq}{\longrightarrow} \hat V_{\alpha_1}^\C. 
\]
\end{enumerate}
\end{proposition}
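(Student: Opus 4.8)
The plan is to prove the four assertions essentially in the order stated, using the explicit formula \eqref{eqntT} for $I(\phi,s,g)$ in terms of the torus integral against $\theta_T(s)$ as the computational backbone, together with the convergence and analytic continuation properties quoted from \cite{Bump} and the comparison results in the appendix.

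For part (i), I would argue for $\mathrm{Re}(s)>1/2$ where the defining integral converges absolutely, and then extend to all $s$ by analytic continuation. If $b\in B$ and $\phi_s\in\Ind_B^G(\mu_s)$, then $\phi_s(n\omega bg)$ can be analyzed via the Bruhat-type decomposition $\omega b\omega^{-1}$: for $b$ in the diagonal torus this conjugates $N$ to itself with a Jacobian that, combined with the transformation of $\phi_s$, produces exactly the factor $\mu_{1-s}(b)$ (the shift from $\mu_s$ to $\mu_{1-s}$ is the familiar effect of the intertwining integral; the extra $q$-power in the modulus character accounts for the discrepancy between $s$ and $1-s$). The unipotent part of $b$ is absorbed by the change of variables $n\mapsto$ (translate) in the integral over $N$. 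This is standard (it is \cite[Prop.~4.5.6]{Bump} adapted to our flat sections), so I would keep it brief. Once (i) holds, $\Lambda(\phi)=I(\phi,0,\cdot)$ transforms under $B$ by $\mu_1$, hence lies in $\Ind_B^G(\mu_1)$.

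For part (ii), by the description of $V_{\alpha_1}^\C$ recalled just before the Steinberg subsection, namely $V_{\alpha_1}^\C=\{\phi\in\Ind_B^G(\mu_1):\int_T\alpha^{\nu(\det t)}\phi(t)\,dt=0\}$, it suffices to show $\int_T\alpha^{\nu(\det t)}\Lambda(\phi)(t)\,dt=0$. Using \eqref{eqntT} at $s=0$ we have $\Lambda(\phi)(t)=I(\phi,0,t)=C_T\int_T\theta_T(0)(y)f(t^{-1}y)\,dy$; integrating against $\alpha^{\nu(\det t)}$ over $T$ and using the $T$-invariance of the Haar measure together with the fact that $\alpha^{\nu(\det(\cdot))}$ is a character of $T$ (because $\nu(\det z)\in 2\Z$ for $z\in Z$ and $\alpha=\pm1$), the double integral factors as $\big(\int_T\alpha^{\nu(\det t)}\,dt\big)$ times a convolution-type term — and $\int_T\alpha^{\nu(\det t)}\,dt=0$ unless that character is trivial. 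The delicate point, and what I expect to be the main obstacle, is the interchange of integrals and the precise bookkeeping of $\theta_T(0)$: the integrand $\theta_T(s)(y)$ is only defined for almost all $y$ and the naive integral $\int_T\theta_T(0)(y)\,dy$ sits exactly at the edge of convergence, so one must either work at general $s$ with $\mathrm{Re}(s)>1/2$ (where everything converges and Fubini applies) and then continue analytically, or invoke the regularization implicit in the analytic continuation of $I(\phi,s,g)$. I would carry out the Fubini argument for $\mathrm{Re}(s)$ large, obtain the vanishing for those $s$ after restricting to the open subset of $T$ where the relevant character is nontrivial, and then let $s\to0$.

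For part (iii), $\phi_0(g)=\alpha^{\nu(\det g)}$ is the spherical vector generating the rank-one subrepresentation; its flat section is $(\phi_0)_s(g)=\alpha_s^{\nu(\det g)}\cdot(\text{const})$, and the integral $I(\phi_0,s,g)=\int_N(\phi_0)_s(n\omega g)\,dn$ is a constant multiple of $\int_N\mu_s(\cdots)\,dn$, which is the standard local intertwining integral sending the spherical section of $\Ind(\mu_s)$ to a multiple of the spherical section of $\Ind(\mu_{1-s})$ with the well-known Gindikin--Karpelevich factor; at $s=0$ (so $\mu_s=\mu_0$, i.e. $\alpha_s=\alpha=\pm1$) this normalizing factor has a zero, forcing $\Lambda(\phi_0)=0$. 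Concretely I would compute $I(\phi_0,s,1)$ directly as a geometric series in $q^{-s}$ (or quote it from \cite{Bump}) and observe the factor $(1-q^{-s}\cdot\text{something})$ or equivalently $\alpha-\alpha_s$-type vanishing at $s=0$; combined with (i) this gives $\Lambda(\phi_0)\equiv0$. Finally, for part (iv): by (iii) $\Lambda$ kills the line $\C\phi_0$, hence descends to a $G$-equivariant map $V_\alpha^\C=\Ind_B^G(\mu_0)/\C\phi_0\to\Ind_B^G(\mu_1)$, whose image lies in $V_{\alpha_1}^\C$ by (ii). Both $V_\alpha^\C$ and $V_{\alpha_1}^\C$ are irreducible (the Steinberg/special representation), so the induced map is either $0$ or an isomorphism; it is nonzero because, e.g., evaluating via \eqref{eqntT} on a suitable $f=1_H$ one gets a nonzero torus integral (the nonvanishing of the intertwining operator at $s=0$ after having divided out the spherical line is exactly the assertion that Steinberg occurs as a quotient of the reducible principal series and maps isomorphically onto its realization inside $\Ind_B^G(\mu_1)$, which also follows from \cite[Prop.~4.5.7]{Bump}). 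Hence $\Lambda\colon V_\alpha^\C\xrightarrow{\ \simeq\ }V_{\alpha_1}^\C$, completing the proof.
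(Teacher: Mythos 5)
Your treatment of part (ii) rests on a vanishing mechanism that does not work. After interchanging the two integrals (legitimate for $\mathrm{Re}(s)>1/2$), the double integral does not factor as $\bigl(\int_T\alpha^{\nu(\det t)}\,dt\bigr)$ times a convolution-type term: substituting $u=t^{-1}y$ it factors, as in the paper, into
\[
C_T\Bigl(\int_T\alpha^{\nu(\det u)}f(u)\,du\Bigr)\Bigl(\int_T\alpha^{\nu(\det y)}\theta_T(s)(y)\,dy\Bigr),
\]
and neither factor is a bare character integral to which orthogonality could be applied (for a split torus $\int_T\alpha^{\nu(\det t)}\,dt$ does not even converge). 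More seriously, your argument produces nothing precisely when $\alpha=1$, where $\alpha^{\nu(\det\,\cdot)}$ is the trivial character; this is one of the two cases $\alpha=\pm1$ covered by the proposition and the only one used later in the paper (it is the exceptional-zero case). Finally, the quantity $\int_T\alpha^{\nu(\det t)}I(\phi,s,t)\,dt$ does \emph{not} vanish identically for $\mathrm{Re}(s)$ large, so the plan ``prove vanishing for large $s$ and let $s\to 0$'' cannot succeed: the vanishing is a special-value phenomenon at $s=0$ only. What is missing is the explicit evaluation, carried out in the paper for the particular split torus $T_0$,
\[
\int_{T_0}\alpha^{\nu(\det t)}\theta_{T_0}(s)(t)\,dt=\int_{F^\times}\Bigl|\frac{x}{(x-1)^2}\Bigr|^{1-s}d^\times x=\frac{(q^s-1)^2(q^{s-1}+1)}{(1-q^{s-1})(q-1)(q^{2s-1}-1)},
\]
whose numerator $(q^s-1)^2$ vanishes at $s=0$; this single computation is the heart of the paper's proof of (ii) and is reused verbatim for (iii).

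Parts (i) and (iv) of your proposal are fine and essentially the paper's own argument (the paper only adds, for (iv), that irreducibility of $V_\alpha^\C$ together with $\Lambda\neq 0$ suffices; your sketch of nonvanishing via \eqref{eqntT} is a reasonable supplement). For (iii), your Gindikin--Karpelevich route does detect a zero at $s=0$, but be careful: $(\phi_0)_s(g)=\alpha_s^{\nu(\det g)}$ is not an element of $\Ind_B^G(\mu_s)$, and the flat section entering the definition of $\Lambda$ in this paper is $\bar\delta_T^{\mu_s}(f_0)$, not the $K$-spherical section; so to prove (iii) as stated you should either redo the computation with the paper's flat section --- which reduces again to the displayed $T_0$ integral --- or justify that the two continuations agree at $s=0$.
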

\begin{proof}
Part $(i)$ follows from a direct computation. Part $(ii)$ follows from the fact that $\Lambda^{-1}(\hat V_{\alpha_1}^\C)$ must be an infinite dimensional subrepresentation of $\bar V_\alpha^\C$, hence it must be $\bar V_\alpha^\C$ itself. Notice that, if $\Lambda(\phi_0)\neq 0$, it generates a 1-dimensional subrepresentation in $V_{\alpha_1}^\C$. Since $V_{\alpha_1}^\C$ has no 1-dimensional subrepresentations, part $(iii)$ follows.
Finally, part $(iv)$ follows from the fact that $V_\alpha^\C$ is irreducible and $\Lambda$ is non-zero.
\end{proof}

\subsection{Local pairings}\label{locpair}
Throughout this section we will choose the Haar measure of $T(F_\cP)$ so that the image of $\cO_{K_\cP}^\times$ has volume 1. 
In the previous sections, we have defined a $T(F_\cP)$-equivariant morphism $\delta_T: C_\diamond(T(F_\cP),\C)\longrightarrow(\pi_\cP,V_\cP)$, for any representation $\pi_\cP:=\pi_{\alpha,0}^\C$ associated with $\alpha\in \C^\times$. Moreover, we have defined the pairing $\beta_{\pi_\cP,\chi_\cP}:V_\cP\times V_\cP\rightarrow\C$. The aim of this section is to compute $\beta_{\pi_\cP,\chi_\cP}(\delta_T(f_1),\delta_T(f_2))$, for $f_1,f_2\in C_\diamond(T(F_\cP),\C)$.

We assume that either $\alpha=\pm 1$ or $|\alpha|^2=q$, the cardinal of the residue field of $F$. We know that the representation is unitarizable, namely, there is an invariant hermitian inner product 
$\langle\cdot,\cdot\rangle: V_\cP\times V_\cP\rightarrow\C$. First, let us fix the inner product:
\begin{definition}
If either $\alpha=\pm 1$ or $|\alpha|^2=q$, we denote by $\langle\cdot,\cdot\rangle$ the hermitian inner product on $(V_\cP,\pi_\cP):=(V_{\alpha,0}^\C,\pi_{\alpha,0}^\C)$ given by
$$
\langle \vartheta_1,\vartheta_2\rangle=\left\{\begin{array}{ll}
					  \int_{K_0(1)} \vartheta_1(g)\overline{\vartheta_2(g)}dg, & |\alpha|^2=q,\\
					 \int_{K_0(1)} \vartheta_1(g)\overline{\Lambda(\vartheta_2)(g)}dg, & \alpha=\pm 1,
					\end{array}\right.
$$
for any $\vartheta_1,\vartheta_2\in V_{\alpha,0}^\C$. It is $G$-invariant, by Corollary \ref{coras} of the Appendix with $M/Z_M=K$. 
\end{definition}
\begin{remark}
Note that, in the case $\alpha=\pm 1$, the integral $\int_{K_0(1)} \vartheta_1(g)\overline{\Lambda(\vartheta_2)(g)}dg$ does not depend on the representative of $\vartheta_1$ and $\vartheta_2$ in $\bar V_{\alpha,0}^\C$, since $\Lambda(\phi_0)=0$ by Proposition \ref{propintert} (iii) and
\[
 \int_{K_0(1)} \phi_0(g)\overline{\Lambda(\vartheta_2)(g)}dg= \int_{K_0(1)} \overline{\Lambda(\vartheta_2)(g)}dg=0,
\]
by Proposition \ref{propintert} (ii).
\end{remark}

The following result computes the pairing $\langle\cdot,\cdot\rangle$ in terms of the torus $T(F_\cP)$. 
\begin{proposition}\label{innerprod}
There exists a constant $c_T$ such that  
\[
c_T^{-1}\langle \vartheta_1,\vartheta_2\rangle=\left\{\begin{array}{ll}
					  \int_{T(F_\cP)} \vartheta_1(t)\overline{\vartheta_2(t)}d^\times t, & |\alpha|^2=q,\\
					 \int_{T(F_\cP)} \vartheta_1(t)\overline{\Lambda(\vartheta_2)(t)}d^\times t, & \alpha=\pm 1,
					\end{array}\right.
\]
for all $\vartheta_1,\vartheta_2\in V_\cP$.
\end{proposition}
\begin{proof}
Note that, $T(F_\cP)\cap P_\cP=Z$ because $K_\cP^\times\not\subset P_\cP$. Since $|\alpha|^2=q$ or $1$, we have that
\[
h(g):=\left\{\begin{array}{ll}
					  \vartheta_1(g)\overline{\vartheta_2(g)}, & |\alpha|^2=q,\\
					  \vartheta_1(g)\overline{\Lambda(\vartheta_2)(g)}, & \alpha=\pm 1,
					\end{array}\right. 
\]
satisfies $h(bg)=\kappa(b)h(g)$, where $\kappa$ is the modular quasicharacter. 
The result follows now from Corollary \ref{coras2} of the Appendix with $M_1/Z_{M_1}=K_0(1)$, $M_2/Z_{M_2}=T(F_\cP)$.
\end{proof}

\begin{proposition}\label{prop-sc-prod} 
Assume that $K_\cP^\times\not\subset P_\cP$, and let $f_1,f_2\in C_\diamond(T(F_\cP),\C)$. Then, there is a non-zero constant $c_T$, depending on $T(F_\cP)$, such that $\beta_{\pi_\cP,\chi_\cP}(\delta_T(f_1),\delta_T(f_2))$ equals to:
 \[
\left\{\begin{array}{ll}
					  c_T\left(\int_{T(F_\cP)}f_1(t)\chi_\cP^{-1}(t)d^\times t\right)\overline{\left(\int_{T(F_\cP)}f_2(t)\chi_\cP^{-1}(t)d^\times t\right)}, & |\alpha|^2=q,\\
					 c_T\left(\int_{T(F_\cP)}f_1(t)\chi_\cP^{-1}(t)d^\times t\right)\overline{\left(\int_{T(F_\cP)}\Lambda(\delta_T(f_2))(t)\chi_\cP(t)d^\times t\right)}, & \alpha=\pm 1.
					\end{array}\right.
\] 
\end{proposition}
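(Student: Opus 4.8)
The plan is to compute the period $\beta_{\pi,\chi}(\delta_T(f_1),\delta_T(f_2))$ by unfolding its definition against the explicit description of the inner product given in Proposition \ref{innerprod} and the explicit formula for $\delta_T$ in \eqref{deltaT}. Recall that
\[
\beta_{\pi,\chi}(\delta_T(f_1),\delta_T(f_2))=\int_T\langle\pi(t)\delta_T(f_1),\delta_T(f_2)\rangle\chi(t)\,dt.
\]
By $T$-equivariance of $\delta_T$ (established in \S\ref{LocDisTor}), $\pi(t)\delta_T(f_1)=\delta_T(t\ast f_1)$, so the integrand becomes $\langle\delta_T(t\ast f_1),\delta_T(f_2)\rangle$. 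Now I substitute the formula of Proposition \ref{innerprod}: in the case $|\alpha|^2=q$ this is $c_T\int_{T}(t\ast f_1)(\tau)\overline{f_2(\tau)}\,d\tau$, using that $\bar\delta_T^\mu(f)(\tau)=f(\tau^{-1})$ evaluated at $\tau\in T$ — one must be careful here that $\delta_T(f)(t)$ for $t\in T$ reads off $\mu(1)f(t^{-1})=f(t^{-1})$, and then an immediate change of variable $\tau\mapsto\tau^{-1}$ (which preserves the Haar measure on the group $T$) turns it into $\int_T f_1(t^{-1}\tau)\overline{f_2(\tau^{-1})}\,d\tau$, or equivalently $\int_T f_1(t\tau)\overline{f_2(\tau)}\,d\tau$ after absorbing the inversion. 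The analogous statement in the case $\alpha=\pm1$ replaces $\delta_T(f_2)$ by $\Lambda(\delta_T(f_2))$ in the second slot.

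The heart of the argument is then a Fubini-type interchange: plugging this back,
\[
\beta_{\pi,\chi}(\delta_T(f_1),\delta_T(f_2))=c_T\int_T\int_T f_1(t\tau)\overline{f_2(\tau)}\,d\tau\,\chi(t)\,dt,
\]
and after the substitution $t\mapsto t\tau^{-1}$ in the outer integral (legitimate since $T$ is a group and $\chi$ a character, giving $\chi(t)=\chi(t')\overline{\chi(\tau)}^{-1}$ — here one uses that $\chi$ is a character so $\chi(t\tau^{-1})=\chi(t)\chi(\tau)^{-1}$, and one should note $|\chi|=1$ if $\chi$ is unitary, or simply carry the factor), the double integral factors as
\[
c_T\left(\int_T f_1(t)\chi(t)\,dt\right)\overline{\left(\int_T f_2(\tau)\chi(\tau)\,d\tau\right)},
\]
which is exactly the claimed formula in the first case. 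In the Steinberg case $\alpha=\pm1$, the same manipulation applies verbatim except that the function paired against $f_1$ in the inner integral is $\Lambda(\delta_T(f_2))$ restricted to $T$ rather than $f_2$; since $\Lambda$ is $G$-equivariant (Proposition \ref{propintert}), the $T$-action still passes through correctly, and one obtains
\[
c_T\left(\int_T f_1(t)\chi(t)\,dt\right)\overline{\left(\int_T\Lambda(\delta_T(f_2))(t)\chi(t)\,dt\right)}.
\]
Non-vanishing of $c_T$ is inherited directly from Proposition \ref{innerprod}.

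The main obstacle I anticipate is the bookkeeping of convergence and of the inversion/translation changes of variable in the $\alpha=\pm1$ case: here $\delta_T(f)$ lives in $\Ind_B^G(\mu_0)$ but $\Lambda(\delta_T(f))$ lives in $V_{\alpha_1}^\C\subset\Ind_B^G(\mu_1)$, so the pairing $\int_T\delta_T(f_1)(t)\overline{\Lambda(\delta_T(f_2))(t)}\,dt$ mixes two different flat sections, and one must check that the restriction of $\Lambda(\delta_T(f_2))$ to $T$ is indeed integrable against $\overline{\chi}$ — this follows from formula \eqref{eqntT} for $I(\phi,s,t)=\Lambda(\phi)(t)$ at $s=0$, which expresses it as an absolutely convergent (by analytic continuation, or directly in the region of convergence and then continued) integral $C_T\int_T\theta_T(0)(y)f_2(t^{-1}y)\,dy$, but one needs $C_\diamond$ decay of $f_2$ to control the tails. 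A secondary technical point is that when $T^2$ is split, $C_\diamond(T,\C)\neq C(T,\C)$, so $f_i$ vanishes at the cusp of $T\hookrightarrow\PP^1(F)$, and I must confirm that this decay is exactly what makes all the integrals over $T$ converge and the Fubini interchange valid; this is where the hypothesis $f_i\in C_\diamond(T,\C)$ (rather than merely $C(T,\C)$) is used.
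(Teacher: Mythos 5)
Your proof is correct and follows essentially the same route as the paper's: apply Proposition \ref{innerprod} to unfold $\beta_{\pi,\chi}$ into a double integral over $T$ (with $\Lambda(\delta_T(f_2))\mid_T$ in the second slot when $\alpha=\pm1$), change variables, use that $\chi$ is a unitary character, and factor the double integral via Fubini, justified by $f_i\in C_\diamond(T,\C)$. The only caveat is some loose inversion bookkeeping ($t$ versus $t^{-1}$ in the arguments of $f_1,f_2$), which affects at most whether $\chi$ or $\bar\chi$ appears in one factor — a normalization the paper's own computation treats equally loosely.
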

\begin{proof}
Let $f_2^*:=f_2$ if $|\alpha|^2=q$ and $f_2^*(x):=\Lambda(\delta_T(f_2))(x^{-1})$ if $\alpha=\pm 1$.
Using Proposition \ref{innerprod}, we obtain,
\begin{eqnarray*}
\int_{T(F_\cP)}\langle \pi_\cP(t)\delta_T(f_1),\delta_T(f_2)\rangle\chi_\cP(t)d^\times t=
c_T\int\int_{T(F_\cP)}f_1(x^{-1}t^{-1})\overline{f_2^*(x^{-1})}\chi_\cP(t)d^\times xd^\times t \\
=c_T\left(\int_{T(F_\cP)}f_1(y)\chi_\cP^{-1}(y)d^\times y\right)\overline{\left(\int_{T(F_\cP)}f_2^*(y)\chi_\cP^{-1}(y)d^\times y\right)}, 
\end{eqnarray*}
and the result follows (we are allowed to exchange the order of the integrals, since $f_i\in C_\diamond(T(F_\cP),\C)$).
\end{proof}

Let us assume for the rest of the section that $\alpha=\pm 1$. Proposition \ref{prop-sc-prod} implies that, in order to compute $\beta_{\pi_\cP,\chi_\cP}(\delta_T(f_1),\delta_T(f_2))$, one has to describe the integral $\int_{T(F_\cP)}\Lambda(\delta_T(f))(t)\chi_\cP(t)d^\times t$ in terms of $f\in C_\diamond(T(F_\cP),\C)$. By Equation \eqref{eqntT}
\begin{eqnarray*}
\int_{T(F_\cP)}I(\delta_T(f),s,t)\chi_\cP(t)d^\times t=C_T\int_{T(F_\cP)}\int_{T(F_\cP)}\theta_T(s)(y)f(t^{-1}y)\chi_\cP(t)d^\times yd^\times t\\
=C_T\left(\int_{T(F_\cP)}\theta_T(s)(t)\chi_\cP(t)d^\times t\right)\left(\int_{T(F_\cP)}f(t)\chi_\cP^{-1}(t)d^\times t\right).
\end{eqnarray*}
Again, the integral $I_T(\chi_\cP,s):=\int_{T(F_\cP)}\theta_T(s)(t)\chi_\cP(t)d^\times t$ converges absolutely for ${\rm Re }(s)>1/2$ and admits analytic continuation to all $s\in \C$. By abuse of notation, denote also by $I_T(\chi_\cP,s)$ its analytic continuation. We conclude
\[
\int_{T(F_\cP)}\Lambda(\delta_T(f))(t)\chi_\cP(t)d^\times t
=C_TI_T(\chi_\cP,0)\int_{T(F_\cP)}f(t)\chi_\cP^{-1}(t)d^\times t.
\]

\begin{remark}
If we assume that $\chi_\cP(t)=\alpha^{\nu(\det(t))}$, for all $\phi=\delta_T(f)\in V_\cP$ 
\[
0=\int_{T(F_\cP)}\alpha^{\nu(\det(t))}\Lambda(\phi)(t)d^\times t=C_TI_T(\chi_\cP,0)\int_{T(F_\cP)}\alpha^{\nu(\det(t))}f(t)d^\times t
\]
by Proposition \ref{propintert} (ii). This implies that $I_T(\chi_\cP,0)=0$ in this case. We call this the \emph{Excepcional Zero Phenomenon}.
\end{remark}

Let $\eta$ be the quadratic character associated with $T(F_\cP)$. Hence $L(s,\eta)=(1-q^{-s})^{-1}$ if $K_\cP/F_\cP$ is split, $L(s,\eta)=(1+q^{-s})^{-1}$ if $K_\cP/F_\cP$ is inert, or $L(s,\eta)=1$ if $K_\cP/F_\cP$ is ramified. Recall the local Riemann zeta function $\zeta_\cP(s)=(1-q^{-s})^{-1}$. Note that the maximal compact subgroup $H_0$ of $T(F_\cP)$ admits a natural composition series $H_0\supseteq H_1\supseteq\cdots\supseteq H_n\supseteq\cdots$ such that $[H_{n}:H_{n+1}]=q$, for all $n>0$. Given $\chi_\cP$, its conductor $n_\chi$ is the integer such that $\chi_\cP\mid_{H_{n_\chi}}=1$ and $\chi_\cP\mid_{K_{n_\chi-1}}\neq 1$.
\begin{theorem}\label{teocompIT}
There is an integer $n_T$, depending on the embedding $K_\cP^\times\hookrightarrow\GL_2(F_\cP)$, such that the analytic continuation $I_T(\chi_\cP,s)$ is given by
\[
I_T(\chi_\cP,s)=\left\{\begin{array}{ll}
q^{(2-2s)n_T}L(1,\eta)\frac{\zeta_\cP(2s-1)}{\zeta_\cP(2-2s)}q^{(1-2s)n_\chi},&\chi_\cP\mid_{\cO_{K_\cP^\times}}\neq 1,\\
q^{(2-2s)n_T}L(1,\eta)\frac{\zeta_\cP(2s-1)}{\zeta_\cP(2-2s)}\frac{L(-s+1/2,\pi_\cP,\chi_\cP)}{L(s-1/2,\pi_\cP,\chi_\cP)},&\chi_\cP\mid_{\cO_{K_\cP^\times}}= 1.
\end{array}\right.
\]
where $n_\chi$ is the conductor of $\chi_\cP$ and, for an unramified character $\chi_\cP$,
\[
L(s,\pi_\cP,\chi_\cP)=\left\{\begin{array}{ll}
(1-\alpha\chi_\cP(\varpi)q^{-s-1/2})^{-1}(1-\alpha\chi_\cP(\varpi)^{-1}q^{-s-1/2})^{-1},& K_\cP/F_\cP\;\mbox{splits}\\
(1-q^{-2s-1})^{-1},& K_\cP/F_\cP\;\mbox{inert}\\
(1-\alpha\chi(\varpi_K)q^{-s-1/2})^{-1},& K_\cP/F_\cP\;\mbox{ramifies},
\end{array}\right.
\]
with $\varpi_K$ a uniformizer of $\cO_{K_\cP}$ in the ramified case.
\end{theorem}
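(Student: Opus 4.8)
The plan is to reduce the computation of $I_T(\chi,s)=\int_T\theta_T(s)(t)\chi(t)\,dt$ to an explicit local zeta integral on $E^\times/F^\times$. First I would recall the explicit formula for $\theta_T(s)(t)=\mu_{s-1}(n\omega t)$, where $n\in N$ is the unique element with $n\omega\in Bt^{-1}$; unwinding the definition of $\varphi$ in \eqref{defvarphi} and of $\mu_{s-1}=\mu_{\alpha_{s-1}}^{-1}\otimes\mu_{\alpha_{s-1}}$, one sees that $\theta_T(s)(t)$ depends only on $|\varphi(t)-\varphi(1)|$ (or a similar cross-ratio-type quantity on $\PP^1(F)$), so that $\theta_T(s)$ is, up to the choice of embedding, a power $|{\cdot}|^{1-s}$ of a rational function on $T\hookrightarrow\PP^1(F)$. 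The integer $n_T$ should be exactly the valuation datum measuring how the compact subgroup filtration $K_0\supseteq K_1\supseteq\cdots$ of $T$ sits inside $\PP^1(F)$ relative to $\GL_2(\cO)$, i.e.\ how far the image of $T$ in $\PP^1(F)$ is from being in ``standard position''; the prefactor $q^{(2-2s)n_T}$ then arises by a change of variables rescaling this embedding to the standard one, exactly as the explicit split computation $T_0$ in Proposition~\ref{propintert} is the $n_T=0$ case.

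Having normalized to the standard embedding, I would split the integral over $T=E^\times/F^\times$ according to whether the character $\chi$ is ramified on $\cO_E^\times$ or not. When $\chi|_{\cO_E^\times}\neq 1$, the integrand $\theta_T(s)$ is, on the standard torus, invariant under a compact subgroup against which $\chi$ still oscillates on the relevant annuli except on a single coset, so orthogonality of characters collapses the sum over the compact direction and leaves a finite geometric contribution of size $q^{(1-2s)n_\chi}$ times the universal factor $L(-1,\eta)\zeta(1-2s)/\zeta(2s-2)$ coming from the ``radial'' part $\int_{F^\times}|x/(x-1)^2|^{1-s}d^\times x$-type integral already computed in Proposition~\ref{propintert}; here the $L(-1,\eta)$ and the ratio of zeta values are just the reorganization of that rational function in $q^s$ in terms of $\zeta$ and the quadratic character $\eta$ attached to $T$, using $L(s,\eta)=(1\mp q^s)^{-1}$ or $1$ according to split/inert/ramified. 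When $\chi|_{\cO_E^\times}=1$, i.e.\ $\chi$ is unramified, the compact direction integrates trivially and what remains is a genuine infinite geometric series over $E^\times/(F^\times\cO_E^\times)\cong\Z$ (in the split and ramified cases) or $2\Z$ (inert), which sums to the ratio $L(s-1,\pi,\chi)/L(-s,\pi,\chi)$ with $L(s,\pi,\chi)$ the Rankin--Selberg-type local factor displayed; the three cases split/inert/ramified are exactly the three shapes of the local $L$-factor of $\pi_E\otimes\chi$ restricted to the relevant lattice of unramified twists, which one verifies by direct summation of $\sum_n (\alpha\chi(\varpi))^n q^{(s-1)n}$ style series.

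Concretely the steps are: (1) write $\theta_T(s)$ explicitly as a function on $\PP^1(F)$ via $\varphi$ and identify the ``defect'' integer $n_T$; (2) perform the change of variables reducing to the standard torus $T_0$, producing the factor $q^{(2-2s)n_T}$; (3) on $T_0$, decompose $T=E^\times/F^\times$ using the filtration $K_0\supseteq K_1\supseteq\cdots$ and the Cartan/valuation decomposition of $E^\times$; (4) integrate the compact part using orthogonality of characters, yielding $q^{(1-2s)n_\chi}$ in the ramified-on-$\cO_E^\times$ case and $1$ otherwise; (5) sum the resulting geometric series in the ``radial'' variable, recognizing the answer as $L(-1,\eta)\zeta(1-2s)/\zeta(2s-2)$ times either $q^{(1-2s)n_\chi}$ or $L(s-1,\pi,\chi)/L(-s,\pi,\chi)$; (6) check absolute convergence for $\mathrm{Re}(s)>1/2$ from the exponent $1-s$ appearing in $|{\cdot}|^{1-s}$, and note the formula itself furnishes the meromorphic continuation.

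The main obstacle I expect is step~(2), pinning down the integer $n_T$ and the exact power of $q$ in the prefactor for an \emph{arbitrary} embedding $T^2\hookrightarrow\GL_2(F)$ (split, inert, or ramified), since this requires carefully tracking how the embedding distorts the measure $dt$ and the filtration of $T$ relative to $\GL_2(\cO)$, rather than just the single explicit torus $T_0$ handled earlier; in the ramified case one must also be careful about $2$-divisibility of valuations and the correct choice of uniformizer $\varpi_E$. Once the embedding is normalized, the remaining harmonic-analysis computation (steps 3--5) is the same local zeta-integral calculation underlying the Saito--Tunnell and Waldspurger formulas (Propositions~\ref{Saito-Tunnel} and~\ref{propWaldloc}), and should go through by the standard orthogonality-plus-geometric-series argument, with the three cases distinguished exactly by the shape of $L(s,\eta)$ and $L(s,\pi,\chi)$.
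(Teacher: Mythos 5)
Your plan is correct and follows essentially the same route as the paper: compute $\theta_T(s)(t)=\alpha^{\nu(\det t)}\bigl|\det(t)/c(t)^2\bigr|^{1-s}$ explicitly, pull out $q^{(2-2s)n_T}$, then decompose $T$ by its compact filtration and finish with orthogonality of characters plus geometric series, case by case for split/inert/ramified. The only clarification worth making is that the paper does not normalize by a change of variables or conjugation (which would also move $B$, on which $\theta_T$ depends): instead it uses that $t\mapsto c(t)$ is $F$-linear with $c(1)=0$, so $\nu(c(t))=n_T+(\text{filtration level of }t)$ with $n_T$ the valuation of $c$ at a generator (e.g.\ $c(x)=C(x-1)$, $n_T=\ord(C)$ in the split case, $n_T=\nu(c(\alpha))$ or $\nu(c(\varpi_E))$ otherwise), which resolves exactly the obstacle you flagged in step (2) and shows the measure itself is never distorted.
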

\begin{proof}
Let us assume that the embedding is given by
\[
K_\cP^\times\hookrightarrow\GL_2(F_\cP);\qquad t\longmapsto\left(\begin{array}{cc}a(t)&b(t)\\c(t)&d(t)\end{array}\right).
\]
Hence we compute
\[
\theta_T(s)(t)=\mu_{s-1}\left(\left(\begin{array}{cc}
					 1  & \\
					\frac{a(t)}{c(t)}  & 1
					\end{array}\right)\omega \left(\begin{array}{cc}
					 a(t)  & b(t) \\
					c(t)  & d(t)
					\end{array}\right)\right)=\alpha^{\nu(\det(t))}\left|\frac{\det(t)}{c(t)^2}\right|^{1-s}.
\]

{\bf Assume that $K_\cP$ is split}. Thus $T(F_\cP)\simeq F_\cP^\times$. Since any split torus is conjugated to the group of diagonal matrices, we compute that $\det(x)=x$ and $c(x)=C(x-1)$, for some $C\in F_\cP$. Since $K_\cP^\times\not\subset P_\cP$, we have $C\neq 0$. Hence we write $n_T=\ord(C)>-\infty$. We write $\cO_\cP=\cO_{F_\cP}$ and compute
\begin{eqnarray*}
I_T(\chi_\cP,s)&=&\int_{T(F_\cP)}\theta_T(s)(t)\chi_\cP(t)d^\times t=\int_{F_\cP^\times}\chi_\cP(x)\alpha^{\nu( x)}\left|\frac{x}{C^2(x-1)^2}\right|^{1-s}d^\times x\\
&=&q^{(2-2s)n_T}\left(\int_{F_\cP\setminus\cO_\cP}\alpha^{\nu( x)}\chi_\cP(x)|x|^{s-1}d^{\times}x+\int_{\cO_\cP^\times}\chi_\cP(x)|x-1|^{2s-2}d^\times x+\right.\\
&&\left.+\int_{\cO_\cP\setminus\cO_\cP^\times}\alpha^{\nu( x)}\chi_\cP(x)|x|^{1-s}d^{\times}x\right).
\end{eqnarray*}
Two of these integrals can be calculated easily:
\begin{eqnarray*}
\int_{F_\cP\setminus\cO_\cP}\alpha^{\nu (x)}\chi_\cP(x)|x|^{s-1}d^{\times}x
&=&\frac{q^{s-1}\alpha\chi_\cP(\varpi)^{-1}}{1-q^{s-1}\alpha\chi_\cP(\varpi)^{-1}}\int_{\cO_\cP^\times}\chi_\cP(x)d^\times x.
\end{eqnarray*}
\begin{eqnarray*}
\int_{\cO_\cP\setminus\cO_\cP^\times}\alpha^{\nu( x)}\chi_\cP(x)|x|^{1-s}d^{\times}x
&=&\frac{q^{s-1}\alpha\chi_\cP(\varpi)}{1-q^{s-1}\alpha\chi_\cP(\varpi)}\int_{\cO_\cP^\times}\chi_\cP(x)d^\times x.
\end{eqnarray*}
For any $n>0$, we have that $H_n:=1+\varpi^n\cO_\cP$. 
By orthogonality $\int_{H_n}\chi_\cP(x)d^\times x=0$ if $n<n_\chi$, and $\int_{H_n}\chi_\cP(x)d^\times x={\rm vol}(H_n)$ otherwise. 
Since $[\cO^\times:\cO_n^\times]=q^{n-1}(q-1)$ for $n>0$, and the Haar measure satisfies ${\rm vol}(\cO_\cP^\times)=1$, we obtain that ${\rm vol}(H_n)=(q-1)^{-1}q^{1-n}$ ($n>0$),  ${\rm vol}(H_n\setminus H_{n+1})=q^{-n}$ ($n>0$) and ${\rm vol}(\cO_\cP^\times\setminus H_1)=(q-1)^{-1}(q-2)$.
Thus,
\begin{eqnarray*}
\int_{\cO_\cP^\times}\chi_\cP(x)|x-1|^{2s-2}d^\times x&=&\sum_{n\geq0}q^{(2-2s)n}\int_{H_n\setminus H_{n+1}}\chi_\cP(x)d^\times x\\
&=&\left\{\begin{array}{ll}
\frac{q^{(1-2s)n_\chi+1}-q^{(1-2s)(n_\chi-1)}}{(1-q^{1-2s})(q-1)},&n_\chi>0,\\
\frac{q-2+q^{1-2s}}{(q-1)(1-q^{1-2s})},&n_\chi=0.
\end{array}\right.
\end{eqnarray*}
We conclude
\[
I_T(\chi_\cP,s)=\left\{\begin{array}{ll}
\frac{q^{(2-2s)n_T}}{(1-q^{1-2s})(q-1)}(q^{(1-2s)n_\chi+1}-q^{(1-2s)(n_\chi-1)}),&n_\chi>0,\\
\frac{q^{(2-2s)n_T}(1-q^{2s-2})}{(1-q^{-1})(1-q^{1-2s})}\left(\frac{(1-\alpha\chi_\cP(\varpi)q^{-s})(1-\alpha\chi_\cP(\varpi)^{-1}q^{-s})}{(1-\alpha\chi_\cP(\varpi)q^{s-1})(1-\alpha\chi_\cP(\varpi)^{-1}q^{s-1})}\right),&n_\chi=0.
\end{array}\right.
\]

{\bf Assume that  $K_\cP$ is inert}. Thus $K_\cP^\times\simeq \varpi^\Z\cO_{K_\cP}^\times$. 
We have $T(F_\cP)=\cO_{K_\cP}^\times/\cO_\cP^\times$ and $\cO_{K_\cP}=\cO_\cP+\beta\cO_\cP$, for some $\beta\in \cO_{K_\cP}^\times$. Let $n_T:=\nu(c(\beta))$. In this case $H_n:=(\cO_\cP+\varpi^n\beta\cO_\cP)^\times/\cO_\cP^\times$, for every $n\in \N$, we compute
\begin{eqnarray*}
I_T(\chi_\cP,s)&=&\int_{T(F_\cP)}\theta_T(s)(t)\chi_{F_\cP}(t)d^\times t=\int_{\cO_{K_\cP}^\times/\cO_\cP^\times}\chi_\cP(t)|c(t)|^{2s-2}d^\times t\\
&=&\sum_{n\geq 0}\int_{H_n\setminus H_{n+1}}\chi_\cP(t)q^{(n_T+n)(2-2s)}d^\times t.
\end{eqnarray*}
This implies that
\[
I_T(\chi_\cP,s)=\sum_{n_\chi> n\geq 0}q^{(n_T+n)(2-2s)}\int_{H_n\setminus H_{n+1}}\chi_\cP(t)d^\times t+\sum_{n\geq n_\chi}q^{(n_T+n)(2-2s)}{\rm vol}(H_n\setminus H_{n+1})
\]
When the character $\chi\mid_{H_n}$ is not trivial ($n<n_\chi$), we have that
$\int_{H_n}\chi_\cP(t)d^\times t=0$, by orthogonality.
This implies that, if $n<n_\chi-1$,
\[
\int_{H_n\setminus H_{n+1}}\chi_\cP(t)d^\times t=\int_{H_n}\chi_\cP(t)d^\times t-\int_{H_{n+1}}\chi_\cP(t)d^\times t=0.
\]
On the other side, if $n=n_\chi-1$,
\[
\int_{H_{n_\chi-1}\setminus H_{n_\chi}}\chi_\cP(t)d^\times t=\int_{H_{n_\chi-1}}\chi_\cP(t)d^\times t-\int_{H_{n_\chi}}\chi_\cP(t)d^\times t=-{\rm vol}(H_{n_\chi}).
\]
Note that $T(F_\cP)$ is compact and ${\rm vol}(T(F_\cP))=1$. Moreover, 
we compute that ${\rm vol}(H_n)=(q+1)^{-1}q^{1-n}$ whenever $n>0$. Thus ${\rm vol}(H_n\setminus H_{n+1})=(q+1)^{-1}q^{-n}(q-1)$ if $n>0$, and ${\rm vol}(H_0\setminus H_1)=q(q+1)^{-1}$.
With all these computations we obtain the value of $I_T(\chi_\cP,s)$:
\[
I_T(\chi_\cP,s)=\left\{\begin{array}{ll}
\frac{q^{(2-2s)n_T}}{q+1}\left(-q^{(1-2s)(n_\chi-1)}+(q-1)\sum_{n\geq n_\chi}q^{(1-2s)n}\right),&\chi\neq 1\\
\frac{q^{(2-2s)n_T}}{q+1}\left(q+(q-1)\sum_{n>0}q^{(1-2s)n}\right),&\chi= 1.
\end{array}\right.
\]
We conclude that
\[
I_T(\chi_\cP,s)=\left\{\begin{array}{ll}
\frac{q^{(2s-2)n_T}}{(q+1)(1-q^{1-2s})}(q^{(1-2s)n_\chi+1}-q^{(1-2s)(n_\chi-1)}),&\chi\neq 1\\
\frac{q^{(2-2s)n_T}}{(1+q^{-1})(1-q^{1-2s})}(1-q^{-2s}),&\chi= 1.
\end{array}\right.
\]

{\bf Assume that  $K_\cP$ ramifies}. Thus $K_\cP^\times\simeq \varpi_K^\Z\cO_{K_\cP}^\times$, where $\varpi_K^2\in\varpi\cO_\cP$. This implies $T(F_\cP)=\varpi_K^{\Z/2\Z}\times\cO_{K_\cP}^\times/\cO_\cP^\times$ and $|\det(\varpi_K)|=q$. Note that $\cO_{K_\cP}=\cO_\cP+\varpi_K\cO_{\cP}$, and let $n_T:=\ord(c(\varpi_K))$. 
In this case $H_n=(\cO_\cP+\varpi^n\varpi_K\cO_\cP)^\times/\cO_\cP^\times$.
\begin{eqnarray*}
I_T(\chi_\cP,s)=\int_{T(F_\cP)}\theta_T(s)(t)\chi_\cP(t)d^\times t=\int_{T(F_\cP)}\alpha^{\nu(\det(t))}\chi_\cP(t)\left|\frac{\det(t)}{c(t)^2}\right|^{1-s}d^\times t\\
=\int_{\varpi_K\cO_{K_\cP}^\times/\cO_\cP^\times}\alpha q^{s-1}\chi_\cP(t)\left| c(t) \right|^{2s-2}d^\times t+\int_{\cO_{K_\cP}^\times/\cO_\cP^\times}\chi_\cP(t)\left|c(t)\right|^{2s-2}d^\times t\\
=q^{(2n_T-1)(1-s)}\alpha\chi_\cP(\varpi_K)\int_{\cO_{K_\cP}^\times/\cO_\cP^\times}\chi_\cP(t)d^\times t+\sum_{n\geq 0}\int_{H_n\setminus H_{n+1}}\chi_\cP(t)q^{(n_T+n)(2-2s)}d^\times t
\end{eqnarray*}
Again, 
by orthogonality,
$\int_{H_n}\chi_\cP(t)d^\times t=0$ if $n<n_\chi-1$.
Moreover, ${\rm vol}(H_n)=q^{-n}$ and ${\rm vol}(H_n\setminus H_{n+1})=q^{-n-1}(q-1)$, for $n\geq 0$. 
Some computations analogous to the previous cases yield the formula
\begin{eqnarray*}
I_T(\chi_\cP,s)&=&\left\{\begin{array}{ll}
q^{(2-2s)n_T}\left(-q^{(1-2s)(n_\chi-1)}+\sum_{n\geq n_\chi-1}q^{(1-2s)n-1}(q-1)\right),&n_\chi>1\\
q^{(2-2s)n_T}\left(q^{s-1}\alpha\chi_\cP(\varpi_K)+\sum_{n\geq 0}q^{(1-2s)n-1}(q-1)\right),&n_\chi=1
\end{array}\right.\\
&=&\left\{\begin{array}{ll}
\frac{q^{(2-2s)n_T}}{q(1-q^{1-2s})}\left(q^{(1-2s)n_\chi+1}-q^{(1-2s)(n_\chi-1)}\right),&n_\chi>1\\
\frac{q^{(2-2s)n_T}}{1-q^{1-2s}}(1-\alpha\chi_\cP(\varpi_K)q^{-s})(1+\alpha\chi_\cP(\varpi_K)q^{s-1}),&n_\chi=1,
\end{array}\right.
\end{eqnarray*}
since $(\alpha\chi_\cP(\varpi_K))^2=1$.
\end{proof}

\begin{corollary}
The analytic continuation $I_T(\chi_\cP,s)$ satisfies  $I_T(\chi_\cP,0)=0$ if, and only if, $\chi_\cP(t)=\alpha^{\nu(\det(t))}$.
\end{corollary}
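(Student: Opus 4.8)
The plan is to read the statement off the explicit analytic continuation recorded in Theorem \ref{teocompIT}; the only extra ingredient is a concrete description of the character $\chi_0\colon t\mapsto\alpha^{\nu(\det(t))}$ of $T=E^\times/F^\times$.

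First observe that $\chi_0$ is always trivial on the maximal compact subgroup $\cO_E^\times$ of $T$, since $\det$ carries it into $\cO^\times$. Hence for $\chi$ with $\chi\mid_{\cO_E^\times}\neq 1$ we automatically have $\chi\neq\chi_0$; on the other hand Theorem \ref{teocompIT} gives $I_T(\chi,0)=q^{2n_T}L(-1,\eta)\,\zeta(1)\zeta(-2)^{-1}\,q^{n_\chi}$, which is a product of nonzero scalars: $q^{2n_T}\neq 0$; $L(-1,\eta)$ equals $(1-q^{-1})^{-1}$, $(1+q^{-1})^{-1}$ or $1$ according as $E/F$ is split, inert or ramified; and $\zeta(1)\zeta(-2)^{-1}=(1-q^{-2})(1-q)^{-1}\neq 0$. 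So $I_T(\chi,0)\neq 0$ and the asserted equivalence holds in this case, both of its sides being false.

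There remains the case $\chi\mid_{\cO_E^\times}=1$, for which Theorem \ref{teocompIT} gives $I_T(\chi,0)=q^{2n_T}L(-1,\eta)\,\zeta(1)\zeta(-2)^{-1}\,L(-1,\pi,\chi)\,L(0,\pi,\chi)^{-1}$. Here $q^{2n_T}L(-1,\eta)\,\zeta(1)\zeta(-2)^{-1}$ and $L(-1,\pi,\chi)$ are nonzero and finite (transparent from the rational-function expressions for $I_T(\chi,s)$ obtained in the proof of Theorem \ref{teocompIT}; note also that $\chi(\varpi_E)^2=1$ in the ramified situation while $\chi=1$ in the inert one), so $I_T(\chi,0)=0$ if and only if $L(s,\pi,\chi)$ has a pole at $s=0$. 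Reading the three displayed formulas for $L(s,\pi,\chi)$, this pole condition is $\alpha\chi(\varpi)=1$ in the split case, is automatic in the inert case (where $L(s,\pi,\chi)=(1-q^{2s})^{-1}$), and is $\alpha\chi(\varpi_E)=1$ in the ramified case.

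The final, and in my view crucial, step is to make $\chi_0$ explicit on $T$ in each case and match it with these pole conditions, which amounts to computing $t\mapsto\nu(\det(t))$. In the split case $T\cong F^\times$ and, using $\alpha^2=1$, the character $\chi_0$ is the unramified character of $T$ taking the value $\alpha$ at a uniformizer — exactly the solution of $\alpha\chi(\varpi)=1$. In the inert case $\det(T)\subseteq\cO^\times$, so $\chi_0$ is trivial; it is also the unique $\chi$ with $\chi\mid_{\cO_E^\times}=1$ (as $T$ is compact here), consistently with $L(s,\pi,1)=(1-q^{2s})^{-1}$ being singular at $s=0$. In the ramified case $\chi_0$ is trivial on $\cO_E^\times$ and, since $|\det(\varpi_E)|=q$, satisfies $\chi_0(\varpi_E)=\alpha$ — exactly the solution of $\alpha\chi(\varpi_E)=1$. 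Combining these identifications with the previous two paragraphs yields the equivalence. As a sanity check, the implication $\chi=\chi_0\Rightarrow I_T(\chi,0)=0$ is also the content of the remark preceding Theorem \ref{teocompIT}: choosing $f$ with $\int_T\chi_0(t)f(t)\,dt\neq 0$, one gets $0=\int_T\chi_0(t)\Lambda(\delta_T(f))(t)\,dt=C_T\,I_T(\chi_0,0)\int_T\chi_0(t)f(t)\,dt$ from Proposition \ref{propintert}(ii). I expect the explicit computation of $t\mapsto\nu(\det(t))$ in the three cases — not any analytic point — to be where the genuine work lies.
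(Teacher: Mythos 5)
Your argument is correct and follows essentially the same route as the paper: the paper's proof is a one-line reading of Theorem \ref{teocompIT}, noting that the prefactors $\zeta(1)$, $\zeta(-2)^{-1}$, $L(-1,\eta)$, $L(-1,\pi,\chi)$ are nonzero and that $L(0,\pi,\chi)^{-1}=0$ exactly when $\chi(t)=\alpha^{\nu(\det(t))}$. Your contribution is simply to make explicit the case-by-case identification of $\alpha^{\nu(\det(t))}$ as the unramified character with value $\alpha$ at a uniformizer (trivial in the inert case), which the paper leaves to the reader; this is a fleshed-out version of the same proof, not a different one.
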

\begin{proof}
This follows directly from the above result, observing that $\zeta_\cP(-1)\neq 0$, $\zeta_\cP(2)^{-1}\neq 0$, $L(1/2,\pi_\cP,\chi_\cP)\neq 0$ and $L(-1/2,\pi_\cP,\chi_\cP)^{-1}=0$, if, and only if, $\chi_\cP(t)=\alpha^{\nu(\det(t))}$.
\end{proof}

\begin{corollary}\label{Corcalclocal}
Assume that $K_\cP^\times\not\subset P_\cP$, and let $f_1,f_2\in C_\diamond(T(F_\cP),\C)$. Then, there exists a non-zero constant $K_T$, depending on $T(F_\cP)$ and $\pi_\cP$, such that
\[
\alpha_{\pi_\cP,\chi_\cP}(\delta_T(f_1),\delta_T(f_2))=K_Te_\cP(\pi_\cP,\chi_\cP)\left(\int f_1(t)\chi_\cP^{-1}(t)d^\times t\right)\overline{\left(\int f_2(t)\chi_\cP^{-1}(t)d^\times t\right)},
\]
 \begin{eqnarray*}
(K_T,e_\cP(\pi_\cP,\chi_\cP))=
\left\{\begin{array}{ll}
					  \left(c_T\frac{L(1,\eta)}{\xi_\cP(2)},\frac{L(1,\pi_\cP,ad)}{L(1/2,\pi_\cP,\chi_\cP)}\right) & |\alpha|^2=q,\\
					 \left(c_T\bar C_T\frac{L(1,\eta)^2\xi_\cP(-1)q^{2n_T}}{\xi_\cP(2)},\frac{1}{L(-1/2,\pi_\cP,\chi_\cP)}\right), & \alpha=\pm 1,\;\chi_\cP\mid_{\cO_{K_\cP}^\times}=1,\\
					\left(c_T\bar C_T\frac{L(1,\eta)^2\xi_\cP(-1)q^{2n_T}}{\xi_\cP(2)},\frac{q^{n_\chi}}{L(1/2,\pi_\cP,\chi_\cP)}\right), & \alpha=\pm 1,\;\chi_\cP\mid_{\cO_{K_\cP}^\times}\neq1,
					\end{array}\right.
\end{eqnarray*}
where $n_\chi$ is the conductor of $\chi_\cP$.
\end{corollary}

\section{Cohomology of automorphic forms and Shimura curves}\label{Ap2}

For any quaternion algebra $B/F$, we write $G_B(F)^+\subset G_B(F)$ for the subgroup of elements of positive norm.

Let $S$ be a finite set of nonarchimedean places. We shall usually consider $G_B(F)$ in $G_B(\hat F^S)$ by means of the natural monomorphism. 
Given any ring $R$, let $N,M$ be a $R[G(F)]$-module  and a $R$-module respectively. We define $\cA_B^S(N,M)$ to be the module of functions $f:G_B(\hat F^S)\longrightarrow \Hom_R(N,M)$ such that there exists an open compact subgroup $U\subseteq G_B(\hat F^S)$ such that $f(\cdot\; U)=f(\cdot)$.
Note that $\cA_B^S(N,M)$ is equipped with commuting $G_B(F)$- and $G_B(\hat F^S)$-actions: 
\[
\begin{array}{cc}
(\gamma\cdot f)(g)=\gamma\left( f(\gamma^{-1} g)\right),&\gamma\in G_B(F), \\
(h\cdot f)(g)=f(g h)),&h\in G_B(\hat F^S);\\
\end{array}
\]
where $g\in G(\hat F^S)$, $f\in \cA_B^S(N,M)$ and we are considering the usual action of $G_B(F)$ on $\Hom_R(N,M)$. We write $\cA_B^\cQ(N,M)$ instead of $\cA_B^{\{\cQ\}}(N,M)$ and $\cA_B(N,M)$ instead of $\cA_B^\emptyset(N,M)$. Similarly, we define $\cA_B^S(M):=\cA_B^S(R,M)$, where $R$ is endowed with trivial $G_B(F)$-action.
Note that, if $M$ and $N$ are ${\bf C}$-vector spaces for some field ${\bf C}$, then $H^q(H,\cA_B^S(N,M))$ is a smooth $G_B(\hat F^S)$-representation over ${\bf C}$, for any subgroup $H\subseteq G_B(F)$.
\begin{remark}\label{AvsHom}
For any $G_B(F)$-module $M'$ and any $G_B(F_S)$-representation $M$ over $R$, we have an isomorphism of $(G_B(F),G_B(\hat F^S))$-representations:
\begin{equation}\label{eqiso}
\begin{array}{ccc}
\phi:\Hom_{G_B(F_S)}(M,\cA_B(M',N)) &\longrightarrow& \cA_B^S( M\otimes_R M',N)\\
\varphi&\longmapsto&\phi(\varphi)(g)(m\otimes m')=\varphi(m)(g)(m'),
\end{array}
\end{equation}
with inverse $\phi^{-1}(f)(m)(g_S,g)(m'):= f(g)(g_S m\otimes m')$, for all $g\in G_B(\hat F^S)$, $m\in M$, $m'\in M'$ and $g_S\in G_B(F_S)$. 
\end{remark}

\begin{lemma}\label{lemmaext}
Assume that $M$ is an $R$-module and $R\rightarrow R'$ a flat ring homomorphism. Then the canonical map
\[
H^q(G_B(F),\cA_B^S(M)\otimes_R R')\longrightarrow H^q(G_B(F),\cA_B^S(M\otimes_R R'))
\]
is an isomorphism for all $q\geq 0$.
\end{lemma}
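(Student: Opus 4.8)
The plan is to show this is really a statement about homological algebra plus the fact that $\cA_f^S(-)$ commutes with flat base change on the coefficient module in a strong enough sense. The key observation is that $\cA_f^S(M)$ is, by definition, a direct limit over open compact subgroups $U\subseteq G(F^S)$ of the fixed-point modules $\cA_f^S(M)^U$, and each such piece is a (possibly infinite) product of copies of $M$ indexed by the finite set $G(F^S)/(F^S)^\times U$ intersected with a coset system — more precisely, a function on $G(F^S)/(F^S)^\times$ that is right $U$-invariant is determined by its values on the set of double cosets, which is \emph{discrete} but not necessarily finite. So the first technical point to pin down is that $\cA_f^S(M) \cong \prod' M$ over an index set, compatibly with the $G(F)$-action, and that $\otimes_R R'$ does not automatically commute with such infinite products. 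This is why flatness alone on the module level is not obviously enough, and why the group cohomology has to enter.

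The route I would take is: first reduce to a single open compact subgroup $U$ and use that $H^q(G(F), -)$ can be computed by a complex built from a \emph{finite-type} resolution after passing to arithmetic subgroups. Concretely, for each $U$ the strong approximation theorem (using $G(F)^+$ and the quaternionic setting from the start of \S\ref{Ap2}) decomposes $G(F^S)/(F^S)^\times U$ into finitely many orbits under $G(F)$, with stabilizers the relevant $S$-arithmetic subgroups $\Gamma_i$. Shapiro's lemma then gives
\[
H^q\big(G(F),\cA_f^S(M)^U\big)\;\cong\;\bigoplus_i H^q(\Gamma_i, M),
\]
and the analogous decomposition holds with $M$ replaced by $M\otimes_R R'$. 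Now each $\Gamma_i$ is a group of type $FP_\infty$ (it acts properly cocompactly, or with finite-type quotient, on the relevant symmetric space / Bruhat–Tits building product), so $H^q(\Gamma_i, -)$ is computed by a complex of finitely generated modules; since $R\to R'$ is flat, $H^q(\Gamma_i, M)\otimes_R R' \cong H^q(\Gamma_i, M\otimes_R R')$. Taking the direct limit over $U$ (which is exact, and commutes with $\otimes_R R'$ and with group cohomology of the \emph{discrete} group $G(F)$ since cohomology commutes with filtered colimits of coefficient modules) assembles these into the desired isomorphism.

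\medskip

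The main obstacle, as I see it, is handling the passage to arithmetic subgroups cleanly and verifying the $FP_\infty$ (finite homological type) property in the quaternionic/$S$-arithmetic generality allowed here — in the totally definite case the arithmetic groups are finite-by-(free of finite rank) type questions are easy, but in the case split at one archimedean place one needs Borel–Serre type finiteness for the $S$-arithmetic lattice acting on (symmetric space)$\times$(product of trees for $v\in S$). One must also be a little careful that the colimit over $U$ genuinely commutes with everything; the clean way is to fix $U$, prove the $\otimes_R R'$ isomorphism there functorially in $U$, and only then take $\varinjlim_U$. An alternative, and perhaps cleaner, argument avoiding explicit arithmetic subgroups: observe that $\cA_f^S(M)$ is a \emph{coinduced}-type module and that $G(F)$ has finite homological type as a discrete group acting on $G(F^S)/(F^S)^\times U$ with finite-type quotient; then directly build a resolution of $\Z$ by finitely generated $\Z[G(F)]$-modules adapted to each $U$, tensor up, and use flatness. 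Either way, the heart of the matter is the finiteness statement that reduces infinite products of $M$'s to finite ones at the level of cohomology, which is precisely what makes flat base change pass through.
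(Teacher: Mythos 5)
Your proposal follows essentially the same route as the paper's proof: reduce to a fixed open compact subgroup $U$, identify $\cA_f^S(M)^U$ with a coinduced module, use strong approximation and Shapiro's lemma to decompose $H^q(G(F),\Coind_U^{G(F^S)}N)$ into finitely many groups $H^q(\Gamma_{g_i},N)$ for $S$-arithmetic $\Gamma_{g_i}$, and then invoke their homological finiteness to pass the flat base change through cohomology. The only difference is cosmetic: the paper cites that these groups are of type (VFL), so that cohomology commutes with direct limits (writing $R'$ as a limit of finite free $R$-modules), whereas you phrase the same finiteness as an $FP_\infty$-type resolution by finitely generated modules.
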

\begin{proof}
This result can be found essentially in \cite[Corollary 4.7]{Spiess}. 
\end{proof}

\subsection{Abel-Jacobi map on Shimura curves}

In this section, we assume that $B/F$ is a quaternion algebra that splits at a single place $\sigma\mid\infty$. 
Let us consider the $\C$-vector space $\cA(\C)$ of functions $f:G_B(F_\sigma)\times G_B(\hat F)\longrightarrow\C$ such that:
\begin{itemize}
\item There is an open compact subgroup $U\subseteq G_B(\hat F)$ such that $f(\cdot\; U)=f(\cdot)$.

\item $f\mid_{G_B(F_\sigma)}\in C^\infty(\PGL_2(\R),\C)$, under a fixed identification $G_B(F_\sigma)\simeq\PGL_2(\R)$.

\item Fixing $O_\sigma$, a maximal compact subgroup of $G_B(F_\sigma)$ isomorphic to the image of $\rO(2)$, we assume that any $f\in\cA(\C)$ is $O_\sigma$-finite, namely, its right translates by elements of $O_\sigma$ span a finite-dimensional vector space. 

\item We assume that any $f\in\cA(\C)$ must be ${\mathcal{Z}}$-finite, where ${\mathcal{Z}}$ is the centre of the universal enveloping algebra of the Lie algebra of $G_B(F_\sigma)$.
\end{itemize}
Write $\rho$ for the action of $G_B(F_\sigma)\times G_B(\hat F)$ given by right translation, $(\cA(\C),\rho)$ defines a smooth $G_B(\hat F)$-representation and a $(\dG_\sigma,O_\sigma)$-module, where $\dG_\sigma$ is the Lie algebra of $G_B(F_\sigma)$. Moreover, $\cA(\C)$ is also equipped with the $G_B(F)$-action:
\[
(h\cdot f)(g_\sigma,  g)=f(h^{-1} g_\sigma,h^{-1} g),\quad h\in G_B(F),\;g\in G_B(\hat F), \; g_\sigma\in G_B(F_\sigma), \;f\in \cA(\C).
\]
Let us fix an isomorphism $G_B(F_\sigma)\simeq\PGL_2(\R)$ that maps $O_\sigma$ to the image of $\rO(2)$. Let $V$ be a $(\dG_\sigma,O_\sigma)$-module. In analogy with Remark \ref{AvsHom}, we define
\[
\cA^\sigma(V,\C):=\Hom_{(\dG_\sigma,O_\sigma)}(V,\cA(\C)),
\]
endowed with the natural $G_B(F)$- and $G_B(\hat F)$-actions. Let us consider 1-dimensional $\PGL_2(\R)$-representations $\C(\pm 1)$ of Appendix 2 \S \ref{DiscSer}. Note that we have the isomorphism of $(G_B(F), G_B(\hat F))$-modules
\[
s^\pm:\cA^\sigma(\C(\pm 1),\C)\longrightarrow\cA_B(\C(\pm 1),\C);\qquad s^\pm(\phi)(g_f)(z)=\phi(z)(1,g_f).
\]

\subsubsection{Cohomology of a Shimura curve}\label{CohShi}
 
For any open compact subgroup $U\subset G_B(\hat F)$, let us consider the Shimura curve $X_U$, whose set of non-cuspidal points $Y_U(\C)$ is in correspondence with the double coset space: 
\[
Y_U(\C)=G_B(F)^+\backslash \left(\mathfrak{H}\times G_B(\hat F)/ U\right)\subseteq X_U(\C),
\]
where $\mathfrak{H}$ is the Poincar\'e upper half-plane.
It is well known that the space of holomorphic forms $\Omega^1_{Y_U}$ of $Y_U$ can be identified with $H^0(G_B(F),\cA^\sigma(\cD,\C)^U)$ by means of the morphism
\[
H^0(G_B(F),\cA^\sigma(\cD,\C)^U)\longrightarrow\Omega^1_{Y_U};\qquad \phi\mapsto \omega_\phi(g_\infty i,g_f):=\frac{1}{2\pi i}\phi(f_2)f_2^{-1}(g_\infty,g_f)d\tau,
\]
where $\cD$ is the discrete series representation of weight 2, $f_2\in \cD$ is a generator (see Appendix 2 \S \ref{DiscSer}), $g_f\in G_B(\hat F)$, $g_\infty\in G_B(F_\sigma)^+$, and $\tau=g_\infty i\in \mathfrak{H}$.
Note that $\phi(f_2)f_2^{-1}$ is a function on $G_B(F_\sigma)^+/\SO(2)\times G_B(\hat F)=\mathfrak{H}\times G_B(\hat F)$. It defines a differential form on $Y_U$ because $\phi$ is $G(F)$-invariant. Moreover, $\omega_\phi$ is holomorphic since $Lf_2=0$ (see \eqref{rel1}) and $\phi$ is a morphism of $(\dG_\sigma,O_\sigma)$-modules. 

We claim that \eqref{ext-seq-GK1} and \eqref{ext-seq-GK2} provide exact sequences
\begin{eqnarray}
0\longrightarrow \cA_B(\C)\stackrel{\iota^+}{\longrightarrow} \cA^\sigma(I^+,\C)\stackrel{{\rm pr}^+}{\longrightarrow}\cA^\sigma(\cD,\C)\longrightarrow 0;\label{exseqfund1}\\
0\longrightarrow \cA_B(\C(-1),\C)\stackrel{\iota^-}{\longrightarrow} \cA^\sigma(I^-,\C)\stackrel{{\rm pr}^-}{\longrightarrow}\cA^\sigma(\cD,\C)\longrightarrow 0.\label{exseqfund2}
\end{eqnarray}
Indeed, since $\Hom_{(\dG_\sigma,O_\sigma)}(\cdot,\cA(\C))$ is left exact, we only have to check that ${\rm pr}^\pm$ is surjective. We note that $I^{\pm}$ is generated as a $(\dG_\sigma,O_\sigma)$-module by $f_0$, and $Rf_0=f_2$, $Lf_0=f_{-2}$ by \eqref{rel1}. Hence, any $\varphi^\pm\in \cA^\sigma(I^\pm,\C)$ is characterized by $\varphi^\pm(f_0)\in\cA(\C)$. Given $\phi\in\cA^{\sigma}(\cD,\C)$, to find a pre-image $\varphi^\pm\in({\rm pr}^\pm)^{-1}(\phi)$ is equivalent to find a $\SO(2)$-invariant $h=\varphi^\pm(f_0)\in\cA(\C)$, such that $Rh=\phi(f_2)$ and $Lh=\phi(f_{-2})$. Since,
\[
Rh=iye^{2i\theta}\left(\frac{\partial}{\partial x}-i\frac{\partial}{\partial y}\right)h=2i f_2\frac{\partial}{\partial \tau}h,\qquad Lh=-2i f_{-2}\frac{\partial}{\partial \bar\tau}h,
\]
we deduce that any pre-image $\varphi^\pm\in({\rm pr}^\pm)^{-1}\phi$ is characterized by
\begin{equation*}
d\varphi^\pm(f_0)=\frac{1}{2i}\phi(f_2)f_2^{-1}d\tau-\frac{1}{2i}\phi(f_{-2})f_{-2}^{-1}d\bar\tau
\end{equation*} 
Hence, since any closed differential 1-form in $\mathfrak{H}$ is exact, the claim follows.

The difference between exact sequences \eqref{exseqfund1} and \eqref{exseqfund2} is the action of complex conjugation. We know that complex conjugation acts on $Y_U(\C)$ by sending $(\tau,g_f)$ to $(\gamma\bar\tau,\gamma g_f)$, for any $\tau\in\mathfrak{H}$, $g_f\in G_B(\hat F)$ and $\gamma\in G_B(F)\setminus G_B(F)^+$. Given $g_\infty i=\tau\in\mathfrak{H}$,
\[
\bar\omega_\phi(\tau,g_f):=\omega_\phi(\gamma\bar\tau,\gamma g_f)=\frac{1}{2\pi i}\phi(f_2)f_2^{-1}(\gamma g_\infty\omega,\gamma g_f)d\gamma\bar\tau=\frac{-1}{2\pi i}\phi(\omega f_2)f_{-2}^{-1}(g_\infty,g_f)d\bar\tau,
\]
since $f_2(\omega g_\infty\omega)=f_{-2}(g_\infty)$, for all $g_\infty\in \PGL_2(\R)^+$.
 As it is shown in Appendix 2 \S \ref{DiscSer}, $\phi(\omega f_2)=\pm \phi(f_{-2})$ depending whether we have chosen exact sequence \eqref{exseqfund1} or \eqref{exseqfund2}. We deduce that
\begin{equation}\label{primitive}
d\varphi^\pm(f_0)=\pi(\omega_\phi\pm\bar\omega_\phi).
\end{equation} 

Since $\cA_B(\C(-1),\C)$ and $\cA_B(\C)$ are isomorphic as $(G_B(F)^+,G_B(\hat F))$-modules, the composition of the connection morphisms of exact sequences \eqref{exseqfund1} and \eqref{exseqfund2} with the natural restriction map provide morphisms in cohomology
\[
\partial^{\pm}: H^0(G_B(F),\cA^\sigma(\cD,\C)^U)\longrightarrow H^1(G_B(F)^+,\cA_B(\C)^U).
\]
We can identify $H^1(G_B(F)^+,\cA_B(\C)^U)$ with the singular cohomology of $X_U$ with coefficients in $\C$. Moreover, once we interpret $H^0(G_B(F),\cA^\sigma(\cD,\C)^U)$ as the holomorphic differentials of $Y_U$, the morphisms $\partial^+$ and $\partial^-$ correspond to 
\[
C^\pm:\Omega_{Y_U}^1\rightarrow H^1(X_U,\C);\qquad\omega_\phi\longmapsto\left(c\mapsto\int_{c}(\omega_\phi\pm\bar\omega_\phi)\right),
\]
by \eqref{primitive}.

\begin{remark}\label{CCinHOMOL}
Note that $H^1(X_U,\Q)=H^1(X_U,\Q)^+\oplus H^1(X_U,\Q)^-$, where $H^1(X_U,\Q)^\pm$ is the subspace where complex conjugation acts by $\pm 1$, respectively. Then it is clear that $H^1(X_U,\C)^\pm\simeq H^1(G_B(F),\cA_B(\C(\pm 1),\C)^U)$ and $C^\pm(\Omega_{X_U}^1)=H^1(X_U,\C)^\pm$.
\end{remark}


Let $H\subseteq \mathfrak{H}$ 
be a $G_B(F)^+$-subset. 
Write $\Delta_H=\Z[H]$, equipped with the natural degree morphism $\deg:\Delta_H\rightarrow\Z$. If $\Delta_H^0$ is the kernel of $\deg$, then both $\Delta_H$ and $\Delta_H^0$ are $G_B(F)^+$-modules. Moreover, we have the exact sequence
\begin{equation}\label{exseqDelta}
0\longrightarrow\cA_B^S(M)\stackrel{\deg^\ast}{\longrightarrow}\cA_B^S(\Delta_H,M)\longrightarrow\cA_B^S(\Delta_H^0,M)\longrightarrow 0.
\end{equation}

\begin{lemma}\label{pairings}
The well defined $G_B(F)^+$-equivariant morphism
\[
 ev^\pm:\cA^\sigma(I^\pm,\C)\longrightarrow \cA_B(\Delta_H,\C);\qquad ev^\pm(\phi)(g)(\tau)=\frac{1}{\pi}\phi(f_0)(\tau,g),
\]
makes the following diagram commutative
\begin{equation}\label{comdiagev}
\xymatrix{
0\ar[r]& \cA_B(\C(\pm 1),\C)\ar[r]^{\iota^\pm}\ar[d]_{Id}& \cA^\sigma(I^\pm,\C)\ar[r]^{pr^\pm}\ar[d]_{ev^\pm}&\cA^\sigma(\cD,\C)\ar[r]\ar[d]_{ev^\pm_0}& 0\\
0\ar[r]& \cA_B(\C)\ar[r]^{\deg^\ast}& \cA_B(\Delta_H,\C)\ar[r]&\cA_B(\Delta_H^{0},\C)\ar[r]& 0,
}
\end{equation}
where the map $Id:\cA_B(\C(\pm 1),\C)\rightarrow \cA_B(\C)$ is the natural identification as $G_B(F)^+$-modules.
\end{lemma}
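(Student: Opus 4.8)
The plan is to verify that $ev^\pm$ is well-defined and $G(F)$-equivariant, and then to check commutativity of each of the two squares in \eqref{comdiagev}. For well-definedness, I first note that for $\phi\in\cA^\sigma(I^\pm,\C)$, the function $\phi(f_0)$ lies in $\cA(\C)$ and is $\SO(2)\R^\times$-invariant (because $f_0$ spans the trivial $K_\sigma$-type in $I^\pm$; cf.\ the discussion following \eqref{exseqfund2}), so $\tau\mapsto\phi(f_0)(\tau,g)$ descends to a genuine function on $\mathfrak h$, hence restricts to a function on any $G(F)$-stable $H\subseteq\C\setminus\R$; since $\phi$ is fixed by some open compact $U\subseteq G(\hat F)$, so is $ev^\pm(\phi)$, placing it in $\cA_f(\Delta_H,\C)$. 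Equivariance under $G(F)$ is immediate from the defining formulas: for $h\in G(F)$ one has $(h\cdot\phi)(f_0)(\tau,g)=h\bigl(\phi(f_0)(h^{-1}\tau,h^{-1}g)\bigr)$ by the $G(F)$-action on $\cA^\sigma(I^\pm,\C)$, which matches the $G(F)$-action on $\cA_f(\Delta_H,\C)$ under the identification of $\tau\in H$ with the basis element $[\tau]\in\Delta_H$.

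Next I would treat the right-hand square. The map $ev_0^\pm$ is defined so that $ev_0^\pm(pr^\pm(\phi))=$ the image of $ev^\pm(\phi)$ in $\cA_f(\Delta_H^0,\C)$; concretely, for $\psi\in\cA^\sigma(D,\C)$ choose any lift $\phi\in\cA^\sigma(I^\pm,\C)$ with $pr^\pm(\phi)=\psi$ and set $ev_0^\pm(\psi)$ equal to the class of $\frac{1}{\pi}\phi(f_0)$ in $\cA_f(\Delta_H^0,\C)$. To see this is independent of the lift, observe that two lifts differ by an element of $\iota^\pm(\cA_f(\C))$, whose image under $ev^\pm$ lands in $\deg^\ast(\cA_f(\C))$ (this is exactly the content of the left square, below), hence has trivial image in $\cA_f(\Delta_H^0,\C)$. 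Thus the right square commutes by construction once the left square is checked.

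For the left-hand square I must show $ev^\pm\circ\iota^\pm=\deg^\ast$. Here $\iota^\pm$ is the inclusion $\cA_f(\C)\hookrightarrow\cA^\sigma(I^\pm,\C)$ arising from \eqref{ext-seq-GK1}, \eqref{ext-seq-GK2}; it sends $h\in\cA_f(\C)$ (a function on $G(\hat F)/\hat F^\times$) to the morphism $\iota^\pm(h)\colon I^\pm\to\cA(\C)$ determined on the generator $f_0$ by $\iota^\pm(h)(f_0)(\tau,g)=\pi\cdot h(g)$ — that is, the constant-in-$\tau$ extension, normalised so that $\frac1\pi\iota^\pm(h)(f_0)(\tau,g)=h(g)$. (The factor $\pi$ is forced by the same normalisation that makes $ev^\pm(\phi)(g)(\tau)=\frac1\pi\phi(f_0)(\tau,g)$ and that appears in \eqref{primitive}.) Then $ev^\pm(\iota^\pm(h))(g)(\tau)=\frac1\pi\iota^\pm(h)(f_0)(\tau,g)=h(g)$ for every $\tau\in H$, which is precisely $\deg^\ast(h)(g)$, the function $H\to\C$ constant equal to $h(g)$. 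Hence the left square commutes.

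The main obstacle is bookkeeping of the normalising constants: one must pin down the precise form of $\iota^\pm$ on $f_0$ (in particular the factor of $\pi$) so that it is genuinely compatible with the normalisation built into $ev^\pm$ and with \eqref{primitive}; this requires unwinding the construction of the connecting maps from \eqref{ext-seq-GK1} and \eqref{ext-seq-GK2} and the explicit action of $R$, $L$ on $f_0$ recalled after \eqref{exseqfund2}. Once the constants are matched, both squares are formal, and the top row is exact by \eqref{exseqfund1}–\eqref{exseqfund2} while the bottom row is exact by \eqref{exseqDelta}, so the diagram is a genuine morphism of short exact sequences.
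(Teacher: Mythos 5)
Your overall architecture is the same as the paper's: verify $ev^\pm\circ\iota^\pm=\deg^\ast$ directly, then obtain $ev_0^\pm$ and the right-hand square by a diagram chase using exactness of the two rows (the paper likewise disposes of $ev_0^\pm$ with "diagram chasing"). The well-definedness and $G(F)$-equivariance remarks are fine, and your construction of $ev_0^\pm$ as the induced map, with independence of the lift reduced to the left square, is exactly the intended chase.

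The gap is in the left-hand square, which is the only nontrivial content of the lemma. You assert $\iota^\pm(h)(f_0)(\tau,g)=\pi\, h(g)$ and justify the factor $\pi$ as being "forced by the same normalisation" as the $\tfrac1\pi$ appearing in $ev^\pm$; that reasoning is circular, since the normalisation chosen in the definition of $ev^\pm$ cannot determine $\iota^\pm$. The map $\iota^\pm$ is fixed beforehand: the sequences \eqref{exseqfund1}--\eqref{exseqfund2} arise by applying $\Hom_{(\cG_\sigma,K_\sigma)}(\,\cdot\,,\cA(\C))$ to \eqref{ext-seq-GK1}--\eqref{ext-seq-GK2}, so $\iota^\pm(h)$ is $h$ precomposed with the augmentation $I^\pm\to\C$, which Appendix 2 defines as $v\mapsto\int_0^\pi v(\kappa(\theta))\,d\theta$. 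Hence
\[
\iota^\pm(h)(f_0)(\tau,g)=\Bigl(\int_0^\pi f_0(\kappa(\theta))\,d\theta\Bigr)h(g)=\pi\,h(g),
\]
because $f_0(\kappa(\theta))=1$; this one-line evaluation is precisely the computation the paper performs, and it is what makes the square commute on the nose (with $Id$ on the left) rather than merely up to an undetermined scalar. Your appeal to \eqref{primitive} and to the action of $R$ and $L$ on $f_0$ points at the wrong tools, and you explicitly leave the constant as an unresolved "obstacle"; supplying the displayed integral is what closes the argument.
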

\begin{proof}
We compute that
\begin{equation*}
ev^\pm(\iota^\pm(f))(g)(\tau)=\frac{1}{\pi}\iota^\pm(f)(f_0)(\tau,g)=\frac{1}{\pi} f(g)\left(\int_{0}^\pi d\theta \right)=\deg^\ast(f)(g)(\tau).
\end{equation*}
Hence, $\deg^\ast=ev^\pm\circ\iota^\pm$. The existence of $ev^\pm_0$ follows from a diagram chasing.
\end{proof}

The corresponding morphism in cohomology
\[
ev^\pm_0:H^0(G_B(F),\cA^\sigma(\cD,\C)^U)\longrightarrow H^0(G_B(F)^+,\cA_B(\Delta_H^0,\C)^U),
\]
has also a geometric interpretation. Indeed, for any $m=\sum_in_i\tau_i\in\Delta_H^0$ and $g\in G_B(\hat F)$, we have the divisor $(m,gU)=\sum_in_i(\tau_i,gU)\in \Div^0(Y_U)$. By \eqref{primitive}, we have that 
\begin{equation}\label{IntEv0}
ev^\pm_0(\phi)(g)(m)=\int_{(m,gU)}(\omega_\phi\pm\bar\omega_\phi).
\end{equation} 
Hence, $ev^-_0$ and $ev^+_0$ define the real and imaginary part, respectively, of the image of $(m,gU)\in\Div^0(Y_U)$ under the Abel-Jacobi map.

\subsection{Multiplicity one}\label{MultOne}
Let $\Pi$ be an automorphic parallel weight 2 representation of $G_B(\A_F)$, let $\Pi^S$ be its restriction to $G_B(\hat F^S)$, and let $L_\Pi$ be its field of definition. Thus, there is a smooth irreducible $L_\Pi$-representation $V$ of $G_B(\hat F^S)$, such that $\Pi^S\simeq V\otimes_L\C$. For an extension $L/L_\Pi$ and a smooth semi-simple $L$-representation $W$ of $G_B(\hat F^S)$, we write $W_\Pi$ for $\Hom_{G_B(\hat F^S)}(\Pi^S,W):=\Hom_{G_B(\hat F^S)}(V\otimes_{L_\Pi}L,W)$. 

\begin{definition}
A $G_B(\hat F^S)$-representation $W$ over $L$ is of \emph{automorphic type} if $W$ is smooth and semi-simple and the only irreducible subrepresentations of $W$ are either the one-dimensional representations or the $G_B(\hat F^S)$-representations over $L$, attached to automorphic representations of $G_B(\A_F)$ with parallel weight 2 .
\end{definition}

Let $W$ be a $G_B(\hat F^S)$-representation over $L$ of automorphic type.
By strong multiplicity one, $W_\Pi$ is independent of the set $S$ in the following sense: if $S'\supset S$ and $U_{S'}=\prod_{v\in S'\setminus S}U_v$ is the open subgroup of $G_B(F_{S'})$ such that $\dim(\Pi_{S'}^{U_{S'}})=1$, then we have
\[
\Hom_{G_B(\hat F^{S'})}(\Pi^{S'},W^{U_{S'}})=\Hom_{G_B(\hat F^S)}(\Pi^S,W).
\]

\begin{proposition}\label{1-dim}
Assume that the quaternion algebra $B/F$ is either totally definite or splits at a single archimedean place. Then, for any extension $L/L_\Pi$, the $G_B(\hat F)$-representation
$H^q(G_B(F),\cA_B(L))$ is of automorphic type for all $q\in \Z$. Moreover,
\begin{eqnarray*}
H^0(G_B(F),\cA_B(L))_{\Pi} \simeq L\;\;\mbox{and}&H^k(G_B(F),\cA_B(L))_{\Pi}=0\;(k\neq 0);&\mbox{if $B$ totally definite}\\
H^1(G_B(F),\cA_B(L))_{\Pi} \simeq L\;\;\mbox{and}&H^k(G_B(F),\cA_B(L))_{\Pi}=0\;(k\neq 1);&\mbox{otherwise}.
\end{eqnarray*}
\end{proposition}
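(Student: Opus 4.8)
\textbf{Proof strategy for Proposition \ref{1-dim}.}

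The plan is to reduce the computation of $H^q(G(F),\cA_f(\mathbf{C}))$ to the cohomology of the individual arithmetic (or $S$-arithmetic) groups $\Gamma_{g_i}$ appearing in the decomposition from Lemma \ref{lemmaext}, and then to invoke the Eichler--Shimura / Matsushima-type description of that cohomology together with strong multiplicity one. First I would fix a compact open subgroup $U\subseteq G(\hat F)$ and, exactly as in the proof of Lemma \ref{lemmaext}, use the Strong Approximation Theorem to write $G(\hat F)=\coprod_{i=1}^n G(F)g_i U\hat F^\times$, so that
\[
H^q(G(F),\cA_f(\mathbf{C})^U)=\bigoplus_{i=1}^n H^q(\Gamma_{g_i},\mathbf{C}),\qquad \Gamma_{g_i}=G(F)\cap g_i^{-1}U\hat F^\times g_i.
\]
When $G$ is totally definite each $\Gamma_{g_i}$ is finite (modulo center), so over a field $\mathbf{C}$ of characteristic zero only $H^0$ survives and $H^0(\Gamma_{g_i},\mathbf{C})=\mathbf{C}$; taking the limit over $U$ gives that $H^0(G(F),\cA_f(\mathbf{C}))$ is the space of $\mathbf{C}$-valued automorphic forms on $G$ and $H^k$ vanishes for $k\neq 0$. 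When $G$ splits at a single archimedean place $\sigma$, the quotients $\Gamma_{g_i}\backslash\mathfrak{h}$ are (open) Shimura curves, which are $K(\pi,1)$-spaces, so $H^q(\Gamma_{g_i},\mathbf{C})$ computes the Betti cohomology of $X_U$ and is concentrated in degrees $0$ and $1$; after passing to the part with the relevant central/cuspidal behaviour only $H^1$ contributes nontrivially to the $\Pi$-isotypic piece.

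The next step is to identify these cohomology groups as $G(\hat F)$-representations of automorphic type. For this I would appeal to the Eichler--Shimura isomorphism in the definite case (functions on the finite set $G(F)\backslash G(\hat F)/\hat F^\times$ decompose as a direct sum of the trivial-type pieces and the finite-dimensional pieces attached to automorphic representations of $G$ with trivial central character) and, in the indefinite case, to the identification of $H^1(X_U,\mathbf{C})$ recalled in \S\ref{CohShi} together with the standard decomposition of $H^1$ of a Shimura curve into an Eisenstein/trivial part and a cuspidal part built out of weight $2$ automorphic representations. By construction the only irreducible constituents that appear are one-dimensional characters of $G(\hat F)$ (coming from the connected components / the Eisenstein part) and the smooth representations $\Pi^S$ for $\Pi$ an automorphic weight $2$ representation with trivial central character; this is precisely the definition of \emph{automorphic type}. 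Consequently $W:=H^q(G(F),\cA_f(\mathbf{C}))$ is a legitimate input for the formalism of \S\ref{MultOne}, and $W_\Pi=\Hom_{G(\hat F)}(\Pi^S\otimes\text{(rest)},W)$ is well defined and, by strong multiplicity one, independent of $S$.

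Finally, to pin down $\dim_{\mathbf{C}} W_\Pi=1$ in the relevant degree I would use multiplicity one for $\GL_2$ together with the Jacquet--Langlands correspondence: an automorphic representation of $G(\A_F)$ with trivial central character occurs in the discrete spectrum with multiplicity one, so its finite part $\Pi^S$ occurs in $\cA_f(\mathbf{C})$ (definite case) or in $H^1(X_U,\mathbf{C})$ (indefinite case) with multiplicity exactly one; hence $H^0(G(F),\cA_f(\mathbf{C}))_\Pi\simeq\mathbf{C}$ in the definite case and $H^1(G(F),\cA_f(\mathbf{C}))_\Pi\simeq\mathbf{C}$ otherwise, while the vanishing of $H^k_\Pi$ for $k$ outside the stated degree follows from the cohomological dimension bounds above (a point, resp. a curve, being a $K(\pi,1)$ with the stated cohomological dimension). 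The main obstacle I anticipate is bookkeeping the contribution of the non-cuspidal/Eisenstein part and the disconnectedness of $G(\hat F)/\hat F^\times$ in the indefinite case: one has to check carefully that these contribute only one-dimensional (non-automorphic-type-violating) pieces, so that $W$ really is of automorphic type, and that after projecting to the $\Pi$-isotypic component these extra pieces drop out, leaving exactly one copy of $\Pi^S$. The archimedean $(\mathcal{G}_\sigma,K_\sigma)$-cohomology computation that sits behind the Eichler--Shimura isomorphism (i.e. that $H^*(\mathcal{G}_\sigma,K_\sigma;\Pi_\sigma\otimes\text{(weight 2 local system)})$ is one-dimensional in degree $1$ and zero otherwise) is standard but is where the ``weight $2$'' hypothesis is genuinely used.
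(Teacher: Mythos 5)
Your overall strategy --- reduce to the cohomology of the arithmetic groups $\Gamma_{g_i}$ via the class-set decomposition, identify it with the Betti cohomology of a finite set of points (definite case) or of the Shimura curve $X_U$ (indefinite case), check that everything is of automorphic type, and finish with multiplicity one --- is essentially the paper's argument. However, in the indefinite case your final step contains a genuine error: $\Pi$ does \emph{not} occur with multiplicity one in $H^1(X_U,\mathbf{C})$. Eichler--Shimura gives two copies (a holomorphic and an antiholomorphic one), i.e. $H^1(X_U,\mathbf{C})_\Pi\simeq\mathbf{C}\oplus\mathbf{C}$, and what is one-dimensional is only the part fixed by complex conjugation. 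This is exactly where the distinction between $G(F)$ and $G(F)^+$ enters, and your proposal never addresses it: the singular cohomology of $X_U$ is $H^k(G(F)^+,\cA_f(\mathbf{C})^U)$, since only the positive-norm subgroup preserves $\mathfrak{h}$ --- the full group $\Gamma_{g_i}=G(F)\cap g_i^{-1}U\hat F^\times g_i$ contains elements of negative norm at $\sigma$ and does not act on $\mathfrak{h}$ --- whereas the Proposition concerns cohomology of the full $G(F)$. The paper identifies $H^k(G(F),\cA_f(\mathbf{C}))$ with the invariants of $H^k(G(F)^+,\cA_f(\mathbf{C}))$ under $G(F)/G(F)^+\simeq\Z/2\Z$; this conjugation action is complex conjugation on $X_U(\C)$, it interchanges the holomorphic and antiholomorphic pieces, and taking invariants is what cuts $\mathbf{C}\oplus\mathbf{C}$ down to $\mathbf{C}$. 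As written, your argument either computes the cohomology of the wrong group (identifying $H^q(\Gamma_{g_i},\mathbf{C})$ with $H^q$ of the curve, which is only legitimate for the positive-norm subgroup) or relies on a false multiplicity-one claim for Betti $H^1$; the correct dimension comes out only by accident.

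A more minor point: when the quaternion algebra is a division algebra the curves $X_U$ are compact, so the cohomology of the positive-norm groups lives in degrees $0,1,2$, not just $0,1$ as you assert. This is harmless for the $\Pi$-isotypic component, because $H^0$ and $H^2$ contain only one-dimensional constituents (as the paper notes when verifying automorphic type), but the blanket cohomological-dimension claim should be corrected. Your treatment of the definite case is fine and agrees with the paper's.
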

\begin{proof}

Let $U\subset G_B(\hat F)$ be an open compact subgroup.

In the totally definite case, the double coset space $X_U=G_B(F)\backslash G_B(\hat F)/U$ is a finite set of points. The group $H^k(G_B(F),\cA_B(L))^U$ can be identified with the singular cohomology of $X_U$ with coefficients in $L$. Thus, we deduce that $H^k(G_B(F),\cA_B(L))=0$ for $k>0$.
Moreover, $H^0(G_B(F),\cA_B(L))$ is in correspondence with the set of modular forms $\phi:G_B(F)\backslash G_B(\A_F)\slash UG_B(F_\infty)\rightarrow{\bf C}$, for some open compact subgroup $U\subset G_B(\hat F)$. This proves that $H^k(G_B(F),\cA_B(L))$ is of automorphic type and, by multiplicity one, $H^0(G_B(F),\cA_B(L))_{\Pi}=L$.

In case $B$ splits at a single archimedean place $\sigma$, 
the groups $H^k(G_B(F)^+,\cA_B(L)^U)$ can be identified with the singular cohomology of the Shimura curve $X_U$ with coefficients in $L$. 
Thus, $H^k(G_B(F)^+,\cA_B(L))=0$, for $k>2$, $H^0(G_B(F)^+,\cA_B(L))$ and $H^2(G_B(F)^+,\cA_B(L))$ contain only one-dimensional irreducible subrepresentations and, by Eichler-Shimura, $H^1(G_B(F)^+,\cA_B(L))_\Pi=L\oplus L$. 
Moreover, the groups $H^k(G_B(F)^+,\cA_B(L))$ come equipped with an action of $G_B(F)/G_B(F)^+\simeq\Z/2\Z$ given by conjugation. We can identify $H^k(G_B(F),\cA_B(L))$ as the elements of $H^k(G_B(F)^+,\cA_B(L))$ fixed by $G_B(F)/G_B(F)^+$. This implies that $H^k(G_B(F),\cA_B(L))$ is of automorphic type and $H^1(G_B(F),\cA_B(L))_\Pi\simeq L$.
\end{proof}

\section{$p$-adic global distributions and measures}\label{globdistmeas}


\subsection{Definite anticyclotomic distributions}\label{padicdist}

Let $\cP$ be a prime ideal of $F$ above $p$ write $\cO_\cP:=\cO_{F_\cP}$, and denote by $\Sigma_\infty$ the set of infinite places of $F$. Fixing an open subgroup $\Gamma\subseteq T(F^\cP)$, we shall consider the $p$-adic abelian extension of $K$ associated with $\Gamma$, namely, the maximal abelian $p$-adic anticyclotomic Galois group $\cG_{K,\cP}^\Gamma$ such that the Artin map $\rho_A:T(\A_F)/T(F)\rightarrow\cG_{K,\cP}^\Gamma$ factors through $\Gamma$.

\begin{remark}
If, instead of anticyclotomic extensions of $K$, we want to consider cyclotomic extensions of $F$, we only have to consider the trivial extension $K=F\times F$ (Notice that in this case $T(F)=F^\times$). The formalism on the construction of the cyclotomic $p$-adic $L$-function is completely analogous (see \cite{Spiess}).
\end{remark}

Let $B/F$ be a quaternion algebra that splits at $\cP$.
Let $(\Pi,V_\Pi)$ be an automorphic representation of $G_B(\A_F)$, and let us identify $V_\Pi$ with a subrepresentation of $H^0(G_B(F),\cA(\C))$. Let also assume that $(\Pi_\cP,V_{\Pi_\cP})\simeq(\pi_\alpha^\C,V_\alpha^\C)$, where $\alpha=\pm 1$ or $|\alpha|=q$. 
By the \emph{Tensor Product Theorem} \cite[Theorem 3.3.3]{Bump}, $V_\Pi\simeq \bigotimes'_vV_{\Pi_v}$.
Let $U\subset G_B(\hat F)$ be an open compact subgroup satisfying $\dim_\C (\bigotimes_{v\nmid\infty}'V_{\Pi_v})^U=1$ and $U_\cP=\GL_2(\cO_\cP)$ or $K_0(\varpi)$. Write $U=U_\cP\times U^\cP$. Again by the \emph{Tensor Product Theorem}, we have a $G(F_\cP)$-equivariant morphism  
\begin{equation}\label{eqpiindpi}
V_\alpha^\C\simeq V_{\Pi_\cP}\longrightarrow V_\Pi^{U^\cP}.
\end{equation}
\begin{remark}\label{unicitymap}
Note that the above morphism is not unique in general. Nevertheless, since $\dim_\C (\bigotimes_{v\nmid\infty}'V_{\Pi_v})^U=1$, this morphism is unique up to constant if $V_{\Pi_v}$ is trivial for all $v\mid\infty$. This will be the case when the quaternion algebra $B$ is totally definite and the automorphic representation $\Pi$ is of parallel weight 2.
\end{remark}

Assume that for every prime dividing the discriminant of $B$ the extension $K/F$ is non-split. This implies that there exists a (fixed) embedding $\imath:K\hookrightarrow B$. Hence, we can consider $T$ inside $G_B$ by means of $\imath$. Let us also assume that $T(\hat F^\cP)\cap U^\cP=\Gamma$.
We choose $P_\cP\subset G_B(F_\cP)$, some conjugate of the group of upper triangular matrices, in such a way that $T(F_\cP)\not\subset P_\cP$. Let $R\subset \C$ be any ring (endowed with the discrete topology) such that $\alpha\in R^\times$. The map $\delta_T$ of \eqref{deltaT} provides a $T(F_\cP)$-equivariant $R$-module morphism 
\begin{equation}\label{deltaarep}
\delta:C_c(T(F_\cP),R)\stackrel{\delta_{T}}{\longrightarrow}V_\alpha^R\subseteq V_{\Pi_\cP} \stackrel{\eqref{eqpiindpi}}{\longrightarrow} V_\Pi^{U^\cP}.
\end{equation}
Finally, we define the distribution $\mu_{K,\cP}^\Pi$ on $\cG_{K,\cP}^\Gamma$ as follows: for $g\in C(\cG_{K,\cP}^\Gamma,\C)=C_c(\cG_{K,\cP}^\Gamma,\C)_0$,
\[
\int_{\cG_{K,\cP}^\Gamma}g(\gamma)d\mu_{K,\cP}^\Pi(\gamma):=[H_\cP:H]\int_{T(\A_F)/T(F)}g(\rho_A(x))\delta(1_H)(x)d^\times x,
\] 
where $H\subset T(F_\cP)$ is an open subgroup small enough that $g\circ\rho_A$ is $H$-invariant, $1_H\in C_c(T(F_\cP),\Z)$ is the characteristic function of $H$, and $H_\cP$ is the maximal open subgroup of $T(F_\cP)$. A simple computation shows that the above definition does not depend on the choice of $H$. 

In the cyclotomic setting, Leopoldt's conjecture predicts that the maximal abelian extension of $F$ unramified outside $\infty$ and $p$ is isomorphic to $\Z_p$ up to a finite group.
The following lemma describes the free part of the Galois group $\cG_{K,\cP}^\Gamma$, and establishes a big difference between the cyclotomic and the anticyclotomic setting:
\begin{lemma}\label{lemstructGp}
Let $G^{\Gamma}_{K,\cP}$ be the torsion subgroup of $\cG^{\Gamma}_{K,\cP}$. Then $\cG^{\Gamma}_{K,\cP}=G^{\Gamma}_{K,\cP}\times\cG_{K,\cP}$, where
$\cG_{K,\cP}=\Z_p^{[F_\cP:\Q_p]}$. 
\end{lemma}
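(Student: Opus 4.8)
The plan is to analyze $\cG^{\Gamma}_{E,\cP}$ via global class field theory, writing it as a quotient of the idele class group $\A_T/T = \A_E^\times / \A_F^\times E^\times$, and to determine its free and torsion parts by localizing at $\cP$. First I would recall that, by the construction in this section, $\cG^{\Gamma}_{E,\cP}$ is the maximal abelian pro-$p$ quotient of $\A_T/T$ through which the Artin map factors through the fixed open subgroup $\Gamma \subseteq T^\cP$; concretely it is a quotient of $T_\cP \times (T^\cP/\Gamma)$ modulo the closure of the image of the global units $T = E^\times/F^\times$. Since $\Gamma$ is open in $T^\cP$, the factor $T^\cP/\Gamma$ is compact, and after passing to the maximal pro-$p$ quotient it contributes only to the torsion. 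The crucial point is the local factor at $\cP$: since $E/F$ is totally imaginary and $\cP$ lies above $p$, the local torus $T_\cP = E_\cP^\times/F_\cP^\times$ has a maximal pro-$p$ quotient whose free part is $\Z_p^{[F_\cP:\Q_p]}$. This is a standard computation with the logarithm/exponential on the pro-$p$ part of $\cO_{E,\cP}^\times/\cO_{F,\cP}^\times$: up to torsion this unit quotient is $\cO_{E,\cP} \otimes \Z_p$ modulo $\cO_{F,\cP}\otimes\Z_p$, which has $\Z_p$-rank $[E_\cP:\Q_p] - [F_\cP:\Q_p]$ when $\cP$ is inert or ramified, and one argues similarly in the split case; combined with the valuation map the total free rank works out to $[F_\cP:\Q_p] = r$. (This matches the count $r = [F_\cP:\Q_p]$ asserted in the introduction.)

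Next I would address why the global units $E^\times/F^\times$ do not cut down this rank. Here the totally imaginary hypothesis is doing the work: because $E/F$ is a CM-type extension, the complex conjugation $\tau \in \Gal(E/F)$ acts on $\A_T/T$ and the anticyclotomic quotient is precisely the $(-1)$-eigenspace; on the unit side, Dirichlet's theorem gives that the $\Z$-rank of $\cO_E^\times$ equals that of $\cO_F^\times$ (both equal to $r_1(F)+r_2(F)-1 = [F:\Q]-1$ since $F$ is totally real and $E$ is totally imaginary of degree $2[F:\Q]$), so $\cO_E^\times/\cO_F^\times$ is finite. Hence the image of global units in the pro-$p$ completion is finite, contributing only torsion, and it cannot reduce the $\Z_p$-rank contributed by $T_\cP$. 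Therefore the free rank of $\cG^{\Gamma}_{E,\cP}$ is exactly $[F_\cP:\Q_p]$, and I would denote by $\cG_{E,\cP}$ a complementary $\Z_p^{[F_\cP:\Q_p]}$-subgroup.

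Finally, to get the splitting $\cG^{\Gamma}_{E,\cP} = G^{\Gamma}_{E,\cP} \times \cG_{E,\cP}$ as an internal direct product rather than merely a short exact sequence, I would invoke the structure theorem for finitely generated $\Z_p$-modules: any such module is (non-canonically) the direct sum of its torsion submodule and a free part, and the extension splits because $\Z_p^r$ is a free (hence projective) $\Z_p$-module. So one picks a set-theoretic lift of a $\Z_p$-basis of the free quotient, extends it $\Z_p$-linearly, and obtains $\cG_{E,\cP} \cong \Z_p^{[F_\cP:\Q_p]}$ with $\cG^{\Gamma}_{E,\cP} = G^{\Gamma}_{E,\cP} \times \cG_{E,\cP}$.

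The main obstacle is the local rank computation at $\cP$ in the ramified and split cases: one must carefully track how the valuation maps and the unit filtrations on $E_\cP^\times$ and $F_\cP^\times$ interact after taking the quotient $E_\cP^\times/F_\cP^\times$ and then the maximal pro-$p$ quotient, to confirm the free rank is $[F_\cP:\Q_p]$ uniformly rather than varying with the splitting behavior of $\cP$; the global step, by contrast, is a clean application of Dirichlet's unit theorem together with the totally imaginary hypothesis.
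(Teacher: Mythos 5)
Your overall strategy is the same as the paper's (idelic description of $\cG^{\Gamma}_{E,\cP}$, Dirichlet's unit theorem to see that $\cO_E^\times/\cO_F^\times$ is finite, hence $T=E^\times/F^\times$ is discrete in $\hat T$), but there is a genuine gap in the local step, located exactly where you flag your ``main obstacle''. The claim that the maximal pro-$p$ quotient of $T_\cP=E_\cP^\times/F_\cP^\times$ has free part $\Z_p^{[F_\cP:\Q_p]}$ uniformly is false precisely when $\cP$ splits in $E$ (the case the paper actually needs in \S\ref{CaseSplit}): there $T_\cP\simeq F_\cP^\times$, whose pro-$p$ completion has free rank $[F_\cP:\Q_p]+1$, the extra $\Z_p$ coming from the valuation. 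So the resolution you defer to --- ``carefully track the valuation maps and unit filtrations to confirm the free rank is $[F_\cP:\Q_p]$ uniformly'' --- cannot succeed: the local rank genuinely does depend on the splitting behaviour, and the extra valuation direction is only killed globally. Concretely, since $\Gamma$ is open it contains a compact open subgroup, and finiteness of the class number of the torus $T$ (equivalently, principality of a power of the primes of $E$ above $\cP$, together with the fact that the prime-to-$\cP$ unit ideles have finite image in $T^\cP/\Gamma$ because $\Gamma$ is open) shows that, modulo the image of $T$ and torsion, the valuation direction at $\cP$ lands in the image of the compact unit group $\cO_{E,\cP}^\times/\cO_{F,\cP}^\times$. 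This is what the paper's (terse) deduction amounts to: $T^\cP/\Gamma$ is discrete, $T$ is discrete by the unit-rank equality, hence the $\Z_p$-rank of $\cG^{\Gamma}_{E,\cP}$ equals that of $\cO_{E,\cP}^\times/\cO_{F,\cP}^\times$, which is $[F_\cP:\Q_p]$ in every case.

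Two smaller points. First, your assertion that $T^\cP/\Gamma$ is compact is wrong (a quotient by an open subgroup is discrete, and $\Gamma$ need not be cocompact, e.g.\ $T^\cP$ contains noncompact factors $F_w^\times$ at split places $w$), and the inference ``compact, hence contributes only torsion after passing to the maximal pro-$p$ quotient'' is not valid reasoning in general (compact groups such as the local units at other places above $p$ have positive pro-$p$ free rank); what controls the prime-to-$\cP$ contribution is again openness of $\Gamma$ plus the global finiteness above. Second, your splitting of the torsion via the structure theorem is fine, but it presupposes that $\cG^{\Gamma}_{E,\cP}$ is a finitely generated $\Z_p$-module, which also rests on the same class-number finiteness; the paper leaves that point implicit as well.
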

\begin{remark}
Observe that $\cG_{K,\cP}$ does not depend on $\Gamma$.
\end{remark}
\begin{proof}
First note that the Artin map factors through $T(\hat F)/\Gamma T(F)$. Since $\Gamma\subseteq T(\hat F^\cP)$ is open, $T(\hat F)/\Gamma=T(F_\cP)\times T(\hat F^\cP)/\Gamma$ where $T(\hat F^\cP)/\Gamma$ is discrete.
Moreover,
\begin{equation}\label{unitsEunitsF}
\rank_\Z(\cO_K^\times)=\frac{1}{2}[K:\Q]-1=[F:\Q]-1=\rank_\Z(\cO_F^\times).
\end{equation}
Hence $T(F)$ is a discrete subgroup of $T(\hat F)$. We deduce that the $\Z_p$-rank of $\cG^{\Gamma}_{K,\cP}$ coincides with the $\Z_p$-rank of $\cO_{K,\cP}^\times/\cO_{F,\cP}^\times$, which is clearly $[F_\cP:\Q_p]$.
\end{proof}

From now on, we shall consider the distributions $\mu_{K,\cP}^\Pi$ restricted to functions supported on $\cG_{K,\cP}$. 

\subsubsection{Waldspurger formula and interpolation properties}\label{WaldFormula}

Let us consider $(\pi,V_\pi)$, an irreducible cuspidal automorphic representation in $L^2(\GL_2(F)\backslash \GL_2(\A_F))$ with trivial central character and parallel weight $2$, and let $\chi$ be a finite character of $T(\A_F)/T(F)$. Write $L(s,\pi_K,\chi)$ for the Rankin-Selberg $L$-series  associated with $\pi$, $\chi$ and $K$. We also consider the finite sets of places of $F$
\begin{eqnarray}
\Sigma_\pi^\chi:&=&\{v: \;\dim(\Hom_{T(F_v)}(\pi_v\otimes\chi_v,\C))=0\},\\
\hat\Sigma_\pi^\chi:&=&\{v\nmid \infty: \;\dim(\Hom_{T(F_v)}(\pi_v\otimes\chi_v,\C))=0\}.
\end{eqnarray}
Let $B/F$ be a quaternion algebra with ramification set $\Sigma_B$. For any place $v$ of $F$, denote by $(\pi_v^{B},V_{\pi_v^{B}})$ the Jacquet-Langlands lift of the local representation $\pi_v$ on $G_B(F_v)$ (if $v\not\in\Sigma_B$, then $\pi_v^{B}=\pi_v$). 
We also consider the global Jacquet-Langlands lift $(\pi^{B},V_{\pi^{B}})$ of $\pi$ on $G_B(\A_F)$. Invoking again the \emph{Tensor Product Theorem}, $(\pi^{B},V_{\pi^{B}})\simeq(\bigotimes'_v\pi_v^{B},\bigotimes'_vV_{\pi_v^{B}})$. We define the following pairing on $\pi^{B}$
\[
\beta_{\pi^{B},\chi}=\frac{\xi(2)}{L(1,\eta)L(1,\pi,ad)}\prod_v\alpha_{\pi_v^{B},\chi_v},
\]
where 
$\eta:\A_F^\times/F^\times\Norm_{K/F}(\A_K^\times)\rightarrow\{\pm 1\} $
is the quadratic character associated with the extension $K/F$ and $\alpha_{\pi_v^{B}}$ are the local pairings defined in \S \ref{TorInn}.
The pairing $\beta_{\pi^{B},\chi}$ is well-defined by Proposition \ref{propWaldloc}. The following result is due to Waldspurger \cite{Walds}:
\begin{theorem}[Waldspurger]\label{WaldThm}
Assume that $\phi\in V_{\pi^{B}}$ corresponds to $\otimes_v f_v\in\bigotimes'_vV_{\pi_v^{B}}$ under the isomorphism $\pi^{B}\simeq\bigotimes'_v\pi_v^{B}$, then
\[
\left |\int_{T(\A_F)/T(F)}\chi(t)\phi(t)d^\times t\right |^2=\frac{1}{2}L(1/2,\pi_K,\chi)L(1,\eta)\beta_{\pi^{B},\chi}(\otimes_v f_v,\otimes_v f_v).
\]
Moreover, the above expression is $0$ unless $\Sigma_B=\Sigma_\pi^\chi$.
\end{theorem}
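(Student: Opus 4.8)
The plan is the classical two-step strategy for toric period formulas: reduce the identity to a product of purely local statements by multiplicity one, and then pin down the single remaining global constant by an explicit integral representation of the central $L$-value. Write $P_\chi(\phi):=\int_{\A_T/T}\chi(\epsilon)\phi(\epsilon)d^\times\epsilon$ for the global period. First I would handle the vanishing claim: the functional $\phi\mapsto P_\chi(\phi)$ is $\A_T$-invariant on $\pi^{JL}\otimes\chi$, so it lies in $\Hom_{\A_T}(\pi^{JL}\otimes\chi,\C)$, which by the Tensor Product Theorem is the restricted tensor product $\bigotimes'_v\Hom_{T_v}(\pi_v^{JL}\otimes\chi_v,\C)$; in particular $P_\chi\equiv 0$ as soon as one local Hom space vanishes. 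By Proposition \ref{Saito-Tunnell} the local Hom space is non-zero exactly when the local sign condition at $v$ is met, i.e. exactly when $v\in\Sigma_A\Longleftrightarrow v\in\Sigma_\pi^\chi$; hence $P_\chi\equiv 0$ unless $\Sigma_A=\Sigma_\pi^\chi$, and when $\Sigma_A=\Sigma_\pi^\chi$ every local Hom space is one-dimensional.

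Next, assuming $\Sigma_A=\Sigma_\pi^\chi$, both the sesquilinear form $(\phi,\phi')\mapsto P_\chi(\phi)\overline{P_\chi(\phi')}$ and the normalised product of local pairings $\beta_{\pi^{JL},\chi}$ (well defined because $\alpha_{\pi_v^{JL},\chi_v}=1$ for almost all $v$ by Proposition \ref{propWaldloc}(iii)) are $\A_T$-equivariant sesquilinear forms on $\pi^{JL}$, hence both lie in the one-dimensional space $\Hom_{\A_T}(\pi^{JL}\otimes\chi,\C)\otimes\overline{\Hom_{\A_T}(\pi^{JL}\otimes\chi,\C)}$ identified above. Therefore there is a constant $c$, independent of the pure tensor $\phi=\otimes_v f_v$, with
\[
|P_\chi(\phi)|^2=c\cdot\beta_{\pi^{JL},\chi}(\otimes_v f_v,\otimes_v f_v),
\]
and positivity together with Proposition \ref{propWaldloc} forces $c>0$.

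It then remains to show $c=\tfrac12 L(1/2,\pi_E,\chi)L(1,\eta)$, and since $c$ is independent of $\phi$ one may evaluate both sides on a single convenient vector: take $f_v$ spherical at all $v$ outside a finite set $S$ containing the archimedean and ramified places. For $v\notin S$ Proposition \ref{propWaldloc}(iii) gives $\alpha_{\pi_v^{JL},\chi_v}(f_v,f_v)=1$, so $\beta_{\pi^{JL},\chi}(\otimes f_v,\otimes f_v)=\frac{\xi(2)}{L(1,\eta)L(1,\pi,ad)}\prod_{v\in S}\alpha_{\pi_v^{JL},\chi_v}(f_v,f_v)$ with the remaining local factors elementary. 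The left-hand side $|P_\chi(\phi)|^2$ is computed by the Shimura--Shintani--Waldspurger correspondence as in \cite{Walds}: one realises $\phi$ via the theta lift to the metaplectic cover of $\SL_2$ (the torus period of $\pi^{JL}$ becomes a Fourier coefficient of a half-integral weight form), so that $|P_\chi(\phi)|^2$ becomes the product of $L(1/2,\pi_E,\chi)$ with the Petersson norm of the lift; evaluating that norm by the Rankin--Selberg (Shimura) integral produces the factor $\xi(2)L(1,\pi,ad)^{-1}$, the elementary constant $\tfrac12 L(1,\eta)$, and the same local integrals $\prod_{v\in S}\alpha_{\pi_v^{JL},\chi_v}(f_v,f_v)$. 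Matching the two expressions yields the value of $c$ and hence the theorem.

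The two reduction steps are formal once Propositions \ref{Saito-Tunnell} and \ref{propWaldloc} are granted; the genuine difficulty, and the step that uses Waldspurger's deep input, is the identification of the global constant with the central value $L(1/2,\pi_E,\chi)$. This rests on the theta correspondence with the metaplectic group (equivalently, on Jacquet's relative trace formula comparing $\GL_2$ with its inner forms), together with a careful treatment of convergence and analytic continuation near the centre of the functional equation, where the relevant Eisenstein/Rankin--Selberg integrals are only conditionally convergent.
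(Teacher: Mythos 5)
The paper offers no proof of this statement: Theorem \ref{WaldThm} is quoted directly from Waldspurger \cite{Walds}, so the only fair comparison is with Waldspurger's original argument, not with anything internal to the paper. Your first two steps are correct and standard: the period functional lies in $\Hom_{\A_T}(\pi^{JL}\otimes\chi,\C)$, which is controlled by the local spaces, the Saito--Tunnell dichotomy (Proposition \ref{Saito-Tunnel}) shows the relevant local space at $v$ is non-zero exactly when the ramification of $A$ at $v$ matches membership in $\Sigma_\pi^\chi$, giving the vanishing claim, and in the matching case one-dimensionality makes $|P_\chi(\cdot)|^2$ proportional to $\beta_{\pi^{JL},\chi}$.

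Two caveats. First, the assertion that positivity forces $c>0$ is false: the global period can vanish identically even when every local $\Hom$ space is non-zero, and this happens precisely when $L(1/2,\pi_E,\chi)=0$, in which case the formula gives $c=0$; positivity yields only $c\ge 0$. This does not damage your strategy (you determine $c$ by evaluating on a test vector, for which you only need $\beta_{\pi^{JL},\chi}(\phi,\phi)\neq 0$, available from Proposition \ref{propWaldloc} provided you also choose good vectors at the ramified places in $S$, not just spherical vectors outside $S$), but it should not be claimed. Second, and more substantially, your third step --- identifying $c$ with $\tfrac12 L(1/2,\pi_E,\chi)L(1,\eta)$ via the Shimura--Shintani--Waldspurger correspondence and a Rankin--Selberg evaluation of the Petersson norm of the theta lift --- is exactly the content of Waldspurger's theorem; as written it is a pointer to \cite{Walds} (or to Jacquet's relative trace formula) rather than an argument you carry out. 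Since the paper itself treats the result as a citation, this level of deferral is acceptable, but the third step should be presented as an appeal to Waldspurger's work, with your contribution being the (correct) reduction of the stated normalization to that reference.
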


By Saito-Tunnel (Proposition \ref{Saito-Tunnel}), if $K$ does not split at some place $\sigma\mid\infty$, then $\Sigma_\infty\subseteq \Sigma_\pi^\chi$ for every finite character $\chi$. Since the $\C$-vector space in $C(\cG_{K,\cP},\C)$ is generated by the set of finite characters with trivial component outside $\cP$, we deduce the following direct consequence of the Waldspurger formula:
\begin{corollary}\label{consWald}
The condition $\Sigma_\pi^1\setminus\{\cP\}=\Sigma_B$ is necessary for  the Jacquet-Langlands lift  $\pi^{B}$ and the distribution $\mu_{K,\cP}^{\pi^{B}}$ to be non-zero. In particular, $B$ is totally definite and $\#(\hat\Sigma_\pi^1\setminus\{\cP\})+[F:\Q]$ is even.
\end{corollary}

\begin{remark}
The above result implies that whenever $\#(\hat\Sigma_\pi^1\setminus\{\cP\})+[F:\Q]$ is odd we will not be able to construct non-zero anticyclotomic $p$-adic distributions of $\cG_{K,\cP}$ using the previous procedure. In \S \ref{DerAntDist}, we will explain a different procedure to deal with this situation.
\end{remark}


\begin{definition}\label{defdef}
Let $(\pi,V_\pi)$ be an irreducible cuspidal automorphic representation in $L^2(\GL_2(F)\backslash \GL_2(\A_F))$ with trivial central character and parallel weight $2$, and assume that $(\pi_\cP,V_{\pi_\cP})\simeq(\pi_\alpha^\C,V_\alpha^\C)$, where $\alpha=\pm 1$ or $|\alpha|^2=q$. Under the assumption that $\#(\hat\Sigma_\pi^1\setminus\{\cP\})+[F:\Q]$ is even, we define $\mu_{K,\cP}^{\rm def}$ to be the distribution $\mu_{K,\cP}^{\pi^{D}}$ of $\cG_{K,\cP}$, where $\pi^{D}$ is the corresponding Jacquet-Langlands automorphic representation attached to the totally definite quaternion algebra $D/F$ with ramification set $(\hat\Sigma_\pi^1\setminus\{\cP\})\cup\Sigma_\infty$.
\end{definition}

\begin{theorem}[Interpolation Property]\label{intprop1}
There is a non-zero constant $C_K$ depending on $K/F$ such that, 
for any continuous character $\chi:\cG_{K,\cP}\rightarrow\C^\times$,
\[
\left|\int_{\cG_{K,\cP}}\chi(\gamma)d\mu_{K,\cP}^{\rm def}(\gamma)\right|^2=C_K C(\pi_\cP,\chi_\cP) \frac{L(1/2,\pi_K,\chi)}{L(1,\pi,ad)},
\]
where
\[
C(\pi_\cP,\chi_\cP)=\left\{\begin{array}{ll}
					  \frac{L(1,\pi_\cP,ad)}{L(1/2,\pi_\cP,\chi_\cP)}, & |\alpha|^2=q,\\
					 \frac{1}{L(-1/2,\pi_\cP,\chi_\cP)}, & \alpha=\pm 1,\;\chi_\cP\mid_{\cO_{K_\cP}^\times}=1,\\
					 \frac{q^{n_\chi}}{L(1/2,\pi_\cP,\chi_\cP)}, & \alpha=\pm 1,\;\chi_\cP\mid_{\cO_{K_\cP}^\times}\neq1,
					\end{array}\right.
\]
and $n_\chi$ is the conductor of $\chi_\cP$.
\end{theorem}
\begin{proof}
In order to define $\mu_{K,\cP}^{\pi^{D}}$, we have to choose a $G_D(F_\cP)$-equivariant morphism $V_{\alpha}^\C\rightarrow V_{\pi^{D}}^{U^\cP}$, which is unique up to constant by Remark \ref{unicitymap}. 
Assume that $\delta(1_H)$ corresponds to $\bigotimes_v f_v\in\bigotimes'_v V_{\pi^{D}_v}$ under the isomorphism of the \emph{Tensor Product Theorem}. Thus, $f_v\in V_{\pi^{D}_v}^{U_v}$ is a generator of $V_{\pi^{D}_v}$ as $\C[G_D(F_v)]$-module for all $v\nmid \infty\cP$, 
and $f_\cP=\delta_{T}(1_H)$. 
We compute that
\begin{eqnarray*}
\left|\int_{\cG_{K,\cP}}\chi(\gamma)d\mu_{K,\cP}^{\rm def}(\gamma)\right|^2&=&[H_\cP:H]^2\left|\int_{T(\A_F)/T(F)}\chi(\rho_A(t))\delta(1_H)(t)d^\times t\right|^2\\
&=&C_\phi[H_\cP:H]^2\frac{L(1/2,\pi_K,\chi)}{L(1,\pi,ad)}\alpha_{\pi_\cP,\chi_\cP}(\delta_{T}(1_H),\delta_{T}(1_H)),
\end{eqnarray*}
where $C_\phi=\frac{1}{2}\xi(2)\prod_{v\neq \cP}\alpha_{\pi_v^{D},\chi_v}(f_v,f_v)$. Since $\chi$ is a character of $\cG_{K,\cP}$, the components $\chi_v$ at $v\neq \cP$ are trivial. By Proposition \ref{Saito-Tunnel}, the choice of the quaternion algebra ensures that $\alpha_{\pi_v^{D},\chi_v}(f_v,f_v)\neq 0$ for $v\neq \cP$, thus $C_\phi\neq 0$. By Corollary \ref{Corcalclocal},
\[
\alpha_{\pi_\cP,\chi_\cP}(\delta_{T}(1_H),\delta_{T}(1_H))=k_{T}C(\pi_\cP,\chi_\cP){\rm vol}(H)^2,
\]
for some non-zero constant $k_{T}$ depending on $T(F_\cP)$. Finally, the result follows from the fact that ${\rm vol}(H)[H_\cP:H]={\rm vol}(H_\cP)$ is a fixed constant.
\end{proof}

Sometimes it is more convenient to compute the square of the integral rather than the square of its absolute value. Although it is not complicated to compute such square, we have preferred to compute the absolute value in order to have more analogy with the indefinite setting.
Recent results by P-C. Hung compute explicitly such square when $U=\Gamma_0(\mathfrak{n})$ and we have a fixed morphism \eqref{eqpiindpi} provided by a given newform (see \cite{Hung1} for more details).

\subsection{Cohomological interpretation of $\mu_{K,\cP}^{\rm def}$}\label{CoInt}

We showed in the proof of Lemma \ref{lemstructGp} that $T(F)$ is a discrete subgroup in $T(\hat F)$.
The Artin map provides an isomorphism $\cG_{K,\cP}^\Gamma\simeq T(\hat F)/T(F)\Gamma$, for some open compact subgroup $\Gamma\subset T(\hat F^\cP)$. Hence, for any Hausdorff topological ring $R$, we have an injection 
\begin{equation}\label{defpart}
\partial:C(\cG_{K,\cP},R)\hookrightarrow H_0(T(F),C_c(T(\hat F)/\Gamma,R)).
\end{equation}
Moreover, the natural map
\begin{equation}\label{decfunct}
C_c(T(F_\cP),R)\otimes_R C_c(T(\hat F^\cP)/\Gamma,R)_0\stackrel{\simeq}{\longrightarrow}C_c(T(\hat F)/\Gamma,R);\quad (f_\cP\otimes f^\cP)\longmapsto f_\cP\cdot f^\cP,
\end{equation}
is an isomorphism
and, since $T(\hat F^\cP)/\Gamma$ is discrete, any $f^\cP\in C_c(T(\hat F^\cP)/\Gamma,R)_0$ can be written as a finite combination of characteristic functions of $T(\hat F^\cP)/\Gamma$.

Since the representation $(\pi^{D},V_{\pi^{D}})$ satisfies $(\pi^{D}_\cP,V_{\pi_\cP^{D}})\simeq (\pi_\alpha^\C,V_\alpha^\C)$, equation \eqref{eqpiindpi} provides an element $\phi\in\Hom_{G_D(F_\cP)}(V_\alpha^\C,\cA_D(\C)^{U^\cP})=\cA_D^\cP(V_\alpha^\C,\C)^{U^\cP}$, which is unique up to constant by Remark \ref{unicitymap}. Since $\pi^{D}$ is an automorphic representation, $\phi\in H^0(G_D(F),\cA_D^\cP(V_\alpha^\C,\C)^{U^\cP})$. 
\begin{lemma}\label{lemtenpro} 
We have a natural isomorphism 
\[
H^k\left(G_D(F),\cA_D^\cP\left(\Ind_{U_p}^{G_D(F_\cP)}1_{R}/\cI,R\right)\right)\simeq H^k(G_D(F),\cA_D^\cP\left(\Ind_{U_p}^{G_D(F_\cP)}1_{R'}/\cI,R'\right)\otimes_{R'}R),
\]
for any ideal $\cI\subseteq \Ind_{U_p}^{G_D(F_\cP)}1_{R'}$, any $k\in \N$ and any flat ring homomorphism $R'\rightarrow R$.
\end{lemma}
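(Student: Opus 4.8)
The plan is to reduce the statement to Lemma~\ref{lemmaext} by imitating its proof, the only new feature being the $G(F_\cP)$-action carried by the module in the first slot of $\cA_f^\cP$.

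\emph{Setting up the base change.} I would begin by observing that $\Ind_{F_\cP^\times U_p}^{G(F_\cP)}1_{R'}$ is a \emph{free} $R'$-module: the subgroup $F_\cP^\times U_p$ is open, so $F_\cP^\times U_p\backslash G(F_\cP)$ is discrete, and the representation is exactly the space of finitely supported $R'$-valued functions on this discrete set, with basis the characteristic functions of the cosets. Consequently $\Ind_{F_\cP^\times U_p}^{G(F_\cP)}1_{R'}\otimes_{R'}R\cong\Ind_{F_\cP^\times U_p}^{G(F_\cP)}1_{R}$, and since $R'\to R$ is flat, the short exact sequence $0\to\cI\to\Ind_{F_\cP^\times U_p}^{G(F_\cP)}1_{R'}\to\Ind_{F_\cP^\times U_p}^{G(F_\cP)}1_{R'}/\cI\to0$ stays exact after $-\otimes_{R'}R$, with $\cI\otimes_{R'}R$ injecting onto the submodule called $\cI$ in $\Ind_{F_\cP^\times U_p}^{G(F_\cP)}1_{R}$; hence $(\Ind_{F_\cP^\times U_p}^{G(F_\cP)}1_{R'}/\cI)\otimes_{R'}R\cong\Ind_{F_\cP^\times U_p}^{G(F_\cP)}1_{R}/\cI$. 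Thus the claim is precisely that $M\mapsto H^k(G(F),\cA_f^\cP(M,-))$ turns the flat coefficient base change $-\otimes_{R'}R$ into the one on cohomology, for $M=\Ind_{F_\cP^\times U_p}^{G(F_\cP)}1_{R'}/\cI$ — the same sort of assertion as Lemma~\ref{lemmaext}.

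\emph{The case $\cI=0$.} Here Remark~\ref{AvsHom} (taking the trivial $G(F)$-module $R'$ in the second argument) identifies $\cA_f^\cP(\Ind_{F_\cP^\times U_p}^{G(F_\cP)}1_{R'},R')$ with $\Hom_{G(F_\cP)}(\Ind_{F_\cP^\times U_p}^{G(F_\cP)}1_{R'},\cA_f^\emptyset(R'))$, and Frobenius reciprocity for the (compactly) induced module gives, for every open compact $U\subseteq G(F^\cP)$,
\[
\cA_f^\cP(\Ind_{F_\cP^\times U_p}^{G(F_\cP)}1_{R'},R')^U\ \cong\ \cA_f^\emptyset(R')^{U\times U_p}\ =\ \Coind_{U\times U_p}^{G(\hat F)}R',
\]
$U\times U_p$ now being an honest open compact subgroup of $G(\hat F)=G(F^\emptyset)$. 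At this point Lemma~\ref{lemmaext} applies verbatim: its proof writes $H^q(G(F),\Coind_{U\times U_p}^{G(\hat F)}(-))$ as a finite sum $\bigoplus_i H^q(\Gamma_{g_i},-)$ over $\cP$-arithmetic groups of type (VFL), whose cohomology is computed by a finite complex of finitely generated free modules and hence commutes with the exact functor $-\otimes_{R'}R$. So the lemma holds when $\cI=0$.

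\emph{General $\cI$, and the main obstacle.} To pass from $\cI=0$ to arbitrary $\cI$, I would apply $\cA_f^\cP(-,R')$ to the short exact sequence above and to its $R$-analogue, take $H^\bullet(G(F),-)$, and compare the resulting long exact sequences by the five lemma, reducing the statement for $\Ind_{F_\cP^\times U_p}^{G(F_\cP)}1_{R'}/\cI$ to the case $\cI=0$ together with the analogous statement for $\cI$ — the latter handled the same way after presenting $\cI$ by (co)induced $G(F_\cP)$-modules. The delicate point, and what I expect to be the real obstacle, is that $\cA_f^\cP(-,R')\cong\Hom_{G(F_\cP)}(-,\cA_f^\emptyset(R'))$ is only left exact in the first variable, so a short exact sequence of coefficient modules need not give one after applying $\cA_f^\cP$, and $\cA_f^\emptyset(R')$ need not be injective as a smooth $G(F_\cP)$-module once $R'$ is not a $\Q$-algebra. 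One therefore either resolves $\Ind_{F_\cP^\times U_p}^{G(F_\cP)}1_{R'}/\cI$ by (co)induced $G(F_\cP)$-modules and runs a spectral-sequence argument, controlling the groups $\mathrm{Ext}^i_{G(F_\cP)}(-,\cA_f^\emptyset(R'))$ (which again localize, through Remark~\ref{AvsHom} and Strong Approximation, to the finitely many $\Gamma_{g_i}$), or — enough for this paper, where $\cI$ always arises with $\Ind_{F_\cP^\times U_p}^{G(F_\cP)}1_{R'}/\cI$ flat over $R'$ — uses flatness of the quotient to obtain exactness of $\cA_f^\cP(-,R')$ on this particular sequence. The remaining bookkeeping (commuting $U$-invariants, $\Hom_{G(F_\cP)}$, $G(F)$-cohomology and $-\otimes_{R'}R$) is routine and parallels \cite[Corollary~4.7]{Spiess}.
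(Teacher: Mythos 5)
Your reduction of the case $\cI=0$ (Frobenius reciprocity identifying $\cA_f^\cP(\Ind_{F_\cP^\times U_p}^{G(F_\cP)}1_{R'},R')^U$ with a coinduced module, then the (VFL) argument of Lemma \ref{lemmaext}) is fine, as is the observation that flatness gives $(\Ind_{F_\cP^\times U_p}^{G(F_\cP)}1_{R'}/\cI)\otimes_{R'}R\simeq \Ind_{F_\cP^\times U_p}^{G(F_\cP)}1_{R}/\cI$. The genuine gap is the passage to general $\cI$. Your five-lemma strategy needs $\cA_f^\cP(-,R')\simeq\Hom_{G(F_\cP)}(-,\cA_f(R'))$ to be exact on $0\to\cI\to\Ind\to\Ind/\cI\to 0$, and neither of your proposed remedies closes this. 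In particular, flatness of $\Ind/\cI$ over $R'$ is irrelevant: exactness of $\Hom_{G(F_\cP)}(-,\cA_f(R'))$ in the first variable is governed by $\mathrm{Ext}^1_{G(F_\cP)}(\Ind/\cI,\cA_f(R'))$ (equivalently a splitting as smooth $G(F_\cP)$-modules), not by the $R'$-module structure of the quotient. Moreover your devissage also requires "the analogous statement for $\cI$" itself, and in the application $\cI=(\cT-a)\Ind_{F_\cP^\times U_p}^{G(F_\cP)}1_{R'}$ is a quotient of the induced module, not again of (co)induced form, so the induction does not obviously terminate; the spectral-sequence alternative is only named, not carried out.

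The paper avoids all of this and never needs right-exactness. By Remark \ref{AvsHom} the coefficient module is $\Hom_{G(F_\cP)}(\Ind_{F_\cP^\times U_p}^{G(F_\cP)}1_{R}/\cI,\cA_f(R))$, and evaluation at the canonical generator $1_{F_\cP^\times U_p}$ (Frobenius reciprocity, i.e.\ only left-exactness of $\Hom$) embeds it into $\cA_f(R)$ as the $U_pF_\cP^\times$-invariant vectors satisfying the $R'$-linear conditions coming from $\cI$; a cohomology class $\phi$ is then determined by $\phi(1_{F_\cP^\times U_p})\in H^k(G(F),\cA_f(R))$, which equals $H^k(G(F),\cA_f(R'))\otimes_{R'}R$ by Lemma \ref{lemmaext}, and flatness of $R'\to R$ lets the invariance/kernel conditions pass through the base change. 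If you want a complete argument along your lines, replace the short exact sequence by the presentation $\Ind 1_{R'}\stackrel{\cT-a}{\longrightarrow}\Ind 1_{R'}\to\Ind 1_{R'}/\cI\to 0$ and use left-exactness of $\Hom$ to realize the coefficients as a kernel of a map between coinduced modules, to which your $\cI=0$ argument and flat base change of kernels apply; that is essentially the paper's route.
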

\begin{proof}
Let $\phi\in H^k\left(G_D(F),\cA_D^\cP\left(\Ind_{U_p}^{G_D(F_\cP)}1_{R}/\cI,R\right)\right)$. By Remark \ref{AvsHom},
\[
\cA_D^\cP\left(\Ind_{U_p}^{G_D(F_\cP)}1_{R}/\cI,R\right)\simeq\Hom_{G_D(F_\cP)}\left(\Ind_{U_p}^{G_D(F_\cP)}1_{R}/\cI,\cA_D(R)\right)
\]
Moreover, $\phi(1_{U_p})\in H^k(G_D(F),\cA_D(R'))\otimes_{R'}R$ by Lemma \ref{lemmaext}, hence the result follows.
\end{proof}

Since $V_\alpha^{L}=\Ind_{U_p}^{G_D(F_\cP)}1_{L}/(\cT-a)$ ($L=\C$ or $\bar\Q$) by Lemma \ref{lemlattices},
the above result implies that we can assume that $\phi\in H^0(G_D(F),\cA_D^\cP(V_\alpha^{\bar\Q},\bar\Q)^{U^\cP})$.
The $T(F_\cP)$-equivariant $\bar\Q$-module morphism 
\begin{equation*}
\delta_{T}:C_c(T(F_\cP),\bar\Q)\longrightarrow V_\alpha^{\bar\Q},
\end{equation*}
provides a $T(F)$-equivariant morphism
\begin{equation}\label{kappa}
\kappa: \cA_D^\cP(V_\alpha^{\bar\Q},\C_p)^{U^\cP}\longrightarrow C_c(T(\hat F)/\Gamma,\C_p)_0^\vee;\quad  \langle\kappa(\phi),f_\cP\otimes f^\cP\rangle:=\sum_{x\in T(\hat F^\cP)/\Gamma}f^\cP(x)\phi(x)(\delta_\cP(f_\cP)).
\end{equation}
We deduce that 
\[
\int_{\cG_{K,\cP}}g(\gamma)d\mu_{K,\cP}^{\rm def}(\gamma)=\kappa({\rm res}\phi)\cap\partial g,
\]
where ${\rm res}: H^0(G_D(F),\cA_D^\cP(V_\alpha^{\bar\Q},\bar\Q))\rightarrow H^0(T(F),\cA_D^\cP(V_\alpha^{\bar\Q},\bar\Q))$ is the restriction map.

\subsection{Heegner points and Gross-Zagier-Zhang formula}\label{ShimCM}

Let $B/F$ now be a quaternion algebra that splits at a single archimedean place $\sigma$, and admitting an embedding $K\hookrightarrow B$.

Assume that the automorphic representation $\pi$ admits a Jacquet-Langlands lift $\pi^{B}$ to $G_B$. Since $\pi$ has parallel weight 2, $\dim_\C H^{0}(G_B(F),\cA^\sigma(\cD,\C))_{\pi^{B}}=1$. Hence, for some open compact subgroup $U\subset G_B(\hat F)$, there exists $\phi\in H^{0}(G_B(F),\cA^\sigma(\cD,\C))^U$ that generates $\pi^{B}\mid_{G_B(\hat F)}$. Moreover, $\phi$ is unique up to constant.

We have explained how $\phi$ defines an holomorphic differential form $\omega_\phi\in\Omega^1_{Y_U/\C}$, and since $\pi$ is cuspidal, in fact $\omega_\phi\in\Omega^1_{X_U/\C}$. 
Let $\Omega_{X_U}^1$ be the cotangent space of $X_U/F$, and let $\Omega^1_X$ be the $\Q$-vector space
\[
\Omega_X^1=\varinjlim_U \Omega_{X_U}^1\subset \varinjlim_U \Omega_{Y_U/\C}^1\simeq H^0(G_B(F),\cA^\sigma(\cD,\C)).
\]
This is a $\Q$-representation of $G_B(\hat F)$ that decomposes as $\Omega^1_X=\oplus \rho$, where $\rho$ are irreducible $\Q$-representations. 
There exists an irreducible $L_\pi$-representation $\rho_\pi\subset\Omega^1_X$ such that
\[
\rho_\pi\otimes_{L_\pi}\C\simeq\pi^{B}\mid_{G_B(\hat F)},\qquad\rho_\pi\otimes_\Q\C=\oplus_{\tau\in\Gal(L_\pi/\Q)} {^{\tau}}\pi^{B}\mid_{G_B(\hat F)},
\] 
where ${^{\tau}}\pi^{B}=\rho_\pi\otimes_{\tau(L_\pi)}\C\subset H^0(G_B(F),\cA^\sigma(\cD,\C))$.
After scaling conveniently, we can assume that $\omega_\phi\in\rho_\pi$. The above equation allows to embed $\rho_\pi$ in different $\C$-representations $ {^{\tau}}\pi^{B}\mid_{G_B(\hat F)}$. For all $\tau\in\Gal(L_\pi/\Q)$, we write $^\tau\omega_\phi$ for the image of $\omega_\phi$.
The $\Z$-module
\[
\Lambda_\pi:=\left\{\left(\int_c {^\tau}\omega_\phi\right)_{\tau\in\Gal(L/\Q)}, \;c\in H_1(X_U,\Z)\right\}\subset L\otimes_\Q \C\simeq\C^{[L:\Q]},
\]
defines a lattice. Moreover, the complex torus $(L\otimes_\Q\C)/\Lambda_\pi$ defines an abelian variety of $\GL_2$-type $A$ defined over $F$ such that $\End^0(A)=L_\pi$. The Abel-Jacobi map 
\begin{equation}\label{AJmap}
AJ:\Div^0(X_U)\longrightarrow A(\C);\quad m\longmapsto \left(\int_m {^\tau}\omega_\phi\right)_{\tau\in\Gal(L_\pi/\Q)}\mbox{ mod }\Lambda_\pi,
\end{equation}
provides, in fact, a morphism $\Jac(X_U)\rightarrow A$ defined over $F$. Since we are interested in points up to torsion, write $A^0(M)=A(M)\otimes_{\End(A)}L_\pi$, for any field $M$. Hence $A^0(\C)\simeq(L_\pi\otimes_\Q\C)/\Lambda_\pi^0$, where $\Lambda_\pi^0=\Lambda_\pi\otimes_\Z\Q$.

We write $\Delta=\Z[\mathfrak{H}]$ and $\Delta^0=\ker(\deg:\Delta\rightarrow\Z)$.  Let us consider the morphism $\lambda_\pi:\cA_B(\Delta^0,\C)\longrightarrow\cA_B(\Delta^0,A^0(\C))$ given by
\[
\lambda_\pi\psi(m,g)=1\otimes\psi(m,g)\mod\Lambda_\pi^0\in (L\otimes_\Q\C)/\Lambda^0_\pi\simeq A^0(\C).
\]
Recall the morphisms $ev^\pm_0:\cA^\sigma(\cD,\C)\rightarrow\cA_B(\Delta^0,\C)$ of Lemma \ref{pairings}. Thus, the composition of $ev_0:=ev^+_0+ev^-_0$ and $\lambda_\pi$ provides a morphism of $G_B(\hat F)$-representations over $L_\pi$:
\[
\varphi:\rho_\pi\longrightarrow H^0(G_B(F)^+,\cA_B(\Delta^0,A^0(\C))),\quad\mbox{hence}\quad\varphi\in H^0(G_B(F)^+,\cA_B(\Delta^0,A^0(\C)))_{\pi^{B}}.
\]

Let us consider the $L_\pi$-module $A^0(\C)$, and the exact sequence \eqref{exseqDelta}
\[
0\longrightarrow\cA_B(A^0(\C))\longrightarrow\cA_B(\Delta,A^0(\C))\longrightarrow\cA_B(\Delta^0,A^0(\C))\longrightarrow 0.
\]
This provides an exact sequence in cohomology
\begin{eqnarray*}
H^0(G_B(F)^+,\cA_B(A^0(\C)))_{\pi^{B}}&\longrightarrow& H^0(G_B(F)^+,\cA_B(\Delta,A^0(\C)))_{\pi^{B}}\longrightarrow\\
\longrightarrow H^0(G_B(F)^+,\cA_B(\Delta^0,A^0(\C)))_{\pi^{B}}&\stackrel{\partial}{\longrightarrow}& H^1(G_B(F)^+,\cA_B(A^0(\C)))_{\pi^{B}}.
\end{eqnarray*}

\begin{lemma}\label{lempartial0}
We have that $H^0(G_B(F)^+,\cA_B(A^0(\C)))_{\pi^{B}}=0$. Moreover, $\partial\varphi=0$.
\end{lemma}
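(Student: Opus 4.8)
The plan is to show both statements by exhibiting $H^0$ and $H^1$ of $\cA_f(A^0_\pi(\C))$ in terms of the cohomology of $\cA_f(\C)$, using that $A^0_\pi(\C)\simeq(L\otimes_\Q\C)/\Lambda^0_\pi$ is a quotient of the $\C$-vector space $L\otimes_\Q\C$ by the lattice $\Lambda^0_\pi$ (more precisely $\Lambda^0_\pi=\Lambda_\pi\otimes_\Z\Q$, a $\Q$-subspace structure). First I would record the short exact sequence of $G(F)$-modules (with trivial $G(F)$-action on the coefficients)
\[
0\longrightarrow \Lambda^0_\pi\longrightarrow L\otimes_\Q\C\longrightarrow A^0_\pi(\C)\longrightarrow 0,
\]
apply the exact functor $\cA_f(-)=\cA_f(R,-)$ (it is exact because $\cA_f(M)^U=\Coind_U^{G(\hat F)}M$ and coinduction is exact), and pass to $G(F)$-cohomology. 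Taking $\pi^{JL}$-isotypic components — which is again exact, since it is a direct summand functor on semisimple representations of automorphic type by Proposition \ref{1-dim} — we get a long exact sequence relating $H^\bullet(G(F),\cA_f(L\otimes_\Q\C))_{\pi^{JL}}$, $H^\bullet(G(F),\cA_f(A^0_\pi(\C)))_{\pi^{JL}}$ and the shift of $H^\bullet(G(F),\cA_f(\Lambda^0_\pi))_{\pi^{JL}}$.

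Next I would invoke Proposition \ref{1-dim}: since $G$ splits at a single archimedean place, $H^k(G(F),\cA_f(\C))_{\pi^{JL}}=0$ for $k\neq 1$ and is one-dimensional for $k=1$. By Lemma \ref{lemmaext} (flatness of $\Q\hookrightarrow\C$ and of $\C\hookrightarrow L\otimes_\Q\C$, the latter being a finite free $\C$-module) this gives $H^0(G(F),\cA_f(L\otimes_\Q\C))_{\pi^{JL}}=0$ and $H^0(G(F),\cA_f(\Lambda^0_\pi))_{\pi^{JL}}=0$ as well. From the long exact sequence the term $H^0(G(F),\cA_f(A^0_\pi(\C)))_{\pi^{JL}}$ is squeezed between these two zero groups, hence vanishes, giving the first assertion.

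For $\partial\varphi=0$: the connecting map $\partial$ lands in $H^1(G(F),\cA_f(A^0_\pi(\C)))_{\pi^{JL}}$, and by the same long exact sequence this group is a quotient of $H^1(G(F),\cA_f(L\otimes_\Q\C))_{\pi^{JL}}$. The point is that the class $\varphi$ is constructed via $ev_0=ev_0^++ev_0^-$ followed by $\lambda_\pi$, and $ev_0^\pm$ is the cohomological shadow of the genuine Abel-Jacobi integration against $\omega_\phi\in\rho_\pi$ (see \eqref{IntEv0}); so I would argue that $\partial\varphi$ is represented by a cocycle valued in $\cA_f(L\otimes_\Q\C)$ whose class in $H^1(G(F),\cA_f(L\otimes_\Q\C))_{\pi^{JL}}$ is precisely the obstruction to lifting the holomorphic differential $\omega_\phi$ to a function on divisors rather than degree-zero divisors — equivalently, it is the image of the class $\partial^\pm(\phi)\in H^1(G(F),\cA_f(\C)^U)$ under the natural map induced by $\C\hookrightarrow L\otimes_\Q\C$, composed with the period quotient. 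Since the Abel-Jacobi map $AJ:\Div^0(X_U)\to A_\pi(\C)$ of \eqref{AJmap} is well-defined — i.e. independent of the choice of a path and of a degree-zero representative modulo $\Lambda_\pi$ — the lift exists after passing to the quotient by $\Lambda^0_\pi$, which is exactly the statement that the image of $\partial^\pm(\phi)$ in $H^1(G(F),\cA_f(A^0_\pi(\C)))_{\pi^{JL}}$ vanishes. Concretely, one chooses for each $g$ a divisor of positive degree mapping correctly and checks the cocycle cobounds modulo periods.

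The main obstacle I anticipate is the second part: making precise that $\partial\varphi$ is literally the Abel-Jacobi obstruction and not merely analogous to it. This requires carefully tracking the diagram \eqref{comdiagev} of Lemma \ref{pairings} — comparing the connecting map of \eqref{exseqDelta} with that of \eqref{exseqfund1}/\eqref{exseqfund2} via $ev^\pm$ and $ev_0^\pm$ — so that $\partial\varphi$ is identified with $\lambda_\pi\circ\mathrm{res}\circ(\partial^++\partial^-)(\phi)$, and then using that $\lambda_\pi$ kills exactly the period lattice. Once that identification is in hand, vanishing is immediate from the well-definedness of $AJ$; all the remaining verifications (exactness of the coefficient sequence, flatness, compatibility of isotypic projection with the long exact sequence) are routine given Lemmas \ref{lemmaext}, \ref{pairings} and Proposition \ref{1-dim}.
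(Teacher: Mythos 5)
Your argument for the second assertion is essentially the paper's: reduce to the generator $\omega_\phi$, identify $\partial\varphi$ with $\lambda_\pi\bigl((\partial^++\partial^-)\phi\bigr)$ via the diagram \eqref{comdiagev} and the identifications of \S\ref{CohShi} (Remark \ref{CCinHOMOL}), and observe that the resulting cocycle is given by the full period integrals, which lie in $\Lambda_\pi$ and hence die under $\lambda_\pi$. One caveat: be careful not to phrase this as the vanishing of ``the image of $\partial^\pm(\phi)$'' individually. Neither $\lambda_\pi(C^+(\omega_\phi))$ nor $\lambda_\pi(C^-(\omega_\phi))$ vanishes by itself, since $\int_c(\omega_\phi\mp\bar\omega_\phi)$ is not a period; the point is precisely that $\varphi$ is built from $ev_0=ev_0^++ev_0^-$, so that $C^+(\omega_\phi)(c)+C^-(\omega_\phi)(c)=2\bigl(\int_c{}^\tau\omega_\phi\bigr)_\tau\in\Lambda_\pi$. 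With that said, this half of your proposal is correct and coincides with the paper's proof.

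The first assertion is where there is a genuine gap. You invoke the long exact sequence attached to $0\to\Lambda^0_\pi\to L\otimes_\Q\C\to A^0_\pi(\C)\to 0$ and claim $H^0(G(F),\cA_f(A^0_\pi(\C)))_{\pi^{JL}}$ is ``squeezed between'' $H^0(G(F),\cA_f(L\otimes_\Q\C))_{\pi^{JL}}=0$ and $H^0(G(F),\cA_f(\Lambda^0_\pi))_{\pi^{JL}}=0$. But $H^0(\cA_f(\Lambda^0_\pi))$ is not adjacent to the term you want: the sequence reads $H^0(\cA_f(L\otimes_\Q\C))\to H^0(\cA_f(A^0_\pi(\C)))\to H^1(\cA_f(\Lambda^0_\pi))$, and by Proposition \ref{1-dim} together with Lemma \ref{lemmaext} the group $H^1(G(F),\cA_f(\Lambda^0_\pi))_{\pi^{JL}}$ is \emph{nonzero} (it is isomorphic to $\Lambda^0_\pi$, since $\Lambda^0_\pi$ is an $L$-vector space and $H^1(G(F),\cA_f(L))_{\pi^{JL}}\simeq L$). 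So the squeeze as written does not give vanishing; it only gives an injection into a nonzero group. The conclusion can be rescued along your lines by noting that $\Lambda^0_\pi\hookrightarrow L\otimes_\Q\C$ is an injection of $\Q$-vector spaces, so by flatness $H^1(\cA_f(\Lambda^0_\pi))_{\pi^{JL}}\to H^1(\cA_f(L\otimes_\Q\C))_{\pi^{JL}}$ is injective, forcing the connecting map to vanish and hence $H^0(\cA_f(A^0_\pi(\C)))_{\pi^{JL}}$ to be a quotient of $H^0(\cA_f(L\otimes_\Q\C))_{\pi^{JL}}=0$. The paper avoids all of this with a more direct argument that you could adopt: $A^0_\pi(\C)$ is a free $L$-module (of infinite rank), and since cohomology of $\cA_f(-)$ commutes with direct limits of coefficient modules (the argument of Lemma \ref{lemmaext}), the vanishing follows at once from $H^0(G(F),\cA_f(L))_{\pi^{JL}}=0$ of Proposition \ref{1-dim}.
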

\begin{proof}
Note that $A^0(\C)$ is a free $L_\pi$-module of infinite rank, hence the first claim follows from the fact that $H^0(G_B(F)^+,\cA_B(L_\pi))_{\pi^{B}}=0$ (proof of Proposition \ref{1-dim}). 

On the other side, we know that $\rho_\pi$ is generated by $\omega_\phi\in\rho_\pi^U$. Thus, if we prove that $\partial(\varphi(\omega_\phi))=0$ the assertion will follow. 

By Remark \ref{CCinHOMOL}, we have that 
\[
\partial(\varphi(\omega_\phi))=\lambda_\pi(\partial^+\phi+\partial^-\phi)=\lambda_\pi\left(C^+(\omega_\phi)+C^-(\omega_\phi)\right),
\]
under the identifications given in \S \ref{CohShi}. For all $c\in H_1(X_U,\Z)$,
\[
C^+(\omega_\phi)(c)+C^-(\omega_\phi)(c)=2\left(\int_c{^\tau}\omega_\phi\right)_\tau\in\Lambda_\pi.
\]
Hence, the result follows.
\end{proof}

The above Lemma implies that there exists a unique $\bar\varphi\in H^0(G_B(F)^+,\cA_B(\Delta,A^0(\C)))_{\pi^{B}}$ extending $\varphi$. Let us describe $\bar\varphi(\omega_\phi)$ geometrically in two different ways: On the one hand, 
\[
\bar\varphi(\omega_\phi):\mathfrak{H}\times G_B(\hat F)\longrightarrow X_U(\C)\longrightarrow A^0(\C) 
\]
can be described as the extension of the composition of the $F$-morphism $\iota:X_U\rightarrow\Jac(X_U)$, given by a suitable multiple of the Hodge class (resp., of the cusp at infinity if $B = \M_2(\Q)$), with the modular parametrization $\Jac(X_U)\rightarrow A$. Indeed, on the degree zero divisors,
\[
\bar\varphi(\omega_\phi)(z_1-z_2,g)=\lambda_\pi \circ (ev^-_0+ev^+_0)(\omega_\phi)(z_1-z_2,g)=\left(\int_{(z_1,gU)}^{(z_2,gU)}{^\tau}\omega_\phi\right)_\tau\mod\Lambda_\pi,
\]
by \eqref{IntEv0}. On the other hand, let $\cH_U=(\Ind_{U}^{G_B(\hat F)}1_\Z)^U$ be the Hecke algebra of compactly supported and $U$-bi-invariant functions. By Frobenius reciprocity, $\cH_U$ acts on the 1-dimensional space $\rho_\pi^U$, providing a morphism of algebras
\[
\lambda:\cH_U\longrightarrow L_\pi,\qquad \lambda(\cT)\omega_\phi=\cT\ast\omega_\phi,\mbox{ for all }\cT\in\cH_U.
\] 
let $\dL$ be a prime of $F$ such that $U_\dL\simeq\GL_2(\cO_{F_\dL})/\cO_{F_\dL}^\times$, and the class of $\dL$ is trivial in $\hat F^\times/N(U)F^+$, where $N : (B\otimes_F\hat F)^\times \rightarrow \hat F^\times$ is the norm map. Write 
\[
\cV:=T_\dL-(\ell+1)1_U\in\cH_U,\qquad T_\dL=1_{Ug_\ell U},\quad g_\ell=\left(\begin{array}{cc}\varpi_\ell&\\&1\end{array}\right),
\]
where $\omega_\ell\in\cO_{F_\dL}$ is an uniformizer and $\ell=\#(\cO_{F_\dL}/\omega_\ell)$. 
Notice that $\lambda(\cV)=a_\dL-\ell-1$, where $|a_\dL|\leq \ell^{1/2}$, hence $\lambda(\cV)\neq 0$.

\begin{lemma}
For any $(z,gU)\in\Div(X_U)$,
\[
\cV\ast(z,gU)={\rm vol}(U)^{-1}\int_{G_B(\hat F)}\cV(h)(z,ghU)dh\in\Div^0(X_U).
\]
\end{lemma}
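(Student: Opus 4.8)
The plan is to reduce both sides of the claimed identity to finite sums over coset representatives, turning the equality into a bookkeeping matter, and then to read off the degree–zero statement from the defining property of $\cH_U^0$.

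First I would unwind the Hecke action explicitly. The algebra $\cH_U$ is spanned over $\Z$ by the characteristic functions $1_{UhU}$ of double cosets, each taken modulo the centre $\hat F^\times$; such a double coset decomposes into finitely many left cosets $UhU=\bigsqcup_j h_jU$, and the geometric action on $\Div(X_U)$ is $1_{UhU}\ast(z,gU)=\sum_j(z,gh_jU)$, extended $\Z$–linearly. On the other hand, since $1_{UhU}$ is right $U$–invariant and central, and since each coset $h_jU$ has volume $\vol(U)$ in $G(\hat F)/\hat F^\times$ for the chosen Haar measure, one has $\vol(U)^{-1}\int_{G(\hat F)}1_{UhU}(h')\,(z,gh'U)\,dh'=\sum_j(z,gh_jU)$. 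Writing $\cT=\sum_i n_i 1_{Uh_iU}$ and summing, the displayed integral formula for $\cT\ast(z,gU)$ drops out; in particular it exhibits $\cT\ast(z,gU)$ as a genuine element of $\Div(X_U)$.

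Next I would verify $\cT\ast(z,gU)\in\Div^0(X_U)$. Let $N:G(\hat F)\to\hat F^\times$ be the reduced norm, and let $\deg:\Div(X_U)\to\Z[F^\times\backslash\hat F^\times/N(U)]$ be the degree map sending $(z,g'U)$ to the class of $N(g')$; this is well defined because $N$ carries $G(F)$ into $F^\times$ and $U$ into $N(U)$, and it is independent of the archimedean coordinate. For $a\in\hat F^\times$, a point $(z,ghU)$ occurring in the above sum contributes to the $a$–component of $\deg(\cT\ast(z,gU))$ exactly when $N(gh)=N(g)N(h)\in F^\times aN(U)$, that is, when $h\in N^{-1}(F^\times a'N(U))$ with $a'=N(g)^{-1}a$; hence that component equals $\vol(U)^{-1}\int_{N^{-1}(F^\times a'N(U))}\cT(h)\,dh$, which is $0$ by the definition of $\cH_U^0$ as $a'$ ranges over $\hat F^\times$. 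Thus $\deg(\cT\ast(z,gU))=0$.

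The one point requiring care, and the likely main obstacle, is matching the target of the degree map with the intended meaning of $\Div^0(X_U)$: one must check that $(z,g'U)\mapsto[N(g')]$ really factors through $\Div(X_U)$ and that $\Div^0(X_U)$ is precisely its kernel. This rests on the reduced norm of an element of $G(F)$ being a principal idele and on the surjectivity of $N$ on each local group $G(F_v)$ — so that the norm classes actually realised exhaust $F^\times\backslash\hat F^\times/N(U)$ — and the condition defining $\cH_U^0$ has been tailored precisely to this. Granting this, the remaining work is only the routine coset manipulation above.
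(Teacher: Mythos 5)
Your argument is correct and follows essentially the same route as the paper: both proofs rest on the identification (via strong approximation, since $A$ splits at exactly one archimedean place) of the components of $X_U$ with norm classes in $F^\times\backslash\hat F^\times/N(U)$, and then apply the defining condition of $\cH_U^0$ to the fiber $N^{-1}(F^\times a' N(U))$ with $a'=N(g)^{-1}a$ to get degree zero on each component. Your preliminary unwinding of the Hecke action into left cosets, and your packaging of the componentwise degree as a map to $\Z[F^\times\backslash\hat F^\times/N(U)]$, are only cosmetic variations on the paper's regrouping of the integral over norm classes.
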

\begin{proof}
Since $B$ is a quaternion algebra that splits at a single archimedean place, the norm map induces an isomorphism $G_B(F)^+\backslash G_B(\hat F)/U\simeq F^+\backslash \hat F^\times/N(U)$. 
We compute,
\begin{eqnarray*}
\cV\ast(z,gU)&=&{\rm vol}(U)^{-1}\sum_{[a]\in F^+\backslash \hat F^\times/N(U)}\int_{N^{-1}([a])}\cV(h)(z,ghU)dh\\
&=&\left({\rm vol}(U)^{-1}\int_{U}\cV(h)(\gamma_{gh} z)dh,gg_\ell U\right),
\end{eqnarray*}
where $\gamma_{gh}\in G_B(F)^+$ satisfies $\gamma_{gh}^{-1}gg_\ell U=ghU$. By definition, the divisor
\[
{\rm vol}(U)^{-1}\int_{U}\cV(h)(\gamma_{gh} z)dh \in\Z[\mathfrak{H}]
\]
has degree zero, hence $\cV\ast(z,gU)\in\Div^0(X_U)$.
\end{proof}
Then, 
\begin{equation}\label{2nd}
\bar\varphi(\omega_\phi)(z,g)=\lambda(\cV)^{-1}AJ(\cV\ast(z,gU)), 
\end{equation}
where $AJ$ is the Abel-Jacobi map of \eqref{AJmap}. Moreover, the above description does not depend on $\cV$.

Let $\tau_K\in\mathfrak{H}$ be the unique point such that $t\tau_K=\tau_K$, for all $t\in K^\times$. Such $\tau_K$ defines a $G_B(F)^+$-equivariant monomorphism $\Ind_{K^\times}^{G_B(F)^+}1_\Z\hookrightarrow \Delta$. Restricting $\bar\varphi$ we obtain,
\[
\Phi_T:\rho_\pi\longrightarrow H^0(G_B(F)^+,\Coind_{K^\times}^{G_B(F)^+}\cA_B(A^0(\C)))=H^0(K^\times,\cA_B(A^0(\C))).
\]
\emph{Shimura's reciprocity law} asserts that the image of $\Phi_T(\omega_\phi)$ lies in $A^0(K^{ab})$, where $K^{ab}$ is the maximal abelian extension of $K$, and the Galois action of $\Gal(K^{ab}/K)$ is given by
\[
{^{\rho_A(t)}}\Phi_T(\omega_\phi)(g)=\Phi_T(\omega_\phi)(tg),\qquad t\in \hat K^\times/K^\times,\quad g\in G_B(\hat F),
\] 
where $\rho_A:\hat K^\times/K^\times\rightarrow\Gal(K^{ab}/K)$ is the Artin map. Thus,
\[
\Phi_T\in H^0(K^\times,\cA_B(A^0(K^{ab})))_{\pi^{B}}.
\]

In analogy with \S\ref{WaldFormula}, we consider the pairing on $\pi^{B}\mid_{G_B(\hat F)}\simeq\bigotimes'_{v\nmid\infty}V_{\pi_v^{B}}$
\[
\hat\beta_{\pi^{B},\chi}=\frac{\xi(2)}{L(1,\eta)L(1,\pi,ad)}\prod_{v\nmid\infty}\alpha_{\pi_v^{B},\chi_v},
\]
and let us also consider the Neron-Tate pairing on $A$:
$$
\langle\;,\;\rangle:A(\bar\Q)\times A(\bar\Q)\longrightarrow\R.
$$
By Galois and Hecke equivariance, the Neron-Tate pairing provides a $\hat K^\times/K^\times$-invariant Hermitian pairing $\langle \;,\;\rangle$ on $A^0(K^{ab})\otimes_{L_\pi}\C$. Write $|Q |^2:=\langle Q,Q\rangle$ and $\hat\Sigma_B=\{v\in\Sigma_B,\;v\nmid \infty\}$. 

\begin{theorem}[Gross-Zagier-Zhang]\label{GrZaZh}
For any finite character $\chi$ and any $\otimes_v f_v\in \bigotimes'_{v\nmid\infty}V_{\pi^{B}_v}$ we have
\[
\left |\int_{T(\hat F)/T(F)} \chi(t)\Phi_T(\otimes_v f_v)(t) d^\times t\right |^2=L'(1/2,\pi_K,\chi)L(1,\eta)\hat\beta_{\pi^{B},\chi}(\otimes_v f_v,\otimes_v f_v).
\]
Moreover, the above expression is $0$ whether $\hat\Sigma_B\neq\hat\Sigma_\pi^\chi$.
\end{theorem}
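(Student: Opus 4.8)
The plan is to identify the left-hand side with the squared N\'eron--Tate height of a $\chi$-isotypic Heegner point on the Shimura curve $X_U$, and then to invoke the arithmetic Gross--Zagier formula in the generality now available on Shimura curves (Gross--Zagier, S.~Zhang, Yuan--Zhang--Zhang), adjusting normalizations so that the resulting Euler product of local toric integrals is precisely $\hat\beta_{\pi^{JL},\chi}$. Since both sides are sesquilinear in $\otimes_v f_v$, by polarization it suffices to prove the corresponding equality of Hermitian forms on $\pi^{JL}$; we may therefore work with $\otimes_v f_v$ corresponding to a scaling of the generator $\omega_\phi$ of $\rho_\pi^U$, the general pure tensor being reached by the $G(\hat F)$-action, under which all terms transform compatibly.

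First I would make $\Phi_T$ explicit. By construction $\Phi_T$ is the restriction of $\bar\varphi$ along the $G(F)$-monomorphism $\Ind_{E^\times}^{G(F)}1_\Z\hookrightarrow\Delta$ attached to the CM point $\tau_E$, and by \eqref{2nd}, $\bar\varphi(\omega_\phi)(z,g)=\lambda(\cT)^{-1}AJ(\cT\ast(z,gU))$ for a fixed $\cT\in\cH_U^0$ with $\lambda(\cT)\neq 0$. Hence, up to the scalar $\lambda(\cT)^{-1}$, the value $\Phi_T(\omega_\phi)(\epsilon)$ is the image under the modular parametrization $\Jac(X_U)\to A_\pi$ of the degree-zero divisor $\cT\ast(\tau_E,\epsilon U)$, that is, a Heegner point $P_\epsilon\in A_\pi^0(E^{ab})$, with Galois action ${}^{\rho_A(t)}P_\epsilon=P_{t\epsilon}$ by Shimura reciprocity. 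Consequently $\int_{\hat T/T}\chi(\epsilon)\Phi_T(\omega_\phi)(\epsilon)\,d^\times\epsilon$ is, up to that scalar, the $\chi$-component $P_\chi$ of the Heegner point, and $|P_\chi|^2=\langle P_\chi,P_\chi\rangle$ expands as a double integral over $(\hat T/T)^2$ of the heights $\langle P_\epsilon,P_{\epsilon'}\rangle$ weighted by $\chi(\epsilon)\overline{\chi(\epsilon')}$.

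The core input is then the Gross--Zagier formula on Shimura curves, which evaluates this height distribution and equates it with $L'(1/2,\pi_E,\chi)$ times an explicit product of local toric integrals over the finite places, the archimedean place $\sigma$ being responsible --- through the local Green's-function computation --- for the appearance of the derivative rather than the value. After the standard normalization these finite local factors are constant multiples of the pairings $\alpha_{\pi_v^{JL},\chi_v}$ of Proposition \ref{propWaldloc}, so that, collecting them with the global constant $\xi(2)/(L(1,\eta)L(1,\pi,ad))$, one recovers $\hat\beta_{\pi^{JL},\chi}$, in exact parallel with Waldspurger's formula (Theorem \ref{WaldThm}) but with $L(1/2,\pi_E,\chi)$ replaced by $L'(1/2,\pi_E,\chi)$. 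The main obstacle is the bookkeeping of normalizations: the Hodge (resp.\ cusp) class defining $\iota:X_U\to\Jac(X_U)$, the degree of the modular parametrization and the period lattice $\Lambda_\pi$ (which enter through $\lambda(\cT)$ and the identification $A_\pi^0(\C)\simeq(L\otimes_\Q\C)/\Lambda_\pi^0$), the Haar measures $d^\times\epsilon$ and $d^\times t_v$ fixed in the Notation, and the comparison of the motivic normalization of $L(A/E,\chi,s)$ with the automorphic $L(s,\pi_E,\chi)$; all of these must be matched against the precise statement of the Gross--Zagier formula.

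For the vanishing clause I would argue exactly as in Corollary \ref{consWald}: if $\hat\Sigma_A\neq\hat\Sigma_\pi^\chi$, then by Proposition \ref{Saito-Tunnel} there is a finite place $v$ with $\Hom_{T_v}(\pi_v^{JL}\otimes\chi_v,\C)=0$, whence $\alpha_{\pi_v^{JL},\chi_v}=0$ and so $\hat\beta_{\pi^{JL},\chi}=0$, which kills the right-hand side; and the same local obstruction, together with the standing hypothesis that $A$ splits at the archimedean place $\sigma$, forces the $\chi$-component $P_\chi$ to be torsion, so that $|P_\chi|^2=0$ on the left.
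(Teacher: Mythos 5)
Your sketch follows essentially the same route as the paper: identify $\Phi_T$ with the Abel--Jacobi image of Hecke-translated CM points via \eqref{2nd}, verify the key compatibility on the generating vector and extend by $G(\hat F)$-equivariance of $\Phi_T$, then invoke the Yuan--Zhang--Zhang height formula, whose adelic statement already yields $L'(1/2,\pi_E,\chi)L(1,\eta)\hat\beta_{\pi^{JL},\chi}$, so the normalization bookkeeping you defer is precisely what the paper absorbs into that citation. Your treatment of the vanishing clause via the local Hom-space obstruction (Saito--Tunnell) is the same one-line argument the paper gives, so the proposal is correct.
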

\begin{proof}
The expression is 0 whether $\hat\Sigma_B\neq\hat\Sigma_\pi^\chi$ by Proposition \ref{propWaldloc}. The expression is a reformulation of the Gross-Zagier-Zhang formula that can be found in \cite{YZZ}. We leave the details to the reader.
\end{proof}

\subsection{Indefinite anticyclotomic distributions}\label{DerAntDist}

As above, let $(\pi,V_\pi)$ be an irreducible cuspidal automorphic representation in $L^2(\GL_2(F)\backslash \GL_2(\A_F))$ with trivial central character and parallel weight $2$, and assume that $(\pi_\cP,V_{\pi_\cP})\simeq(\pi_\alpha^\C,V_\alpha^\C)$, where $\alpha=\pm 1$ or $|\alpha|=q$. 

In contrast with Definition \ref{defdef}, we assume that $\#(\hat\Sigma_\pi^1\setminus\{\cP\})+[F:\Q]$ is odd. Let $B$ be the quaternion algebra with ramification set $(\hat\Sigma_\pi^1\setminus\{\cP\})\cup(\Sigma_\infty\setminus\sigma)$, for a fixed archimedean place $\sigma$.
Let $\pi^{B}$ be the corresponding Jacquet-Langlands lift to $G_B$. 

The previous conditions imply that $B$ admits an embedding $\imath:K\hookrightarrow B$, that we fix once for all. 
As above, let $U=U_\cP\times U^\cP\subset G_B(\hat F)$ be an open compact subgroup such that $\dim_\C\left(\bigotimes'_{v\nmid\infty}\pi_v^{B}\right)^U=1$ and $U_\cP=\GL_2(\cO_\cP)$ or $K_0(\varpi)$. We have constructed a morphism of $G_B(\hat F)$-representations
\[
\Phi_T:\bigotimes'_{v\nmid\infty}\pi_v^{B}\simeq \pi^{B}\mid_{G_B(\hat F)}\longrightarrow H^0(T(F),\cA_B(A^0(K^{ab})\otimes_{L_\pi}\C)).
\]
Let $R\subset\C$ be any ring (endowed with the discrete topology) such that $\alpha\in R^\times$. In analogy with the definite situation, we consider
the $T(F_\cP)$-equivariant morphism 
\begin{equation}\label{hatdelta}
\hat\delta:C_c(T(F_\cP),R)\stackrel{\delta_{T}}{\longrightarrow}V_\alpha^R\subseteq V_{\pi^{B}_\cP}\longrightarrow\left(\bigotimes'_{v\nmid\infty}\pi^{B}_v\right)^{U^\cP}.
\end{equation}
Write $\Gamma=T(\hat F^\cP)\cap U^\cP$. Similarly as above, we decompose $\cG_{K,\cP}^\Gamma=G_{K,\cP}^\Gamma\times\cG_{K,\cP}$, where $G_{K,\cP}^\Gamma$ is its torsion subgroup. By Lemma \ref{lemstructGp}, $\cG_{K,\cP}\simeq\Z_p^{[F_\cP:\Q_p]}$ does not depend on $\Gamma$. We will consider the anticyclotomic distributions of functions supported on $\cG_{K,\cP}$.
For $g\in C(\cG_{K,\cP},\C)$, we define
\[
\int_{\cG_{K,\cP}^\Gamma}g(\gamma)d\mu_{K,\cP}^{\rm ind}(\gamma):=[H_\cP:H]\int_{T(\hat F)/T(F)}g(\rho_A(t))\Phi_T(\hat\delta(1_H))(t)d^\times t\in A^0(K^{ab})\otimes_{L_\pi}\C,
\]
where $H\subset T(F_\cP)$ is an open compact subgroup small enough that $g\circ\rho_A$ is $H$-invariant. 
 Since $\Phi_T\circ\hat\delta$ is $T(F_\cP)$-equivariant, one proves that this definition does not depend on $H$, exactly the same way as in the definite setting. 

\begin{remark}
By Theorem \ref{GrZaZh}, a necessary condition for  the Jacquet-Langlands lift  $\pi^{B}$ and the above expression to be non-zero is precisely that the ramification set of $B$ is $(\hat\Sigma_\pi^1\setminus\{\cP\})\cup(\Sigma_\infty\setminus\sigma)$.
\end{remark}

\begin{theorem}[Interpolation Property]\label{intprop2}
There is a non-zero constant $C_K$ depending on $K/F$ such that, 
for any continuous character $\chi:\cG_{K,\cP}\rightarrow\C^\times$,
\[
\left|\int_{\cG_{K,\cP}}\chi(\gamma)d\mu_{K,\cP}^{\rm ind}(\gamma)\right|^2=C_K C(\pi_\cP,\chi_\cP) \frac{L'(1/2,\pi_K,\chi)}{L(1,\pi,ad)},
\]
where
\[
C(\pi_\cP,\chi_\cP)=\left\{\begin{array}{ll}
					  \frac{L(1,\pi_\cP,ad)}{L(1/2,\pi_\cP,\chi_\cP)}, & |\alpha|^2=q,\\
					 \frac{1}{L(-1/2,\pi_\cP,\chi_\cP)}, & \alpha=\pm 1,\;\chi_\cP\mid_{\cO_{K_\cP}^\times}=1,\\
					 \frac{q^{n_\chi}}{L(1/2,\pi_\cP,\chi_\cP)}, & \alpha=\pm 1,\;\chi_\cP\mid_{\cO_{K_\cP}^\times}\neq1,
					\end{array}\right.
\]
and $n_\chi$ is the conductor of $\chi_\cP$.
\end{theorem}
\begin{proof}
The proof is completely analogous to the proof of Theorem \ref{intprop1}, if the Waldspurger formula (Theorem \ref{WaldThm}) is replaced by the Gross-Zagier-Zhang formula (Theorem \ref{GrZaZh}).
\end{proof}

\subsection{Cohomological interpretation of $\mu_{K,\cP}^{\rm ind}$}

Recall that
$\rho_\pi\otimes_L\C\simeq\pi^{B}\mid_{G_B(\hat F)}\simeq\bigotimes'_{v\nmid\infty}\pi_v^{B}$.
Notice that the composition
\[
\Ind_{U_\cP}^{G_B(F_\cP)}1_{\bar\Q}/(\cT_\cP-a_\cP)\Ind_{U_\cP}^{G_B(F_\cP)}1_{\bar\Q}\stackrel{\simeq}{\longrightarrow}V_\alpha^{\bar\Q}\longrightarrow \left(\bigotimes'_{v\nmid\infty}\pi_v^{B}\right)^{U^\cP}\simeq\left(\pi^{B}\mid_{G_B(\hat F)}\right)^{U^\cP}
\]
sends $f$ to $\sum_{g\in G_B(F_\cP)/U_\cP}f(g) g\ast\omega_\phi$.
Since $\Phi_T(\omega_\phi)\in H^0(T(F),\cA_B(A^0(K^{ab})))$, 
we deduce that the image of 
\begin{equation}\label{eqconPhi}
V_\alpha^{\bar\Q}\longrightarrow\left(\pi^{JL}\mid_{G_B(\hat F)}\right)^{U^\cP}\stackrel{\Phi_T}{\longrightarrow}H^0(T(F),\cA_B(A^0(K^{ab})\otimes_{L_\pi}\C)^{U^\cP})
\end{equation}
lies in $H^0(T(F),\cA_B(A^0(K^{ab})\otimes_{L_\pi}\bar\Q)^{U^\cP})$.
Given the fixed embeddings $L_\pi\hookrightarrow\bar\Q\hookrightarrow\C_p$, let
\[
\log_p=\log_{\omega_\phi}:A^0(K^{ab})\otimes_{L_\pi}\bar\Q\longrightarrow\C_p.
\] 
be the formal group logarithm attached to the differential $\omega_\phi\in\Omega^1_{X_U}$. Notice that the formal group logarithm extends to the whole $A(K^{ab})$ because, for any $P\in A(K^{ab})$, there is $n\in\N$ such that $nP$ lies in the subgroup of points reducing to the identity section.
Composing with such a formal logarithm, we obtain a morphism
\begin{equation}\label{eqindfAV}
V_\alpha^{\bar\Q}\longrightarrow H^0(T(F),\cA_B(A^0(K^{ab})\otimes_{L_\pi}\bar\Q)^{U^\cP})\longrightarrow H^0(T(F),\cA_B(\C_p)^{U^\cP}).
\end{equation}

From now on, we also denote by $\mu_{K,\cP}^{\rm ind}\in\Dist(\cG_{K,\cP},\C_p)$ (by abuse of notation) the $p$-adic distribution
\[
\int_{\cG_{K,\cP}}g(\gamma)d\mu_{K,\cP}^{\rm ind}(\gamma):=[H_\cP:H]\int_{T(\hat F)/T(F)}g(\rho_A(t))\log_p\left(\Phi_T\circ\hat\delta(1_H)(t)\right)d^\times t.
\]
Thus $\mu_{K,\cP}^{\rm ind}\in\Dist(\cG_{K,\cP},\C_p)$ can be interpreted as the logarithm of the corresponding $(A^0(K^{ab})\otimes_{L_\pi}\C)$-valued distribution.

Analogously to \S \ref{CoInt}, equation \eqref{eqindfAV} provides an element 
$\log\phi\in H^0(T(F),\cA_B^\cP(V_\alpha^{\bar\Q},\C_p)^{U^\cP})$. 
If we recall the $T(F_\cP)$-equivariant morphism 
$\kappa: \cA_B^\cP(V_\alpha^{\bar\Q},\C_p)^{U^\cP}\rightarrow C_c(T(\hat F)/\Gamma,\C_p)_0^\vee$ of \eqref{kappa},
we deduce that
\[
\int_{\cG_{K,\cP}}g(\gamma)d\mu_{K,\cP}^{\rm ind}(\gamma)=\kappa(\log\phi)\cap\partial g.
\]

\subsection{$p$-adic measures and $p$-adic $L$-functions}\label{padicmeas}

Let $\pi$ be a cuspidal automorphic representation of $\GL_2$ with parallel weight $2$ and a trivial central character. 

In the definite case ($\#(\Sigma_\pi^1\setminus\{\cP\})+[F:\Q]$ even), we have constructed the distribution $\mu_{K,\cP}^{\rm def}$ defined by
\[
g\in C(\cG_{K,\cP},\C_p)_0\longmapsto\kappa({\rm res}\phi)\cap\partial(g),
\]
where $\phi\in H^0(G_D(F),\cA_D^\cP(V_\alpha^{\bar\Q},\bar\Q)^{U^\cP})$ is some generator of the Jacquet-Langlands lift to the totally definite quaternion algebra $D$.

In the indefinite case $(\#(\Sigma_\pi^1\setminus\{\cP\})+[F:\Q]$ odd), we have constructed the distribution $\mu_{K,\cP}^{\rm ind}$ defined by
\[
g\in C(\cG_{K,\cP},\C_p)_0\longmapsto\kappa(\log\phi)\cap\partial(g),
\]
where $\log\phi\in H^0(T(F),\cA_B^\cP(V_\alpha^{\bar\Q},\C_p)^{U^\cP})$ has been obtained from the $p$-adic formal logarithm of Heegner points associated with $K$ on the Shimura curve attached to a quaternion algebra $B$ that splits at a single place.

\begin{definition}\label{defordinary}
We say that $\pi$ is $\cP$-ordinary if $\pi_\cP$ is of the form $V^\C_\alpha$ where
$\alpha\in \cO_{\C_p}^\times\cap\bar\Q$.
\end{definition}

\begin{proposition}\label{dist-meas}
If $\pi$ is $\cP$-ordinary then both $\mu_{K,\cP}^{\rm def}$ and $\mu_{K,\cP}^{\rm ind}$ are $p$-adic measures.
\end{proposition}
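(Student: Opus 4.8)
The plan is to show that, after multiplying by a suitable $\lambda\in\C_p^\times$, the distribution becomes $\cO_{\C_p}$-valued; by \eqref{defMeas} (applied with the lattice $\cO_{\C_p}\subset\C_p$) this is exactly what it means to be a measure. Since $\cG_{E,\cP}$ is compact, $C(\cG_{E,\cP},\Z)_0$ is spanned over $\Z$ by characteristic functions of cosets $x\Gamma'$ of open subgroups $\Gamma'\subseteq\cG_{E,\cP}$, and $\cO_{\C_p}$ is a ring, so it suffices to find a single $\lambda$ with $\lambda\int_{\cG_{E,\cP}}1_{x\Gamma'}\,d\mu\in\cO_{\C_p}$ uniformly over all such cosets. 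Using the cohomological descriptions $\int g\,d\mu_{E,\cP}^{\pi,I}=\kappa(\mathrm{res}\,\phi)\cap\partial g$ and $\int g\,d\mu_{E,\cP}^{\pi,II}=\kappa(\log\phi)\cap\partial g$ of \S\ref{CoInt} and of \S\ref{padicmeas}, together with \eqref{kappa}, \eqref{defpart} and \eqref{decfunct}, I would unwind $\int 1_{x\Gamma'}\,d\mu$ as a \emph{finite} sum of terms of the shape $\phi(y^\cP)\!\left(\pi_\alpha(y_\cP)\,\delta_\cP(1_H)\right)$ (definite case), resp. $\log_p\!\left(\Phi_T(\pi_\alpha(y_\cP)\,\delta_\cP(1_H))(y^\cP)\right)$ (indefinite case), where $y=(y_\cP,y^\cP)$ runs over a fixed finite set of representatives (finiteness of the class number / Strong Approximation, as in Lemma \ref{lemmaext}) and $H\subseteq T_\cP$ is the $\cP$-component of (the pre-image of) $\Gamma'$, an open subgroup that shrinks with $\Gamma'$. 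Everything thus reduces to uniform $p$-adic boundedness, as $H$ shrinks, of $\phi$ (resp. $\log_p\circ\Phi_T$) evaluated on the vectors $\pi_\alpha(g)\,\delta_\cP(1_H)$, $g\in G(F_\cP)$.

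There are two integrality inputs. Fix a finite extension $L/\Q_p$ containing $\alpha$; since $\pi$ is $\cP$-ordinary we have $\alpha\in\cO_L^\times$, hence $a=\alpha+q\alpha^{-1}$ (resp. $a=\alpha$) lies in $\cO_L$, and by Lemma \ref{lemlattices} the $\cO_L$-module $\Lambda':=\Ind_{F_\cP^\times U_\cP}^{G(F_\cP)}1_{\cO_L}/(\cT-a)$ is a $G(F_\cP)$-stable lattice in $V_\alpha^{\bar\Q}=\Ind_{F_\cP^\times U_\cP}^{G(F_\cP)}1_{\bar\Q}/(\cT-a)$. \emph{First}, $\phi$ — resp. $\log\phi$ — is, up to a fixed scalar $\lambda_1$, defined over $\cO_L$ on this lattice: by Lemmas \ref{lemmaext} and \ref{lemtenpro} (and, for $\mu_{E,\cP}^{\pi,II}$, by the analogous finiteness for the cohomology of the torus $T$ and of the Shimura curve, both arithmetic) the class $\phi$ comes from $H^0(\cdot,\cA_f^\cP(\Lambda',\cO_{\C_p})^{U^\cP})$ after scaling, so $\lambda_1\phi(x)(\Lambda')\subseteq\cO_{\C_p}$ for every $x$. \emph{Second}, $\delta_\cP(1_H)$ lies in $\lambda_2^{-1}\Lambda'$ for a single scalar $\lambda_2$, uniformly in $H$: in the Steinberg case $\alpha=\pm1$ this is immediate because $\varphi_1$ is an isomorphism (Lemma \ref{lemlattices}), so $\delta_\cP(1_H)\in V_\alpha^{\cO_L}\cong\Lambda'$; in the case $|\alpha|^2=q$ it follows from Lemma \ref{bounddelta}. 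Combining the two inputs and using the $G(F_\cP)$-stability of $\Lambda'$ to absorb $\pi_\alpha(y_\cP)$ gives $\lambda_1\lambda_2\,\phi(y^\cP)(\pi_\alpha(y_\cP)\delta_\cP(1_H))\in\cO_{\C_p}$ uniformly; a finite sum of such terms stays in $\cO_{\C_p}$, so $\lambda_1\lambda_2\,\mu_{E,\cP}^{\pi,I}\in\Dist(\cG_{E,\cP},\cO_{\C_p})$, which is the claim. The indefinite case is the same once one checks that $\log_p$ is bounded on the $\cO_L$-module of Heegner points that occurs — here too $\cP$-ordinariness is what makes this possible, as it lets one organize the depth-$n$ contributions into a norm-compatible system of Heegner points.

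The point where $\cP$-ordinariness is decisive, and hence the main obstacle, is the second integrality input in the case $|\alpha|^2=q$. In the proof of Lemma \ref{bounddelta} one has the recursion $\delta_\cP(1_{H_{n+1}})=\mu(g_\varpi)\,\pi_\alpha(g_\varpi^{-1})\,\delta_\cP(1_{H_n})$ with $\mu(g_\varpi)=\alpha^{-1}$; iterating, $\delta_\cP(1_{H_n})=\alpha^{-(n-n_0)}\,\pi_\alpha(g_\varpi^{-1})^{\,n-n_0}\,\delta_\cP(1_{H_{n_0}})$. The vector $\pi_\alpha(g_\varpi^{-1})^{\,n-n_0}\delta_\cP(1_{H_{n_0}})$ stays in a fixed lattice $\lambda_2^{-1}\Lambda'$ by $G(F_\cP)$-stability, but the scalar $\alpha^{-(n-n_0)}$ is bounded (indeed a unit) precisely when $\alpha\in\cO_L^\times$, i.e. when $\pi$ is $\cP$-ordinary; for $\alpha$ not a $p$-adic unit this factor makes $\delta_\cP(1_{H_n})$ leave every fixed lattice, the values $\int 1_{x\Gamma'}\,d\mu$ grow like $|\alpha|_p^{-n}$, and $\mu$ is then only an $h$-admissible distribution rather than a measure — in agreement with the classical Amice--Vélu--Vishik picture. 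The remaining steps (the explicit unwinding of the cap product, the index/volume bookkeeping $[H_\cP:H]\,\mathrm{vol}(H)=\mathrm{vol}(H_\cP)$, and the finiteness statements quoted above) are essentially formal once this is in place.
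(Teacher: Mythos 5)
Your treatment of $\mu^{\pi,I}_{E,\cP}$ is essentially the paper's argument: uniform integrality of $\delta_{T_\cP}(1_H)$ in a fixed $G(F_\cP)$-stable lattice via Lemma \ref{bounddelta} (and trivially in the Steinberg case), which is exactly where $\alpha\in\cO_{\C_p}^\times$ enters, combined with boundedness of the automorphic class on that lattice; for the latter the paper simply notes that $\phi(v_0)$ is an automorphic form on a totally definite quaternion algebra and so takes finitely many values, and your appeal to Lemmas \ref{lemmaext} and \ref{lemtenpro} amounts to the same thing because $G(F)\backslash G(\hat F)/U\hat F^\times$ is finite in the definite case.

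The indefinite case, however, has a genuine gap. Your ``first integrality input'' asserts that $\log\phi$ comes, after one rescaling, from an $\cO_{\C_p}$-valued class ``by the analogous finiteness for the cohomology of the torus $T$ and of the Shimura curve''. No such finiteness exists: $\log\phi$ lies in $H^0(T,\cA_f^\cP(V^{\bar\Q},\C_p)^{U^\cP})$, and the orbit space $T\backslash G(\hat F)/U\hat F^\times$ is infinite (it parametrizes the CM points), so $T$-invariance imposes no bound and the argument of Lemma \ref{lemmaext} does not apply to $H^0(T,-)$. Bounded denominators for the $p$-adic logarithms of the infinitely many Heegner points $\lambda(\cT)^{-1}AJ(\cT\ast(\tau_E,gU))$ is precisely the hard point of the indefinite case, and you defer it to a one-line remark attributing it to ordinarity and a norm-compatible system, which is both unproved and not the actual mechanism: in the paper ordinarity enters only through Lemma \ref{bounddelta}, exactly as in the definite case, while the logarithms are controlled by an arithmetic argument. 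Namely, by Shimura's reciprocity law the points $\lambda(\cT)\Phi_T(\omega_\phi)(G(F_\cP)\times T^\cP)$ are all rational over the single field $E^\Gamma$ with $\Gal(E^\Gamma/E)=\cG_{E,\cP}^\Gamma$; since $\cG_{E,\cP}$ is totally ramified at $\cP$ up to the finite group $G^\Gamma_{E,\cP}$, the residue field $k$ of $E^\Gamma_\cP$ is finite, hence $A_\pi(k)$ is finite, and the exact sequence $0\to\cG_{A_\pi}(\cP^\Gamma)\to A_\pi(E^\Gamma_\cP)\to A_\pi(k)\to 0$ shows that $\log_p(A_\pi(E^\Gamma_\cP))$ has bounded denominators. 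Without this step (or a genuinely worked-out substitute, e.g.\ an actual norm-compatibility argument), your proof establishes only the definite half of the proposition.
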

\begin{proof}
Write $G=G_D$ or $G_B$ depending if we are in the definite or indefinite setting.
Let $\bar\cO:=\bar\Q\cap\cO_{\C_p}$, $V^{\bar\cO}:=\Ind_{U_\cP}^{G(F_\cP)}1_{\bar\cO}/(\cT_\cP-a_\cP)\Ind_{U_\cP}^{G(F_\cP)}1_{\bar\cO}\subset V_\alpha^\C$ and write 
\[
\cW:=\{\psi\in\cA_\ast^\cP(V_\alpha^{\bar\Q},\C_p)^{U^\cP}:\psi(T(\hat F^\cP))(v)\subseteq\cO_{\C_p},\mbox{ for all }v\in V^{\bar\cO}\},\qquad\ast=B,D. 
\]
We claim that, if $\psi$ is in the image of the natural monomorphism
\[
H^0(T(F),\cW)\otimes_{\cO_{\C_p}}\C_p\longrightarrow H^0(T(F),\cA_\ast^\cP(V_\alpha^{\bar\Q},\C_p)^{U^\cP}),
\]
then the distribution $g\mapsto \kappa(\psi)\cap\partial(g)$ is a $p$-adic measure.
Indeed, by \eqref{defpart}, the restriction of $\partial$ provides an isomorphism
\[
\partial:C(\cG_{K,\cP}^\Gamma,\cO_{\C_p})_0\stackrel{\simeq}{\longrightarrow}H_0(T(F),C_c(T(\hat F)/\Gamma,\cO_{\C_p})_0).
\]
Moreover, by \eqref{decfunct}, $C_c(T(\hat F)/\Gamma,\cO_{\C_p})_0\simeq C_c(T(F_\cP), \cO_{\C_p})_0\otimes_{\cO_{\C_p}}C_c(T(\hat F^\cP)/\Gamma,\cO_{\C_p})_0$. Since $\alpha\in\bar\cO^{\times}$ (ordinary), by Lemma \ref{bounddelta} there exists a fixed $\lambda\in\bar\cO$ such that the restriction of the morphism $\kappa$ of \eqref{kappa} factors through
\[
\kappa: \cW\longrightarrow \lambda^{-1}C_c(T(\hat F)/\Gamma,\cO_{\C_p})_0^\vee,
\]
where $\lambda^{-1}C_c( T(\hat F)/\Gamma,\cO_{\C_p})_0^\vee:=\Hom_{\cO_{\C_p}}(C_c(T(\hat F)/\Gamma,\cO_{\C_p})_0,\lambda^{-1}\cO_{\C_p})$. We conclude that, for any $\psi\in H^0(T,\cW)$, the map $g\mapsto \kappa(\psi)\cap\partial(g)$ is a distribution in $\Dist(\cG_{K,\cP},\lambda^{-1}\cO_{\C_p})$. The claim follows from \eqref{defMeas}.

In the definite case, $\psi={\rm res}\phi$, where $\phi\in H^0(G_D(F),\cA_D^\cP(V_\alpha^{\bar\Q},\bar\Q)^{U_\cP})$. Let $v_0=[1_{U_\cP}]\in V^{\bar\cO}$. Note that $\phi(v_0)$ is an automorphic form on a totally definite quaternion algebra, hence its image is a finite set of values. This implies that, after scaling, we can assume that $\phi(v_0)$ has values in $\bar\cO$. Since $V^{\bar\cO}=\bar\cO[G(F_\cP)]v_0$, we deduce that $\phi(f)$ has values in $\bar\cO$, for any $f\in V^{\bar\cO}$. This implies that ${\rm res}\phi\in H^0(T(F),\cW)$, hence $\mu_{K,\cP}^{\rm def}\in\Meas(\cG_{K,\cP},\C_p)$.

In the indefinite case, $\psi=\log\phi$. By \eqref{2nd}, 
\[
(\log\phi)(g)(g_\cP v_0)=\log_p\circ(\Phi_T(\omega_\phi)(g_\cP,g))=\lambda(\cV)^{-1}(\log_p(AJ(\cV\ast(\tau_E,(g_\cP,g)U)))),
\]
where $g\in G_B(F^\cP)$, $g_\cP\in G_B(F_\cP)$ ($(g_\cP,g)\in G_B(\hat F)$), $\cV$ is some auxiliary Hecke operator, and $AJ$ is the Abel-Jacobi map \eqref{AJmap}. By Shimura's reciprocity law, 
\[
^{\rho_A(\Gamma)}\Phi_T(\omega_\phi)(G_B(F_\cP)\times T(\hat F^\cP))=
\Phi_T(\omega_\phi)(G_B(F_\cP)\times T(\hat F^\cP)).
\]
Thus, the Galois action on $\lambda(\cV)\Phi_T(\omega_\phi)(G_B(F_\cP)\times T(\hat F^\cP))\subset A(K^{ab})$ factors through $\cG_{K,\cP}^\Gamma$. This implies that 
\[
\lambda(\cV)\Phi_T(\omega_\phi)(G_B(F_\cP)\times T(\hat F^\cP))\subseteq A(K^\Gamma),\quad\mbox{ where }\quad\Gal(K^\Gamma/K)=\cG_{K,\cP}^\Gamma.
\]
Let $K^\Gamma_\cP\subset\C_p$ be the local field extension generated by $K^\Gamma$, let $\cP^\Gamma$ above $\cP$ be its maximal ideal and let $k$ be its residue field. Since $\cG_{K,\cP}^\Gamma=\cG_{K,\cP}\times G_{K,\cP}^\Gamma$, where $G_{K,\cP}^\Gamma$ is finite and $\cG_{K,\cP}$ is the Galois group of an extension totally ramified at $\cP$, $k$ is finite.
Note that the formal logarithm (after normalization) is given by a formal series in $\bar\cO[[t]]$. We have the exact sequence 
\[
0\longrightarrow \cG_{A}(\cP^\Gamma)\longrightarrow A(K^\Gamma_\cP)\stackrel{red}{\longrightarrow}A(k)\longrightarrow 0,
\]
where $\cG_{A}$ is the formal group of $A$ 
and $A(k)$ is finite. This implies that the set $\log_p(A(K^\Gamma_\cP))$ has bounded denominators. We conclude that there exists $\lambda'\in\C_p$ such that $\log_p(\lambda(\cT)\Phi_T(\omega_\phi)(G_B(F_\cP)\times T(\hat F^\cP)))\subseteq (\lambda')^{-1}\cO_{\C_p}$. Again, since $V^{\bar\cO}=\bar\cO[G_B(F_\cP)]v_0$,
\begin{eqnarray*}
(\log\phi)(T(\hat F^\cP))(V^{\bar\cO})&=&(\log\phi)(T(\hat F^\cP))(\bar\cO[G(F_\cP)]v_0)\\
&=&\lambda(\cV)^{-1}\bar\cO(\log_p(\lambda(\cV)\Phi_T(\omega_\phi)(G_B(F_\cP)\times T(\hat F^\cP))))\\
&\subseteq&\lambda(\cV)^{-1}(\lambda')^{-1}\cO_{\C_p}.
\end{eqnarray*}
Thus, $\log\phi\in H^0(T(F),\cW)\otimes_{\cO_{\C_p}}\C_p$ and $\mu_{K,\cP}^{\rm ind}\in\Meas(\cG_{K,\cP},\C_p)$.
\end{proof}

Hence, in the $\cP$-ordinary case, $\mu_{K,\cP}^{\rm def},\mu_{K,\cP}^{\rm ind}\in\Meas(\cG_{K,\cP},\C_p)$ define elements $L^{\rm def}_\cP(\pi_K)$ and $L^{\rm ind}_\cP(\pi_K)$ in the Iwasawa algebra $\Lambda_{\C_p}=\cO_{\C_p}[[\cG_{E,\cP}]]\otimes_{\cO_{\C_p}}\C_p$, called (anticyclotomic) $p$-adic $L$-functions.
Interpolation properties 
(Theorem \ref{intprop1} and Theorem \ref{intprop2}) and the vanishing of the local factor $L(0,\pi_\cP,1)^{-1}$ when $\pi_\cP$ is the Steinberg representation ($\alpha=1$), show the appearance of what are known as \emph{Exceptional Zeros}. In terms of Iwasawa algebras, this means that both $L_\cP^{\rm def}(\pi_K)$ and $L^{\rm ind}_\cP(\pi_K)$ lie in the augmentation ideal $\cI:=\ker(\deg)$, where
\[
\deg:\Lambda_{\C_p}\simeq\Meas(\cG_{K,\cP},\C_p)\longrightarrow \C_p,\qquad \mu\longmapsto\int_{\cG_{K,\cP}}d\mu.
\]

\begin{corollary}[Exceptional Zero]
Assume that $\pi_\cP$ is Steinberg.
Then the $p$-adic $L$-functions satisfy
\[
L^\bullet_\cP(\pi_K)\in\cI=\ker(\deg),\qquad \bullet={\rm def},{\rm ind},
\]
in both definite and indefinite cases.
\end{corollary}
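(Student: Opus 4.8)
The plan is to show that $\deg$ kills both $p$-adic $L$-functions. Since $L^i_\cP(\pi,E)$ is, by construction, the element of $\cO_{\C_p}[[\cG_{E,\cP}]]\otimes_{\cO_{\C_p}}\C_p$ attached to the measure $\mu_{E,\cP}^{\pi,i}$, and $\deg$ sends it to $\int_{\cG_{E,\cP}}d\mu_{E,\cP}^{\pi,i}=\int_{\cG_{E,\cP}}1(\gamma)\,d\mu_{E,\cP}^{\pi,i}(\gamma)$ — the value at the trivial character $1$ of $\cG_{E,\cP}$ — it is enough to prove that this value vanishes. (Note that $\alpha=1$ is a $p$-adic unit, so $\pi$ is $\cP$-ordinary and, by Proposition~\ref{dist-meas}, the $p$-adic $L$-functions are genuinely defined.)

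First I would record the local computation. Since $\pi_\cP$ is Steinberg with $\alpha=1$, the trivial character of $T_\cP$ coincides with $t\mapsto\alpha^{\nu(\det t)}$; hence, by the Corollary following Theorem~\ref{teocompIT} (equivalently, reading off the explicit shape of $L(s,\pi_\cP,1)$ in the split, inert and ramified cases of Theorem~\ref{teocompIT}), $L(0,\pi_\cP,1)^{-1}=0$. Because the trivial character restricts trivially to $\cO_E^\times$, the Euler factor occurring in the interpolation formulas is $C(\pi_\cP,1)=\dfrac{L(-1,\pi_\cP,1)\,L(1,\pi_\cP,ad)}{L(0,\pi_\cP,1)\,L(1/2,\pi_\cP,1)}$, whose numerator and the factor $L(1/2,\pi_\cP,1)$ are finite and non-zero; since $L(0,\pi_\cP,1)^{-1}=0$ this forces $C(\pi_\cP,1)=0$.

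With this in hand, in the definite case ($\#(\hat\Sigma_\pi^1\setminus\{\cP\})+[F:\Q]$ even) I would apply the interpolation property (Theorem~\ref{intprop1}) with $\chi=1$ to obtain $\bigl|\deg L^I_\cP(\pi,E)\bigr|^2=C_E\,C(\pi_\cP,1)\,L(1/2,\pi_E,1)/L(1,\pi,ad)=0$, the global factors being finite; as $|\cdot|$ is the complex absolute value, this forces $\deg L^I_\cP(\pi,E)=0$, i.e. $L^I_\cP(\pi,E)\in\cI$. In the indefinite case ($\#(\hat\Sigma_\pi^1\setminus\{\cP\})+[F:\Q]$ odd) I would run the analogous argument for the $A^0_\pi(E^{ab})\otimes_L\C$-valued distribution of \S\ref{DerAntDist}: by Theorem~\ref{intprop2} with $\chi=1$ its degree is a Heegner point $P_1\in A^0_\pi(E^{ab})\otimes_L\C$ satisfying $\langle P_1,P_1\rangle=C_E\,C(\pi_\cP,1)\,L'(1/2,\pi_E,1)/L(1,\pi,ad)=0$, so $P_1=0$ by positive-definiteness of the Neron-Tate pairing on $A^0_\pi(E^{ab})\otimes_L\C$; since $\mu_{E,\cP}^{\pi,II}$ (the $\C_p$-valued measure defining $L^{II}_\cP(\pi,E)$) is obtained from this distribution by post-composition with the formal logarithm $\log_p$, one concludes $\deg L^{II}_\cP(\pi,E)=\log_p(P_1)=0$, i.e. $L^{II}_\cP(\pi,E)\in\cI$.

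The step requiring care is the last one: I must know that $\log_p$ commutes with the integral over $\hat T/T$ (equivalently with the cap product against $\partial(1)$) that computes $\deg$. This should be harmless, since for the test function $1$ the integrand $\Phi_T\circ\hat\delta(1_H)$ is locally constant with essentially compact support, so the integral is a finite combination of values in $A^0_\pi(E^{ab})\otimes_L\bar\Q$, on which $\log_p$ is $\bar\Q$-linear; more conceptually, the degree of the pushforward of a $V$-valued measure along a continuous homomorphism $V\to\C_p$ is that homomorphism applied to the degree of the original measure. Apart from this bookkeeping the argument is entirely formal once the interpolation formulas of Theorems~\ref{intprop1} and~\ref{intprop2} are available.
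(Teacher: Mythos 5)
Your proposal is correct and is exactly the argument the paper intends: the corollary is stated without a separate proof, the preceding sentence pointing precisely to the interpolation formulas of Theorems~\ref{intprop1} and~\ref{intprop2} evaluated at $\chi=1$ together with the vanishing of $L(0,\pi_\cP,1)^{-1}$ for Steinberg $\pi_\cP$ with $\alpha=1$, which is what you carry out. Your extra care in the indefinite case (positive definiteness of the N\'eron--Tate pairing to get $P_1=0$, and commuting $\log_p$ with the finite integral defining the degree) only makes explicit what the paper leaves implicit, and matches how the paper itself treats $\log_p$ later in the proof of Theorem~\ref{main-Res}.
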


\subsection{Explicit interpolation properties}\label{explIP}

The aim of this section is to compute the constants appearing in the interpolation formulas of $\mu_{K,\cP}^{\rm def}$ and $\mu_{K,\cP}^{\rm ind}$, under certain conditions, and for a fixed 
morphism \eqref{eqpiindpi} provided by a given newform.  

Let $\pi$ be a Hilbert automophic representation of parallel weight 2, level $K_0(\mathfrak{n})$, and trivial central character. Let $d_{K/F}$ be the discriminant of $K$ over $F$.
\begin{assumption}\label{hypdis}
We assume that $\gcd(\mathfrak{n},d_{K/F})$ is square free. 
\end{assumption}
Then we have that $\mathfrak{n}=\cP^n\mathfrak{n}'$ for $n\in\{0,1\}$, where $\mathfrak{n}'$ is prime-to-$\cP$. Write $\mathfrak{n}_0=\mathfrak{n}'$, $K$ is inert at $\cP$, or $\mathfrak{n}_0=\mathfrak{n}$, otherwise. Depending we are in the definite or indefinite setting, we consider the Jacquet-Langlands lift $\pi^D$ or $\pi^B$. I claim that there exists an order $\cO_{\mathfrak{n}_0}$ in the corresponding quaternion algebra $D$ or $B$ containing $\cO_K$ and with reduced discriminant $\mathfrak{n}_0$. Indeed, we can fix a maximal order $\cO_1$ containing $\cO_K$ and, by local conditions, an ideal $\cN_0\subset\cO_K$ such that $\Norm_{K/F}(\cN_0)\cdot\disc(\cO_1)=\mathfrak{n}_0$. We define $\cO_{\mathfrak{n}_0}=\cO_K+\cN_0\cO_1$. Thus, there exists a unique up to constant newform $\phi\in(\pi^D)^{U}$ (respectively $(\pi^B)^{U}$), where $U=\hat\cO_{\mathfrak{n}}^\times\subseteq\hat\cO_{\mathfrak{n}_0}^\times$ and we have an equality if $n=0$ or $K$ is not inert at $\cP$. Such newform $\phi$ fixes the morphism
\[
\delta:C_c(T(F_\cP),R)\longrightarrow V_{\pi^D}^{U^\cP},
\]
of \eqref{deltaarep} in the definite case.

Observe that $\delta(g)_v\in(\pi^D)^{U_v}$ and $\hat\delta(g)_v\in(\pi^B)^{U_v}$, for any $g\in C_c(T(F_\cP),R)$. Hence
we proceed to compute 
\[
C_v:=\frac{\beta_{\pi_v^B,\chi_v}(f_v,f_v)}{\langle f_v,f_v\rangle},\qquad f_v\in (\pi_v^B)^{U_v},
\]
for $v\neq \cP$ and $\chi_v$ unramified, which is independent of the choice of $f_v$.

\subsubsection*{Assume that $K$ is inert at $v$} In this case $T(F_v)=\cO_{K_v}^\times/\cO_{F_v}^\times$. Since $\cO_{K_v}^\times\subset U_v$, we have that $T(F_v)$ acts trivially on $f_v$. This implies
\[
C_v=\frac{\int_{T(F_v)}\chi_v(t)\langle \pi_v(t)f_v,f_v\rangle d^\times t}{\langle f_v,f_v\rangle}=\int_{T(F_v)}d^\times t={\rm vol}(T(F_v)),
\]
since $\chi_v$ is also trivial.

\subsubsection*{Assume $K$ is ramified at $v$} By the assumption \ref{hypdis}, the representation $\pi_v$ is either spherical or special. Hence if $B_v=\M_2(F_v)$, we can apply the results obtained in \S \ref{locthe}. Notice that $T(F_v)=(\varpi_K\cO_{K_v}^\times/\cO_{F_v}^\times)\times (\cO_{K_v}^\times/\cO_{F_v}^\times)$ and $f_v$ is $\cO_{K_v}^\times$-invariant. By Proposition \ref{innerprod} and Proposition \ref{prop-sc-prod}, we have that
\[
C_v=\frac{\left(\int_{T(F_v)}f_v(t)\chi_v(t)d^\times t\right)\overline{\left(\int_{T(F_v)}f^*_v(t)\chi_v(t)d^\times t\right)}}{\int_{T(F_v)}f_v(t)\overline{f^*_v(t)}d^\times t},
\]
where $f_v^*=f_v$, if $\pi_v$ is spherical, and $f_v^*=\Lambda(f_v)$, if $\pi_v$ is special. 

We can choose $\varpi_K$ such that $\varpi_K^2=\varpi$, hence
\[
\varpi_K=\left(\begin{array}{cc}a&b\\c&-a\end{array}\right)=\left(\begin{array}{cc}-\frac{\varpi}{c} &a\\&c\end{array}\right)\left(\begin{array}{cc}&-1\\1&-\frac{a}{c}\end{array}\right),\qquad a^2+bc=\varpi.
\]
Note that there exists a constant $C$ such that 
\[
f_v\left(\left(\begin{array}{cc}t_1&x\\&t_2\end{array}\right)g\right)=\left\{\begin{array}{l}C\alpha_v^{\nu_v(t_2/t_1)},\\ C\alpha_v^{\nu_v(t_2/t_1)}1_{K_0(\varpi)}(g),\end{array}\right.\qquad g\in K_0(1). 
\]
Since $a/c\in\cO_{F_v}$ because $a^2+bc=\varpi$, we deduce that $f_v(\varpi_{K})=C\alpha_v^{-1}$, if $\pi_v$ is spherical, and $f_v(\varpi_K)=0$, if $\pi_v$ is special. Using the same techniques as in \S \ref{locthe}, we deduce
\[
C_v=\left\{\begin{array}{lc}\frac{(1+\alpha_v^{-1}\chi_v(\varpi_K))(1+\alpha_v q_v^{-1}\chi_v(\varpi_K))}{1+q_v^{-1}}{\rm vol}(T(F_v))=\frac{L(1/2,\pi_v,\chi_v)\xi_v(2)}{L(1,\pi_v,ad)}{\rm vol}(T(F_v)),&\pi_v\mbox{ spherical}\\ 
\frac{1-\alpha_v\chi_v(\varpi_K)}{2}{\rm vol}(T(F_v)),&\pi_v\mbox{ special}\end{array}\right.
\]
In the special case, we are under the assumption that $\Hom_{T(F_v)}(\pi_v\otimes\chi_v,\C)\neq\emptyset$, hence $\alpha_v\chi_v(\varpi_K)=-1$ and $C_v={\rm vol}(T(F_v))$.

If $B_v$ is now a quaternion algebra, we have the other situation $\alpha_v\chi_v(\varpi_K)=1$. Notice that the corresponding Jacquet-Langlands representation satisfies $\pi^{JL}(\varpi_K)f_v=\alpha_v f_v$. Thus,
\[
C_v=\frac{\int_{T(F_v)}\chi_v(t)\langle \pi_v(t)f_v,f_v\rangle d^\times t}{\langle f_v,f_v\rangle}=(1+\chi_v(\varpi_v)\alpha_v){\rm vol}(\cO_{K_v}^\times/\cO_{F_v}^\times)={\rm vol}(T(F_v)).
\]

\subsubsection*{Assume that $K$ splits at $v$}
Note that in this case $B_v=\M_2(F_v)$ and $T(F_v)\simeq F_v^\times$ a conjugation of the diagonal torus, hence it is of the form
\[
T(F_v)=\left\{\left(\begin{array}{cc}t+\alpha(t-1)&\frac{\alpha(\alpha+1)}{c}(1-t)\\c(t-1)&1+\alpha(1-t)\end{array}\right)\;\; t\in F_v^\times\right\},\\
\]
for some $\alpha,c\in F_v$, with $c\neq 0$ because $T(F_v)$ intersects trivially with the Borel subgroup of upper triangular matrices $T_0$.
Notice that
\begin{eqnarray*}
&\left(\begin{array}{cc}t+\alpha(t-1)&\frac{\alpha(\alpha+1)}{c}(1-t)\\c(t-1)&1+\alpha(1-t)\end{array}\right)=A^{-1}\left(\begin{array}{cc}t&\\&1\end{array}\right)A=B^{-1}\left(\begin{array}{cc}1&\\&t\end{array}\right)B,\\
&A=\left(\begin{array}{cc}-1&\frac{\alpha}{c}\\c&-\alpha-1\end{array}\right),\qquad B=\left(\begin{array}{cc}-1&\frac{\alpha+1}{c}\\-c&\alpha\end{array}\right),
\end{eqnarray*}
Since $\cO_{K_v}\subseteq\GL_2(\cO_{F_v})$, we deduce that $\alpha,\frac{\alpha(\alpha+1)}{c}\in\cO_{F_v}$ and $c\in \mathfrak{n}_0$. Hence, we can assume $\frac{\alpha}{c}\in\cO_{F_v}$ and thus $A\in K_0(\mathfrak{n}_0)$ (otherwise we use the other expression and the matrix $B$).

Let us consider the $K_0(\mathfrak{n}_0)$-invariant Whittaker function $W_v:\GL_2(F_v)\rightarrow\C$ of $\pi_v$ normalized so that $W_v(d_{F_v})=1$, where $d_{F_v}$ is a generator of the discriminant $\cD_{F_v}$ (see \cite[\S 4.4]{Bump}). Thus,
\[
C_v\langle W_v,W_v\rangle=\int_{T(F_v)}\chi_v(t)\langle\pi_v(t)W_v,W_v\rangle d^\times t=\int_{T_0}\chi_v(t)\langle\pi_v(tA)W_v,\pi_v(A)W_v\rangle d^\times t.
\]
Since $\pi_v(A)W_v=W_v$, we compute
\begin{eqnarray*}
C_v\langle W_v,W_v\rangle=\int_{T_0}\chi_v(t)\int_{T_0}W_v(\tau t)\overline{W_v(\tau)}d^\times\tau d^\times t=\left|\int_{T_0}\chi_v(t)W_v(t)d^\times t\right|^2.
\end{eqnarray*}
Moreover, by \cite[Proposition 4.7.5]{Bump} and \cite[\S 2.4]{Hung2}
\[
\left|\int_{T_0}\chi_v(t)W_v(t)d^\times t\right|^2=L\left(\frac{1}{2},\pi_{K,v},\chi_v\right) \cdot{\rm N}(\cD_{F_v}).
\]
\subsubsection*{At the prime $\cP$} In this part we will compute the inner product $\langle f_\cP,f_\cP\rangle$,
where $f_\cP\in (V_\alpha)^{U_\cP}$ is such that $f_\cP(1)=1$. 
In order to simplify things, we choose the Haar measure of $T(F_\cP)$ so that the image of $\cO_{K_\cP}^\times$ has volume 1, as in \S \ref{locpair}.
Recall that 
\[
\langle f_\cP,f_\cP\rangle=c_T\int_{T(F_\cP)}f_\cP(t)\overline{f_\cP^\ast(t)}d^\times t,
\]
where $f_\cP^\ast=f_\cP$, if $|\alpha|^2=q$, or $f_\cP^\ast=\Lambda(f_\cP)$, if $\alpha=\pm 1$. 
\begin{proposition}
If $|\alpha|^2=q$, we have that
\[
\langle f_\cP,f_\cP\rangle=c_T\cdot q^{n_s}\cdot(1+q^{-1})\cdot L(1,\eta_\cP)=\left\{\begin{array}{lc}
c_T,&T(F_\cP)\mbox{ is inert};\\
c_T \cdot q^{n_s}\cdot\frac{1+q^{-1}}{1-q^{-1}},&T(F_\cP)\mbox{ is split};\\
c_T\cdot(1+q^{-1}),&T(F_\cP)\mbox{ is ramified},
\end{array}\right.
\]
where $n_s=\max\{n\in\N: \cO_K\subset \cO_{\mathfrak{n}_0\cP^n}\}$.

If $\alpha=\pm 1$, we have that
\[
\langle f_\cP,f_\cP\rangle=-c_T\bar C_T\cdot q^{2n_T-1}\cdot L(1,\eta_\cP)^2=\left\{\begin{array}{lc}
-c_T\bar C_T\cdot q^{2n_T-1}\frac{1}{(1+q^{-1})^2},&T(F_\cP)\mbox{ is inert};\\
-c_T\bar C_T\cdot q^{2n_T-1}\frac{1}{(1-q^{-1})^2},&T(F_\cP)\mbox{ is split};\\
-c_T\bar C_T\cdot q^{2n_T-1},&T(F_\cP)\mbox{ is ramified}.
\end{array}\right.
\]
where $n_T$ is the constant introduced in Theorem \ref{teocompIT}.
\end{proposition}
\begin{proof}
Note that, if $|\alpha|^2=q$ and $T(F_\cP)$ inert, then $f_\cP=\delta(1_{T(F_\cP)})$. Hence $\langle f_\cP,f_\cP\rangle=c_T$.

If $|\alpha|^2=q$ and $T(F_\cP)$ splits, we have seen above that we can assume that 
\[
T(F_\cP)=\left\{A\left(\begin{array}{cc}t&\\&1\end{array}\right)A^{-1}\right\},\quad \mbox{for some}\quad A=\left(\begin{array}{cc}a&b\\c&d\end{array}\right)\in K_0(1). 
\]
Hence, 
\[
\langle f_\cP,f_\cP\rangle=c_T\int_{T(F_\cP)}\left|f_\cP\left(A\left(\begin{array}{cc}t&\\&1\end{array}\right)A^{-1}\right)\right|^2d^\times t=c_T\int_{T(F_\cP)}\left|f_\cP\left(\begin{array}{cc}at&b\\ct&d\end{array}\right)\right|^2d^\times t.
\]
Writing $D=ad-bc\in\cO_{F_\cP}^\times$, we have
\[
\left(\begin{array}{cc}at&b\\ct&d\end{array}\right)=\left(\begin{array}{cc}\frac{D}{c}&at\\&ct\end{array}\right)\left(\begin{array}{cc}&-1\\1&\frac{d}{ct}\end{array}\right)=\left(\begin{array}{cc}\frac{Dt}{d}&b\\&d\end{array}\right)\left(\begin{array}{cc}1&\\\frac{ct}{d}&1\end{array}\right).
\]
Hence, if $n_0:=\nu(d/c)$ and writing $n=\nu(t)$,
\[
\langle f_\cP,f_\cP\rangle=c_T\left(\sum_{n<n_0}q^{2\nu(c)+n}+\sum_{n\geq n_0}q^{2\nu(d)-n}\right)=c_T\cdot q^{\nu(c\cdot d)}\left(\frac{1+q^{-1}}{1-q^{-1}}\right).
\]

If $|\alpha|^2=q$ and $T(F_\cP)$ ramifies, we have seen above that 
\[
T(F_\cP)=(\varpi_K\cO_{K_\cP}^\times/\cO_{F_\cP}^\times)\times(\cO_{K_\cP}^\times/\cO_{F_\cP}^\times),\quad f_\cP(\cO_{K_\cP}^\times/\cO_{F_\cP}^\times)=1,\;\mbox{ and }\;f_\cP(\varpi_K\cO_{K_\cP}^\times/\cO_{F_\cP}^\times)=\alpha^{-1}.
\]
Thus,
$\langle f_\cP,f_\cP\rangle=c_T\cdot(1+q^{-1})$.

If $\alpha=\pm 1$ and $T(F_\cP)$ is inert, we have that $K_0(1)\cap K_{\cP}^\times=\cO_{K_\cP}^\times=(\cO_{F_\cP}+\beta\cO_{F_\cP})^\times$, but $K_0(1)\cap K_{\cP}^\times=(\cO_{F_\cP}+\varpi\beta\cO_{F_\cP})^\times$. Hence $f_\cP\mid_{T(F_\cP)}=1_{H_1}$, where $H_n=(\cO_{F_\cP}+\varpi^n\beta\cO_{F_\cP})^\times)/\cO_{F_\cP}^\times$. We compute
\begin{eqnarray*}
\langle f_\cP,f_\cP\rangle&=&c_T\int_{H_1}\overline{\Lambda(f_\cP)(t)}d^\times t=\left.c_T\bar C_T\int_{H_1}\int_{T(F_\cP)}\overline{\theta_T(s)(y)\cdot f_\cP(y^{-1}t)}d^\times yd^\times t\right|_{s=0}\\
&=&\left.c_T\bar C_T\int_{H_1}\int_{H_1}|c(t)|^{2s-2}d^\times y d^\times t\right|_{s=0}=\left.\frac{c_T\bar C_T}{q+1}\sum_{n\geq 1}	q^{(n_T+n)(2-2s)}{\rm vol}(H_n\setminus H_{n+1})\right|_{s=0}\\
&=&c_T\bar C_Tq^{2n_T}\frac{q(q-1)}{(1+q)^2(1-q)}.
\end{eqnarray*}
since ${\rm vol}(H_1)=(1+q)^{-1}$ and ${\rm vol}(H_n\setminus H_{n+1})=(q+1)^{-1}q^{-n}(q-1)$.

If $\alpha=\pm 1$ and $T(F_\cP)$ splits, we have seen above that we can assume that 
\[
T(F_\cP)=\left\{A\left(\begin{array}{cc}t&\\&1\end{array}\right)A^{-1}\right\},\quad \mbox{for some}\quad A=\left(\begin{array}{cc}a&b\\c&d\end{array}\right)\in K_0(\cP). 
\]
Hence, since $f_\cP\mid_{K_0(1)}=1_{K_0(\cP)}$ and $\nu(d)=0$, we have
\[
f_\cP(t)=f_\cP\left(\begin{array}{cc}at&b\\ct&d\end{array}\right)=\left\{\begin{array}{ll}\alpha^{n},&n=\nu(t)>-\nu(c)=n_T\\
0,&\mbox{otherwise.}\end{array}\right.
\]
Thus,
\begin{eqnarray*}
\langle f_\cP,f_\cP\rangle&=&c_T\sum_{n>n_T}\alpha^n\int_{\varpi^n\cO_{K_\cP}^\times}\overline{\Lambda(f_\cP)(t)}d^\times t\\
&=&\left.c_T\bar C_T\sum_{n>n_T}\alpha^n\int_{\cO_{K_\cP}^\times}\int_{T(F_\cP)}\overline{\theta_T(s)(y)\cdot f_\cP(\varpi^n y^{-1}t)}d^\times yd^\times t\right|_{s=0}\\
&=&\left.c_T\bar C_T\sum_{n>n_T}	\sum_{m<n-n_T}\int_{\varpi^m\cO_{F_\cP^\times}}\left|\frac{y}{c^2(y-1)^2}\right|^{1-s}d^\times y\right|_{s=0}\\
&=&\left.c_T\bar C_Tq^{n_T(2-2s)}\sum_{n>n_T}	\sum_{N>n_T-n}q^{N(s-1)}\right|_{s=0}=c_T\bar C_Tq^{2n_T}\frac{1}{(1-q^{-1})(1-q)}.
\end{eqnarray*}

If $\alpha=\pm 1$ and $T(F_\cP)$ ramifies, we have seen above that $f_\cP(\cO_{K_\cP}^\times/\cO_{F_\cP}^\times)=1$ and $f_\cP(\varpi_K)=0$. Hence, with the notations of \S \ref{locthe},
\begin{eqnarray*}
\langle f_\cP,f_\cP\rangle&=&c_T\int_{\cO_{K_\cP}^\times}\overline{\Lambda(f_\cP)(t)}d^\times t=\left.c_T\bar C_T\int_{\cO_{K_\cP}^\times}\int_{T(F_\cP)}\overline{\theta_T(s)(y)\cdot f_\cP(y^{-1}t)}d^\times yd^\times t\right|_{s=0}\\
&=&\left.c_T\bar C_T\cdot\int_{\cO_{K_\cP}^\times}\left|\frac{\det(t)}{c(t)^2}\right|^{1-s}d^\times y\right|_{s=0}\\
&=&\left.c_T\bar C_T\cdot\sum_{n\geq 0}q^{(2-2s)(n_T+n)}{\rm vol}(H_n\setminus H_{n+1})\right|_{s=0}=-c_T\bar C_T\cdot q^{2n_T-1}.
\end{eqnarray*}
\end{proof}

\subsubsection*{Interpolation formulas}
Applying the above result and Corollary \ref{Corcalclocal}, Theorem \ref{WaldThm}, we obtain that
\[
\left|\int_{\cG_{K,\cP}}\chi(\gamma)d\mu_{K,\cP}^{\rm def}(\gamma)\right|^2=\frac{C}{2}\cdot\xi(2)\cdot C_{\rm inert}\cdot C_{\rm ram}\cdot e(\pi_\cP,\chi_\cP)\cdot\frac{L(1/2,\pi_K,\chi)}{L(1,\pi,ad)}\cdot\prod_v\langle f_v,f_v\rangle,
\]
where 
\begin{eqnarray*}
&&C=\langle f_\cP,f_\cP\rangle\cdot{\rm vol}(\cO_{K_\cP}^\times/\cO_{F_\cP}^\times)\cdot\prod_{v\;{\rm split}}\frac{\xi_v(1)L(1,\pi_v,ad)}{\langle W_v,W_v\rangle\xi_v(2)}\cdot{\rm N}(\cD_{F_v})\prod_{v\;{\rm nonsplit}}{\rm vol}(T(F_v)),\\
&&C_{\rm inert}=\left(\prod_{v\mid\disc(B_v),\;{\rm inert}}\frac{1}{\xi_v(1)}\right),\qquad
C_{\rm ram}=\left(\prod_{v\;{\rm ram},\;v\mid\mathfrak{n}'}\frac{1}{L(1/2,\pi_v,\chi_v)}\right),\\
&&e(\pi_\cP,\chi_\cP)=\left\{\begin{array}{ll}
\frac{L(1,\pi_\cP,ad)}{q^{n_s}\xi_\cP(1)L(1/2,\pi_\cP,\chi_\cP)},&\mbox{if }|\alpha|^2=q,\\
\frac{\xi_\cP(1)q^{-1}}{\xi_\cP(2)L(-1/2,\pi_\cP,\chi_\cP)},&\mbox{if }\alpha=\pm 1,\chi_\cP|_{\cO_{K_\cP}^\times}=1,\\
\frac{\xi_\cP(1)q^{n_\chi-1}}{\xi_\cP(2)L(1/2,\pi_\cP,\chi_\cP)},&\mbox{if }\alpha=\pm 1,\chi_\cP|_{\cO_{K_\cP}^\times}\neq1.\\
\end{array}\right.
\end{eqnarray*}
Similarly, applying the above result, Corollary \ref{Corcalclocal}, and Theorem \ref{GrZaZh}, we obtain
\[
\left|\int_{\cG_{K,\cP}}\chi(\gamma)d\mu_{K,\cP}^{\rm ind}(\gamma)\right|^2=C\cdot\xi(2)\cdot C_{\rm inert}\cdot C_{\rm ram}\cdot e(\pi_\cP,\chi_\cP)\cdot\frac{L'(1/2,\pi_K,\chi)}{L(1,\pi,ad)}\cdot\prod_{v}\langle f_v,f_v\rangle.
\]
Moreover, these expressions do not depend on the choice of the newform or the the Haar measure. But we can apply the explicit Waldspurger and Gross-Zagier-Zhang formulas given in \cite[Theorem 6.1, Theorem 7.1]{Zhang03} for a suitable special case in order to compute the terms $\langle f_v,f_v\rangle$, $\langle W_v,W_v\rangle$ and ${\rm vol}(T(F_v))$. We obtain the following result: 
\begin{theorem}\label{explIPthm}
Let $\cN$ be the level of the Eichler order $\cO_{\mathfrak{n}'}$, and write 
\[
K_{ram}:=\prod_{v{\,\rm ram},\,v\mid\cN}\frac{\xi_v(1)}{\xi_v(2)}=\prod_{v{\,\rm ram},\,v\mid\cN}\frac{1}{L(1/2,\pi_v,\chi_v)}.
\]
In the indefinite case we have that
\[
\left|\int_{\cG_{K,\cP}}\chi(\gamma)d\mu_{K,\cP}^{\rm ind}(\gamma)\right|^2=\frac{\sqrt{\Norm(d_{K/F})}}{2^{d+1}}\cdot K_{ram}\cdot e(\pi_\cP,\chi_\cP)\cdot \frac{L'(1/2,\pi_K,\chi)}{\parallel f\parallel^2},
\]
where $f$ is the corresponding Hilbert newform, and $||f||^2$ is computed using the invariant measure on the Hilbert variety induced by $dxdy/y^2$ on $\mathfrak{H}$.
In the definite case, 
if we choose the newform $\phi$ to be of norm 1, then   
\[
\left|\int_{\cG_{K,\cP}}\chi(\gamma)d\mu_{K,\cP}^{\rm def}(\gamma)\right|^2=\frac{\sqrt{\Norm(d_{K/F})}}{2^{d}}\cdot K_{ram}\cdot e(\pi_\cP,\chi_\cP)\cdot \frac{L(1/2,\pi_K,\chi)}{\parallel f\parallel^2}.
\]
\end{theorem}

\section{Automorphic $\cL$-invariants}\label{Ap2}


Let $(\pi, V_\pi)$ be an automorphic representation of $\GL_2(\A_F)$ of parallel weight 2 with a trivial central character, and assume that $(\pi_\cP,V_{\pi_\cP})$ is the Steinberg representation $(\pi_1^\C,V_1^\C)$. Let us assume that $\cP$ splits in $K$. 

\subsection{Extensions of the Steinberg representation}\label{ExtStRep}

The main references in this section are \cite[\S 2.7]{Spiess} and \cite{B-G}.
Recall that the Steinberg representation $V_1^\C$ is defined by means of the exact sequence
\[
0\longrightarrow \C\phi_0\longrightarrow \bar V_1^\C\longrightarrow V_1^\C\longrightarrow 0,
\]
where $\phi_0(g)=1$ for all $g\in \GL_2(F_\cP)$. For any topological ring $R$, we defined in \S\ref{Loc-Re} its $R$-valued analogue $(\pi^R,V^R):=(\pi_1^R,V_1^R)$.

Fix en embedding $K_\cP^\times\hookrightarrow\GL_2(F_\cP)$.
Since  $K_\cP^\times$ splits, there are two eigenvectors $v_1,v_2\in F_\cP^2$ and two eigenvalue morphisms 
\[
\lambda_1,\lambda_2:K_\cP^\times\longrightarrow F_\cP^\times,
\]
such that $tv_i=\lambda_i(t)v_i$ and $\det(t)=\lambda_1(t)\lambda_2(t)$, for all $t\in K_\cP^\times$. We fix the isomorphism $\psi:T(F_\cP)\stackrel{\simeq}{\rightarrow}F_\cP^\times$ provided by $t\mapsto \lambda_1(t)/\lambda_2(t)$. Notice that we have the map
\begin{equation}\label{varphiPP1}
\varphi:\GL_2(F_\cP)\longrightarrow \PP^1(F_\cP);\quad g=\left(\begin{array}{cc}a&b\\c&d\end{array}\right)\longmapsto \frac{-d}{c},
\end{equation}
that identifies $P_\cP\backslash \GL_2(F_\cP)$ with $\PP^1(F_\cP)$. If $K_\cP^\times\not\subset P_\cP$, the restriction of $\varphi$ provides an injection 
\[
\varphi:T(F_\cP)\hookrightarrow\PP^1(F_\cP),\quad\mbox{ such that }\quad\PP^1(F_\cP)\setminus\varphi(T(F_\cP))=\{x_1,x_2\},
\]
where $x_1$ and $x_2\in\PP^1(F_\cP)$ correspond to the spaces generated by $v_1$ and $v_2$, respectively. Since $\varphi(1)=\infty\in\varphi(T(F_\cP))$, the points $x_i\neq\infty$ can be seen as values in $F_\cP$. If $P_{x_i}:=\varphi^{-1}(x_i)\subseteq \GL_2(F_\cP)$, we have the maps
\[
\Lambda_i:\GL_2(F_\cP)\setminus P_{x_i}\longrightarrow F_\cP^\times; \qquad \left(\begin{array}{cc}a&b\\c&d\end{array}\right)\longmapsto d+x_i c.
\]
It is easy to check that 
$\Lambda_i(gt)=\Lambda_i(g)\lambda_i(t)$, for all $t\in K_\cP^\times$. Let $H\subset T(F_\cP)$ be the maximal open subgroup. Thus $T(F_\cP)/H\simeq\varpi^\Z$ for some $\varpi\in T(F_\cP)$. Assume that $x_1=\lim_{x_n\in\varpi^n H}x_n$ in $\PP^1(F_\cP)$ and let $U=x_1\cup\bigcup_{n\in\N} \varpi^n H$.

Let $\ell:F_\cP^\times\rightarrow R$ be any continuous group homomorphism. Then, we can define the cocycle $c_\ell\in H^1(\GL_2(F_\cP),V^R)$ associated with the extension of $R[\GL_2(F_\cP)]$-modules
\begin{equation}\label{exseqEell}\xymatrix{
0\ar[r] &V^R\ar[rr]_{\phi\mapsto (\phi,0)}^\iota& &\mathscr{E}(\ell)\ar[rr]^\pi_{(\phi,y)\mapsto y}&& R\ar[r]& 0,
}\end{equation}
where  
\[
\mathscr{E}(\ell):=\left\{(\phi,y)\in C(\GL_2(F_\cP),R)\times R:\; \phi\left(\left(\begin{array}{cc}t_1&x\\&t_2\end{array}\right)g\right)=\phi(g)+\ell(t_2)y\right\}/R\iota\phi_0,
\]
with $G$-action $g\ast(\phi,y)=(g\ast\phi,y)$ and $g\ast\phi(h)=\phi(hg)$.
Note that the above sequence is exact, since $\pi$ is surjective, indeed, we can define $(\phi_1,1)\in \mathscr{E}(\ell)$, where
\[
\phi_1(g)=\left\{\begin{array}{ll}\ell\left(\Lambda_2(g)\right),&\varphi(g)\in U,\\\ell\left(\Lambda_1(g)\right),&\varphi(g)\not\in U. \end{array}\right.
\] 

\begin{remark}\label{2intEell}
The above definition of $\mathscr{E}(\ell)$ differs from the one given in \cite{Spiess}, for example, where the extensions introduced there are of the form
\[
\mathscr{E}(\ell)^*:=\left\{(\phi,y)\in C(\GL_2(F_\cP),R)\times R:\; \phi\left(\left(\begin{array}{cc}t_1&x\\&t_2\end{array}\right)g\right)=\phi(g)+\ell\left(\frac{t_2}{t_1}\right)y\right\}/R\iota\phi_0.
\]
It is easier to show that these extensions have trivial action of the center. But notice that there is an isomorphism
\[
\mathscr{E}(2\ell)\longrightarrow \mathscr{E}(\ell)^*;\qquad (\phi,y)\longmapsto(\phi-yf_0,y),\quad f_0(g):=\ell(\det(g)),
\] 
that is $\GL_2(F_\cP)$-equivariant since we are taking quotients by $R\iota\phi_0$. Thus, both approaches are almost equivalent.
\end{remark}

\begin{remark}\label{rempreprop}
The identification $P_\cP\backslash \GL_2(F_\cP)\simeq\PP^1(F_\cP)$ provides an $R$-module isomorphism $\bar V_1^R\simeq C(\PP^1(F_\cP),R)$. Thus, we can consider $V_1^R=V^R$ as a quotient of $C(\PP^1(F_\cP),R)$.
\end{remark}

\begin{proposition}\label{eqcoc}
Write $\tilde\ell:=\ell\circ\psi:T(F_\cP)\rightarrow R$.
The restriction ${\rm res}(c_\ell)\in H^1(T(F_\cP),V^R)$ coincides with the cocycle
\[
{\rm res}(c_\ell)=z_{\tilde\ell};\qquad z_{\tilde\ell}(t):=(1-t)\tilde\ell1_U\in C(\PP^1(F_\cP),R).
\]
\end{proposition}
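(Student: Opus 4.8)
The plan is to produce a concrete cocycle representing $c_\ell$, restrict it to $T_\cP$, and compare it with $(1-t)\tilde\ell 1_U$ point by point on $\PP^1(F_\cP)$.

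First I would fix the cocycle. Since $R$ carries the trivial $G$-action, $\pi$ admits the $R$-linear section $s(y)=(y\phi_1,y)$; that $s$ lands in $\mathscr{E}(\ell)$ is exactly the transformation rule for $\phi_1$, which one verifies directly from $\Lambda_i(gt)=\Lambda_i(g)\lambda_i(t)$ and its left analogue, that $\Lambda_i(bg)$ is $\Lambda_i(g)$ times the lower-right entry of $b$, for $b\in B$. With this section $c_\ell$ is represented by $g\mapsto g\ast\phi_1-\phi_1$ (mapped into $V^R$ by $\iota$), where $(g\ast\phi_1)(h)=\phi_1(hg)$. For $t\in T_\cP^2$ (a lift of $t\in T_\cP$) the function $g\mapsto\phi_1(gt)-\phi_1(g)$ is left $B$-invariant, the $\ell$-terms produced by the transformation rule cancelling, hence it lies in $\Ind_B^G 1_R=C(\PP^1(F_\cP),R)$, and since $V^R$ has trivial central character its class in $V^R$ depends only on $t\in T_\cP$; this class is ${\rm res}(c_\ell)(t)$. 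It therefore suffices to show that $t\ast\phi_1-\phi_1$ and $(1-t)\tilde\ell 1_U$ differ by a constant function, for each $t$.

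Next I would carry out the comparison on $\varphi(T_\cP)\subset\PP^1(F_\cP)$. For $x=\varphi(g)$ the value of the representative at $x$ is $\phi_1(gt)-\phi_1(g)$, and since $\varphi(gt)=t^{-1}\ast x$, which branch of $\phi_1$ is in force at $g$ and at $gt$ is governed by whether $x$ and $t^{-1}\ast x$ lie in $U$. Using $\Lambda_i(gt)=\Lambda_i(g)\lambda_i(t)$, the fact (from $\psi=\lambda_1/\lambda_2$ and the left $B$-invariance of $\Lambda_1/\Lambda_2$) that $\ell(\Lambda_1(g))-\ell(\Lambda_2(g))$ equals $\tilde\ell$ at the $T_\cP$-point underlying $x$ whenever $x\in\varphi(T_\cP)$, and the observation that $t^{-1}\ast$ fixes $x_1,x_2$ while shifting by $n\mapsto n+m$ the index $n$ of the coset $\varphi(\varpi^n H)$ containing a point of $\varphi(T_\cP)$ (where $\varpi^m H$ is the coset of $t$), a short case distinction on the signs of $n$ and $n+m$ shows that $\phi_1(gt)-\phi_1(g)$ and $\big((1-t)\tilde\ell 1_U\big)(x)$ differ by the single constant $\ell(\lambda_1(t))$ for all $x\in\varphi(T_\cP)$ — the potentially unbounded $\tilde\ell(x)$-contributions cancelling precisely, thanks to $\tilde\ell(t)=\ell(\lambda_1(t))-\ell(\lambda_2(t))$.

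Finally I would treat the two points $x_1,x_2\in\PP^1(F_\cP)\setminus\varphi(T_\cP)$, which is the crux. The formal symbol $\tilde\ell 1_U$ is not continuous on $\PP^1(F_\cP)$ (it is unbounded towards $x_1$), so one must first check that $(1-t)\tilde\ell 1_U$ nonetheless lies in $C(\PP^1(F_\cP),R)$: it is locally constant away from $\{x_1,x_2\}$, and on a punctured neighbourhood of $x_1$ (resp.\ of $x_2$) it is constant with value $-\tilde\ell(t)$ (resp.\ $0$), so it extends continuously, and those are its values at $x_1,x_2$. Evaluating the representative $\phi_1(gt)-\phi_1(g)$ at $x_1$ and $x_2$ directly — both are $t^{-1}\ast$-fixed, so the same branch ($\Lambda_2$ near $x_1$, $\Lambda_1$ near $x_2$) serves for $g$ and for $gt$ — gives $\ell(\lambda_2(t))$ and $\ell(\lambda_1(t))$, and comparison with the limiting values above produces the difference $\ell(\lambda_1(t))$ once more. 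Hence $t\ast\phi_1-\phi_1$ and $(1-t)\tilde\ell 1_U$ differ by the constant function $\ell(\lambda_1(t))\phi_0$ on all of $\PP^1(F_\cP)$ (for the chosen lift $t\in T_\cP^2$), so they coincide in $V^R$, which is the assertion; cocycle-ness of $z_{\tilde\ell}$ is then inherited from that of ${\rm res}(c_\ell)$. The main obstacle is exactly this reconciliation at $x_1,x_2$ — making sense of $(1-t)\tilde\ell 1_U$ as an honest continuous function and reading off its values there by continuity — together with keeping straight throughout the identification $\varphi\colon B\backslash G\simeq\PP^1(F_\cP)$, the relation $\varphi(gt)=t^{-1}\ast\varphi(g)$ linking the induced $G$-action on $\Ind_B^G 1_R$ with the M\"obius action, and the sign convention in passing from the extension $\mathscr{E}(\ell)$ to its defining cocycle.
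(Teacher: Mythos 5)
Your proposal is correct and follows essentially the same route as the paper: both use the explicit section $(\phi_1,1)$ of $\mathscr{E}(\ell)$, represent $\mathrm{res}(c_\ell)(t)$ by $t\ast\phi_1-\phi_1$, and check pointwise on $\PP^1(F_\cP)$ that this differs from $(1-t)\tilde\ell 1_U$ by the constant $\ell(\lambda_1(t))\phi_0$, which vanishes in $V^R$. Your separate verification at the fixed points $x_1,x_2$ (and that $(1-t)\tilde\ell 1_U$ extends continuously there) is just a more careful rendering of the paper's one-line rewriting $\phi_1=\ell(\Lambda_1)+\ell(\Lambda_2/\Lambda_1)1_U\circ\varphi$, not a different argument.
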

\begin{proof}
We compute ${\rm res}(c_\ell)(x)$, for $x\in\PP^1(F_\cP)$, 
\[
{\rm res}(c_\ell)(t)(x)=t\ast\phi_1(g_x)-\phi_1(g_x)=\phi_1(g_xt)-\phi_1(g_x),
\]
for any choice $g_x\in \varphi^{-1}(x)$. 
Since $\phi_1(g)=\ell\left(\Lambda_1(g)\right)+\ell\left(\frac{\Lambda_2(g)}{\Lambda_1(g)}\right)1_U(\varphi(g))$, we have
\begin{eqnarray*}
{\rm res}(c_\ell)(t)(x)&=&\ell\left(\Lambda_1(g_xt)\right)-\ell\left(\Lambda_1(g_x)\right)+(1-t)\tilde\ell1_U(x)\\
&=&\ell\left(\lambda_1(t)\right)+(1-t)\tilde\ell1_U(x)=\ell\left(\lambda_1(t)\right)1_{\PP^1}(x)+(1-t)\tilde\ell1_U(x),
\end{eqnarray*}
since $\ell\left(\frac{\Lambda_1(g_x)}{\Lambda_2(g_x)}\right)$ coincides with $\tilde\ell(t)$ whenever $x=\varphi(t)$ with $t\in T(F_\cP)$. The result follows because the constant function $\ell\left(\lambda_1(t)\right)1_{\PP^1}(x)$ corresponds to $\ell\left(\lambda_1(t)\right)\phi_0$ under the identification of Remark \ref{rempreprop}.
\end{proof}

\subsection{$\cL$-invariants}

As in \S \ref{MultOne}, let $L/L_\pi$ be any field field extension endowed with the discrete topology. Let $B/F$ be a quaternion algebra that splits at $\cP$ and admitting a Jacquet-Langlands lift $\pi^B$.
Remark \ref{AvsHom} implies that
\begin{equation}\label{1-dimeq}
H^k(G_B(F),\cA_B^\cP(V^{{L}},L))_{\pi^B}\simeq H^k(G_B(F),\cA_B({\bf C}))_{\pi^B}.
\end{equation}

Let $c_\ord\in H^1(G_B(F),V^{\Z})$ be the restriction of the class associated with the continuous morphism $\ord:F_\cP^\times\rightarrow\Z$ (here the ring $\Z$ is considered with the discrete topology).
\begin{proposition}\label{propord}
The cup product by $c_\ord$ provides an isomorphism of 1-dimensional $L$-vector spaces
\[
H^k(G_B(F),\cA_B^\cP(V^{L},L))_{\pi^B}\stackrel{\cup c_\ord}{\longrightarrow}H^{k+1}(G_B(F),\cA_B^\cP({L}))_{\pi^B} ,
\]
where $k=0$, if $B$ is definite, and $k=1$, if $B$ splits at a single archimedean place.
\end{proposition}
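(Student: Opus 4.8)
The plan is to realize $\cup\, c_\ord$ as a connecting homomorphism, transport everything to the local group $G(F_\cP)$ via a spectral sequence, and there reduce to an elementary computation of $\mathrm{Ext}$-groups between the Steinberg and the trivial representation. Write $\mathscr{E}(\ord)_{\bf C}:=\mathscr{E}(\ord)\otimes_\Z{\bf C}$. The cocycle $c_\ord\in H^1(G(F),V^\Z)$ is, by construction, the class of the extension of $\Z[G(F)]$-modules $0\to V^\Z\to\mathscr{E}(\ord)\to\Z\to 0$. As $\Z$ is free this sequence is $\Z$-split, so applying the contravariant functor $\cA_f^\cP(-,{\bf C})$ --- ``locally constant $\Hom_\Z(-,{\bf C})$-valued functions on $G(F^\cP)/(F^\cP)^\times$'', which is exact on $\Z$-split sequences --- yields a short exact sequence of $G(F)$- and $G(F^\cP)$-modules
\[
0\longrightarrow\cA_f^\cP({\bf C})\longrightarrow\cA_f^\cP(\mathscr{E}(\ord)_{\bf C},{\bf C})\longrightarrow\cA_f^\cP(V^{\bf C},{\bf C})\longrightarrow 0.
\]
By the standard identification of the boundary map of an extension with the cup product by its class (here via the evaluation pairing $\cA_f^\cP(V^{\bf C},{\bf C})\otimes V^{\bf C}\to\cA_f^\cP({\bf C})$), the connecting map $\delta$ of the associated long exact sequence for $H^\bullet(G(F),-)$ agrees, up to sign, with $\cup\, c_\ord$. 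So everything reduces to understanding $\delta$ in degree $k$, in particular the $G(F^\cP)$-equivariant map $\delta_\Pi:H^k(G(F),\cA_f^\cP(V^{\bf C},{\bf C}))_\Pi\to H^{k+1}(G(F),\cA_f^\cP({\bf C}))_\Pi$ it induces on $\Pi$-isotypic components.

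The key tool is a ``factorization at $\cP$'' spectral sequence. By Remark \ref{AvsHom}, $\cA_f^\cP(M,{\bf C})\simeq\Hom_{G(F_\cP)}(M,\cA_f({\bf C}))$ for every smooth ${\bf C}[G(F_\cP)]$-module $M$, compatibly with the remaining actions. Performing the homological bookkeeping as in the proof of Lemma \ref{lemmaext} and \cite[\S 2.7]{Spiess} (an injective resolution of $\cA_f({\bf C})$ in the category of ${\bf C}[G(F)]$-modules with a commuting smooth $G(F_\cP)$-action), one obtains, after taking $\Pi$-isotypic parts at the finite places \emph{away from} $\cP$ --- so that $H^q(G(F),\cA_f({\bf C}))_\Pi$ keeps a smooth $G(F_\cP)$-action --- a spectral sequence \emph{functorial in} $M$:
\[
E_2^{p,q}=\mathrm{Ext}^p_{G(F_\cP)}\!\big(M,\,H^q(G(F),\cA_f({\bf C}))_\Pi\big)\ \Longrightarrow\ H^{p+q}(G(F),\cA_f^\cP(M,{\bf C}))_\Pi.
\]
By Proposition \ref{1-dim} and strong multiplicity one --- in the split case together with the Eichler--Shimura isomorphism and the action of $G(F)/G(F)^+$ by complex conjugation --- the $G(F_\cP)$-representation $H^q(G(F),\cA_f({\bf C}))_\Pi$ is zero for $q\neq k$ and isomorphic to $\Pi_\cP$, i.e. to the Steinberg $V^{\bf C}$, for $q=k$. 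Hence the spectral sequence degenerates to a natural isomorphism $H^{k+p}(G(F),\cA_f^\cP(M,{\bf C}))_\Pi\simeq\mathrm{Ext}^p_{G(F_\cP)}(M,V^{\bf C})$ for all $p\ge 0$.

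Applying this to the short exact sequence $0\to V^{\bf C}\to\mathscr{E}(\ord)_{\bf C}\to{\bf 1}\to 0$ of $G(F_\cP)$-modules, functoriality (in the contravariant variable $M$) identifies $\delta_\Pi$ with the degree $0\to 1$ connecting map of the long exact sequence obtained by applying $\mathrm{Ext}^\bullet_{G(F_\cP)}(-,V^{\bf C})$ to that sequence, i.e. with the map $\partial$ in
\[
\mathrm{Ext}^0_{G(F_\cP)}(\mathscr{E}(\ord)_{\bf C},V^{\bf C})\longrightarrow\mathrm{Ext}^0_{G(F_\cP)}(V^{\bf C},V^{\bf C})\xrightarrow{\ \partial\ }\mathrm{Ext}^1_{G(F_\cP)}({\bf 1},V^{\bf C}).
\]
Now $\mathrm{Ext}^0_{G(F_\cP)}(V^{\bf C},V^{\bf C})={\bf C}$ by irreducibility of the Steinberg; $\mathrm{Ext}^1_{G(F_\cP)}({\bf 1},V^{\bf C})$ is one-dimensional, by the classical computation that the Steinberg representation of $\GL_2(F_\cP)$ has a one-dimensional first extension group with the trivial representation (in the category of smooth representations with trivial central character); and $\mathrm{Ext}^0_{G(F_\cP)}(\mathscr{E}(\ord)_{\bf C},V^{\bf C})=\Hom_{G(F_\cP)}(\mathscr{E}(\ord)_{\bf C},V^{\bf C})=0$, because $\mathscr{E}(\ord)_{\bf C}$ is a \emph{non-split} extension $0\to V^{\bf C}\to\mathscr{E}(\ord)_{\bf C}\to{\bf 1}\to 0$ and $V^{\bf C}$ is irreducible, so a nonzero homomorphism $\mathscr{E}(\ord)_{\bf C}\to V^{\bf C}$ would split it. Thus $\partial$ is an injective linear map ${\bf C}\to{\bf C}$, hence an isomorphism; equivalently $\cup\, c_\ord$ induces an isomorphism between the two one-dimensional ${\bf C}$-vector spaces, which is the assertion. (In the definite case $k=0$ one can avoid the spectral sequence for the vanishing of $\mathrm{Ext}^0(\mathscr{E}(\ord)_{\bf C},V^{\bf C})$, since there the two actions commute and $H^0(G(F),\cA_f^\cP(\mathscr{E}(\ord)_{\bf C},{\bf C}))_\Pi=\Hom_{G(F_\cP)}(\mathscr{E}(\ord)_{\bf C},H^0(G(F),\cA_f({\bf C}))_\Pi)$ directly.)

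The principal difficulty is the second paragraph: the spectral sequence must be constructed with care, since $\cA_f({\bf C})$ is \emph{not} an injective smooth $G(F_\cP)$-representation; and, more substantially, one must pin down the $G(F_\cP)$-module $H^k(G(F),\cA_f({\bf C}))_\Pi\simeq V^{\bf C}=\Pi_\cP$, which in the split case requires unwinding the Eichler--Shimura description of Proposition \ref{1-dim} to verify that exactly one copy of $\Pi_\cP$ (and not two) survives the complex-conjugation symmetry. The only genuinely local ingredient, the one-dimensionality of $\mathrm{Ext}^1_{\GL_2(F_\cP)}({\bf 1},V^{\bf C})$, is classical but should be invoked with a precise reference.
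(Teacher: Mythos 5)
Your overall strategy is genuinely different from the paper's: you reduce everything to the local Ext computation $\Hom_{G(F_\cP)}(V^{\bf C},V^{\bf C})={\bf C}$, $\mathrm{Ext}^1_{G(F_\cP)}({\bf 1},V^{\bf C})\simeq{\bf C}$, $\Hom_{G(F_\cP)}(\mathscr{E}(\ord)_{\bf C},V^{\bf C})=0$ (the last two facts are correct for discrete coefficients, and non-splitness of $\mathscr{E}(\ord)$ is easy, though you assert both without argument), via a local--global spectral sequence with $E_2^{p,q}=\mathrm{Ext}^p_{G(F_\cP)}(M,H^q(G(F),\cA_f({\bf C}))_\Pi)$. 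The paper instead never leaves the global cohomology: it proves the explicit presentation $\mathscr{E}(\ord)\simeq\Ind_{KZ}^{G(F_\cP)}1_{\bf C}/(\cT-(q+1))$ (via the $K$-fixed vector $(\phi_K,1)$ and the $K_0(\varpi)$-Hecke operator computation), which turns the middle term of the long exact sequence into cohomology of $\cA_f({\bf C})^K$; since the Steinberg $\Pi_\cP$ has no nonzero $K$-fixed vectors, strong multiplicity one kills its $\Pi$-part in \emph{every} degree, and the long exact sequence then gives injectivity, surjectivity, and the one-dimensionality of the target simultaneously. Note that the paper's presentation is in effect a proof of the local Ext facts you quote, so the two routes are close in substance but organized very differently.

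The genuine gap in your write-up is exactly the step you flag: the spectral sequence is asserted, not constructed, and it does not follow from "bookkeeping as in Lemma \ref{lemmaext}", which is only a flat base-change statement. For your $E_2$-page (group cohomology inside, Ext outside) to converge to $H^{p+q}(G(F),\cA_f^\cP(M,{\bf C}))_\Pi$ and to be functorial in $M$ (so that the global connecting map really is the local $\partial:\Hom(V^{\bf C},V^{\bf C})\to\mathrm{Ext}^1({\bf 1},V^{\bf C})$), you must prove an acyclicity statement: that $\mathrm{Ext}^{>0}_{G(F_\cP)}(M,\cA_f({\bf C}))=0$ for the three modules involved, i.e. that $\cA_f({\bf C})$ is acyclic for $\Hom_{G(F_\cP)}(M,-)$ in the category of smooth representations with trivial central character, and in addition that the away-from-$\cP$ isotypic functor is exact on all the terms appearing (this is where "automorphic type" and strong multiplicity one must be invoked at each stage, including the Eichler--Shimura/complex-conjugation point you mention for $k=1$). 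The acyclicity is in fact true -- as a smooth $G(F_\cP)/F_\cP^\times$-module, $\cA_f({\bf C})$ is a filtered colimit over $U^\cP$ of coinduced modules $C^\infty(G(F_\cP)/F_\cP^\times,W)$, hence injective since the category is locally noetherian -- but none of this is in your proposal, and without it the identification of $\cup\,c_\ord$ with the local connecting map, and the one-dimensionality of $H^{k+1}(G(F),\cA_f^\cP({\bf C}))_\Pi$, are unsubstantiated. Either supply this argument, or bypass it as the paper does by resolving $\mathscr{E}(\ord)$ and $V^{\bf C}$ explicitly by modules induced from $KZ$ and $K_0(\varpi)Z$.
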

\begin{proof}
The proof is completely analogous to \cite[Lemma 5.2 (b)]{Spiess}.
\end{proof}

\begin{remark}\label{remVCs}
Since $P_\cP\backslash \GL_2(F_\cP)\simeq\PP^1(F_\cP)$, we can identify $V_0^R:=V_{1,0}^{R}$ with $C_c(F_\cP,R)_0$ and $V^{R}$ with $C_\diamond(F_\cP,R)$, both inside $C(\PP^1(F_\cP),R)$. 
\end{remark}

The above remark implies that there is a canonical pairing
\begin{equation}\label{pairingASTA}
\langle\;,\;\rangle:\cA_B^\cP(V^{\cO_{\C_p}}_0,\cO_{\C_p})\otimes_{\cO_{\C_p}}\C_p\times V^{\Z_p}\longrightarrow \cA_B^\cP(\C_p),
\end{equation}
since $\Hom_{\cO_{\C_p}}(V_0^{\cO_{\C_p}},\cO_{\C_p})$ is identified with a space of bounded distributions. Moreover, if we equip $V^{\Z_p}$ with the action of $G_B(F)$ provided by the inclusion $G_B(F)\hookrightarrow \GL_2(F_\cP)/F_\cP^\times$, the above pairing is $G_B(F)$-equivariant. 

Recall that, by Lemma \ref{lemlattices} and Lemma \ref{lemtenpro}, we can identify
\[
H^k(G_B(F),\cA_B^\cP(V^{\bar\Q},\C_p))\simeq H^k(G_B(F),\cA_B^\cP(V_0^{\cO_{\C_p}},\cO_{\C_p})\otimes_{\cO_{\C_p}}\C_p).
\]
Given a continuous homomorphism $\ell:F^\times_\cP\rightarrow\Z_p$, the cup product by $c_\ell\in H^1(G_B(F),V^{\Z_p})$ on the $\pi^B$-isotypic component induces a morphism 
\[
(\cdot\cup c_\ell)_{\pi^B}: H^k(G_B(F),\cA_B^\cP(V^{\bar\Q},\C_p))_{\pi^B}\longrightarrow H^{k+1}(G_B(F),\cA_B^\cP(\C_p))_{\pi^B},
\]
where $k=0$ if $B$ is definite, and $k=1$ if $B$ splits at a single archimedean place.
By Proposition \ref{propord}, both $H^k(G_B(F),\cA_B^\cP(V^{\bar\Q},\C_p))_{\pi^B}$ and $H^{k+1}(G_B(F),\cA_B^\cP(\C_p))_{\pi^B}$ are one-dimensional $\C_p$-vector spaces with a fixed isomorphism $(\cdot\cup c_\ord)_{\pi^B}$.

\begin{definition}\label{defLinv}
Given a continuous group homomorphism $\ell:F^\times_\cP\rightarrow \Z_p$, the \emph{automorphic $\cL$-invariant} associated with $\ell$ and $\pi^B$ is the unique $\cL_\cP(\pi^B,\ell)\in \C_p$ such that
\[
(\cdot\cup c_\ell)_{\pi^B}=\cL_\cP(\pi^B,\ell)(\cdot\cup c_\ord)_{\pi^B}.
\]
\end{definition}

We expect this automorphic $L$-invariant to depend only on the representation $\pi$ and not on the  Jacquet-Langlands lift $\pi^B$.

\begin{conjecture}\label{conjLinv}
Let $B/F$ and $D/F$ be two quaternion algebras over $F$ that are either definite or split at a single archimedean place, but both split at $\cP$.
Then we have that
\[ 
\cL_\cP(\pi^B,\ell)=\cL_\cP(\pi^D,\ell),
\]
for any continuous morphism $\ell: F_\cP^\times\rightarrow \Z_p$.
\end{conjecture}
\begin{remark}
We can define $\cL_\cP(\pi^B,\ell)$ for any quaternion algebra $B/F$ that splits at $\cP$. In this case, the automorphic representation $\pi^B$ lies in $H^k(G_B(F),\cA_B^\cP(V^{\bar\Q},\C_p))$, where $k$ is the number of archimedean places where $B$ splits. Hence, we can reformulate the previous conjecture for any pair of quaternion algebra that split at $\cP$ (see \cite[Definition 2.5]{lennart-L-invariants}). 

Conjecture \ref{conjLinv} has been recently proven in \cite{lennart-L-invariants} under certain technical assumptions.
\end{remark}

For the rest of the section let us assume that $B$ is a quaternion algebra that splits at a single place. The restriction map identifies $H^1(G_B(F),\cA_B^\cP(V^{\bar\Q},\C_p))$ with the subspace of $H^1(G_B(F)^+,\cA_B^\cP(V^{\bar\Q},\C_p))$ fixed by $G_B(F)/G_B(F)^+$. Write $H^1(G_B(F)^+,\cA_B^\cP(V^{\bar\Q},\C_p))^-$ for the subspace such that the non-trivial element of $G(F)/G(F)^-$ acts as $-1$. The proof of Proposition \ref{1-dim} shows that 
\[
H^1(G_B(F)^+,\cA_B^\cP(V^{\bar\Q},\C_p))^-_{\pi^B}=H^1(G_B(F)^+,\cA_B(\C_p))^-_{\pi^B}\simeq \C_p.
\] 
Moreover, the analogous result to Proposition \ref{propord} shows that 
\[
\cup c_{\ord}:H^1(G_B(F)^+,\cA_B^\cP(V^{\bar\Q},\C_p))^-_{\pi^B}\longrightarrow H^2(G_B(F)^+,\cA_B^\cP(\C_p))^-_{\pi^B},
\]
is an isomorphism of 1-dimensional $\C_p$-vector spaces. The following result shows that in fact the same $L$-invariant is obtained when using these new cohomology groups.
\begin{proposition}\label{L-inv-}
Given a group homomorphism $\ell:F_\cP^\times\rightarrow\Z_p$, we have that 
\[
(\cdot\cup c_\ell)_{\pi^B}=\cL_\cP({\pi^B},\ell)(\cdot\cup c_\ord)_{\pi^B}
\]
in $\Hom\left(H^1(G_B(F)^+,\cA_B^\cP(V^{\bar\Q},\C_p))^-_{\pi^B}, H^2(G_B(F)^+,\cA_B^\cP(\C_p))^-_{\pi^B}\right)$.
\end{proposition}
\begin{proof}
The result is clear for $\ell=\lambda\cdot\ord$, for some $\lambda\in\C_p$. Thus, it remains to prove it for $\ell=\log_\sigma=\log\circ\sigma$, where $\log:\C_p^\times\rightarrow\C_p$ is the $p$-adic logarithm sending $p$ to 0, and $\sigma:F_\cP\hookrightarrow\C_p$. For $F=\Q$, it is proved in \cite[Corollaire 5.1.3]{Breu} using completed cohomology.

Recent results on the uniqueness of Breuil's $\cL$-invariant allow us to mimic the arguments given in \cite{Breu} in order to prove the result for arbitrary totally real field $F$. Let $\cL:=-\cL_\cP({\pi^B},\log_\sigma)$, and write $\log_{\sigma,\cL}$ for the $p$-adic logarithm such that $\log_{\sigma,\cL}(p)=\cL$ and $\log_{\sigma,\cL}\mid_{\cO_{F,\cP}}=\log_\sigma$. In \cite{Ding}, the set of $\Q_p$-analytic vectors in $\mathscr{E}(2\log_{\sigma,\cL})$ are denoted by $\Sigma(\cL)=\Sigma(\underline{2},0;1,\cL)$ (see Remark \ref{2intEell}). Note that $\log_{\sigma,\cL}=\log_{\sigma}-\cL\ord$, hence $c_{2\log_{\sigma,\cL}}=2c_{\log_{\sigma}}-2\cL c_{\ord}$. Thus, by definition,
\[
(\cdot\cup c_{2\log_{\sigma,\cL}})_{\pi^B}=0.
\]
This implies that the exact sequence \eqref{exseqEell} provides a surjective morphism
\[
\Hom_{G_B(F_\cP)}(\Sigma(\cL),\hat H^1(G_B(F)^+,\cA_B(L)))_{\pi^B}\longrightarrow H^1(G_B(F)^+,\cA_B^\cP(V^{L},L))_{\pi^B},
\]
where $L$ is the localization of the field of definition $L_\pi$ and $\hat H^1(G_B(F)^+,\cA_B(L))=\varprojlim_n H^1(G_B(F)^+,\cA_B(\cO_{L}/\varpi^n))\otimes_{\cO_L}{L}$ is the completed cohomology. 
By \cite[Corollary 4.7]{Ding} the element $\cL$ coincides with Breuil's $\cL$-invariant, and its definition does not depend on the on the sign of the action of $G_B(F)/G_B(F)^+$, thus the result follows.
\end{proof}

\subsection{Geometric $\cL$-invariants}\label{GeoLinv}

In this section we show that, in the definite setting, automorphic $\cL$-invariants coincide with the geometric $\cL$-invariants defined below. During the paper review process F. Bergunde and L. Gehrmann informed us that in \cite{B-G} they prove this result using similar techniques.  

Recall that $A$ is the abelian variety of $\GL_2$-type associated with $\pi$, and let $A^\vee$ be its dual abelian variety. 
Since $\pi$ is Steinberg at $\cP$, the abelian varieties $A$ and $A^\vee$ have purely multiplicative reduction at $\cP$, hence they admit the following analytic description: There is a pairing (determined up to canonical isomorphism)
\[
X\times Y\stackrel{j}{\longrightarrow} F_\cP^\times,
\]
where $X$ and $Y$ are free abelian groups of rank $d=[L:\Q]$, and $j$ is a bi-multiplicative mapping such that the composition $\ord_\cP\circ j$ tensored with $\Q$ gives a perfect duality of $\Q$-vector spaces
\begin{equation}\label{exseqordlat}
X\otimes\Q\times Y\otimes\Q\stackrel{\ord_\cP\circ j}{\longrightarrow}\Q,
\end{equation}
moreover, such that there is a pair of exact sequences of $\Gal(\bar\Q_p/F_\cP)$-modules ($X$ and $Y$ endowed with trivial Galois action)
\begin{eqnarray}
0\longrightarrow X\longrightarrow&\Hom(Y,\bar\Q_p^\times)&\stackrel{t_A}{\longrightarrow} A(\bar\Q_p)\longrightarrow0\\
0\longrightarrow Y\longrightarrow&\Hom(X,\bar\Q_p^\times)&\stackrel{t_{A^\vee}}{\longrightarrow} A^\vee(\bar\Q_p)\longrightarrow0
\end{eqnarray}
where the morphisms on the left are induced by $j$. 

Let $\cO_{L_\pi}\subset L_\pi$ be the endomorphism ring of $A$ (and $A^\vee$). Thus, $X$ and $Y$ are $\cO_{L_\pi}$-modules and $j(\alpha x,y)=j(x,\alpha y)$ for all $x\in X$, $y\in Y$ and $\alpha\in\cO_{L_\pi}$. The non-degenerate pairing \eqref{exseqordlat} provides an isomorphism of 1-dimensional $L_\pi$-vector spaces
\[
\alpha: X\otimes\Q\longrightarrow\Hom(Y\otimes\Q,\Q)
\]
Given a continuous morphism $\ell:F_\cP^\times\rightarrow\Z_p$, we consider the corresponding bilinear pairing 
\[
X\otimes\Q_p\times Y\otimes\Q_p\stackrel{\ell\circ j}{\longrightarrow}\Q_p,
\]
and the corresponding homomorphism of $L_\pi\otimes\Q_p$ modules (free of rank 1)
\[
\beta_\ell: X\otimes\Q_p\longrightarrow\Hom_{\Q_p}(Y\otimes\Q_p,\Q_p).
\]
Hence there exist $\cL_\cP(A,\ell)'\in L\otimes \Q_p$ such that
\[
\beta_\ell=\cL_\cP(A,\ell)'\alpha_p,
\]
where $\alpha_p=\alpha\otimes 1:Y\otimes\Q_p\rightarrow\Hom_{\Q_p}(X\otimes\Q_p,\Q_p)$. We define $\cL_\cP(A,\ell)\in\C_p$ to be the image of $\cL_\cP(A,\ell)'$ under the homomorphism $L_\pi\otimes\Q_p\rightarrow\C_p$ given by the fixed embedding $\bar\Q\hookrightarrow\C_p$.

Assume we are in the definite case ($D/F$ is a definite quaternion algebra that splits at $\cP$ and admitting a Jacquet-Langlands lift $\pi^D$), hence $\pi^D$ is generated by an automorphic form $\phi\in H^0(G_D(F),\cA_D(\cO_{L_\pi})^U)$. Since $\cO_{L_\pi}\simeq\Z^{d}$, $\phi$ can be seen as $\phi=(\phi_i)_{i=1,\cdots, d}$, where $\phi_i\in H^0(G_D(F),\cA_D(\Z)^U)$. Since $\pi_\cP$ is Steinberg, each $\phi_i$ define an element $f_i\in H^0(G_D(F),\cA_D^\cP(V^\Z,\Z)^{U^\cP})$.

Fix $g\in G_D(F^\cP)$. Since $V^R$ is the quotient of the space $C(\PP^1(F_\cP),R)$ modulo the subspace generated by $1_{\PP^1(F_\cP)}$, we can interpret $f_i(g)\in\Hom(V^\Z,\Z)$ as a distribution of $\PP^1(F_\cP)$ with integral values and such that $\int_{\PP^1(F_\cP)}df_i(g)=0$. Hence, it makes sense to consider the corresponding multiplicative integral. Let $\dH_p(\bar\Q_p):=\PP^1(\bar\Q_p)\setminus\PP^1(F_\cP)$ be the $p$-adic Poincar\'e hyperplane, write $\Delta_\cP=\Z[\dH_\cP]$, equipped with the natural degree morphism $\deg:\Delta_\cP\rightarrow\Z$, and let $\Delta_\cP^0$ be the kernel of $\deg$. We can identify $Y$ with the $\Z$-module generated by the $\{f_i\}_{i=1,\cdots,d}$.
Similarly as in \S \ref{CohShi}, we define ${\rm ev}_p(\phi)\in H^0(G_D(F),\cA_D^\cP(\Delta_\cP^0,\Hom(Y,\bar\Q_p^\times))^{U^\cP})$ by
\[
{\rm ev}_p(\phi)(g)(z_1-z_2)(f_i):=\mint_{\PP^1(F_\cP)}\left(\frac{z_2-t}{z_1-t}\right)df_i(g)(t),\qquad g\in G_D(\hat F^\cP), z_1,z_2\in\dH_\cP.
\] 
Note that, in this situation, we also have the exact sequence given by the degree map $\deg$:
\begin{equation}\label{exseqDeltaP}
0\longrightarrow\cA_D^\cP(\Hom(Y,\bar\Q_p^\times))\stackrel{\deg^\ast}{\longrightarrow}\cA_D^\cP(\Delta_\cP,\Hom(Y,\bar\Q_p^\times))\longrightarrow\cA_D^\cP(\Delta_\cP^0,\Hom(Y,\bar\Q_p^\times))\longrightarrow 0.
\end{equation}
We consider the image of ${\rm ev}_p(\phi)$ under the connection morphism
\[
H^0(G_D(F),\cA_D^\cP(\Delta_\cP^0,\Hom(Y,\bar\Q_p^\times))^{U^\cP})\stackrel{\partial}{\longrightarrow} H^1(G_D(F),\cA_D^\cP(\Hom(Y,\bar\Q_p^\times))^{U^\cP}).
\]
Using the $p$-adic uniformization of the Shimura curve $X_U$ and Manin-Drinfeld Theorem on the Jacobian of Mumford curves (see \cite{Dasg}), one can show that, in fact, $\partial({\rm ev}_p(\phi))\in H^1(G_D(F),\cA_D^\cP(X)^{U^\cP})$.
\begin{proposition}
Let $\ell:F_\cP^\times\rightarrow \Z_p$ be a continuous homomorphism. We have that
\[
f_i\cup c_\ell=\beta_\ell(\partial({\rm ev}_p(\phi)))(f_i)\in H^1(G_D(F),\cA_D^{\cP}(\C_p)^{U^\cP}),
\] 
for any $i=1,\cdots, d$.
\end{proposition}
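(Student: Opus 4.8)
The plan is to reduce the identity $f_i\cup c_\ell=\beta_\ell(\partial(\mathrm{ev}_p(\phi)))(f_i)$ to a comparison of two connecting homomorphisms, both controlled by the universal multiplicative integral. First I would set up the relevant exact sequence of $R[G(F_\cP)]$-modules carefully: the extension $\mathscr{E}(\ell)$ from \S\ref{ExtStRep} gives the class $c_\ell\in H^1(G(F),V^{\Z_p})$, and via the pairing \eqref{pairingASTA} the cup product $f_i\cup c_\ell$ is computed by pushing $f_i\in H^0(G(F),\cA_f^\cP(V^{\cO_0},\cO))$ along $\langle\cdot,c_\ell\rangle$. On the geometric side, $\partial(\mathrm{ev}_p(\phi))$ is the image of $\mathrm{ev}_p(\phi)$ under the connecting map of \eqref{exseqDeltaP}, and $\beta_\ell$ turns the $\Hom(Y,\bar\Q_p^\times)$-coefficients into $\C_p$-coefficients. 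The key observation is that both sides are obtained from the \emph{same} datum, namely the collection of distributions $df_i(g)$ on $\PP^1(F_\cP)$ of total mass zero, by applying $\ell$: on the one hand via the cocycle $\phi_1$ built from $\Lambda_1,\Lambda_2$ in \S\ref{ExtStRep}, on the other via the multiplicative integral $\times\!\!\!\!\int(\frac{z_2-t}{z_1-t})df_i(g)(t)$ followed by $\ell\circ j$.

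The technical heart is a \emph{cocycle comparison on $\PP^1(F_\cP)$}. I would unwind $\beta_\ell(\partial(\mathrm{ev}_p(\phi)))(f_i)$ explicitly: choosing a set-theoretic lift of $\mathrm{ev}_p(\phi)$ in $\cA_f^\cP(\Delta_\cP,\Hom(Y,\bar\Q_p^\times))$ amounts to fixing a base point $z_0\in\cH_\cP$ and sending $z\mapsto (f_i\mapsto \times\!\!\!\!\int(z-t)\,df_i(g)(t))$, so that applying $\gamma\in G(F)$ and subtracting produces a $\cA_f^\cP(X)$-valued $1$-cocycle whose value on $\gamma$ is $(f_i\mapsto \times\!\!\!\!\int\frac{\gamma^{-1}z_0-t}{z_0-t}\,df_i(g)(t))$; this lies in $X$ by the $p$-adic uniformization statement already granted. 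Composing with $\beta_\ell = \ell\circ j$ and using that $\ell\circ\ord_\cP$-duality identifies $\beta_\ell$ acting on $X$ with integration of $\ell$ against the $X$-component, the cocycle becomes $\gamma\mapsto (f_i\mapsto \int_{\PP^1(F_\cP)}\ell\left(\frac{\gamma^{-1}z_0-t}{z_0-t}\right)df_i(g)(t))$. Now $\ell\left(\frac{\gamma^{-1}z_0-t}{z_0-t}\right)$ is, up to the total-mass-zero condition on $df_i(g)$, exactly the difference $\phi_{\ell,z}(t)-\gamma\ast\phi_{\ell,z}(t)$ arising from a representative of $\mathscr{E}(\ell)$; I would make this precise by comparing with the explicit element $(\phi_1,1)$ of \S\ref{ExtStRep} and Proposition \ref{eqcoc}, noting that changing $z_0$ or the choice of splitting of $\PP^1(F_\cP)$ into $U$ and its complement alters $\phi_1$ by a function that is either constant (killed by $\int df_i(g)=0$, i.e.\ becomes a multiple of $\phi_0$ in $V^R$) or by a coboundary.

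I would then check that $f_i\cup c_\ell$, computed through \eqref{pairingASTA}, gives literally the cocycle $\gamma\mapsto (g\mapsto \langle f_i(g), \gamma\ast\phi_1-\phi_1\rangle)=(g\mapsto \int_{\PP^1(F_\cP)}(\gamma\ast\phi_1-\phi_1)(t)\,df_i(g)(t))$, which matches the expression just obtained; equality of the two classes in $H^1(G(F),\cA_f^\cP(\C_p)^{U^\cP})$ follows because the two explicit cocycles differ by a coboundary coming from the choice of base point and splitting. \textbf{The main obstacle} I anticipate is keeping the identifications $\Hom(V^{\cO_0},\cO)\leftrightarrow$ (bounded distributions on $\PP^1(F_\cP)$ of mass zero) $\leftrightarrow$ ($X$ after the uniformization) compatible with all the Galois/Hecke actions and with $\beta_\ell$ — in particular verifying that the $X$-valued class $\partial(\mathrm{ev}_p(\phi))$ is pulled back correctly under $\ell\circ j$ to the $\C_p$-valued cup product, rather than to a twist of it; this requires invoking the Tate-parametrization compatibility (the exact sequences with $\Hom(Y,\bar\Q_p^\times)$) precisely, and carefully tracking signs and the normalization of the $\ord_\cP\circ j$ duality. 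Once that dictionary is pinned down, the remainder is the routine cocycle bookkeeping sketched above.
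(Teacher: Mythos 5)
Your strategy is in essence the one the paper follows: represent $c_\ell$ by an explicit section of $\mathscr{E}(\ell)$, represent $\partial({\rm ev}_p(\phi))$ by the cocycle attached to a base point $z\in\cH_\cP$, pair both against the mass-zero distributions $df_i(g)$, and absorb all choices into coboundaries. But there is one concrete gap, and it sits exactly where you flag your ``main obstacle''. Your pivotal formula applies $\ell$ to $\frac{\gamma^{-1}z_0-t}{z_0-t}$ and, implicitly, to the multiplicative integrals $\mint\bigl(\frac{\gamma^{-1}z_0-t}{z_0-t}\bigr)df_i(g)(t)$; these lie in $\bar\Q_p^\times$, not in $F_\cP^\times$, so $\ell$ is simply not defined on them, and appealing to the ``$\ell\circ\ord_\cP$-duality'' or to ``Tate-parametrization compatibility'' does not repair this: the statement $\partial({\rm ev}_p(\phi))\in H^1(G(F),\cA_f^\cP(X)^{U^\cP})$ is about the cohomology class, whereas your base-point representative is only $\Hom(Y,\bar\Q_p^\times)$-valued, so $\beta_\ell=\ell\circ j$ cannot be applied to it cochain-wise. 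The missing device is the paper's opening move: extend $\ell$ to a continuous homomorphism $\bar\ell:\bar\Q_p^\times\rightarrow\bar\Q_p$ (compose with the norm on each finite extension of $F_\cP$ and divide by the degree). With $\bar\ell$ one takes the section $(c_{\bar\ell}(z),1)\in\mathscr{E}(\ell)$, where $c_{\bar\ell}(z)(g)=\bar\ell(cz+d)$ for $g$ with bottom row $(c,d)$; it satisfies $g(c_{\bar\ell}(z),1)=(c_{\bar\ell}(gz),1)$, so the resulting representative of $c_\ell$ evaluated at $\gamma$ is precisely $t\mapsto\bar\ell\bigl(\frac{\gamma^{-1}z-t}{z-t}\bigr)$, and pairing with $f_i(g)$ gives $\bar\ell$ of the multiplicative integral, i.e.\ a representative of $\beta_\ell(\partial({\rm ev}_p(\phi)))(f_i)$; the choices of $z$ and of section contribute only coboundaries, and since $\bar\ell\mid_{F_\cP^\times}=\ell$, applying $\bar\ell$ to the $\bar\Q_p^\times$-valued representative computes the same class as applying $\ell\circ j$ to an $X$-valued one. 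This is exactly the compatibility you were unsure how to pin down.

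Two smaller points. First, your plan to route the comparison through the section $(\phi_1,1)$ and Proposition \ref{eqcoc} is a detour: that proposition only computes the restriction of $c_\ell$ to the torus $T_\cP$, while here you need the cocycle on all of $G(F)$; any section works, and $(c_{\bar\ell}(z),1)$ makes the identification with the multiplicative integral immediate, with no bookkeeping about the splitting of $\PP^1(F_\cP)$ into $U$ and its complement. Second, your description of the other side is correct and matches the paper: $f_i\cup c_\ell$ is represented by $\gamma\mapsto\bigl(g\mapsto\int_{\PP^1(F_\cP)}(\gamma\ast s-s)(t)\,df_i(g)(t)\bigr)$ for any section $s$, the total-mass-zero condition on $f_i(g)$ killing the $\phi_0$-ambiguity, so once the extension $\bar\ell$ is introduced the remaining verification is the routine coboundary bookkeeping you describe.
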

\begin{proof}
Let $\bar\ell:\bar\Q_p^\times\rightarrow\bar\Q_p$ be any extension of $\ell$, namely a continuous additive morphism such that $\bar\ell\mid_{F_\cP^\times}=\ell$ (one can always find such an extension composing $\ell$ with the norm map on each finite extension of $F_\cP$ and dividing by the corresponding degree).
For any $z\in\dH_p$, we have the class of $(c_{\bar\ell}(z),1)\in\mathscr{E}(\ell)$, where 
\[
c_{\bar\ell}(z)\left(\begin{array}{cc}a&b\\c&d\end{array}\right)=\bar\ell\left(cz+d\right),
\]
It is easy to check that
\[
c_{\bar\ell}(z)(hg)=c_{\bar\ell}(gz)(h)+c_{\bar\ell}(z)(g),\qquad g,h\in G_D(F_\cP).
\]
Since the function $h\mapsto c_{\bar\ell}(z)(g)$ is obviously constant, we deduce that $g(c_{\bar\ell}(z),1)=(c_{\bar\ell}(gz),1)$, for all $g\in G_D(F_\cP)$. 

Let $\tilde f_i\in \cA_D^\cP(\mathscr{E}(\ell),\bar\Q_p)$ be any pre-image of $f_i\in H^0(G_D(F),\cA_D^\cP(V^\Z,\Z))$. Hence, for all $\gamma\in G_D(F)$ and all $g\in G_D(\hat F^\cP)$,
\begin{eqnarray*}
(f_i\cap c_\ell)(\gamma)(g)&=&\gamma\tilde f_i(g)(c_{\bar\ell}(z),1)-\tilde f_i(g)(c_{\bar\ell}(z),1)\\
&=&f_i(g)(c_{\bar\ell}(\gamma^{-1}z)-c_{\bar\ell}(z))+b_i(\gamma)(g)\\
&\equiv& \int_{\PP^1(F_\cP)}\bar\ell\left(\frac{\gamma^{-1} z-t}{z-t}\right)df_i(g)(t)=\bar\ell\left({\rm ev}_p(\phi)(g)(\gamma^{-1}z-z)(f_i)\right)\\
&=&\beta_\ell(\partial({\rm ev}_p(\phi)))(f_i)(\gamma)(g),
\end{eqnarray*}
where $b_i$ corresponds to the 1-coboundary $b_i(\gamma)=(1-\gamma)\tilde f_i(c_{\bar\ell}(z),1)$. Hence the result follows.
\end{proof}
This proposition implies that the automorphic $\cL$-invariant $\cL_\cP(\pi^D,\ell)$ coincides with the geometric $\cL$-invariant $\cL_\cP(A,\ell)$ in the definite setting. Such a claim in the indefinite setting is equivalent to Conjecture \ref{conjLinv}.

\subsection{$\cL$-invariants and Heegner points}

Throughout this section, assume that $B$ is a quaternion algebra that splits at a single archimedean place $\sigma$.
Let $\phi\in H^0(G_B(F),\cA^\sigma(\cD,\C))^U$ be a generator of $\pi^B\mid_{G(\hat F)}$, corresponding to a differential form $\omega_\phi\in\Omega^1_{X_U}$ of the cotangent space of $X_U/F$.
Recall that the abelian variety of $\GL_2$-type $A$ associated with $\pi$ is provided by the complex torus
\[
A(\C)\simeq (\C\otimes_\Q L)/\Lambda_\pi,\qquad \Lambda_\pi=\left\{\left(\int_c{^\tau}\omega_\phi\right)_{\tau\in\Gal(L_\pi/\Q)},\;c\in H_1(X_U,\Z)\right\}.
\]
Fix an embedding $K\hookrightarrow B$, and let $\tau_K\in\dH$ as in \S \ref{ShimCM}.
Let $\Delta_T=\Z[G_B(F)^+\tau_B]\simeq \Ind^{G_B(F)^+}_{T(F)}1_\Z$ be the set of divisors supported on $G_B(F)^+\tau_K$, and $\Delta_T^0=\ker(\deg:\Delta_T\rightarrow\Z)$ the set of degree zero divisors. 


Let 
\[
H_1(X_U,\Z)^\pm:=\{c\in H_1(X_U,\Z);\quad \bar c=\pm c\},
\]
where $\bar c$ is the complex conjugated path. Then, we have that 
\[
\cI_\pi^\pm:=\left\{C^\pm(\omega_\phi)(c)=\left(\int_c{^\tau}\omega_\phi\right)_{\tau\in\Gal(L/\Q)},\;c\in H_1^\pm(X_U,\Z)\right\}
\]
is a $\cO_{L_\pi}$-module locally free of rank 1.
It is clear that $\cI_\pi^+ +\cI_\pi^-\subseteq \Lambda_\pi$ has finite index. Moreover, we have the following surjective morphisms with finite cokernel,
\[
W^+/\cI_\pi^+\longrightarrow A(\R):=A(\C)^+,\qquad W^-/\cI_\pi^-\longrightarrow A(\C)^-,
\]
where $W^\pm=\cI_\pi^\pm\otimes\R\subseteq\C\otimes L_\pi$ is the subspace where complex conjugation acts by $\pm 1$, and analogously for $A(\C)^\pm$.

Since $\cI_\pi^\pm\otimes\Q=L_\pi\Omega_\pi^\pm$, for some $\Omega_\pi^\pm\in\C\otimes_\Q L_\pi$, the exact sequences 
\[
0\longrightarrow L_\pi\longrightarrow W^+\longrightarrow A^0(\R)\longrightarrow 0,\qquad 0\longrightarrow \Delta_T^0\longrightarrow \Ind_{T(F)}^{G_B(F)^+}1_\Z\longrightarrow\Z\longrightarrow 0,
\]
provide the commutative diagram:
\begin{equation}\label{bigdiag}
\small{\xymatrix{
&H^0(G_B(F)^+,\cA_B(A^0(\R)))\ar[d]\ar[r]&H^1(G_B(F)^+,\cA_B(L_\pi))\ar[d]^{\rm res}\\
H^0(T(F),\cA_B(W^+))\ar[r]\ar[d]&H^0(T(F),\cA_B(A^0(\R)))\ar[d]\ar[r]_{d_2}&H^1(T(F),\cA_B(L_\pi))\ar[d]^{r}\\
H^0(G_B(F)^+,\cA_B(\Delta_T^0,W^+))\ar[r]\ar[d]^{d_1}&H^0(G_B(F)^+,\cA_B(\Delta_T^0,A^0(\R)))\ar[r]\ar[d]&H^1(G_B(F)^+,\cA_B(\Delta_T^0,L_\pi))\\
H^1(G_B(F)^+,\cA_B(W^+))\ar[r]_{p_1}&H^1(G_B(F)^+,\cA_B(A^0(\R)))&
}}
\end{equation}

Moreover, we have an analogous commutative diagram for $A(\C)^-$.
The evaluation $ev_0^+$ \eqref{comdiagev} of ${^\tau}\omega_\phi$, $\tau\in\Gal(L_\pi/\Q)$, gives rise to an element $\varphi\in H^0(G_B(F)^+,\cA_B(\Delta_T^0,W^+))$ such that $d_1(\varphi)\neq 0$ (see Lemma \ref{pairings}) and $p_1\circ d_1(\varphi)=0$ (see Lemma \ref{lempartial0}). Hence, we obtain $\bar\varphi\in H^0(T(F),\cA_B(A^0(\R)))_{\pi^B}$, which is unique because $H^0(G_B(F)^+,\cA_f(A^0(\R)))_{\pi^B}=0$. We claim that $d_2(\bar\varphi)\neq 0$. Indeed, if $d_2(\bar\varphi)= 0$ then there should exists some pre-image of $\varphi$ in $H^0(T(F),\cA_B(W^+))$, and this contradicts the fact that $d_1(\varphi)\neq 0$. A similar diagram chasing argument shows that $r\circ d_2(\bar\varphi)=0$. Thus, there exists $\phi_1^+\in H^1(G_B(F)^+,\cA_B(L_\pi))_{\pi^B}$ such that ${\rm res}(\phi_1^+)=d_2(\bar\varphi)$.

The analogous commutative diagram for $A(\C)^-$ produces an element $\bar\varphi^-\in H^0(T(F),\cA_B(A^0(\C)^-))_{\pi^B}$, such that ${\rm res}(\phi_1^-)=d_2^-(\bar\varphi^-)$, for some $\phi_1^-\in H^1(G_B(F)^+,\cA_B(L_\pi))_{\pi^B}$, where 
\[
d_2^-:H^0(T(F),\cA_B(A^0(\C)^-))\longrightarrow H^1(T(F),\cA_B(L_\pi)),
\]
is analogous to $d_2$. By Remark \ref{CCinHOMOL}, it is clear that $\phi_1^\pm\in H^1(G_B(F)^+,\cA_B(L_\pi))^{\pm}_{\pi^B}$.

By equation \eqref{2nd} and Shimura's reciprocity law, $\bar \varphi+\bar\varphi^-$ defines, in fact, an element of 
$\Phi_T\in H^0(T(F),\cA_B(A(\bar\Q))\otimes_{\cO_{L_\pi}} {L_\pi})_{\pi^B}$. 
Rescaling the generator $\phi\in\pi^B\mid_{G_B(\hat F)}$ if necessary, we can always assume that $\Phi_T(\phi)\in H^0(T(F),\cA_B(A(\bar\Q)))$. Since $\pi^B_\cP$ is Steinberg, we have that $\phi\in G_B(F_\cP)\phi\simeq
V^{\cO_{L_\pi}}$. Hence $\Phi_T(\phi)$ defines an element 
$\Phi_T\mid_{V^{\cO_{L_\pi}}}\in H^0(T(F),\cA_B^\cP(V^{\cO_{L_\pi}},A(\C_p)))$. By means of such identifications, we consider 
\[
\Phi_T\in H^0(T(F),\cA_B^\cP(V^{\cO_{L_\pi}},A(\C_p))\otimes {L_\pi})_{\pi^B}. 
\]

Notice that, multiplication by $p^n$ provides the following exact sequence of $T(F)$-modules
\begin{equation}\label{exseqpn}
0\longrightarrow\cA^\cP_B(V^{\cO_{L_\pi}},A[p^n](\bar\Q))\longrightarrow \cA_B^\cP(V^{\cO_{L_\pi}},A(\C_p))\stackrel{p^n}{\longrightarrow}\cA_B^\cP(V^{\cO_{L_\pi}},A(\C_p))\longrightarrow 0.
\end{equation}
Since $\Lambda_\pi/p^n\Lambda_\pi\simeq A[p^n](\bar\Q)$, 
we obtain the connection morphism 
\[
d^n:H^0(T(F),\cA_B^\cP(V^{\cO_{L_\pi}},A(\C_p)))\longrightarrow H^1(T(F),\cA_B^\cP(V^{\cO_{L_\pi}},\Lambda_\pi/p^n\Lambda_\pi)),
\]
for all $n\in\N$. It is easy to check that 
\begin{equation}\label{BetiEtale}
d^n(\Phi_T\mid_{V^{\cO_{L_\pi}}})=\Omega_\pi^+d_2(\bar\varphi\mid_{V^{\cO_{L_\pi}}})+\Omega_\pi^-d_2^-(\bar\varphi^-\mid_{V^{\cO_{L_\pi}}})\mod p^n.
\end{equation}

Recall that the abelian variety $A(\C_p)\simeq \Hom(Y,\C_p^\times)/X$, for some locally free $\cO_L$-modules of rank one $X$ and $Y$. 
Since $\alpha:X\otimes\Q\rightarrow \Hom(Y\otimes\Q,\Q)$ is an isomorphism, we can identify the image of $A(\C_p)$ in $A^0(\C_p)=A(\C_p)\otimes {L_\pi}$ (killing all the torsion) with the image of the composition
\[
\Hom(Y,\cO_{\C_p})\stackrel{(\exp_p)_\ast}{\hookrightarrow}  \Hom(Y,\C_p^\times)/X\simeq A(\C_p)\longrightarrow A^0(\C_p),
\]
where $\exp_p:\cO_{\C_p}\rightarrow\C_p^\times$ is the $p$-adic exponential morphism. Hence, we can assume that $\Phi_T\mid_{V^{\cO_{L_\pi}}}\in H^0(T,\cA^\cP_f(V^{\cO_{L_\pi}},\Hom(Y,\cO_{\C_p})))$.

Given a continuous homomorphism $\ell:F^\times_\cP\rightarrow\Z_p$, we consider $c_\ell\in H^1(G_B(F_\cP),V^{\Z_p})$ as above. Since $\Hom(V^{\cO_{L_\pi}},\Hom(Y,\cO_{\C_p}))$ can be interpreted as a space of bounded distributions,  the cup product by $c_\ell$ provides following commutative diagram
\[
\xymatrix{
H^0(T(F),\cA_B^\cP(V^{\cO_{L_\pi}},\Hom(Y,\cO_{\C_p})))\ar[r]^{d^n}\ar[d]^{\cup c_\ell}&H^1(T(F),\cA_B^\cP(V^{\cO_{L_\pi}},\Lambda_\pi/p^n\Lambda_\pi))\ar[d]^{\cup c_\ell}\\
H^1(T(F),\cA_B^\cP(\Hom(Y,\cO_{\C_p})))\ar[r]^{d_1^n}&H^2(T(F),\cA_B^\cP(\Lambda_\pi/p^n\Lambda_\pi))
}
\]  
Let $\cL_\cP(\pi^B,\ell)'\in L_\pi\otimes \Q_p$ be the element whose components are $\cL_\cP((\pi^B)^\tau,\ell)$, $\tau\in\Gal(L_\pi/\Q)$. The above commutative diagram together with \eqref{BetiEtale} and Proposition \ref{L-inv-} imply that
\begin{equation*}
d^n_1(\Phi_T\mid_{V^{\cO_{L_\pi}}}\cup c_\ell)=\cL_\cP(\pi^B,\ell)'d_1^n(\Phi_T\mid_{V^{\cO_{L_\pi}}}\cup c_\ord).
\end{equation*}
Since $\ker(d_1^n)=p^n H^1(T(F),\cA_B^\cP(A(\C_p)))$ by  \eqref{exseqpn}, the image of $\Phi_T\mid_{V^{\cO_{L_\pi}}}\cup c_\ell-\cL_\cP(\pi^B,\ell)'\left(\Phi_T\mid_{V^{\cO_{L_\pi}}}\cup c_\ord\right)$ in $H^1(T(F),\cA_B^\cP(\Hom(Y,\cO_{\C_p}/p^n\cO_{\C_p})))$ is 0 for all $n$. We conclude that
\[
\Phi_T\cup c_\ell=\cL_\cP(\pi^B,\ell)'\left(\Phi_T\cup c_\ord\right).
\]
When we apply $\log_{\omega_\phi}$ the formal group logarithm attached to the differential $\omega_\phi$, we recover the component corresponding to the fixed embedding $\bar\Q\hookrightarrow\bar\Q_p$. Hence
\begin{equation}\label{perfi}
\log_{\omega_\phi}(\Phi_T\cup c_\ell)=\cL_\cP(\pi^B,\ell)\left(\log_{\omega_\phi}(\Phi_T\cup c_\ord)\right).
\end{equation}

\section{Exceptional zero phenomenon in the split case}\label{CaseSplit}

The aim of the rest of the paper is to compute the class of $L^{\rm def}_\cP(\pi_K)$ and $L^{\rm ind}_\cP(\pi_K)$ in $\cI/\cI^2$ in the presence of the Exceptional Zero phenomenon and in case $K$ splits at $\cP$. We will invoke Corollary \ref{charI/I2} and so such class will be characterized by the integrals
\[
\int_{\cG_{K,\cP}}\ell d\mu_{K,\cP}^{\bullet}=\kappa(\psi_\bullet)\cap\partial(\ell),\qquad \bullet={\rm def},{\rm ind},\quad\psi_{\rm def}:={\rm res}\phi,\;\psi_{\rm ind}:=\log\phi,
\]  
for all $\ell\in\Hom_{\Z_p}(\cG_{K,\cP},\Z_p)=\cG_{K,\cP}^\vee$.

Let $H$ be the maximal open compact subgroup of $T(F_\cP)\simeq F_\cP^\times$. We have the exact sequence
\[
0\longrightarrow H\longrightarrow T(F_\cP) \stackrel{\ord}{\longrightarrow}\Z\longrightarrow 0.
\]
Then we can consider the real manifold $M=\R\times T(\hat F^\cP)/\Gamma$ with the following natural action of $T(F)$:
\[
T(F)\times M\longrightarrow M;\quad (t,(x,t^\cP))\longmapsto (x+\ord(\iota_\cP(t)),\iota^\cP(t)t^\cP).
\]
Since we can identify $H_0(M,\Z)$ with $C_c(T(\hat F^\cP)/\Gamma,\Z)$, we can consider the fundamental class $\vartheta$ of the oriented compact manifold $M/T(F)$ as an element of $H_1(T(F),C_c(T(\hat F^\cP)/\Gamma,\Z))$ by means of the identifications
\[
\vartheta\in H_1(M/T(F),\Z)=H_1(T(F),H_0(M,\Z))=H_1(T(F),C_c(T(\hat F^\cP)/\Gamma,\Z)).
\]

Let $\ell:\cG_{K,\cP}\rightarrow\Z_p$ be a continuous group homomorphism ($\ell\in C(\cG_{K,\cP},\Z_p)$ not necessarily locally constant). It corresponds to a continuous homomorphism $\hat\ell:T(\hat F) \rightarrow\Z_p$ that factors through $\Gamma T(F)$. Let $\ell_\cP:T(F_\cP)\rightarrow\Z_p$ and $\ell^\cP:T(\hat F^\cP)\rightarrow\Z_p$ be its corresponding restriction to $T(F_\cP)$ and $T(\hat F^\cP)$. By topological reasons, $\ell^\cP=0$. Let $\cF$ be a fundamental compact domain for $T(\hat F)/\Gamma$ under the action of $T(F)$. Hence $\partial\ell=[\hat\ell 1_{\cF}]$, where $[\hat\ell 1_{\cF}]\in H_0(T(F),C_c(T(\hat F)/\Gamma,\Z_p))$ is the image of $\hat\ell 1_{\cF}$ and $1_\cF\in C_c(T(\hat F)/\Gamma,\Z)$ is the characteristic function of $\cF$. As above, we consider the natural injections $\iota_\cP:T(F)\hookrightarrow T(F_\cP)$ and $\iota^\cP:T(F)\hookrightarrow T(\hat F^\cP)$.

Let us consider again the open compact subset $U=x_1\cup\bigcup_{n\in\N}\varpi^n H$ of $\PP^1(F)$.
Since $\ell^\cP=0$, the morphism $\ell$ depends only on $\ell_\cP$. Hence it makes sense to consider the cocycle $z_{\ell_\cP}\in H^1(T(F),C_c(T(F_\cP),\Z_p))$ defined by
\[
z_{\ell_\cP}(t):=(1-\iota_\cP(t))(\ell_\cP 1_U).
\]
\begin{remark}\label{remaux}
Note that $z_{\ell_\cP}(t)\in C_c(T(F_\cP),\Z_p)\subset C_\diamond(T(F_\cP),\Z_p)$. Indeed,
\begin{eqnarray*}
z_{\ell_\cP}(t)(x)&=&\ell_\cP(x) 1_U(x)-\ell_\cP(\iota_\cP(t)^{-1}x) 1_U(\iota_\cP(t)^{-1}x)\\
&=&\ell_\cP(x) 1_U(x)-(\ell_\cP(x) -\ell_\cP(\iota_\cP(t)))1_U(\iota_\cP(t)^{-1}x)\\
&=&\pm \ell_\cP 1_{U_t}(x)+\ell_\cP(\iota_\cP(t)))\iota_\cP(t)1_U(x),
\end{eqnarray*}
where $U_t=U\setminus \iota_\cP(t)U\subset T(F_\cP)$ is clearly open and compact. Since $\hat\ell(T)=\ell_\cP(\iota_\cP(t))=0$, we deduce that $z_{\ell_\cP}(t)=\pm \ell_\cP 1_{U_t}(x)\in C_c(T(F_\cP),\Z_p)$.
\end{remark}
\begin{proposition}\label{comppart}
We have that 
\[
\partial(\ell)=\vartheta\cap z_{\ell_\cP}\in H_0(T(F), C_c(T(\hat F)/\Gamma,\Z_p)).
\]
\end{proposition}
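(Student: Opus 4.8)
The plan is to make every object explicit in terms of the $T$-orbit structure of the discrete set $T^\cP/\Gamma$, and then to compute the cap product at the level of chains, where, with suitable choices, the two sides of the asserted identity are represented by literally the same element of $C_c(\hat T/\Gamma,\Z_p)$.

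First I would recall, via \eqref{defpart} and \eqref{decfunct}, that $\partial(\ell)$ equals the class $[\hat\ell\cdot 1_\cF]\in H_0(T,C_c(\hat T/\Gamma,\Z_p))$ attached to \emph{any} fundamental domain $\cF$ for the action of $T$ on $\hat T/\Gamma$; this class is independent of $\cF$ because $\hat\ell$ is $T$-invariant, so that $\hat\ell\cdot 1_{t\cV}=t\ast(\hat\ell\cdot 1_{\cV})$ for every compact open $\cV\subseteq\hat T/\Gamma$. Next I would make the fundamental class $\vartheta$ explicit. As $M/T$ is a compact oriented $1$-manifold, its connected components are circles, in bijection with the (finitely many) orbits of $T$ on $T^\cP/\Gamma$; fixing orbit representatives $c_1,\dots,c_r$, each stabiliser $T_{c_j}=\mathrm{Stab}_T(c_j)$ is infinite cyclic, say $T_{c_j}=\langle\tau_j\rangle$, and the $j$-th component is a circle, i.e. a classifying space for $T_{c_j}$. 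Using additivity of the fundamental class over components, Shapiro's lemma $H_\bullet(T,C_c(Tc_j,\Z_p))\cong H_\bullet(T_{c_j},\Z_p)$, and the identification of the fundamental class of a circle with the generator of $H_1(T_{c_j},\Z_p)=T_{c_j}$, one obtains the chain-level representative
\[
\vartheta=\sum_{j=1}^{r}1_{c_j}\otimes[\tau_j],
\]
the $\tau_j$ being normalised by the orientation; this is a cycle precisely because $\iota^\cP(\tau_j)c_j=c_j$.

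I would then feed this into the cap product $H_1(T,-)\otimes H^1(T,-)\to H_0(T,-)$. On the bar resolution the cap of a $1$-cycle $m\otimes[g]$ with a $1$-cocycle $z$ is, up to the standard sign, $m\otimes z(g)$, so by \eqref{decfunct}
\[
\vartheta\cap z_{\ell_\cP}=\sum_{j=1}^{r}1_{c_j}\cdot z_{\ell_\cP}(\tau_j)\in C_c(T^\cP/\Gamma,\Z_p)\otimes_{\Z_p}C_c(T_\cP,\Z_p)=C_c(\hat T/\Gamma,\Z_p).
\]
By Remark \ref{remaux}, since $\hat\ell$ is trivial on $T$ we have $\ell_\cP(\iota_\cP(\tau_j))=\hat\ell(\tau_j)=0$, which kills the translation term in $z_{\ell_\cP}(\tau_j)$ and leaves $z_{\ell_\cP}(\tau_j)=\pm\,\ell_\cP\cdot 1_{D_j}$, where $D_j=U\triangle\iota_\cP(\tau_j)U$ (with the same sign) is a block of $|\ord(\iota_\cP(\tau_j))|$ consecutive cosets of $H$ in $T_\cP$, in particular a fundamental domain for $\langle\iota_\cP(\tau_j)\rangle$ acting on $T_\cP$. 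Finally I would take the adapted fundamental domain $\cF=\bigsqcup_{j=1}^{r}D_j\times\{c_j\}\subseteq T_\cP\times T^\cP/\Gamma$: the $D_j$ tile $T_\cP$ under $\langle\iota_\cP(\tau_j)\rangle$ and the $c_j$ exhaust the $T$-orbits, so $\cF$ is a fundamental domain for $T$ on $\hat T/\Gamma$; since $\ell^\cP=0$ we have $\hat\ell(x_\cP,\bar s)=\ell_\cP(x_\cP)$, whence $\hat\ell\cdot 1_\cF=\sum_{j}1_{c_j}\cdot(\ell_\cP 1_{D_j})$ is precisely the representative found above for $\vartheta\cap z_{\ell_\cP}$. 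Therefore $\vartheta\cap z_{\ell_\cP}=[\hat\ell\cdot 1_\cF]=\partial(\ell)$.

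The main obstacle is the middle step: converting the topological input ``$M/T$ is a compact oriented $1$-manifold'' into the precise chain-level identity $\vartheta=\sum_j 1_{c_j}\otimes[\tau_j]$ with the correct orientation, i.e. choosing each generator $\tau_j$ of $T_{c_j}$ with the right sign of $\ord(\iota_\cP(\tau_j))$, and checking that this sign is consistent both with the conventions entering the cap product and with the side of $x_1$ on which $U$ lies (equivalently, with the identification $\varphi$ between $T_\cP$ and $\PP^1(F_\cP)\setminus\{x_1,x_2\}$ used in the definition of $z_{\ell_\cP}$). Once these conventions are harmonised the remaining verifications are the routine bookkeeping indicated above, and with the domain $\cF$ the two sides in fact coincide already on the chain level, not merely in homology.
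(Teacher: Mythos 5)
Your strategy (decompose $T^\cP/\Gamma$ into its finitely many $T$-orbits, apply Shapiro's lemma orbit by orbit to write $\vartheta$ as an explicit $1$-cycle $\sum_j 1_{c_j}\otimes[\tau_j]$, cap at chain level, and then exhibit an adapted fundamental domain) is close in spirit to the paper's proof, which instead passes to a single auxiliary finite-index subgroup $\cT=t^\Z\times\cT^\cP\subseteq T$ and describes $\vartheta$ as a corestriction of a class normalised by $\tfrac{1}{[T:\cT]}$ with rational coefficients. The genuine gap is your claim that each stabiliser $T_{c_j}$ is infinite cyclic. Since $T^\cP$ is abelian, all these stabilisers are equal to $T_\Gamma=\{t\in T:\ \iota^\cP(t)\in\Gamma\}$, which has rank one but may have a nontrivial finite torsion subgroup $W:=T\cap(H\times\Gamma)$; this happens for perfectly admissible data (e.g.\ $F=\Q$, $E=\Q(i)$, $\Gamma$ the maximal compact subgroup of $T^\cP$, where the class of $i$ lies in $W$). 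When $W\neq 1$ two steps break at once. First, the component of $M/T$ above the orbit of $c_j$ is still a circle but is no longer a classifying space for $T_{c_j}$, so the identification of its fundamental class with the cycle $1_{c_j}\otimes[\tau_j]$ is exactly the point that requires a transfer argument (this is what the paper's factor $\tfrac{1}{[T:\cT]}$ and its switch to $\Q$-coefficients are doing). Second, and independently of how $\vartheta$ is normalised, your adapted domain $\cF=\bigsqcup_j D_j\times\{c_j\}$ is not a fundamental domain: any nontrivial $w\in W$ has $\iota_\cP(w)$ a root of unity, hence lies in $H$ and preserves each block $D_j$ of full $H$-cosets, so every $T$-orbit on $\hat T/\Gamma$ meets $\cF$ exactly $|W|$ times (the action of $T$ on $\hat T/\Gamma$ being free, these are distinct points). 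With your representatives the computation then produces $|W|\cdot\partial(\ell)$ rather than $\partial(\ell)$.

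So, as written, your argument establishes the proposition only under the unstated hypothesis $T\cap(H\times\Gamma)=\{1\}$ (equivalently, $T_\Gamma$ torsion-free, equivalently $T$ acting freely on $M$); in that case it is correct, works at the chain level, and is arguably more transparent than the paper's proof, the remaining issues being only the orientation bookkeeping you flagged. To treat the general case you must insert the averaging that the paper builds in: either replace each $D_j\times\{c_j\}$ by a clopen fundamental domain for the finite group $W$ inside it and carry the factor $\tfrac{1}{|W|}$ (which forces $\Q_p$- rather than $\Z_p$-coefficients when $p$ divides $|W|$), or, as the paper does, choose a finite-index subgroup $\cT=t^\Z\times\cT^\cP$ acting freely, describe $\vartheta$ by corestriction of the $\tfrac{1}{[T:\cT]}$-normalised class over $\cT$, compute the cap product there, and only at the end compare $1_{U_G\times\cF^\cP}=\sum_{g\in T/\cT}g1_\cF$ with $[T:\cT]\,\hat\ell 1_\cF$ in the coinvariants. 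Your orbit-by-orbit Shapiro picture is a fine substitute for the paper's single Shapiro step, but this transfer/normalisation device is the missing ingredient.
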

\begin{proof}
Let $\cF\subset T(\hat F)/\Gamma$ be an open compact fundamental domain for the action of $T(F)$ such that $H\cF=\cF$. By definition, $\partial(\ell)$  is the image of the compactly supported function $\hat\ell1_\cF$ in $H_0(T(F),C_c(T(\hat F)/\Gamma,\Z_p))$. 

Let us consider a finite index subgroup of the form $\cT=t^\Z\times \cT^\cP\subseteq T(F)$, such that $\iota_\cP(\cT^\cP)\subset H$ and $\iota^\cP(t)\in\Gamma$. Let 
$\cF^\cP$ a fundamental domain for $T(F^\cP)/\Gamma$ under the action of $\cT^\cP$. Hence, $U_t\times\cF^\cP=(U\setminus \iota_\cP(t)U)\times\cF^\cP$ is a fundamental domain for $T(\hat F)/\Gamma$ under the action of $\cT$.
Note that, in this situation, $C_c(T(\hat F^\cP)/\Gamma,\Z)\simeq C(\cF^\cP,\Z)\otimes_{\Z}\Z[\cT^\cP] \simeq C(\cF^\cP,\Z)\otimes_{\Z[t^\Z]}\Z[\cT]$. Thus, by Shapiro's Lemma, $H_1(\cT,C_c(T(\hat F^\cP)/\Gamma,\Z))=H_1(t^\Z,C(\cF^\cP,\Z))$. We describe $\vartheta$ as the co-restriction of the class in $H_1(t^\Z,C(\cF^\cP,\Q))$ defined by the cocycle 
\[
f:t^\Z\longrightarrow C(\cF^\cP,\Q);\qquad f(t^n)=\left\{\begin{array}{cc}\frac{1}{[T(F):\cT]}1_{\cF^\cP},&n=1,\\0,&n\neq 1.\end{array}\right.
\]
Therefore, we compute
\[
\vartheta\cap z_{\ell_\cP}=\frac{1}{[T(F):\cT]}z_{\ell_\cP}(t)\otimes 1_{\cF^\cP}=\frac{1}{[T(F):\cT]}\ell_\cP 1_{U_t}\otimes 1_{\cF^\cP}=\frac{1}{[T(F):\cT]}\hat\ell 1_{U_t\times \cF^\cP},
\]
where the second equality has been obtained from Remark \ref{remaux}. Since $1_{U_t\times\cF^\cP}=\sum_{g\in T(F)/\cT}g1_\cF$ and $\hat\ell$ is trivial on $T(F)$, we deduce that the class of $\hat\ell 1_{U_t\times \cF^\cP}=\sum_{g\in T(F)/\cT}g(\hat\ell1_\cF)$ coincides with the class of $[T(F):\cT]\hat\ell 1_\cF$ in $H_0(T(F), C_c(T(\hat F)/\Gamma,\Z_p))$, hence the result follows.
\end{proof}

Let us consider the subset
\[
T_1=\{x\in T(F):\,\iota_\cP(x)\in H\}\subset T(F)
\]
and let $\cF_1$ be a fundamental domain for $T(\hat F^\cP)/\Gamma$ under the action of $T_1$. We can consider the subgroup 
\[
\cX:=U\times \cF_1\subset T(\hat F)/\Gamma.
\]
As in previous results, let us identify $V^\Z$ as a quotient of $C(\PP^1,\Z)$.
Hence $\ord:T(F_\cP)\rightarrow\Z$ defines a cocycle $z_{\ord}\in H^1(T(F_\cP), V^\Z)$, where $z_{ord}(t)=(1-\iota_\cP(t))(\ord 1_U)$.

\begin{proposition}\label{classord}
The class of the characteristic function $1_\cX$ satisfies
\[
[1_\cX]=\vartheta\cap z_{\ord}\in H_0(T(F),C_c(T(\hat F)/\Gamma,\C_p)_0).
\]
\end{proposition}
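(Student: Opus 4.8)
The plan is to imitate the proof of Proposition~\ref{comppart} almost verbatim; the one genuinely new feature is that, unlike the homomorphism $\ell$ occurring there, the valuation $\ord$ does not kill $\iota_\cP(T)$, and this discrepancy is exactly what produces the characteristic function $1_\cX$ in place of $0$. I would begin with two reductions. On the one hand, $z_{\ord}$ \emph{vanishes on} $T_1$: if $s\in T_1$ then $\iota_\cP(s)\in H$, which preserves $U$ (because $HU=U$) and fixes the function $\ord$ on each $\varphi(\varpi^n H)$, so $\iota_\cP(s)\ast(\ord\,1_U)=\ord\,1_U$ and $z_{\ord}(s)=(1-\iota_\cP(s))(\ord\,1_U)=0$. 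On the other hand, since $\cP$ splits in $E$ the map $\ord\circ\iota_\cP\colon T\to\Z$ is surjective (represent the classes of the two primes of $E$ above $\cP$ by ideals prime to $\cP$). Hence $T/T_1\simeq\Z$, say generated by some $\sigma_1$ with $\ord(\iota_\cP(\sigma_1))=1$, the cocycle $z_{\ord}$ (restricted to $T$ along $\iota_\cP$) is inflated from a class in $H^1(T/T_1,(V^{\Z})^{T_1})=(V^{\Z})^{T_1}/(\sigma_1-1)$, and a direct pointwise computation identifies its representative with a single $T_\cP$-translate $1_{\varpi U}$ of $1_U$, which indeed lies in $(V^{\Z})^{T_1}$.

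Next I would push $\vartheta$ along the same quotient. The group $T_1$ acts freely on $M=\R\times T^\cP/\Gamma$ fixing the $\R$-factor, so $C_c(T^\cP/\Gamma,\Z)_{T_1}\cong C_c(\cF_1,\Z)$, while $T/T_1\simeq\Z$ acts through $\sigma_1$ by translating $\R$ by $1$ and by the induced return bijection of $\cF_1$; thus the image of $\vartheta$ in $H_1(T/T_1,C_c(T^\cP/\Gamma,\Z)_{T_1})=H_1(\Z,C_c(\cF_1,\Z))$ is the fundamental class of the mapping torus $M/T$, represented by the cocycle $\sigma_1\mapsto 1_{\cF_1}$. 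By the projection formula for cap products, and under the identification $\bar C_c(\hat T/\Gamma,\C_p)_0\cong V^{(\C_p)_0}\otimes_{\C_p}C_c(T^\cP/\Gamma,\C_p)_0$ (the $T_\cP$-factor completed to $\PP^1$ and the constant $1_{\PP^1}$ quotiented out), this gives $\vartheta\cap z_{\ord}=[\,1_{\varpi U}\otimes 1_{\cF_1}\,]$ in $H_0(T,\bar C_c(\hat T/\Gamma,\C_p)_0)$. Finally, since $\iota_\cP(\sigma_1)U=\varpi U$ and $\iota^\cP(\sigma_1)\cF_1$ is again a $T_1$-fundamental domain of $T^\cP/\Gamma$, one has $\sigma_1\ast(1_U\otimes 1_{\cF_1})=1_{\varpi U}\otimes 1_{\iota^\cP(\sigma_1)\cF_1}$ and $1_{\cF_1}\equiv 1_{\iota^\cP(\sigma_1)\cF_1}$ in $C_c(T^\cP/\Gamma,\C_p)_{T_1}$, so $[1_{\varpi U}\otimes 1_{\cF_1}]=[1_{\varpi U}\otimes 1_{\iota^\cP(\sigma_1)\cF_1}]=[1_U\otimes 1_{\cF_1}]=[1_\cX]$, as claimed.

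The main obstacle is the middle step: making the projection formula and the coefficient bookkeeping precise. One must keep straight that $z_{\ord}$ is naturally a $T_\cP$-cocycle restricted to $T$, that $\vartheta$ has coefficients in the topological module whereas $z_{\ord}$ lives in the discrete Steinberg module $V^{\Z}$, and that the target $\bar C_c(\hat T/\Gamma,\C_p)_0$ is the projectivized tensor product in which $1_{\PP^1}$ vanishes; only then are the freeness identification $C_c(T^\cP/\Gamma,\Z)_{T_1}\cong C_c(\cF_1,\Z)$ and the passage from $1_{\varpi U}$ back to $1_U$ legitimate.

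An essentially equivalent but more computational route, strictly parallel to Proposition~\ref{comppart}, would instead choose a finite-index subgroup $\cT=t^\Z\times\cT^\cP\subseteq T$ with $\iota^\cP(t)\in\Gamma$, $\iota_\cP(\cT^\cP)\subseteq H$ and $m:=\ord(\iota_\cP(t))>0$, reduce $H_1(\cT,C_c(T^\cP/\Gamma,\Z))$ to $H_1(t^\Z,C(\cF^\cP,\Z))$ by Shapiro, and compute $\vartheta\cap z_{\ord}=[T:\cT]^{-1}\,z_{\ord}(\iota_\cP(t))\otimes 1_{\cF^\cP}$ with $z_{\ord}(\iota_\cP(t))=\ord\,1_{U\setminus\varpi^m U}+m\,\iota_\cP(t)(1_U)$. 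The second summand contributes $m\,[T_1:\cT^\cP]\,[1_\cX]$ after unfolding $1_{U\times\cF^\cP}$ into $T_1$-translates of $1_{U\times\cF_1}$; the first summand $\ord\,1_{U\setminus\varpi^m U}$ — which has no analogue in Proposition~\ref{comppart}, where $\ell|_{T}=0$ — must be shown to be cohomologous to zero by telescoping its pieces $\varphi(\varpi^n H)$ against $1_{\PP^1}\equiv 0$, and it is this cancellation that is the delicate point. Either way one lands on $\vartheta\cap z_{\ord}=[1_\cX]$.
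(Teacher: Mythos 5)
Your first argument is essentially the paper's own proof: it works along the exact sequence $0\to T_1\to T\stackrel{\ord}{\longrightarrow}\Z\to 0$, regards $z_{\ord}$ as inflated from $H^1(T/T_1,(V^{\Z})^{T_1})$, identifies the pushforward of $\vartheta$ with the class represented by $\sigma_1\mapsto 1_{\cF_1}$ (the paper writes this as $[1_{\cF_1}]\otimes\vartheta_1\mapsto\vartheta$), and performs the same pointwise computation in which the term $\ord\cdot 1_{U\setminus \varpi U}=\ord\cdot 1_H$ vanishes, leaving the translate $1_{\varpi U}=\iota_\cP(\sigma_1)1_U$ whose class in coinvariants is $[1_U]$, hence $\vartheta\cap z_{\ord}=[1_U\otimes 1_{\cF_1}]=[1_\cX]$. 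The Shapiro-style computation you sketch as a second route is closer to Proposition \ref{comppart} and is not what the paper does here, but your main argument is correct and coincides with the paper's proof.
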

\begin{proof}
Since $\iota_\cP(T(F))$ is dense in $T(F_\cP)$, we have the exact sequence
\[
0\longrightarrow T_1\longrightarrow T(F)\stackrel{\ord}{\longrightarrow}\Z\longrightarrow 0,
\] 
where, by abuse of notation, we also denote by $\ord$ the composition $\ord\circ\iota_\cP$. Note that $z_{\ord}$ is the image through the inflation map of an element $\bar z_\ord\in H_1(T(F)/T_1,(V^\Z)^{T_1})$.

On the one hand, let us consider $1_{\cF_1}$ the characteristic function of $\cF_1$. By strong approximation, given $t\in T(F)$ there exists $t_1\in T_1$ such that $\iota^\cP(t^{-1}t_1)\in \Gamma$. This implies that the image $[1_{\cF_1}]\in H_0(T_1,C_c(T(\hat F^\cP)/\Gamma,\Z))$ lies in fact in $[1_{\cF_1}]\in H^0(T(F)/T_1,H_0(T_1,C_c(T(\hat F^\cP)/\Gamma,\Z)))$.

On the other hand, let us consider $\vartheta_1\in H_1(T(F)/T_1,\Z)\simeq H_1(\R/T(F),\Z)$ given by the fundamental class of $\R/T(F)$ ($t\in T(F)$ acts on $\R$ by $tx=x+\ord(\iota_\cP(t))$ as above).
Then it is clear that $[1_{\cF_1}]\otimes\vartheta_1$ is mapped to $\vartheta$ by means of the composition
\[
\xymatrix{
H^0(T(F)/T_1,H_0(T_1,C_c(T(\hat F^\cP)/\Gamma,\Z)))\otimes H_1(T(F)/T_1,\Z)\ar[d]^{\cap}&\\
H_1(T(F)/T_1,H_0(T_1,C_c(T(\hat F^\cP)/\Gamma,\Z)))\ar[r]& H_1(T(F),C_c(T(\hat F^\cP)/\Gamma,\Z)).
}
\]
Thus, the result follows if we show that $\bar z_\ord\cap \vartheta_1=[1_{U}]\in H_0(T(F)/T_1, (V^\Z)^{T_1})$. Indeed, if $t\in T(F)/T_1$ is a generator then
\begin{eqnarray*}
\bar z_\ord\cap \vartheta_1&=&\bar z_\ord(t)=(1-\iota_\cP(t))(\ord 1_U)=\ord 1_U-(\iota_\cP(t)\ord)(\iota_\cP(t) 1_U)\\
&=&\ord(1_U-\iota_\cP(t)1_U)+\ord(\iota_\cP(t))\iota_\cP1_U=\ord\cdot 1_H+\iota_\cP(t) 1_U=\iota_\cP(t) 1_U.
\end{eqnarray*}
\end{proof}


For any continous $\Z_p$-module homomorphism $\ell:\cG_{K,\cP}\rightarrow \Z_p$, let $\hat\ell:T(\hat F)\rightarrow\Z_p$ be the composition $\ell\circ\rho_A$, where $\rho_A:T(\hat F)/\Gamma\rightarrow\cG_{K,\cP}$ is the Artin map, and let $\ell_\cP:T(F_\cP)\rightarrow\Z_p$ be its restriction to $T(F_\cP)$ (recall that $\ell^\cP=0$). We will identify $T(F_\cP)$ with $F_\cP^\times$ by means of the isomorphism $\psi$ of \S \ref{ExtStRep}.
Recall the automorphic $\cL$-invariants $\cL_\cP(\pi^{B},\ell_\cP)$ and $\cL_\cP(\pi^{D},\ell_\cP)$ 
 introduced in Definition \ref{defLinv}.
\begin{definition}
By Corollary \ref{charI/I2}, the morphism in $\Hom_{\Z_p}(\cG_{K,\cP}^\vee,\C_p)$ that maps $\ell\in\cG_{K,\cP}^\vee$ to $\cL_\cP(\pi^{B},\ell_\cP)$ or $\cL_\cP(\pi^{D},\ell_\cP)$ (depending we are in the indefinite or definite case)
 defines a class $\underline{\cL_\cP^\bullet}(\pi)\in\cI/\cI^2$, where $\bullet={\rm def}$ or ${\rm ind}$, and $\cI$ is the augmentation map of $\Lambda_{\C_p}$. Such class $\underline{\cL_\cP^\bullet}(\pi)$ is called \emph{automorphic $\cL$-invariant vector} attached to $\pi$.
\end{definition}

We have shown that $L_\cP^\bullet(\pi_K)\in\cI$ whenever $\pi_\cP$ Steinberg.
The main result of this section computes the class of  $L_\cP^\bullet(\pi_K)$ in $\cI/\cI^2$ in terms of the automorphic $\cL$-invariant vector.
\begin{theorem}\label{main-Res}
Let $C_K$ be the non-zero constant of Theorem \ref{intprop1} or Theorem \ref{intprop2}, depending on whether we are in the definite or indefinite case. If
 $T(F_\cP)$ splits and $\alpha=1$ then $L_\cP^\bullet(\pi_K)\in\cI$, with $\bullet={\rm def},{\rm ind}$, and its class $\nabla L_\cP^\bullet(\pi_K)\in \cI/\cI^2$ is given by the formula 
\begin{eqnarray*}
\nabla L_\cP^{\rm def}(\pi_K)&=&\underline{\cL_\cP^{\rm def}}(\pi)\left( C_K C(\pi_\cP)\frac{L(1/2,\pi_K,1)}{L(1,\pi,ad)}\right)^{1/2},\\
\nabla L_\cP^{\rm ind}(\pi_K)&=&\underline{\cL_\cP^{\rm ind}}(\pi)\log_p(P_T),
\end{eqnarray*}
where $C(\pi_\cP)=\frac{-L(1,\pi_\cP,ad)\zeta_\cP(1)^2}{L(1/2,\pi_\cP,1)}\neq 0$ and $P_T\in A(\bar \Q)\otimes_{\cO_{L_\pi}}\bar\Q$ is a Heegner point with Neron-Tate canonical height
\[
\langle P_T,P_T\rangle=\left|P_T\right|^2=C_KC(\pi_\cP)\frac{L'(1/2,\pi_K,1)}{L(1,\pi,ad)}.
\]
\end{theorem}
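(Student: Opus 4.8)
The plan is to characterize the class $\nabla L_\cP^i(\pi,E)\in\cI/\cI^2$ by pairing the measure $\mu_{E,\cP}^{\pi,i}$ against continuous homomorphisms. By Corollary \ref{charI/I2} it suffices to compute $\int_{\cG_{E,\cP}}\ell\,d\mu_{E,\cP}^{\pi,i}$ for every $\ell\in\cG_{E,\cP}^\vee=\Hom_{\Z_p}(\cG_{E,\cP},\Z_p)$. By the cohomological description of these measures (\S\ref{padicmeas}) one has $\int_{\cG_{E,\cP}}\ell\,d\mu_{E,\cP}^{\pi,i}=\kappa(\psi_i)\cap\partial(\ell)$, where $\psi_I={\rm res}\,\phi$ with $\phi$ a generator of the definite Jacquet--Langlands lift, and $\psi_{II}=\log\phi$ is built from the $p$-adic logarithm of Heegner points; then Proposition \ref{comppart} rewrites $\partial(\ell)=\vartheta\cap z_{\ell_\cP}$ and Proposition \ref{classord} identifies $\vartheta\cap z_{\ord}=[1_\cX]$. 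So everything reduces to an explicit manipulation of cap and cup products over the torus $T$.

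The heart of the argument is the identity
\[
\kappa(\psi_i)\cap(\vartheta\cap z_{\ell_\cP})=\cL_\cP(\pi^{JL},\ell_\cP)\cdot\big(\kappa(\psi_i)\cap[1_\cX]\big).
\]
To prove it I would first observe, using Proposition \ref{eqcoc} and Remark \ref{remaux}, that $z_{\ell_\cP}$ is the restriction to $T$ of the cocycle $c_{\ell_\cP}\in H^1(G(F_\cP),V^{\Z_p})$ attached to $\ell_\cP$ via the identification $T_\cP\simeq F_\cP^\times$, and that $z_{\ord}$ is likewise the restriction of $c_{\ord}$. Decomposing $\kappa$ into its $T_\cP$-component (the map $\delta_{T_\cP}$ followed by evaluation on $V_\alpha$ through the canonical pairing \eqref{pairingASTA}) and its $T^\cP$-component (the evaluation over $T^\cP/\Gamma$ in \eqref{kappa}, cf. \eqref{decfunct}), and using associativity of the cap product, one rewrites $\kappa(\psi_i)\cap(\vartheta\cap z_{\ell_\cP})$ as the pairing of $\vartheta$ with the cup product ${\rm res}_T(\psi_i\cup c_{\ell_\cP})\in H^1(T,\cA_f^\cP(\C_p))$. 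In the definite case $\psi_i\cup c_{\ell_\cP}$ is the restriction to $T$ of a class in $H^1(G(F),\cA_f^\cP(\C_p))_{\pi^{JL}}$, which is one-dimensional by Propositions \ref{propord} and \ref{1-dim}; the defining relation of the automorphic $\cL$-invariant (Definition \ref{defLinv}), $(\psi_i\cup c_{\ell_\cP})_{\pi^{JL}}=\cL_\cP(\pi^{JL},\ell_\cP)(\psi_i\cup c_{\ord})_{\pi^{JL}}$, then yields the identity after restricting back. In the indefinite case the analogous relation for the Heegner-point class is precisely \eqref{perfi}, so the same conclusion holds. Since $\ell\mapsto\cL_\cP(\pi^{JL},\ell_\cP)$ is exactly the automorphic $\cL$-invariant vector $\underline{\cL_\cP}(\pi)$, this already shows $L_\cP^i(\pi,E)\in\cI$ and
\[
\nabla L_\cP^i(\pi,E)=\underline{\cL_\cP}(\pi)\cdot c_i,\qquad c_i:=\kappa(\psi_i)\cap[1_\cX]\in\C_p.
\]

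It remains to evaluate the scalar $c_i$. Unwinding \eqref{kappa} and \eqref{decfunct}, in the definite case $c_I$ is, up to harmless constants, a (regularized) global toric period of the Jacquet--Langlands form $\phi$; its square is computed by Waldspurger's formula (Theorem \ref{WaldThm}), whose local factors away from $\cP$ are nonzero by the choice of quaternion algebra (Proposition \ref{Saito-Tunnel}), while the local factor at $\cP$ is computed by hand from Theorem \ref{teocompIT} and Corollary \ref{Corcalclocal} by extracting the leading term as the character degenerates to $1$: there the would-be vanishing factor $L(0,\pi_\cP,1)^{-1}$ is replaced by $-\zeta_\cP(-1)^2$, producing $C(\pi_\cP)$. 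This gives $c_I^2=C_E\,C(\pi_\cP)\,L(1/2,\pi_E,1)/L(1,\pi,ad)$ with $C_E$ the constant of Theorem \ref{intprop1}, which is the first formula. In the indefinite case $c_{II}=\kappa(\log\phi)\cap[1_\cX]$ is similarly $\log_p$ of a (regularized) toric period of $\Phi_T$; by Shimura's reciprocity law (\S\ref{ShimCM}) this is $\log_p(P_T)$ for a Heegner point $P_T\in A_\phi(\bar\Q)\otimes_{\cO_L}\bar\Q$, and the Gross--Zagier--Zhang formula (Theorem \ref{GrZaZh}), with the identical local computation at $\cP$ and $C_E$ the constant of Theorem \ref{intprop2}, yields $\langle P_T,P_T\rangle=C_E\,C(\pi_\cP)\,L'(1/2,\pi_E,1)/L(1,\pi,ad)$, completing the second formula.

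The main obstacle is the cup/cap manipulation of the second paragraph: one must make rigorous, at the level of (co)chains, the transfer of $z_{\ell_\cP}$ across the cap product and its identification with the restriction of $c_{\ell_\cP}$, which requires a careful check of the compatibility of $\delta_{T_\cP}$, of the canonical pairing \eqref{pairingASTA}, of the evaluation maps defining $\kappa$ together with the decomposition \eqref{decfunct}, and of the restriction $H^\bullet(G(F),-)\to H^\bullet(T,-)$, and in particular that $\delta_{T_\cP}\big(z_{\ell_\cP}(t)\big)$ and $c_{\ell_\cP}(t)$ agree under the identification of $V^{\Z_p}$ with functions on $\PP^1(F_\cP)$. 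The secondary difficulty, genuinely computational, is the exceptional-zero local analysis at $\cP$: taking the correct leading term in the character direction so that $L(0,\pi_\cP,1)^{-1}$ is replaced by $-\zeta_\cP(-1)^2$, and matching the overall normalization $C_E$ on the two sides.
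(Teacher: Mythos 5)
Your overall route is the paper's own: reduce via Corollary \ref{charI/I2} to computing $\int_{\cG_{E,\cP}}\ell\,d\mu^{\pi,i}_{E,\cP}$, rewrite this as $(\kappa(\psi_i)\cup z_{\ell_\cP})\cap\vartheta$ using Propositions \ref{comppart} and \ref{classord}, pull out the factor $\cL_\cP(\pi^{JL},\ell_\cP)$ via Proposition \ref{eqcoc} together with Definition \ref{defLinv} in the definite case and \eqref{perfi} in the indefinite case, and finally evaluate $\kappa(\psi_i)\cap[1_\cX]$ as a toric period of $\delta(1_U)$ (resp.\ its $p$-adic logarithm) through Waldspurger and Gross-Zagier-Zhang. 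Up to the factor $\tfrac{1}{2}$ bookkeeping, this is exactly how the paper argues, and this part of your proposal is sound.

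The one step that would not go through as you describe it is the evaluation of the local term at $\cP$. After Waldspurger/Gross-Zagier-Zhang the quantity to compute is $\alpha_{\pi_\cP,1}(\delta_{T_\cP}(1_U),\delta_{T_\cP}(1_U))$, where $1_U$ is the characteristic function of $U=x_1\cup\bigcup_{n\in\N}\varpi^nH$. Restricted to $T_\cP$ this function is \emph{not} in $C_\diamond(T_\cP,\C)$ (it is identically $1$ in one direction of $T_\cP/H\simeq\varpi^{\Z}$), so Proposition \ref{prop-sc-prod} and Corollary \ref{Corcalclocal} simply do not apply to it: the factorization into periods $\int_{T_\cP}f\chi\,d^\times t$ rests on a Fubini exchange requiring decay, and for $\chi=1$ the period $\int_{T_\cP}1_U\,d^\times t$ diverges while the Euler factor $L(0,\pi_\cP,1)^{-1}$ vanishes, so "extracting the leading term as the character degenerates to $1$" from Theorem \ref{teocompIT}/Corollary \ref{Corcalclocal} is a $0\cdot\infty$ indeterminacy rather than a defined procedure. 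The paper instead performs a dedicated computation: by Proposition \ref{innerprod} and \eqref{eqntT} the pairing equals $c_TC_T\int_{T_\cP}F(0)(t)\,d^\times t$, where $F(s)(t)$ is an explicit double integral admitting analytic continuation, and its value at $s=0$ is evaluated directly, yielding the nonzero constant $C(\pi_\cP)=-L(1,\pi_\cP,ad)\zeta_\cP(-1)^2/L(1/2,\pi_\cP,1)$. You should replace your "leading term" step by this regularized computation; note also that your bookkeeping is slightly off, since in the split case $L(-1,\pi_\cP,1)=\zeta_\cP(-1)^2$, so the net effect is to replace the ratio $L(-1,\pi_\cP,1)/L(0,\pi_\cP,1)$ of the interpolation factor by $-\zeta_\cP(-1)^2$, not the single factor $L(0,\pi_\cP,1)^{-1}$. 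With that local evaluation supplied, the rest of your argument coincides with the paper's proof.
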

\begin{proof}
By Corollary \ref{charI/I2}, in order to obtain the class $\nabla L_\cP^{\bullet}(\pi_K)\in\cI/\cI^2$ we have to compute $\frac{\partial L_\cP^{\bullet}(\pi_K)}{\partial \ell}:=\int_{\cG_{K,\cP}}\ell d\mu_{K,\cP}^{\bullet}$, for all $\ell\in\cG_{K,\cP}^\vee$. By Proposition \ref{comppart},
\[
\frac{\partial L_\cP^{\bullet}(\pi_K)}{\partial \ell}=\kappa(\psi_{\bullet})\cap\partial \ell=\kappa(\psi_{\bullet})\cap(\vartheta\cap z_{\ell_\cP})=(\kappa(\psi_{\bullet})\cup z_{\ell_\cP})\cap\vartheta.
\]

In the definite case (following the notation of \S \ref{Ap2}), $\psi_{\rm def}={\rm res}(\phi)$ is the image of 
\[
\phi\in \Hom_{G(F_\cP)}(V^{\bar\Q},H^0(G(F),\cA_D(\bar\Q))_{\pi^{D}}\simeq H^0(G(F),\cA_D^\cP(V^{\bar\Q},\bar\Q))_{\pi^{D}}.
\] 
through the restriction morphism. 
This implies that, by Proposition \ref{eqcoc},
\begin{eqnarray*}
\kappa(\psi_{\rm def})\cup z_{\ell_\cP}&=&\kappa^\cP({\rm res}(\phi\cup c_{\ell_\cP}))=\cL_\cP(\pi^{D},\ell_\cP)\kappa^\cP({\rm res}(\phi\cup c_{\ord}))\\
&=&\cL_\cP(\pi^{D},\ell_\cP)(\kappa(\psi_{\rm def})\cup z_{\ord}),
\end{eqnarray*}
where $\kappa^\cP:\cA_D^\cP(\C_p)\rightarrow C_c(T(\hat F^\cP)/\Gamma,\C_p)_0^\vee$ is given by $\langle\kappa^\cP(\phi),f\rangle=\sum_{t\in T(\hat F^\cP)/\Gamma}f(t)\phi(t)$. 

In the indefinite case, $\psi_{\rm ind}=\log(\phi)=\log_{\omega_\phi}(\Phi_T)$, where $\Phi_T\in H^0(T(F),\cA^\cP_B(V^{\bar\Q},A^0(\bar\Q)))_{\pi^{B}}$ and $\log_{\omega_\phi}$ is the formal logarithm attached to $\omega_\phi$. By \eqref{perfi}, we have that 
\begin{eqnarray*}
\kappa(\psi_{\rm ind})\cup z_{\ell_\cP}&=&\kappa^\cP(\log_{\omega_\phi}(\Phi_T\cup c_{\ell_\cP}))=\cL_\cP(\pi^{B},\ell_\cP)\kappa^\cP(\log_{\omega_\phi}(\Phi_T\cup c_{\ord}))\\
&=&\cL_\cP(\pi^{B},\ell_\cP)(\kappa(\psi_{\rm ind})\cup z_{\ord}).
\end{eqnarray*}

In any of our settings (definite or indefinite) $\kappa(\psi_{\bullet})\cup z_{\ell_\cP}=\cL^\bullet(\ell)(\kappa(\psi_{\bullet})\cup z_{\ord})$, where $\cL^{\rm def}(\ell):=\cL_\cP(\pi^{D},\ell_\cP)$ or $\cL^{\rm ind}(\ell):=\cL_\cP(\pi^{B},\ell_\cP)$.
Thus
\[
\frac{\partial L_\cP^\bullet(\pi_K)}{\partial\ell}=\cL^\bullet(\ell)(\kappa(\psi_{\bullet})\cup z_{\ord})\cap\vartheta=
\cL^\bullet(\ell)\int_{\cG_{K,\cP}}[1_\cX](\gamma)d\mu^{\bullet}_{K,\cP}(\gamma),
\]
by Proposition \ref{classord}. 

We compute that, since $1_\cX=1_{U\times\cF_1}$ is $H$-invariant,
\begin{eqnarray*}
\int_{\cG_{K,\cP}}[1_\cX](\gamma)d\mu^{\rm def}_{K,\cP}(\gamma)&=&\int_{T(\hat F)/T(F)}[1_\cX](x)\delta(1_H)(x)d^\times x,\\
\int_{\cG_{K,\cP}}[1_\cX](\gamma)d\mu^{\rm ind}_{K,\cP}(\gamma)&=&\int_{T(\hat F)/T(F)}[1_\cX](x)\log_p(\hat\delta(1_H)(x))d^\times x.
\end{eqnarray*}
Recall that $T(F_\cP)/H=\varpi^\Z$, $U=\cup_{n\in\N}\varpi^n H$ and $H\times\cF_1$ contains a finite number of fundamental domains of $T(\hat F)$ under the action of $T(F)$. This implies that 
\begin{eqnarray*}
\int_{\cG_{K,\cP}}[1_\cX](\gamma)d\mu^{\rm def}_{K,\cP}(\gamma)&=&\int_{T(\hat F)/T(F)}\sum_{n\in \N}[1_{H\times\cF_1}](\varpi^{-n}x)\delta(1_H)(x)d^\times x\\
&=&\int_{H\times\cF_1}\sum_{n\in \N}\pi^{D}(\varpi^{n})\delta(1_H)(x)d^\times x=\int_{T(\hat F)/T(F)}\delta(1_U)(x)d^\times x.
\end{eqnarray*}
Similarly,
\[
\int_{\cG_{K,\cP}}[1_\cX](\gamma)d\mu^{\rm ind}_{K,\cP}(\gamma)=\int_{T(\hat F)/T(F)}\log_p(\hat\delta(1_U)(x))d^\times x.
\]
Using the Waldspurger and Gross-Zagier formulas, we compute
\begin{eqnarray*}
\left(\int_{T(\hat F)/T(F)}\delta(1_U)(x)d^\times x\right)^2&=&C_\phi \frac{L(1/2,\pi_K,1)}{L(1,\pi,ad)}\alpha_{\pi_\cP,1}(\delta_{T}(1_U),\delta_{T}(1_U)),\\
\left|\int_{T(\hat F)/T(F)}\hat\delta(1_U)(x)d^\times x\right|^2&=&C_\phi \frac{L'(1/2,\pi_K,1)}{L(1,\pi,ad)}\alpha_{\pi_\cP,1}(\delta_{T}(1_U),\delta_{T}(1_U)).
\end{eqnarray*}
Thus, we need to compute the pairing $\beta_{\pi_\cP,1}(\delta_{T}(1_U),\delta_{T}(1_U))$. By Proposition \ref{innerprod},
\begin{eqnarray*}
\beta_{\pi_\cP,1}(\delta_{T}(1_U),\delta_{T}(1_U))&=&c_T\int_{T(F_\cP)}\int_{T(F_\cP)}1_U(t^{-1}x)\overline{\Lambda(\delta_{T}(1_U))(x)}d^\times xd^\times t.
\end{eqnarray*}
Hence by \eqref{eqntT}, 
$\beta_{\pi_\cP,1}(\delta_{T}(1_U),\delta_{T}(1_U))=c_T\bar C_T\int_{T(F_\cP)}F(0)(t) d^\times t$,
where $F(s)(t)$ is the analytic continuation of the expression
\[
F(s)(t)=\int_{T(F_\cP)}1_U(t^{-1}x)\int_{T(F_\cP)}\theta_{T}(s)(y)1_U(x^{-1}y)d^\times yd^\times x
\]
As we showed in the proof of Theorem \ref{teocompIT}, fixing an isomorphism $T_\cP\simeq F_\cP^\times$ (Note that $U$ is mapped to $\cO_{F_\cP}\setminus 0$), there is a non-zero constant $C$, such that $\theta_{T}(s)(y)=\left|\frac{y}{C^2(1-y)^2}\right|^{1-s}$. We deduce that
\begin{eqnarray*}
F(s)(t)&=&\int_{F_\cP^\times}1_{\cO_{F_\cP}}(t^{-1}x)\int_{F_\cP^\times}\left|\frac{y}{C^2(1-y)^2}\right|^{1-s}1_{\cO_{F_\cP}}(x^{-1}y)d^\times yd^\times x\\
&=&q^{(2-2s)n_T}\int_{t\cO_{F_\cP}}\int_{x\cO_{F_\cP}}\left|\frac{y}{(1-y)^2}\right|^{1-s}d^\times yd^\times x\\
&=&q^{(2-2s)n_T}\sum_{m_x=\ord(t)}^\infty\sum_{m_y=m_x}^\infty q^{(s-1)m_y}\int_{\varpi^{m_y}\cO_{F_\cP}^\times}\frac{1}{|1-y|^{2-2s}}d^\times y,
\end{eqnarray*}
where $n_T=\ord(C)$ and $q=\#(\cO_{F_\cP}/\varpi)$.
Moreover,
\[
\int_{\varpi^{m_y}\cO_{F_\cP}^\times}\frac{1}{|1-y|^{2-2s}}d^\times y=\left\{\begin{array}{lr}
1,& m_y>0;\\
q^{(2-2s)m_y},& m_y<0;\\
\frac{q-2+q^{1-2s}}{(q-1)(1-q^{1-2s})},& m_y=0.
\end{array}\right.
\]
By means of a tedious but straightforward computation that will be left to the reader, we obtain that 
\[
F(0)(t)=\left\{\begin{array}{lr}
q^{2n_T}q^{-\ord(t)}(1-q^{-1})^{-2},& \ord(t)>0;\\
q^{2n_T}q^{\ord(t)}(1-q)^{-2},& \ord(t)\leq 0.
\end{array}\right.
\]
This implies that
\begin{eqnarray*}
\alpha_{\pi_\cP,1}(\delta_{T}(1_U),\delta_{T}(1_U))&=&\frac{L(1,\eta_\cP)L(1,\pi_\cP,ad)}{\zeta_\cP(2)L(1/2,\pi_\cP,1)}c_T\bar C_T\int_{T(F_\cP)}F(0)(t) d^\times t\\
&=&\frac{c_T\bar C_Tq^{2n_T}L(1,\eta_\cP)L(1,\pi_\cP,ad)}{\zeta_\cP(2)L(1/2,\pi_\cP,1)(1-q^{-1})^2}\left(\sum_{n>0}q^{-n}+\sum_{n\leq 0}q^{n-2}\right)\\
&=&\frac{c_T\bar C_Tq^{2n_T}L(1,\eta_\cP)L(1,\pi_\cP,ad)}{\zeta_\cP(2)L(1/2,\pi_\cP,1)}\frac{q^{-1}(1+q^{-1})}{(1-q^{-1})^3},
\end{eqnarray*}
and the result follows.
\end{proof}

\begin{remark}
Recall that the constants $C_K$ have been computed in Theorem \ref{explIPthm} under certain mild hypothesis.
\end{remark}


\section{Appendix 1: Local integrals}\label{locint}

\begin{lemma}\cite[Proposition 2.1.5]{Bump}\label{lemmeasp}
Let $H$ be a locally compact group and $N$ a compact subgroup of $H$. Then there is a positive regular Borel measure on the quotient space $H/N$ that is invariant under the action of $H$ by left transaction. This measure is unique up to a constant multiple.
\end{lemma}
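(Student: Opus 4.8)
The plan is to construct the invariant measure on $H/N$ by averaging a left Haar measure of $H$ over the compact subgroup $N$, the classical argument recalled in \cite[\S 2.1]{Bump}. First I would fix a left Haar measure $dh$ on $H$ --- it exists and is unique up to a positive scalar by Haar's theorem --- together with the probability Haar measure $dn$ on the compact group $N$. The key preliminary point is that the modular function $\Delta_H\colon H\to\R_{>0}$ is trivial on $N$: the restriction $\Delta_H|_N$ is a continuous homomorphism from the compact group $N$ to $\R_{>0}$, so its image is a compact subgroup of $\R_{>0}$, hence $\{1\}$. Consequently $\int_H f(hn)\,dh=\int_H f(h)\,dh$ for every $f\in C_c(H)$ and every $n\in N$.

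Next I would introduce the averaging map $f\mapsto f^\natural$, where $f^\natural(h):=\int_N f(hn)\,dn$, which carries $C_c(H)$ onto $C_c(H/N)$ (surjectivity is routine given that $N$ is compact: for $\bar f\in C_c(H/N)$ the preimage of $\mathrm{supp}\,\bar f$ under $q\colon H\to H/N$ is compact, so one may pick $\psi\in C_c(H)$ with $\psi\geq 0$ and $\psi>0$ there, and then $f:=(\bar f\circ q)\,\psi/\psi^\natural$, extended by $0$, satisfies $f^\natural=\bar f$). By Fubini and the displayed identity, $\int_H f^\natural(h)\,dh=\int_N\!\int_H f(hn)\,dh\,dn=\int_H f(h)\,dh$; in particular $f^\natural\equiv 0$ forces $\int_H f\,dh=0$. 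Therefore the assignment $I(\bar f):=\int_H f(h)\,dh$ for any $f$ with $f^\natural=\bar f$ is a well-defined positive linear functional on $C_c(H/N)$, and the Riesz representation theorem yields a positive regular Borel measure $\mu$ on $H/N$ with $\int_{H/N}\bar f\,d\mu=I(\bar f)$. Its left $H$-invariance follows from left-invariance of $dh$ and the intertwining relation $(L_g f)^\natural=L_g(f^\natural)$, where $L_g\phi(x)=\phi(g^{-1}x)$.

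For uniqueness, suppose $\mu'$ is another positive regular $H$-invariant Borel measure on $H/N$. Pulling it back along the averaging map, $J(f):=\int_{H/N}f^\natural\,d\mu'$ is a positive linear functional on $C_c(H)$ which is left-invariant (again by $(L_g f)^\natural=L_g f^\natural$), so by uniqueness of Haar measure $J(f)=c\int_H f\,dh$ for a single constant $c>0$. Since $f\mapsto f^\natural$ is surjective, this gives $\int_{H/N}\bar f\,d\mu'=c\int_{H/N}\bar f\,d\mu$ for all $\bar f\in C_c(H/N)$, hence $\mu'=c\mu$. The only genuinely delicate steps are the triviality of $\Delta_H$ on $N$ (which is what makes the averaged functional well defined) and the surjectivity of $f\mapsto f^\natural$; both are straightforward consequences of the compactness of $N$.
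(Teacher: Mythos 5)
Your argument is correct and is the standard averaging proof (triviality of the modular function on the compact subgroup $N$, the surjection $f\mapsto f^\natural$ from $C_c(H)$ onto $C_c(H/N)$, Riesz representation, and Haar uniqueness); the paper itself gives no proof but simply cites \cite[Proposition 2.1.5]{Bump}, whose proof proceeds along exactly these lines. No gaps worth flagging.
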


Let $F$ be a nonarchimedean local field with absolute value $|\cdot|:F\rightarrow\R$ and integer ring $\cO$. Let us denote by $Z$ the center of $\GL_2(F)$, and let $P$ be its Borel subgroup, namely, the subgroup of upper triangular matrices. 
Let us consider the modular quasicharacter
\[
\kappa:P\longrightarrow\R\qquad\kappa\left(\begin{array}{cc}
					  t_1 & x\\
					  & t_2
					\end{array}\right)=|t_1/t_2|.
\]
It is a group homomorphism that satisfies $d_R(b):=\kappa(b)d_L(b)$, where $d_R$ and $d_L$ are right and left Haar measures on $P$.

\begin{lemma}\label{lemmeas}
Let $M$ be a closed subgroup of $\GL_2(F)$, such that the product $PM$ is open in $\GL_2(F)$ and its complement has zero measure. Let $Z_M\subseteq Z\cap M$ be any subgroup, such that the quotient  $(P\cap M)/Z_M$ is compact. Let $h\in C_c(\GL_2(F),\C)$, such that $h(bg)=\kappa(b)h(g)$, for all $b\in P$ and $g\in \GL_2(F)$. Then, there exists $\phi\in C_c(\GL_2(F),\C)$, such that $h(g)=\int_P\phi(bg)d_Lb$, for all $g\in \GL_2(F)$. Moreover, for any such $\phi$,
\[
\int_{M/Z_M} h(m)d_Rm=C\int_{\GL_2(F)}\phi(g)dg,
\]
where $C=C_{M/Z_M}\in \R$ is a certain constant and $d_R$ is the right Haar measure on $M/Z_M$.
\end{lemma}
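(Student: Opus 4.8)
The plan is to prove this as an instance of the standard theory of integration on the quotient of $G$ by a closed subgroup: first produce $\phi$ from $h$, then apply the quotient integral formula to pass from $\int_G\phi\,dg$ to an integral over $B\backslash G$, and finally identify that integral with a multiple of $\int_{M/Z_M}h\,d_Rm$ by restricting to the dense open orbit $B\backslash BM$ and quotienting out the compact group $(B\cap M)/Z_M$.

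For the first step, the averaging map $\lambda(\phi)(g):=\int_B\phi(bg)\,d_Lb$ carries $C_c(G,\C)$ onto the space of continuous functions $h$ on $G$ satisfying $h(b_0g)=\kappa(b_0)h(g)$ and having support compact modulo $B$; the transformation law follows from the relation $d_R=\kappa\,d_L$ (substituting $b\mapsto bb_0^{-1}$), and the surjectivity is the usual structural fact about averaging over a closed subgroup, proved by a partition of unity over the image of $\mathrm{supp}(h)$ in $B\backslash G$ — here a closed subset of the compact space $B\backslash G\simeq\PP^1(F)$, so that every $h$ to which the lemma is applied indeed has support compact modulo $B$. This produces the required $\phi$ with $\lambda(\phi)=h$; conversely the integral $\int_{M/Z_M}h\,d_Rm$ converges because $\mathrm{supp}(\lambda(\phi))\subseteq B\cdot\mathrm{supp}(\phi)$ is compact modulo $B$, whence $h|_M$ has support compact modulo $B\cap M$.

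For the second step, since $G=\GL_2(F)$ is unimodular while $B$ has modular quasicharacter $\kappa$, the quotient $B\backslash G$ carries a quasi-invariant measure $d\dot g$, unique up to a positive scalar, and Weil's quotient integral formula gives $\int_G\phi(g)\,dg=\int_{B\backslash G}\bigl(\int_B\phi(bg)\,d_Lb\bigr)\,d\dot g=\int_{B\backslash G}h\,d\dot g$, which in particular shows $\int_G\phi\,dg$ depends only on $h=\lambda(\phi)$. Since $BM$ is open with null complement, $B\backslash BM$ is an open, full-measure subset of $B\backslash G$, and the orbit map identifies it $M$-equivariantly with $(B\cap M)\backslash M$. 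The hypotheses ($Z_M\subseteq Z\cap M$, hence central in $G$, and $(B\cap M)/Z_M$ compact) imply that any continuous homomorphism of $B\cap M$ into $\R_{>0}$ is trivial on $Z_M$, hence factors through the compact group $(B\cap M)/Z_M$, hence vanishes; applying this to $\Delta_{B\cap M}$, $\Delta_M|_{B\cap M}$ and $\kappa|_{B\cap M}$ shows that $B\cap M$ is unimodular, that $(B\cap M)\backslash M$ admits an $M$-invariant measure, and that $h|_M$ descends to a compactly supported continuous function on $(B\cap M)\backslash M$. Then $d\dot g|_{B\backslash BM}$, transported to $(B\cap M)\backslash M$, is an $M$-invariant measure on this homogeneous space, so by uniqueness (Lemma \ref{lemmeasp}) it equals a positive multiple of the image of $d_Rm$ under $M/Z_M\to(B\cap M)\backslash M$; Fubini over the compact fibre $(B\cap M)/Z_M$ of that map then yields $\int_{B\backslash BM}h\,d\dot g=C^{-1}\int_{M/Z_M}h(m)\,d_Rm$ for a constant $C=C_{M/Z_M}$ absorbing $\mathrm{vol}\bigl((B\cap M)/Z_M\bigr)$ and the normalisations. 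Combining with the formula of the previous step gives the assertion.

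The main obstacle is precisely the compatibility of all the measures and multipliers in the last step: one must check that $d\dot g$ restricted to the open orbit really does define an $M$-invariant measure on $(B\cap M)\backslash M$ that is proportional to the one coming from $d_Rm$ on $M/Z_M$, and this bookkeeping with modular quasicharacters is exactly where the two hypotheses on $M$ are used. Once it is carried out, everything else is the routine machinery of quotient integration as in \cite[\S 2.1]{Bump}.
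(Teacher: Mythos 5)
Your argument is correct in substance but takes a genuinely different route from the paper's. The paper never works on $B\backslash G$: it builds $\phi$ explicitly from the Iwasawa decomposition (setting $\phi(bk)=\phi_0(b)h(k)$ with $\int_B\phi_0\,d_Lb=1$), and for the main identity it identifies $BM$ with the homogeneous space $\bigl((B\times M)/Z_M\bigr)/\bigl((B\cap M)/Z_M\bigr)$ and compares two $(B\times M)$-invariant functionals on $C_c(BM,\C)$ --- the Haar measure of $G$ restricted to $BM$, and the iterated integral $\int_{M/Z_M}\int_B$ --- using only Lemma \ref{lemmeasp} for the compact stabilizer $(B\cap M)/Z_M$. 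That packaging is exactly what lets the paper sidestep the non-unimodularity of $B$, since no measure on $B\backslash G$ is ever invoked. Your route (surjectivity of the averaging map, the quotient-integration functional on $B\backslash G$, restriction to the open full-measure orbit $B\backslash BM\simeq (B\cap M)\backslash M$, the modular-character checks, and then uniqueness of the invariant measure on $\bigl((B\cap M)/Z_M\bigr)\backslash\bigl(M/Z_M\bigr)$, which is again legitimately Lemma \ref{lemmeasp} because the subgroup $(B\cap M)/Z_M$ of $M/Z_M$ is compact) is more conceptual and shows clearly where the two hypotheses on $M$ enter; the price is the bookkeeping with $\kappa$ and the modular functions that the paper's choice of homogeneous space renders unnecessary.

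Three points in your write-up need repair, though none affects the strategy. First, since $B$ is not unimodular, $h$ does not descend to $B\backslash G$, so the displayed identity $\int_G\phi\,dg=\int_{B\backslash G}h\,d\dot g$ is an abuse of notation: the correct statement is that $\int_G\phi\,dg$ depends only on $h=\lambda(\phi)$ and defines a positive, right-$G$-invariant functional on the space of $\kappa$-equivariant functions (equivalently, $\int_G\phi\,dg=c\int_K h(k)\,dk$ via $G=BK$, which is in effect what the paper's explicit $\phi$ exploits). Second, it is not true that every continuous homomorphism $B\cap M\to\R_{>0}$ is trivial on $Z_M$ (consider $|\det|$); what you need, and what is true, is that the three characters you actually use --- $\Delta_{B\cap M}$, $\Delta_M|_{B\cap M}$ and $\kappa|_{B\cap M}$ --- are trivial on $Z_M$ (the modular ones because $Z_M$ is central, and $\kappa$ because $\kappa|_Z=1$), hence factor through the compact quotient and are trivial. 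Third, the deduction that $h|_M$ has support compact modulo $B\cap M$ fails in general (for $M=T^2$ split one can have $h>0$ everywhere while $M/(B\cap M)$ is noncompact); convergence of $\int_{M/Z_M}h\,d_Rm$ should instead be obtained by running the measure comparison first for $|\phi|\geq 0$, which gives $\int_{M/Z_M}\lambda(|\phi|)\,d_Rm=C\int_G|\phi|\,dg<\infty$.
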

\begin{proof}
We choose a right $(P\cap \GL_2(\cO))$-invariant compactly supported function $\phi_0\in C_c(P,\C)$, such that $\int_{P}\phi_0(b)d_Lb=1$. 
Since $h$ is also left $(P\cap \GL_2(\cO))$-invariant, both $\phi_0$ and $h$ provide a function $\phi\in C_c(\GL_2(F),\C)$ defined by 
 $\phi(bk)=\phi_0(b)h(k)$, for all $b\in P$ and $k\in \GL_2(\cO)$. We compute ($g=b'k'$, $b'\in P$, $k'\in \GL_2(\cO)$)
\[
\int_P\phi(bg)d_Lb
=h(k')\int_P\phi_0(bb')\kappa^{-1}(b)d_R(b)=h(k')\kappa(b')=h(g).
\]

Let $H:=(P\times M)/Z_M$, and $N:=(P\cap M)/Z_M$. We consider $N$ embedded diagonally in $H$. Observe that there is a homeomorphism $H/N\rightarrow PM$ given by $(b,m)N\mapsto bm^{-1}$. This induces a linear isomorphism $C_c(PM,\C)\simeq C_c(H/N,\C)$. The linear functional on $C_c(PM,\C)$ corresponding to the Haar measure of $\GL_2(F)$ gives rise to a linear functional on $C_c(H/N,\C)$, which is invariant under the left action of $H$. Another left $H$-invariant linear functional  on $C_c(H/N,\C)$ is given by $\int_{M/Z_M}\int_Pf((b,m^{-1})N)d_Lbd_Rm$.
By Lemma \ref{lemmeasp}, both linear functionals must coincide (up to constant). Hence
\[
C\int_{\GL_2(F)} \phi(g)dg=C\int_{PM} \phi(g)dg=\int_{M/Z_M}\int_B\phi(bm)d_Lbd_Rm=\int_{M/Z_M} h(m)d_Rm,
\]
and the result follows.
\end{proof}

\begin{corollary}[Invariance]\label{coras}
Let $h\in C_c(\GL_2(F),\C)$ and $M$ be as above. Then, for any $g\in \GL_2(F)$, 
$$
\int_{M/Z_M} h(mg)d_R m=\int_{M/Z_M} h(m)d_Rm.
$$
\end{corollary}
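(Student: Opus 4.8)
The plan is to deduce the invariance directly from Lemma \ref{lemmeas}, exploiting the crucial feature that the constant $C=C_{M/Z_M}$ produced there depends only on the pair $M/Z_M$ (together with the fixed normalisations of Haar measures) and \emph{not} on the particular function $h$ being integrated. Once that is granted, the statement becomes purely formal.

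Concretely, I would fix $g\in G$ and introduce the right translate $h_g\in C_c(G,\C)$ given by $h_g(x):=h(xg)$. Since $h(bx)=\kappa(b)h(x)$ for all $b\in B$, the same identity holds for $h_g$, so $h_g$ again satisfies the hypotheses of Lemma \ref{lemmeas}. Next, picking $\phi\in C_c(G,\C)$ with $h(x)=\int_B\phi(bx)\,d_Lb$ as provided by the lemma, I would set $\phi_g(x):=\phi(xg)$; then $\phi_g\in C_c(G,\C)$ and $\int_B\phi_g(bx)\,d_Lb=\int_B\phi(bxg)\,d_Lb=h(xg)=h_g(x)$, so $\phi_g$ is an admissible primitive for $h_g$ in the sense of the lemma. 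Applying Lemma \ref{lemmeas} first to the pair $(h_g,\phi_g)$ and then to $(h,\phi)$, with the \emph{same} constant $C$, yields
\[
\int_{M/Z_M} h(mg)\,d_Rm=\int_{M/Z_M} h_g(m)\,d_Rm=C\int_G\phi_g(x)\,dx=C\int_G\phi(xg)\,dx,
\]
and the right-invariance of the Haar measure $dx$ on $G$, together with a second use of Lemma \ref{lemmeas}, turns the last term into $C\int_G\phi(x)\,dx=\int_{M/Z_M} h(m)\,d_Rm$, which is the desired identity.

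The only step requiring genuine attention — and the point I expect to be the main obstacle — is the assertion that the constant $C$ in Lemma \ref{lemmeas} is independent of $h$. This follows, however, directly from how $C$ is produced in that proof: it arises as the proportionality factor between two left-$H$-invariant functionals on $C_c(H/N,\C)$, with $H=(B\times M)/Z_M$ and $N=(B\cap M)/Z_M$, a comparison that is intrinsic to $M/Z_M$ and in no way involves the function $h$. With this observation in place, the computation above is formal and the corollary follows.
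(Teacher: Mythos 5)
Your proof is correct and follows essentially the same route as the paper: translate $h$ and its primitive $\phi$ on the right by $g$, apply Lemma \ref{lemmeas} to both pairs with the same constant $C_{M/Z_M}$, and conclude by right-invariance of the Haar measure on $G$. The only difference is cosmetic — you make explicit the independence of $C$ from $h$, which the paper's proof uses tacitly (it even drops $C$ from its displayed equalities), and your justification of that independence from the structure of the lemma's proof is exactly right.
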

\begin{proof}
Write $h'\in C_c(\GL_2(F),\C)$ for the translation $h'(g')=h(g'g)$, for all $g'\in \GL_2(F)$. By the above lemma, there exists $\phi\in C_c(\GL_2(F),\C)$, such that $h(g')=\int_P\phi(bg')d_Lb$. Thus,
$$
h'(g')=h(g'g)=\int_P\phi(bg'g)d_Lb=\int_P\phi'(bg')d_Lb,
$$
where $\phi'(g'):=\phi(g'g)$. Applying the second part of the lemma,
\begin{eqnarray*}
\int_{M/Z_M} h(mg)d_Rm&=&\int_{M/Z_M} h'(m)d_R m=\int_{\GL_2(F)}\phi'(g')dg'=\int_{\GL_2(F)}\phi(g'g)dg'\\
&=&\int_{M/Z_M}h(m)d_Rm,
\end{eqnarray*}
and the result follows.
\end{proof}

\begin{corollary}[Comparison]\label{coras2}
Let $M/Z_M=M_1/Z_{M_1}$ or $M_2/Z_{M_2}$ and $h\in C_c(\GL_2(F),\C)$ be as above. Then there is a constant $C=C(M_1/Z_{M_1},M_2/Z_{M_2})$, such that
$$
\int_{M_1/Z_{M_1}} h(m_1)d_R m_1=C\int_{M_2/Z_{M_2}} h(m_2)d_Rm_2.
$$
\end{corollary}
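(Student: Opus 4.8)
The plan is to read Corollary \ref{coras2} off Lemma \ref{lemmeas} directly, the crucial observation being that the auxiliary function $\phi$ produced in that lemma depends only on $h$ (and on the fixed choice of $\phi_0$ on $B$ made in its proof), and not at all on the closed subgroup $M$.

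First I would fix $h\in C_c(G,\C)$ satisfying $h(bg)=\kappa(b)h(g)$ for all $b\in B$, $g\in G$, and invoke the first assertion of Lemma \ref{lemmeas}; this produces a single $\phi\in C_c(G,\C)$ with $h(g)=\int_B\phi(bg)\,d_Lb$ for all $g\in G$. Then I would apply the second assertion of Lemma \ref{lemmeas} to this one $\phi$, once with $M=M_1$ and once with $M=M_2$, obtaining
\[
\int_{M_i/Z_{M_i}} h(m_i)\,d_R m_i = C_{M_i/Z_{M_i}}\int_G\phi(g)\,dg,\qquad i=1,2,
\]
with the constants $C_{M_i/Z_{M_i}}$ of that lemma. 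One should note here that $C_{M_2/Z_{M_2}}\neq 0$: in the proof of Lemma \ref{lemmeas} it is the proportionality constant between two nonzero left-$H$-invariant functionals on $C_c(H/N,\C)$ — the one induced by the Haar measure of $G$ restricted to the open, full-measure subset $BM_2$, and the iterated Haar integral over $B$ and $M_2/Z_{M_2}$ — which are proportional with a nonzero scalar by Lemma \ref{lemmeasp}.

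Finally I would set $C:=C_{M_1/Z_{M_1}}/C_{M_2/Z_{M_2}}$, a constant depending only on the pair $(M_1/Z_{M_1},M_2/Z_{M_2})$, and conclude: when $\int_G\phi(g)\,dg\neq 0$ the desired identity follows by dividing the two displayed equalities, and when $\int_G\phi(g)\,dg=0$ both sides of the claimed identity vanish, so the identity holds trivially with the same $C$. There is essentially no obstacle; the only points that deserve an explicit line are the independence of $\phi$ from $M$ (immediate from the ``for any such $\phi$'' clause of Lemma \ref{lemmeas}) and the non-vanishing of $C_{M_2/Z_{M_2}}$ just noted.
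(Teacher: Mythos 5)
Your proposal is correct and follows essentially the same route as the paper: apply Lemma \ref{lemmeas} to produce a single $\phi$ with $h(g)=\int_B\phi(bg)\,d_Lb$, use its second assertion for $M_1$ and $M_2$ with that same $\phi$, and take $C=C_{M_1/Z_{M_1}}/C_{M_2/Z_{M_2}}$. Your extra remarks (non-vanishing of $C_{M_2/Z_{M_2}}$ and the degenerate case $\int_G\phi\,dg=0$) only make explicit details the paper leaves implicit.
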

\begin{proof}
By Lemma \ref{lemmeas}, there is $\phi\in C_c(\GL_2(F),\C)$, which satisfies $h(g)=\int_P\phi(bg)d_Lb$, for all $b\in P$ and $g\in \GL_2(F)$. This implies that 
$$
\int_{M_1/Z_{M_1}} h(m_1)d_R m_1=C'\int_{\GL_2(F)}\phi(g)dg=C\int_{M_2/Z_{M_2}} h(m_2)d_Rm_2,
$$
and the result follows.
\end{proof}

\section{Appendix 2: The $(\dG,O)$-module of discrete series}\label{DiscSer}

Let $\dG$ be the Lie algebra of $\GL_2(\R)$, let us consider the maximal compact subgroup $O=O(2)$, and let $H:=SO(2)\subset O$. Denote by $P$ the Borel subgroup
\[
P=\left\{u\left(\begin{array}{cc}y^{1/2}&xy^{-1/2}\\&y^{-1/2}\end{array}\right)\in\GL_2(\R)^+\right\}
\]
For our purposes, we will be interested in the $(\dG,O)$-module $\cD=\cD(2)$ of discrete series of weight 2. It can be described as a subrepresentation of an induced representation from a character of the Borel subgroup $P$. Indeed, since any $g\in\GL_2(\R)^+$ can be written uniquely as $g=u\cdot \tau(x,y)\cdot\kappa(\theta)$, where 
\[
u\in\R^+,\;\tau(x,y)=\left(\begin{array}{cc}y^{1/2}&xy^{-1/2}\\&y^{-1/2}\end{array}\right)\in P,\;\kappa(\theta)=\left(\begin{array}{cc}\cos\theta&\sin\theta\\ -\sin\theta&\cos\theta\end{array}\right)\in\SO(2),
\]
we write
\[
\tilde I=\{f\in C^\infty(\GL_2(\R)^+,\C),\mbox{ such that }f(u\cdot \tau(x,y)\cdot\kappa(\theta))=yf(\kappa(\theta))\}.
\]
Then $I$ is the $(\dG,H)$-module of admissible vectors in $\tilde I$, namely, the set of $f\in\tilde I$ such that the Fourier series of $f(\kappa(\theta))$ is finite. Thus, 
\[
I=\bigoplus_{k\in\Z}\C f_{2k};\qquad f_{2k}(u\cdot \tau(x,y)\cdot\kappa(\theta))=ye^{2ki\theta}.
\]
The $(\dG,H)$-module structure of $I$ can be described as follows: Let $L,R\in\dG$ be the \emph{Maass differential operators} defined in \cite[\S 2.2]{Bump}
\[
 L=e^{-2i\theta}\left(-iy\frac{\partial}{\partial x}+y\frac{\partial}{\partial y}-\frac{1}{2i}\frac{\partial}{\partial \theta}\right),\quad R=e^{2i\theta}\left(iy\frac{\partial}{\partial x}+y\frac{\partial}{\partial y}+\frac{1}{2i}\frac{\partial}{\partial \theta}\right).
\]
Then, the $(\dG,H)$-module $I$ is characterized by the relations:
\begin{eqnarray}
Rf_{2k}=(1+k)f_{2k+2};&\qquad& Lf_{2k}=(1-k)f_{2k-2}; \label{rel1}\\
\kappa(t)f_{2k}=e^{2kit}f_{2k};&\qquad&u f_{2k}=f_{2k},\label{rel2}
\end{eqnarray}
for any $\kappa(t)\in\SO(2)$ and $u\in \R^{+}\subset\GL_2(\R)^+$. Since $f_{2k}\in\C R^kf_0$, if $k>0$, and $f_{2k}\in\C L^{-k}f_0$, if $k<0$, we have that $I$ is generated by $f_0$ .

To provide structure of $(\dG,O)$-module, we have to define the action of $\omega=\left(\begin{array}{cc}-1&\\&1\end{array}\right)\in O(2)\setminus SO(2)$. Therefore, we have to define 
$\omega\in\End(I)$, such that 
\[
(i)\quad\omega f_{2k}\in\C f_{-2k};\qquad (ii)\quad\omega^2=1;\qquad (iii)\quad\omega R=L\omega.
\]
If we write $\omega f_{2k}=\lambda(k)f_{-2k}$, condition $(ii)$ implies that $\lambda(k)\lambda(-k)=1$. Moreover, condition $(iii)$ implies that $\lambda(k)=\lambda(k+1)$. 
We obtain two possible $(\dG,O)$-module structures for $I$: Letting $\lambda(k)=1$ for all $k\in\Z$, or letting $\lambda(k)=-1$ for all $k\in\Z$. We denote by $I^+$ and $I^-$ the corresponding $(\dG,O)$-module structures.

Note that we have a well defined morphism of $(\dG,H)$-modules 
\[
I\longrightarrow\C,\qquad f\mapsto\int_0^{\pi}f(\kappa(\theta))d\theta.
\]
Write $\C\simeq \C(+1)$ for the vector space $\C$ with trivial $\GL_2(\R)$-action, and 
$\C(-1)$ for the vector space $\C$ equipped with the action of $\GL_2(\R)$ given by the character $g\mapsto{\rm sign}(\det(g))$.
The above expresion defines a morphism of $(\dG,O)$-modules $pr^\pm:I^\pm\rightarrow\C(\pm 1)$. The kernels of both morphisms are isomorphic as $(\dG,O)$-modules by means of the isomorphism
\[
\ker(pr^+)\longrightarrow\ker(pr^-);\qquad f_{2k}\longmapsto {\rm sign}(k)f_{2k},
\] 
its isomorphism class is called \emph{discrete series representation} $\cD$. It is an irreducible $(\dG,O)$-module generated by $f_2$. Moreover, we have constructed two different extensions of $\cD$:
\begin{eqnarray}
0\longrightarrow \cD\longrightarrow &I^+&\longrightarrow\C\longrightarrow 0;\label{ext-seq-GK1}\\
0\longrightarrow \cD\longrightarrow &I^-&\longrightarrow\C(-1)\longrightarrow 0.\label{ext-seq-GK2}
\end{eqnarray}

\bibliographystyle{plain}
\bibliography{Anticyclotomic}

\begin{thebibliography}{10}

\bibitem{B-L}
L.~Barthel and R.~Livn{\'e}.
\newblock Modular representations of {${\rm GL}_2$} of a local field: the
  ordinary, unramified case.
\newblock {\em J. Number Theory}, 55(1):1--27, 1995.

\bibitem{B-G}
F.~Bergunde and L.~Gehrmann.
\newblock On the order of vanishing of stickelberger elements of hilbert
  modular forms.
\newblock {\em Proceedings of the London Mathematical Society}, 114:103, 132,
  2017.

\bibitem{lennart-felix}
Felix {Bergunde} and Lennart {Gehrmann}.
\newblock {Leading terms of anticyclotomic Stickelberger elements and p-adic
  periods}.
\newblock {\em ArXiv e-prints}, July 2016.

\bibitem{BDMT}
M.~Bertolini and H.~Darmon.
\newblock Heegner points on mumford-tate curves.
\newblock {\em Invent. Math.}, 126:413--456, 1996.

\bibitem{B-D2}
M.~Bertolini and H.~Darmon.
\newblock A rigid analytic {G}ross-{Z}agier formula and arithmetic
  applications.
\newblock {\em Ann. of Math. (2)}, 146(1):111--147, 1997.
\newblock With an appendix by Bas Edixhoven.

\bibitem{B-D3}
M.~Bertolini and H.~Darmon.
\newblock Heegner points, {$p$}-adic {$L$}-functions, and the
  {C}erednik-{D}rinfeld uniformization.
\newblock {\em Invent. Math.}, 131(3):453--491, 1998.

\bibitem{B-D1}
M.~Bertolini and H.~Darmon.
\newblock {$p$}-adic periods, {$p$}-adic {$L$}-functions, and the {$p$}-adic
  uniformization of {S}himura curves.
\newblock {\em Duke Math. J.}, 98(2):305--334, 1999.

\bibitem{B-D4}
M.~Bertolini and H.~Darmon.
\newblock The $p$-adic $l$-functions of modular elliptic curves, 2000.

\bibitem{Breu}
Christophe Breuil.
\newblock S\'erie sp\'eciale {$p$}-adique et cohomologie \'etale compl\'et\'ee.
\newblock {\em Ast\'erisque}, (331):65--115, 2010.

\bibitem{Bump}
D.~Bump.
\newblock {\em Automorphic forms and representations}, volume~55 of {\em
  Cambridge Studies in Advanced Mathematics}.
\newblock Cambridge University Press, Cambridge, 1997.

\bibitem{Cas}
Francesc Castell\`a.
\newblock On the exceptional specializations of big heegner points.
\newblock {\em Journal de l'Institut Math\'ematique de Jussieu}, To appear.

\bibitem{Dasg}
Samit Dasgupta.
\newblock Stark-{H}eegner points on modular {J}acobians.
\newblock {\em Ann. Sci. \'Ecole Norm. Sup. (4)}, 38(3):427--469, 2005.

\bibitem{Ding}
Yiwen Ding.
\newblock {$\mathcal{L}$}-invariants and local--global compatibility for the
  group {$\text{GL}_2/F$}.
\newblock {\em Forum Math. Sigma}, 4:e13, 49, 2016.

\bibitem{Dis}
Daniel Disegni.
\newblock On the $p$-adic birch and swinnerton-dyer conjecture for elliptic
  curves over number fields.
\newblock {\em Kyoto Journal of Mathematics}, To appear.

\bibitem{lennart-L-invariants}
Lennart {Gehrmann}.
\newblock {A note on automorphic L-Invariants}.
\newblock {\em ArXiv e-prints}, April 2017.

\bibitem{G-S}
R.~Greenberg and G.~Stevens.
\newblock {$p$}-adic {$L$}-functions and {$p$}-adic periods of modular forms.
\newblock {\em Invent. Math.}, 111(2):407--447, 1993.

\bibitem{GMM}
Xavier {Guitart}, Marc {Masdeu}, and Santiago {Molina}.
\newblock {An automorphic approach to Darmon points}.
\newblock {\em ArXiv e-prints}, September 2017.

\bibitem{Hung2}
Pin-Chi Hung.
\newblock On the non-vanishing {${\rm mod}\, \ell$} of central {$L$}-values
  with anticyclotomic twists for {H}ilbert modular forms.
\newblock {\em J. Number Theory}, 173:170--209, 2017.

\bibitem{Hung1}
Pin-Chi Hung.
\newblock On the derivative of anticyclotomic $p$-adic $l$-functions for
  hilbert modular forms.
\newblock {\em International Journal of Number Theory}, 14(03):615--630, 2018.

\bibitem{L-R-V}
M.~Longo, V.~Rotger, and S.~Vigni.
\newblock On rigid analytic uniformizations of jacobians of shimura curves.
\newblock {\em American J. Math.}, 134(5):1197--1246, 2012.

\bibitem{MTT}
B.~Mazur, J.~Tate, and J.~Teitelbaum.
\newblock On p-adic analogues of the conjectures of birch and swinnerton-dyer.
\newblock {\em Invent. Math.}, 84(1):1--48, 1986.

\bibitem{Mok}
C.~P. Mok.
\newblock The exceptional zero conjecture for hilbert modular forms.
\newblock {\em Compositio Mathematica}, 145:1--55, 1 2009.

\bibitem{Sa}
H.~Saito.
\newblock On tunnell's formula for characters of $gl(2)$.
\newblock {\em Compositio Mathematica}, 85(1):99--108, 1993.

\bibitem{Spiess}
M.~Spie{\ss}.
\newblock On special zeros of {$p$}-adic {$L$}-functions of {H}ilbert modular
  forms.
\newblock {\em Invent. Math.}, 196(1):69--138, 2014.

\bibitem{Tu}
J.~Tunnell.
\newblock Local $\epsilon$-factors and characters of $gl(2)$.
\newblock {\em American Journal of Mathematics}, 105(6):pp. 1277--1307, 1983.

\bibitem{V-Ord2}
J.~Van~Order.
\newblock $p$-adic interpolation of automorphic periods for ${\rm gl}_2$.
\newblock {\em Preprint}.

\bibitem{V-Ord1}
J.~Van~Order.
\newblock On the quaternionic {$p$}-adic {$L$}-functions associated to
  {H}ilbert modular eigenforms.
\newblock {\em Int. J. Number Theory}, 8(4):1005--1039, 2012.

\bibitem{Walds}
J.-L. Waldspurger.
\newblock Sur les valeurs de certaines fonctions {$L$} automorphes en leur
  centre de sym\'etrie.
\newblock {\em Compositio Math.}, 54(2):173--242, 1985.

\bibitem{YZZ}
X.~Yuan, S.-W. Zhang, and W.~Zhang.
\newblock {\em The {G}ross-{Z}agier formula on {S}himura curves}, volume 184 of
  {\em Annals of Mathematics Studies}.
\newblock Princeton University Press, Princeton, NJ, 2013.

\bibitem{Zhang}
S.-W. Zhang.
\newblock Heights of {H}eegner points on {S}himura curves.
\newblock {\em Ann. of Math. (2)}, 153(1):27--147, 2001.

\bibitem{Zhang03}
Shou-Wu Zhang.
\newblock Gross-{Z}agier formula for {$\rm GL(2)$}. {II}.
\newblock In {\em Heegner points and {R}ankin {$L$}-series}, volume~49 of {\em
  Math. Sci. Res. Inst. Publ.}, pages 191--214. Cambridge Univ. Press,
  Cambridge, 2004.

\end{thebibliography}

\end{document}